\newcommand{\LandauO}{\mathcal{O}}
\newcommand{\tA}{\tilde A}
\newcommand{\tK}{\mathscr K}
\newcommand{\tS}{\mathscr S}
\newcommand{\tQ}{\mathscr Q}
\newcommand{\tV}{\mathscr V}
\newcommand{\tH}{\mathscr H}
\newcommand{\btH}{\overline{\mathscr H}}
\newcommand{\Hc}{\mathcal H}
\newcommand{\tX}{\mathscr X}
\newcommand{\tY}{\mathscr Y}
\newcommand{\Qc}{\mathcal{Q}}
\newcommand{\Cc}{\mathcal{C}}
\def\qee{$\hfill{\Box}$}
\def\section{\@startsection{section}{1}%
 \z@{.7\linespacing\@plus\linespacing}{.5\linespacing}%
 {\normalfont\Large\bfseries\scshape\centering}}
\def\subsection{\@startsection{subsection}{2}%
  \z@{.5\linespacing\@plus\linespacing}{.5\linespacing}%
  {\normalfont\large\bfseries\scshape}}
\def\subsubsection{\@startsection{subsubsection}{3}%
 \z@{.5\linespacing\@plus\linespacing}{-.5em}%{.5\linespacing}%
 % {\normalfont\bfseries\itshape}}
 {\normalfont\large\bfseries}}
\newtheorem{theorem}{Theorem}[section]
\newtheorem{lemma}[theorem]{Lemma}
\newtheorem{cor}[theorem]{Corollary}
\newtheorem{prop}[theorem]{Proposition}
\newtheorem{definition}[theorem]{Definition}
\theoremstyle{definition}
\newtheorem{remark}[theorem]{Remark}
\newcommand{\beq}{\begin{equation}}
\newcommand{\eeq}{\end{equation}}
\renewcommand{\epsilon}{\varepsilon}
\newcommand{\ns}{\mathbb{N}}%{\mbox{\bbold N}}
\newcommand{\zs}{\mathbb{Z}}%{\mbox{\bbold Z}}
\newcommand{\qs}{\mathbb{Q}}%{\mbox{\bbold Q}}
\newcommand{\bx}{\bar x}
\newcommand{\by}{\bar y}
\newcommand{\bh}{\bar h}
\newcommand{\cC}{\mathcal C}
\newcommand{\cS}{\mathcal S}
\newcommand{\PIK}{V}
\newcommand{\PIIK}{W}
\newcommand{\PIIIK}{Z}
\newcommand{\PID}{M}
\newcommand{\PIID}{N}
\newcommand{\UI}{P_1}
\newcommand{\UII}{P_2}
\DeclareMathOperator{\Rat}{Rat}
\newcommand{\fps}{formal power series}
\newcommand{\gf}{generating function}
\newcommand{\gfs}{generating functions}
\def\emm#1,{{\em #1}}
\newcommand{\C}{C}
\newcommand{\Maple}{{\sc Maple}}
\begin{document}
%--------------------------------------
% 
%--------------------------------------
\title
[Three-quadrant  walks  via invariants]
{Enumeration of three-quadrant  walks  via invariants:\\ [1.5ex]
some diagonally symmetric models}

\author[M. Bousquet-M\'elou]{Mireille Bousquet-M\'elou}

\thanks{MBM was partially supported by the ANR projects DeRerumNatura (ANR-19-CE40-0018) and Combiné (ANR-19-CE48-0011).}
 
\address{CNRS, LaBRI, Universit\'e de Bordeaux, 351 cours de la
  Lib\'eration,  F-33405 Talence Cedex, France} 
\email{bousquet@labri.fr}

\begin{abstract} In the past $20$ years, the enumeration of plane lattice walks confined to a convex cone --- normalized into the first quadrant --- has received a lot of attention, stimulated the development of several original approaches, and led to a rich collection of results. Most of them deal with the nature of the associated \gf: for which \emm models, is it algebraic, D-finite, D-algebraic? By \emm  model,, what  we mean is a finite collection of allowed steps.
 
  More recently, similar questions have been raised for non-convex cones, typically the three-quadrant cone $\Cc = \{ (i,j) : i \geq 0 \text{ or } j \geq 0 \}$.  They turn out to be more difficult than their quadrant counterparts. In this paper, we investigate a collection of eight models in $\Cc$, which
  can be seen as the first level of difficulty beyond quadrant problems. This collection consists of diagonally symmetric models in $\{-1, 0,1\}^2\setminus\{(-1,1), (1,-1)\}$.
  Three of them are known not to be D-algebraic.  We show that the remaining five can be solved in a uniform fashion using Tutte's notion of \emm invariants,,
which has   already proved useful for some quadrant models.  Three
models are found to be algebraic, one  is (only) D-finite, and the
last one is (only) D-algebraic. We also solve in the same fashion the
diagonal model $\{ \nearrow, \nwarrow, \swarrow, \searrow\}$, which is
D-finite. The three algebraic models are those of the Kreweras trilogy, $\cS=\{\nearrow, \leftarrow,
\downarrow\}$, $\cS'=\{\rightarrow, \uparrow, \swarrow\}$, and
$\cS\cup \cS'$. 

Our solutions take similar forms  for all six models. Roughly speaking, the square of the \gf\ of three-quadrant walks with steps in $\cS$ is an explicit rational function in the \emm quadrant, \gf\ with steps in $\tS:= \{(j-i,j): (i,j) \in \cS\}$. We derive various exact or asymptotic
corollaries, including an explicit algebraic description of the positive harmonic function
in $\cC$ for the five models that are at least D-finite.
\end{abstract}

\keywords{Lattice walks --- Exact enumeration --- Algebraic and
  D-finite series --- Discrete harmonic functions}

\maketitle

%%%%%%%%%%%%%%%%%%%%%%%%%%%%%%%%%%%%%%%%%%%%%%%%%%%%%%%%%%%%%%%%%%%%%%%
\section{Introduction}
\label{sec:intro}
%%%%%%%%%%%%%%%%%%%%%%%%%%%%%%%%%%%%%%%%%%%%%%%%%%%%%%%%%%%%%%%%%%%%%%%

\subsection{Walks in a quadrant}
Over the last two decades, the enumeration of walks in the non-negative quadrant
\[
  \Qc:= \{ (i,j) : i \geq 0 \text{ and } j \geq 0 \}
\]
has attracted a lot of attention  and  established its own scientific community with {close to a hundred} research papers; see, e.g., \cite{BMM-10} and citing papers.
 Most of the attention has focused on  walks \emm with small steps,,
 that is, taking their steps in a fixed subset $\cS$ of $\{-1,0,
 1\}^2\setminus\{(0,0)\}$. We often call \emm $\cS$-walk, a walk
 taking its steps in $\cS$. For each  step set $\cS$  {(often called a \emm model, henceforth)}, one considers a  three-variate \gf\ $Q(x,y;t)$ defined by
\beq\label{Q-def}
  Q(x,y;t)= \sum_{n \ge 0}\sum_{i,j \in \Qc} q_{i,j}(n) x^i y^j t^n,
\eeq
where $q_{i,j}(n)$ is the number of quadrant $\cS$-walks starting from $(0,0)$, ending at $(i,j)$, and having in total
$n$ steps. For each small step model $\cS$, one now knows whether and where the series $Q(x,y;t)$ fits in the following classical hierarchy of series:
\beq\label{hierarchy}
  \hbox{ rational } \subset \hbox{ algebraic } \subset  \hbox{ D-finite }
        \subset \hbox{ D-algebraic}.
\eeq
Recall  that a series (say $Q(x,y;t)$ in our case) is \emm rational, if it is the ratio of two polynomials, \emm algebraic, if it satisfies a polynomial equation (with coefficients that are polynomials in the variables), \emm D-finite, if it satisfies three \emm linear, differential equations (one in each variable), again with polynomial coefficients, and finally \emm D-algebraic,
if it satisfies three \emm polynomial, differential equations. One central result in the classification of quadrant problems is that, for the $79$  (non-trivial, and essentially distinct) models with small steps, the series  $Q(x,y;t)$ is D-finite if and only if a certain group, which is easy to construct from the step set $\cS$, is finite~\cite{BMM-10,BoKa-10,BoRaSa-14,KR-12,MeMi13,MiRe09}.

\begin{figure}[t!]
	\centering
\includegraphics[height=40mm]{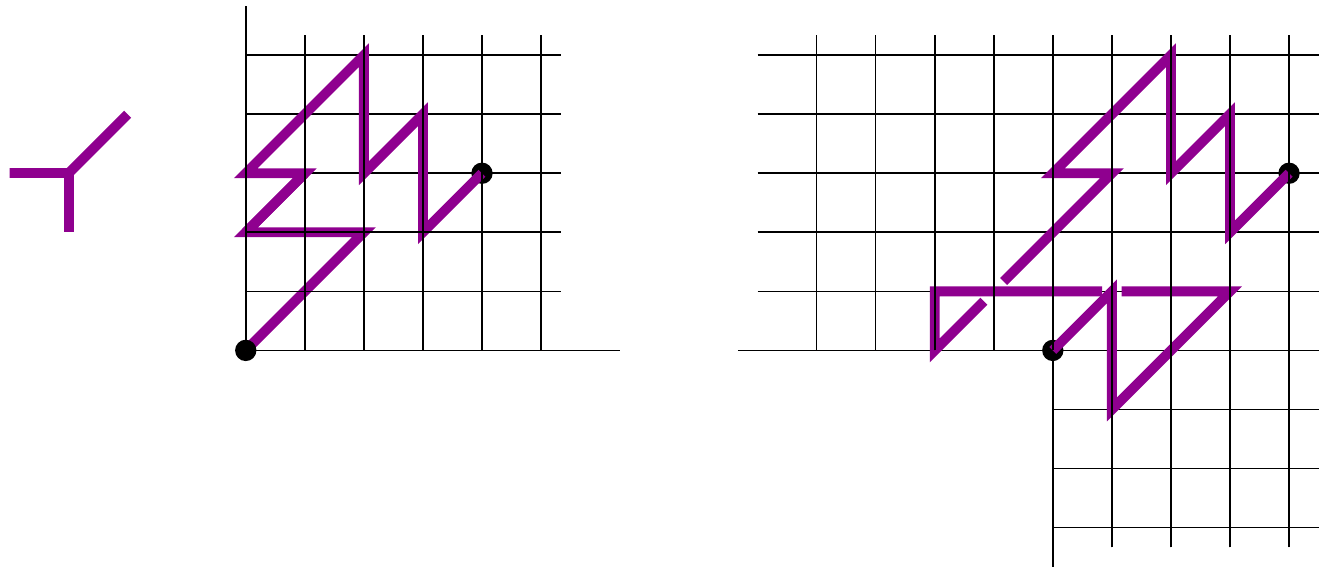}
	\caption{Two walks with Kreweras steps $\nearrow, \leftarrow, \downarrow$, one in the first quadrant~$\Qc$ (left), and one in three-quadrant cone $\Cc$ (right). The associated generating functions are algebraic.}
	\label{fig:kreweras}
      \end{figure}

\subsection{Walks in a three-quadrant cone} In 2016, the  author turned her attention to non-convex cones~\cite{BM-three-quadrants}, and initiated  the enumeration of  lattice paths confined to  the three-quadrant cone
\[ %\beq\label{Ccone-def}
	\Cc := \{ (i,j) : i \geq 0 \text{ or } j \geq 0 \}
\] %eeq
(see Figure~\ref{fig:kreweras}). We can say that such walks \emm
avoid the negative quadrant,. These problems turn out to be significantly harder than their quadrant counterparts. In~\cite{BM-three-quadrants}, the two most natural models were solved: \emm simple walks, with steps in $\{\rightarrow,  \uparrow,
\leftarrow, \downarrow \}$, and \emm diagonal walks, with steps in $\{\nearrow, \nwarrow, \swarrow,  \searrow \}$. The associated series $C(x,y;t)$, defined analogously to~\eqref{Q-def}:
\[ %\beq\label{C-def}
C(x,y;t)%\equiv C(x,y) 
= \sum_{n \geq 0} \sum_{(i,j) \in \Cc}  c_{i,j}(n) x^i y^j t^n,
%= \sum_{(i,j) \in \Cc} x^i y^j  C_{i,j}(t) .
\] %eeq
 was proved to be D-finite for both models.
 It then became  natural to explore more three-quadrant problems, in particular to understand whether the D-finiteness of $C(x,y;t)$ was again related to the finiteness of the associated group --- or even  whether the series $C(x,y;t)$ and $Q(x,y;t)$ would always lie \emm in the same class, of the hierarchy~\eqref{hierarchy} (with the exception of five models that are non-trivial for the quadrant problem but become trivial, with a rational series, for the three-quadrant cone). This conjecture of Dreyfus and Trotignon~\cite{dreyfus-trotignon} holds so far for all solved cases, and will only be reinforced  by  this paper\footnote{In a paper in preparation~\cite{elvey-price-trois-quarts}, Elvey Price proves this conjecture for the dependence in $x$.}.
 
Using an asymptotic argument, Mustapha quickly proved that the $51$ three-quadrant problems associated with an infinite group have, as their quadrant counterparts, a non-D-finite solution~\cite{mustapha-3quadrant}. Regarding exact solutions,  Raschel and Trotignon obtained in~\cite{RaTr-19} sophisticated integral expressions for $C(x,y;t)$ for the step sets $\cS$ of Table~\ref{tab:sym} (apart from the diagonal model). The first four have a finite group, and the expressions of~\cite{RaTr-19} imply that $C(x,y;t)$ is indeed D-finite for these models (at least in $x$ and $y$).
The other four have an infinite group and $C(x,y;t)$ is non-D-finite
by~\cite{mustapha-3quadrant}. These four non-D-finite models, labelled from $\#6$ to $\#9$ in Table~\ref{tab:sym}, were further studied by Dreyfus and Trotignon~\cite{dreyfus-trotignon}: they proved that $C(x,y;t)$ is D-algebraic for the first one ($\#6$), but not for the other three. More recently,  the method used in the original paper~\cite{BM-three-quadrants} was  adapted to solve the so-called king model where all eight steps are allowed~\cite{mbm-wallner-conf,mbm-wallner}, again with a D-finite solution (and a finite group). Finally, remarkable results of Budd~\cite{Budd2020Winding} and Elvey Price~\cite{ElveyPrice2020Winding} on the winding number of various families of plane walks provide explicit D-finite expressions for several series counting walks in $\Cc$ with prescribed endpoints.

\definecolor{Gray}{gray}{0.9}
\newcolumntype{a}{ >{\columncolor{Gray}} c }

\renewcommand{\arraystretch}{1.5}

\begin{table}[tbh!]
%    \centering
  \makebox[\textwidth][c]{
  \begin{tabular}{|c||ccc:ca||c:ccc|}
         \hline
   \rule{0pt}{4ex}   &   Kreweras &
                  \begin{minipage}{15mm}
                    reverse\\ Kreweras 
                  \end{minipage}
&  \begin{minipage}{15mm}
                   double\\ Kreweras 
                 \end{minipage} & simple & diagonal & $\#6$ & $\#7$ &$\#8$ &$\#9$
      \\ \rule{0pt}{7ex}
  $\cS$&  \begin{tikzpicture}[scale=.45] % kreweras
    \draw[->] (0,0) -- (0,-1);
    \draw[->] (0,0) -- (1,1);
    \draw[->] (0,0) -- (-1,0);
 %   \draw[-] (0,-1) -- (0,-1) node[below] {\phantom{$\scriptstyle 1$}};
  \end{tikzpicture}     &
 \begin{tikzpicture}[scale=.45] % reverse kreweras
    \draw[->] (0,0) -- (0,1);
    \draw[->] (0,0) -- (-1,-1);
    \draw[->] (0,0) -- (1,0);
%    \draw[-] (0,-1) -- (0,-1) node[below] {\phantom{$\scriptstyle 1$}};
  \end{tikzpicture}&
    \begin{tikzpicture}[scale=.45] % double kreweras
      \draw[->] (0,0) -- (0,-1);
    \draw[->] (0,0) -- (1,1);
    \draw[->] (0,0) -- (-1,0);
    \draw[->] (0,0) -- (0,1);
    \draw[->] (0,0) -- (-1,-1);
    \draw[->] (0,0) -- (1,0);
 %   \draw[-] (0,-1) -- (0,-1) node[below] {\phantom{$\scriptstyle 1$}};
  \end{tikzpicture}
&
    \begin{tikzpicture}[scale=.45] % simple
      \draw[->] (0,0) -- (0,-1);
      \draw[->] (0,0) -- (-1,0);
    \draw[->] (0,0) -- (0,1);
      \draw[->] (0,0) -- (1,0);
   % \draw[-] (0,-1) -- (0,-1) node[below] {\phantom{$\scriptstyle 1$}};
  \end{tikzpicture}
&
    \begin{tikzpicture}[scale=.45] % diagonal
      \draw[->] (0,0) -- (1,-1);
      \draw[->] (0,0) -- (-1,1);
    \draw[->] (0,0) -- (1,1);
      \draw[->] (0,0) -- (-1,-1);
   % \draw[-] (0,-1) -- (0,-1) node[below] {\phantom{$\scriptstyle 1$}};
  \end{tikzpicture}
&
    \begin{tikzpicture}[scale=.45] % DA
      \draw[->] (0,0) -- (0,-1);
    \draw[->] (0,0) -- (1,1);
    \draw[->] (0,0) -- (-1,0);
    \draw[->] (0,0) -- (0,1);
       \draw[->] (0,0) -- (1,0);
%    \draw[-] (0,-1) -- (0,-1) node[below] {\phantom{$\scriptstyle 1$}};
     \end{tikzpicture}
&   \begin{tikzpicture}[scale=.45] % 
      \draw[->] (0,0) -- (0,-1);
    \draw[->] (0,0) -- (1,1);
    \draw[->] (0,0) -- (-1,0);
      \draw[->] (0,0) -- (-1,-1);
  %    \draw[-] (0,-1) -- (0,-1) node[below] {\phantom{$\scriptstyle 1$}};
  \end{tikzpicture}
&
    \begin{tikzpicture}[scale=.45] % 
        \draw[->] (0,0) -- (1,1);
       \draw[->] (0,0) -- (0,1);
    \draw[->] (0,0) -- (-1,-1);
    \draw[->] (0,0) -- (1,0);
  %  \draw[-] (0,-1) -- (0,-1) node[below] {\phantom{$\scriptstyle 1$}};
  \end{tikzpicture}
&
    \begin{tikzpicture}[scale=.45] % 
      \draw[->] (0,0) -- (0,-1);
       \draw[->] (0,0) -- (-1,0);
    \draw[->] (0,0) -- (0,1);
    \draw[->] (0,0) -- (-1,-1);
    \draw[->] (0,0) -- (1,0);
 %   \draw[-] (0,-1) -- (0,-1) node[below] {\phantom{$\scriptstyle 1$}};
  \end{tikzpicture}
\\
                    %  \hline
$\tS$ & \begin{tikzpicture}[scale=.45] % q6654
    \draw[->] (0,0) -- (0,1);
    \draw[->] (0,0) -- (-1,-1);
    \draw[->] (0,0) -- (1,0);
%    \draw[-] (0,-1) -- (0,-1) node[below] {\phantom{$\scriptstyle 1$}};
  \end{tikzpicture}
 &  \begin{tikzpicture}[scale=.45] % q6654
    \draw[->] (0,0) -- (0,-1);
    \draw[->] (0,0) -- (1,1);
    \draw[->] (0,0) -- (-1,0);
    % \draw[-] (0,-1) -- (0,-1) node[below] {\phantom{$\scriptstyle 1$}};     
  \end{tikzpicture}
&\begin{tikzpicture}[scale=.45] % q6654
      \draw[->] (0,0) -- (0,-1);
    \draw[->] (0,0) -- (1,1);
    \draw[->] (0,0) -- (-1,0);
    \draw[->] (0,0) -- (0,1);
    \draw[->] (0,0) -- (-1,-1);
    \draw[->] (0,0) -- (1,0);
 %   \draw[-] (0,-1) -- (0,-1) node[below] {\phantom{$\scriptstyle 1$}};
  \end{tikzpicture}
&   
\begin{tikzpicture}[scale=.45] 
       \draw[->] (0,0) -- (1,1);
    \draw[->] (0,0) -- (-1,0);
       \draw[->] (0,0) -- (-1,-1);
    \draw[->] (0,0) -- (1,0);
 %   \draw[-] (0,-1) -- (0,-1) node[below] {\phantom{$\scriptstyle 1$}};
  \end{tikzpicture}
& \begin{tikzpicture}[scale=.45] 
       \draw[->] (0,0) -- (1,1);
    \draw[->] (0,0) -- (0,-1);
       \draw[->] (0,0) -- (-1,-1);
    \draw[->] (0,0) -- (0,1);
 %   \draw[-] (0,-1) -- (0,-1) node[below] {\phantom{$\scriptstyle 1$}};
  \end{tikzpicture}
                               &\begin{tikzpicture}[scale=.45] % q6654
    %  \draw[->] (0,0) -- (0,-1);
    \draw[->] (0,0) -- (1,1);
    \draw[->] (0,0) -- (-1,0);
    \draw[->] (0,0) -- (0,1);
    \draw[->] (0,0) -- (-1,-1);
    \draw[->] (0,0) -- (1,0);
 %   \draw[-] (0,-1) -- (0,-1) node[below] {\phantom{$\scriptstyle 1$}};
  \end{tikzpicture}
  &   
\begin{tikzpicture}[scale=.45] 
      \draw[->] (0,0) -- (0,-1);
  %    \draw[->] (0,0) -- (-1,0);
    \draw[->] (0,0) -- (0,1);
    \draw[->] (0,0) -- (-1,-1);
    \draw[->] (0,0) -- (1,0);
 %   \draw[-] (0,-1) -- (0,-1) node[below] {\phantom{$\scriptstyle 1$}};
  \end{tikzpicture}
&   
\begin{tikzpicture}[scale=.45] % q6654
      \draw[->] (0,0) -- (0,-1);
    \draw[->] (0,0) -- (1,1);
    \draw[->] (0,0) -- (-1,0);
    \draw[->] (0,0) -- (0,1);
  %   \draw[-] (0,-1) -- (0,-1) node[below] {\phantom{$\scriptstyle 1$}};
  \end{tikzpicture}
&\begin{tikzpicture}[scale=.45] % q6654
      \draw[->] (0,0) -- (0,-1);
    \draw[->] (0,0) -- (1,1);
    \draw[->] (0,0) -- (-1,0);
    \draw[->] (0,0) -- (-1,-1);
    \draw[->] (0,0) -- (1,0);
 %   \draw[-] (0,-1) -- (0,-1) node[below] {\phantom{$\scriptstyle 1$}};
  \end{tikzpicture}
  \\
      &&&&Gessel &&&&&\\
      \hline
      $C_\cS(x,y;t)$ & alg. & alg. & alg. & DF & DF & DA&non-DA&non-DA& non-DA \\
      &Sec.~\ref{sec:K} & Sec.~\ref{sec:RK} &Sec.~\ref{sec:DK}&  Sec.~\ref{sec:simple}&  Sec.~\ref{sec:diag} &Sec.~\ref{sec:DA}&&& \\
            & &&& \cite{BM-three-quadrants,RaTr-19}  &\cite{BM-three-quadrants,RaTr-19}  &\cite{dreyfus-trotignon} &\cite{dreyfus-trotignon}& ~\cite{dreyfus-trotignon}& \cite{dreyfus-trotignon}
      \\
 %                $Q_\tS(x,y;t)$ & alg. & alg. & alg. & alg. & &&& DA \\
      \hline
    \end{tabular}}
    \vskip 4mm  \caption{The nine models  $\cS$  considered in this paper. One is the diagonal model (shaded column). The others are the eight models with $x/y$-symmetry and no step $\nwarrow$ nor $\searrow$.
      % $(-1,1)$ nor $(1,-1)$,
    Each model is shown with its companion model $\tS$, with step
    polynomial $S(1/x, xy)$ (or $S(1/\sqrt x , \sqrt x y)$ for the
    diagonal model). The first five models have a finite group, the
    other four an infinite group.}
    \label{tab:sym}
  \end{table}

  \subsection{Invariants}  One of the many approaches that have been applied to quadrant walks relies on the notion of \emm invariants,, introduced by William Tutte in the seventies and eighties in his study of coloured planar triangulations~\cite{tutte-chromatic-revisited}. These invariants come by pairs and consist of two series $I(x;t)$ and $J(y;t)$ satisfying some properties (precise definitions will be given later). It was shown in~\cite{BeBMRa-17,bostan-non-DF,DHRS-17,DHRS-sing,hardouin-singer-20}
  that invariants play a crucial role in the classification of quadrant models. More precisely, it was first proved in~\cite{BeBMRa-17} that:
\begin{itemize}
  \item among the $79$ non-trivial quadrant models, exactly $13$ admit  invariants involving the series $Q(x,y;t)$: four if these models have a finite group, nine  an infinite group;
  \item one can use these invariants to prove, in a uniform manner,  that  $Q(x,y;t)$ is \emm  algebraic, for the four models with a finite group (and invariants),
     \item one can use these invariants to prove, in a uniform manner,  that  $Q(x,y;t)$ is \emm D-algebraic, for the nine models with an infinite group (and invariants).
    \end{itemize}
    Moreover, it was already known at that time that the other $19$
    models with a finite group are \emm transcendental, (i.e., not
    algebraic)~\cite{bostan-non-DF}. Hence for finite group models,
    algebraicity is \emm equivalent, to the existence of invariants
    involving $Q$. Similarly, the D-algebraicity results for the nine
    infinite group models were  complemented using differential Galois
    theory~\cite{DHRS-17,DHRS-sing}, proving that for infinite group
    models, D-algebraicity is equivalent to the existence of
    invariants involving $Q$. This equivalence has recently been generalized to quadrant walks with weighted steps (in the infinite group case)~\cite{hardouin-singer-20}.

 \subsection{Main results}    The aim of this paper is to explore the applicability  of invariants in the solution of three-quadrant models.  We focus on the nine models of Table~\ref{tab:sym},
 %(that is, the diagonal model and all $x/y$-symmetric models with no step $\nwarrow$ nor $\searrow$),
 because the series $C(x,y;t)$ can then be described by an equation that is reminiscent of a quadrant equation, although more complex. These models (apart from the diagonal one) are already those considered  in~\cite{RaTr-19,dreyfus-trotignon}. Our first contribution is, for this (admittedly small) set of models, a collection of results that are perfect analogues of the above quadrant results:
\begin{itemize}
\item  exactly four of these nine   models admit  invariants involving the series $C(x,y;t)$: three of them (those of the \emm Kreweras trilogy, on the left of the table) have a finite group, and one ($\#6$) has  an infinite group. Moreover, these four models are also those that admit invariants involving the series $Q(x,y;t)$ (Proposition~\ref{prop:dec-three-quadrants});
\item we use these invariants to prove, in a uniform manner,  that the series $C(x,y;t)$ is \emm  algebraic, for the three models with a finite group (and invariants); this series was already known to be D-finite in $x$ and $y$~\cite{RaTr-19}, and conjectured to be algebraic~\cite{BM-three-quadrants};
     \item we use these invariants to (re)-prove  that the series $C(x,y;t)$ is \emm D-algebraic, for the (unique) model with an infinite group and invariants. This series was already known to be D-algebraic~\cite{dreyfus-trotignon}, but the expression that we obtain is new.
     \end{itemize}
     As in the quadrant case, these ``positive'' results are complemented by ``negative'' ones from the existing literature, which imply a tight connection between invariants and (D)-algebraicity:    the series $C(x,y;t)$ is transcendental for the fourth and fifth models with a finite group (simple walks and diagonal walks)~\cite{BM-three-quadrants}, and  non-D-algebraic for the other three models with an infinite group~\cite{dreyfus-trotignon}. These properties are summarized in Table~\ref{tab:sym}.

     Our second contribution is a new solution of two (transcendental)
     D-finite models via invariants:   simple walks with steps
     $\rightarrow,  \uparrow, \leftarrow, \downarrow$   and  diagonal
     walks  with steps $\nearrow, \nwarrow, \swarrow, \searrow$
     (fourth and fifth in the table). This may seem  surprising,
     since, for finite group models, invariants have so far been used
     to prove algebraicity results. But, as shown
     in~\cite{BM-three-quadrants} for both models, it is natural to
     introduce a series $A(x,y;t)$ that differs from $C(x,y;t)$ by a simple explicit D-finite series (expressed in terms of $Q(x,y;t)$), and then  the heart of the solution of~\cite{BM-three-quadrants} is to prove that $A(x,y;t)$ \emm is, algebraic. What we show here is how to re-prove this algebraicity via invariants.

     In the six cases that we solve via invariants, what we actually
     establish is an algebraic relation between the three-quadrant
     series $C(x,y;t)$ (or $A(x,y;t)$ in the simple and diagonal
     cases)  for the model $\cS$, and the quadrant series $\tQ(x,y;t)$
     for a companion model $\tS$ shown in Table~\ref{tab:sym}. The
     (D)-algebraicity of $C(x,y;t)$ (or $A(x,y;t)$) then follows from
     the (D)-algebraicity of $\tQ(x,y;t)$. (Note that $\tQ(x,y;t)$ is
     also D-algebraic for the three models that we do \emm not, solve,
     but the lack of invariants involving $C(x,y;t)$ prevents us from
     relating $C(x,y;t)$ and  $\tQ(x,y;t)$.) We thus obtain either
     explicit algebraic expressions of $C(x,y;t)$ or $A(x,y;t)$, or,
     for the model that is only D-algebraic, an  expression
     % of      $C(x,y;t)$
     in terms of the \emm weak invariant,     of~\cite{BBMM-18}.

     Finally, for the five models that are (at least) D-finite, we
     derive from our results explicit algebraic expressions for the
     series $\sum_{i,j} H_{i,j} x^i y^j$, where $\left(H_{i,j}\right) _{(i,j)\in
       \Cc}$ is the discrete positive harmonic function associated with $\cS$-walks
     in $\Cc$. As can now be expected, this series
     is related to the  series describing the harmonic function of
      $\tS$-walks in the quadrant. This relation still holds, under natural assumptions, for the 6th model that we solve.
     %In the three algebraic cases, we thus obtain explicit expressions for $C(x,y;t)$. We illustrate this in details with our results for Kreweras' model at the end of this introduction. 

\subsection{Outline of the paper}
The paper is organized as follows. In Section~\ref{sec:tools}, we introduce  general tools, like basic functional equations for walks in $\Cc$, and also the key notion of \emm pairs of  invariants,. For each of the models $\tS$ of Table~\ref{tab:sym}, we give one or two pairs of $\tS$-invariants, borrowed from~\cite{BeBMRa-17}: one pair involves the \gf\ $\tQ(x,y;t)$ of quadrant walks with steps in $\tS$, and the other one, which only exists for finite group models, is rational. In Section~\ref{sec:dec} we construct,  for models of the Kreweras trilogy and for the $6$th model of the table, a new pair of $\tS$-invariants involving the \gf\ of walks in $\Cc$ with steps in $\cS$ (note the change in the step set). This construction is based on a certain \emm decoupling property, in these models, which we relate to the decoupling property used in the solution of quadrant walks with steps in $\cS$  by invariants (Proposition~\ref{prop:dec-three-quadrants}). A basic, but useful, \emm invariant lemma, (Lemma~\ref{lem:invariants}) allows us to relate these new invariants  to the known ones. This connection is made explicit in Sections~\ref{sec:K}, \ref{sec:RK},~\ref{sec:DK} and~\ref{sec:DA} for Kreweras steps, reverse Kreweras steps, double Kreweras steps, and for the $6$th model of Table~\ref{tab:sym},  respectively. We thus relate the \gf\ $C(x,y;t)$ for three-quadrant $\cS$-walks to the \gf\ $\tQ(x,y;t)$ for quadrant $\tS$-walks. In Section~\ref{sec:DF}, we apply the same approach to solve the simple and diagonal models, proving that the series $A(x,y;t)$ is algebraic in both cases. We finish in Section~\ref{sec:final} with comments and perspectives.

The paper is  accompanied by six \Maple\ sessions (one per step set)
available on the author's \href{http://www.labri.fr/perso/bousquet/publis.html}{webpage}.

\subsection{Kreweras' model}
\label{sec:K-statements}
To illustrate  our results, we now present those that deal with Kreweras' model, $\cS=\{\nearrow, \leftarrow, \downarrow\}$. Note that we often omit the dependency in $t$ of our series, denoting for instance $C(x,y)$ for $C(x,y;t)$. We also use the notation $\bx=1/x$ and $\by=1/y$.

\begin{theorem}\label{thm:K-U}
  The \gf\  $C(x,y)$  of walks with steps in $\{\nearrow, \leftarrow, \downarrow\}$ starting from $(0,0)$ and avoiding the negative quadrant is algebraic of degree $96$. It is given by the following identity:
\[
  (1-t(\bx+\by +xy)) C(x,y)=1 -t\by C_-(\bx) -t\bx C_-(\by),
\]
where $\C_-(x)$ is a series in $t$ with polynomial coefficients in $x$, algebraic of degree $24$. This series can be expressed explicitly in terms of the unique formal power series $\PIK\equiv \PIK(t)$ satisfying
\beq \label{PIK-k-def}
\PIK=t(2+\PIK^3).
\eeq
Indeed,
\[ %\begin{multline*}
  2\left(t\bx C_-(x)+ x - \frac 1 {2t}\right)^2= \frac{ (1-\PIK^3)^{3/2}}{\PIK^2}+
  (1-x\PIK)^2 \left(\frac 1 {\PIK^2}-\frac 1 x\right)
  + \left(\bx +\PIK-\frac{2x}\PIK\right) \sqrt{1-\PIK\frac{4+\PIK^3}4 x+\frac{ \PIK^2} 4 x^2}.
\] %\end{multline*}
\end{theorem}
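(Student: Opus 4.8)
The plan is to run this paper's invariant method for the step set $\cS=\{\nearrow,\leftarrow,\downarrow\}$, whose companion model is reverse Kreweras $\tS=\{\uparrow,\rightarrow,\swarrow\}$ (step polynomial $S(\bx,xy)=x+y+\bx\by$), and then to turn the resulting relation into the stated closed form. First I would derive the displayed functional equation by a step-by-step construction of $\cS$-walks in $\Cc$: every $\nearrow$-step is always legal, a $\leftarrow$-step from $(i,j)\in\Cc$ is illegal precisely when $i=0$ and $j\le-1$, and symmetrically for $\downarrow$. Using the diagonal symmetry $c_{i,j}(n)=c_{j,i}(n)$ of Kreweras' model — so that walks ending on the negative $x$-axis and on the negative $y$-axis are counted by one and the same series $C_-(x)=\sum_{k\ge1}\sum_n c_{0,-k}(n)\,x^kt^n\in t\,x\,\qs[x][[t]]$ — this bookkeeping yields exactly
\[
(1-t(\bx+\by +xy))\,C(x,y)=1 -t\by\, C_-(\bx) -t\bx\, C_-(\by),
\]
so that it only remains to determine $C_-$.

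The next step is to produce, on the (genus-zero) kernel curve attached to the companion model $\tS$, three pairs of $\tS$-invariants. Two of them are borrowed from \cite{BeBMRa-17}: a rational pair $(I_0(x),J_0(y))$, available because the group of $\tS$ is finite, and a pair $(I_{\tQ}(x),J_{\tQ}(y))$ built from the quadrant series $\tQ(x,y;t)$ of reverse Kreweras walks, which is algebraic (again because the group is finite). The new ingredient is a third pair $(I_C(x),J_C(y))$ carrying $t\bx C_-(x)$ (equivalently $C(x,y)$): it comes from a \emph{decoupling property} of the Kreweras three-quadrant model, namely that, after the substitution relating $\cS$ to $\tS$, the right-hand side $1-t\by C_-(\bx)-t\bx C_-(\by)$ of the functional equation splits on the kernel curve as a function of $x$ alone plus a function of $y$ alone. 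An invariant with no poles being trivial, these three pairs are then forced to be related: a suitable $\qs(x,t)$-rational combination of $I_C$, $I_0$ and $I_{\tQ}$ is pole-free, hence constant, and this produces a polynomial identity linking $t\bx C_-(x)$, the algebraic series $\tQ$ evaluated along the curve, and rational functions of $x$ and $t$. This already gives algebraicity of $C_-$, and then of $C(x,y)$ through the functional equation above.

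To reach the explicit formulas I would rationally uniformize the kernel curve; the rational invariant and the $\tQ$-invariant become explicit functions of the uniformizing parameter, with $\tQ$ expressible via the unique power series $V=t(2+V^3)$ attached to Kreweras (equivalently reverse Kreweras) walks. Solving the identity above for $t\bx C_-(x)+x-\tfrac1{2t}$ — the identity being quadratic in this quantity, which is where the square on the left-hand side of the statement comes from — yields the displayed closed form. For the degrees: $V$ has degree $3$ over $\qs(t)$, and $C_-(x)$ is obtained from $V$ by three further nested square roots, only one of which ($\sqrt{1-V^3}$) is $x$-free, so $\deg_{\qs(x,t)}C_-=2\cdot 3\cdot 2\cdot 2=24$; finally $C(x,y)$ is a rational function of $x,y,t,C_-(\bx),C_-(\by)$, and since $C_-(\bx)$ and $C_-(\by)$ share only the subfield generated by $V$ and $\sqrt{1-V^3}$ — of degree $6$ over $\qs(x,y,t)$ — and are each of degree $4$ over it, one gets $\deg_{\qs(x,y,t)}C(x,y)=4\cdot 4\cdot 6=96$.

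The hard part is the middle step: establishing the decoupling property for three-quadrant Kreweras walks, upgrading it into an honest pair of $\tS$-invariants carrying $C_-$, and then carrying out the pole analysis carefully enough to pin down the matching identity — rather than merely some weaker consequence of it. Once that identity is secured, the rest is a finite, if delicate, computation on the uniformized curve, where the only real subtlety is to control the tower of algebraic extensions so that the degrees come out as the claimed $24$ and $96$ and not the larger a priori bounds.
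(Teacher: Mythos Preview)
Your proposal is essentially the paper's own approach: construct a $\tS$-invariant pair carrying $C_-$ via the decoupling property (Proposition~\ref{prop:dec-three-quadrants} and Proposition~\ref{prop:inv-tq}), relate it to the known quadrant $\tS$-invariant $(I_1,J_1)$ via the invariant lemma (Lemma~\ref{lem:invariants}), and then read off the closed form. Two minor differences in execution are worth noting: the paper does \emph{not} use the rational pair $(I_0,J_0)$ for Kreweras (only $(I,J)$ and $(I_1,J_1)$ are needed, and the combination $I-4I_1$ already satisfies the invariant lemma), and it does not rationally uniformize the kernel curve --- instead it injects the known expression of $\tQ(x,0)$ from~\cite{BMM-10} and determines the constant~$A$ by specializing at the root $y=V^2$ of $\Delta(y)$; finally, the exactness of the degrees $24$ and $96$ is not established by your field-theoretic counting (which gives only upper bounds) but by computing minimal polynomials at numerical specializations such as $x=2$, $y=3$.
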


One recognizes, as announced, some ingredients of the quadrant \gf\ for the companion model $\tS=\{\rightarrow, \uparrow, \swarrow\}$, which satisfies~\cite{BMM-10,mishna-jcta}:
\beq\label{Qexpr-RK}
  \tQ(x,0)=\frac{\PIK(4-\PIK^3)}{16t} - \frac{t-x^2+tx^3}{2xt^2}+  \left(\bx +\PIK-\frac{2x}\PIK\right) \sqrt{1-\PIK\frac{4+\PIK^3}4 x+\frac{ \PIK^2} 4 x^2}.
\eeq
Note however that this series has degree $6$ only.

\smallskip
We can complement Theorem~\ref{thm:K-U} with the following one,  which counts walks ending on the diagonal: as before, the variable $t$ records the length, and the variable $x$ the abscissa of the endpoint. Note that the expressions in the two theorems only differ by a sign in front of the square root.

\begin{theorem}\label{thm:K-D}
  The \gf\ $D(x)$  of walks with steps in $\{\nearrow, \leftarrow, \downarrow\}$ starting from $(0,0)$, avoiding the negative quadrant and ending on the first diagonal is algebraic of degree $24$. It can be written explicitly in terms of the series $\PIK$ defined by~\eqref{PIK-k-def}:
\[ %  \begin{multline*}
  \frac{\Delta(x)}2\left(xD(x)+  \frac 1 {t}\right)^2= \frac{ (1-\PIK^3)^{3/2}}{\PIK^2}+
  (1-x\PIK)^2 \left(\frac 1 {\PIK^2}-\frac 1 x\right)
  - \left(\bx +\PIK-\frac{2x}\PIK\right) \sqrt{1-\PIK\frac{4+\PIK^3}4 x+\frac{ \PIK^2} 4 x^2},
\] %\end{multline*}
where
$  \Delta(x)= (1-tx)^2-4t^2\bx$.
\end{theorem}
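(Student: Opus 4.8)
The plan is to obtain Theorem~\ref{thm:K-D} alongside Theorem~\ref{thm:K-U}, from the \emph{same} $\tS$-invariant attached to three-quadrant Kreweras walks in Section~\ref{sec:dec}, but reading off the diagonal section $D(x)$ where Theorem~\ref{thm:K-U} reads off the axial section $C_-(x)$. Write the right-hand side of Theorem~\ref{thm:K-U} as $A(x)+B(x)$, where $B(x)=\bigl(\bx+\PIK-\tfrac{2x}{\PIK}\bigr)R(x)$, $R(x)=\sqrt{1-\PIK\tfrac{4+\PIK^3}{4}x+\tfrac{\PIK^2}{4}x^2}$ is the square root appearing in~\eqref{Qexpr-RK}, and $A(x)$ collects the part free of $R$. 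Let $\sigma$ be the involution that flips the determination of $R(x)$ --- equivalently, the one exchanging the two sheets of the kernel curve of the companion model $\tS=\{\rightarrow,\uparrow,\swarrow\}$; it fixes $A$ and negates $B$. Theorem~\ref{thm:K-D} asserts that the right-hand side there is $A(x)-B(x)=\sigma\bigl(A(x)+B(x)\bigr)$, so the claim to prove is that $D(x)$ is the ``other sheet'' avatar of $C_-(x)$, in the precise form
\[
  \frac{\Delta(x)}{2}\Bigl(xD(x)+\frac1t\Bigr)^2=\sigma\!\left(2\Bigl(t\bx C_-(x)+x-\frac1{2t}\Bigr)^2\right);
\]
granting this, Theorem~\ref{thm:K-D} is immediate from Theorem~\ref{thm:K-U} by applying $\sigma$.

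To see why the diagonal and the factor $\Delta$ enter, I would extract the diagonal directly from the basic functional equation $(1-t(\bx+\by+xy))C(x,y)=1-t\by\,C_-(\bx)-t\bx\,C_-(\by)$ of Section~\ref{sec:tools}: since only $\nearrow$ preserves $i-j$ while $\leftarrow$ and $\downarrow$ change it by $\mp1$, the constant-term operator $[w^0]$ applied to $C(\sqrt x\,w,\sqrt x/w)$ returns exactly $D(x)$. That substitution turns the kernel into $1-tx-\tfrac{t}{\sqrt x}(w+1/w)$, a quadratic in $w$ of discriminant $\Delta(x)$ --- whence the factor $\Delta$ --- whose two roots $Y_0(x),Y_1(x)$ are swapped by $\sigma$. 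Extracting $[w^0]$ by residues expresses $xD(x)+\tfrac1t$, up to $\sqrt{\Delta(x)}$ and elementary terms, in terms of the value of $C(x,y)$ --- hence of $C_-$ through the functional equation --- at one of the roots, while the $\tS$-invariant of Section~\ref{sec:dec} ties $C_-(x)$ to the other root; combining the two produces the displayed identity. In practice this is best packaged through the invariant pair of Section~\ref{sec:dec} (see Proposition~\ref{prop:dec-three-quadrants}), which carries both sections at once, so that the displayed relation drops out of the invariant identities with no explicit residue computation.

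The remaining verification is the bounded-invariant argument already used in Section~\ref{sec:K}: both sides of the displayed identity are $\tS$-invariants, so their difference is a bounded invariant, and Lemma~\ref{lem:invariants} --- after inspecting the expansions at $x=0$ and $x=\infty$ using $C_-(x)=tx+O(t^2)$, $D(x)=1+tx+O(t^2)$ and $\PIK=2t+O(t^2)$, and matching the polar part that $\Delta(x)$ contributes at $x=0$ --- forces it to vanish. Algebraicity and the degree $24$ then follow as for $C_-(x)$ in Theorem~\ref{thm:K-U}: $\PIK$ has degree $3$ over $\cs(t)$, adjoining $(1-\PIK^3)^{1/2}$ and $R(x)$ gives a field of degree $12$ over $\cs(x,t)$, and the square root defining $xD(x)+\tfrac1t$ multiplies the degree by at most $2$. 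The main obstacle is exactly the diagonal extraction and the sheet bookkeeping: justifying the $[w^0]$ manipulation at the level of formal power series in $t$ while $C(x,y)$ is a genuine Laurent series in $x$ and $y$, and fixing the determination of $R(x)$ (equivalently, which of $Y_0,Y_1$ to use) so that $D(x)$ lands on the sheet \emph{opposite} to $C_-(x)$ --- which is precisely what makes the two theorems differ by a single sign.
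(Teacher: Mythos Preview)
Your intuition is right—both theorems flow from the same invariant and differ only by that sign—but the argument you give has a real gap, and the paper's route is more direct than what you sketch.

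The point you are circling around but not quite landing on: $D(y)$ is not something to be \emph{extracted} from $C(x,y)$ after the fact; it is already the $J$-component of the invariant pair of Proposition~\ref{prop:inv-tq}. For Kreweras steps that proposition gives
\[
I(x)=\Bigl(2tU(x,0)+2x-\tfrac1t\Bigr)^2,\qquad J(y)=\Delta(y)\Bigl(yD(y)+\tfrac1t\Bigr)^2,
\]
directly from Lemma~\ref{lem:func_eq} and Lemma~\ref{lem:square} (no diagonal residue, no $[w^0]$ needed). The paper then checks that the pair $\bigl(I(x)-4I_1(x),\,J(y)-4J_1(y)\bigr)$ satisfies the hypothesis of Lemma~\ref{lem:invariants}, hence equals $(A,A)$ for some $A\in\qs((t))$; specializing $J$ at the root $y=\PIK^2$ of $\Delta$ determines $A$. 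Theorem~\ref{thm:K-U} is then the identity $I(x)=A+4I_1(x)$ and Theorem~\ref{thm:K-D} is the identity $J(x)=A+4J_1(x)$, both made explicit via~\eqref{Qexpr-RK}. The sign flip in front of the square root appears simply because $I_1(x)$ contains $+t\tQ(x,0)$ while $J_1(x)$ contains $-t\tQ(0,x)=-t\tQ(x,0)$ (by the $x/y$-symmetry of the companion model~$\tS$).

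Your final ``verification'' step is where the argument actually breaks. You write that ``both sides of the displayed identity are $\tS$-invariants'' and invoke Lemma~\ref{lem:invariants}, but that lemma applies to a \emph{pair} $(I(x),J(y))$ through the two-variable ratio $(I(x)-J(y))/\tK(x,y)$, required to vanish at $x=0$ and $y=0$; it says nothing about a one-variable identity in $x$, and ``expansions at $x=\infty$'' play no role in it. Your involution $\sigma$, moreover, is only defined on the explicit right-hand side of Theorem~\ref{thm:K-U}, not on the series $\bigl(t\bx C_-(x)+x-1/(2t)\bigr)^2$ itself, so the displayed identity you propose to prove is Theorem~\ref{thm:K-D} restated rather than reduced. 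The $[w^0]$-extraction paragraph is a plausible heuristic for why $\Delta$ shows up, but it is never turned into an actual equation relating $D$ and $C_-$. Your degree-$24$ bound at the end is fine and matches the paper's.
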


\medskip

The \gf\ of \emm  excursions, (that is, walks ending at $(0,0)$) was conjectured in~\cite{BM-three-quadrants} to be algebraic of degree $6$.  More generally, it is natural to ask about the \gf\ $C_{i,j}$ of walks ending at a specific point $(i,j)$.  
In order to describe these series, we introduce  an extension of degree $4$ of $\qs(\PIK)$ (hence of degree $12$ over $\qs(t)$). First, due to the periodicity of the model (walks ending at position $(i,j)$ have length $-i-j$ modulo $3$), it makes sense to consider $t^3$ as the natural variable for this problem. Note that
\[
  \PIK^3=t^3(2+\PIK^3)^3,
\]
so that all series in $t^3$ can be written as series in $\PIK^3$. We then define the series $\PIIK$ and $\PIIIK$ as described in Table~\ref{tab:K}. Both are \fps\ in $t^3$, and we have the following  tower of extensions:
\[
  \qs(t^3) \stackrel{3}{\hookrightarrow} \qs(\PIK^3) \stackrel{2}{\hookrightarrow} \qs(\PIIK) \stackrel{2}{\hookrightarrow} \qs(\PIIIK).
  \]

\begin{table}[ht]
  \centering
  \begin{tabular}{|c|c|c|c|}
    \hline
    series & defining equation & expression & degree over $\qs(t)$\\
    \hline
    $\PIK$ & $\PIK=t(2+\PIK^3)$ &$-$ & 3\\
    $\PIIK$ & $4\PIIK (1-\PIIK)=\PIK^3, \quad \PIIK(0)=0$& $\frac{1-\sqrt{1-\PIK^3}}2$ & 6\\
    $\PIIIK$ & $2\PIIIK=\PIIK(1+\PIIIK^2)$&  $\frac{1-\sqrt{1-\PIIK^2}}{\PIIK}$ & 12\\ \hline
  \end{tabular}
 \vskip 2mm \caption{Relevant extensions of $\qs(t)$ for Kreweras (and reverse Kreweras) steps.}
  \label{tab:K}
\end{table}

\begin{cor}\label{cor:excursions-K}
Let us define $\PIK, \PIIK $ and $\PIIIK$ as above.   For any $(i,j) \in \Cc$, the \gf\ $C_{i,j}$ of walks avoiding the negative quadrant and ending at $(i,j)$ is algebraic of degree (at most) $12$, and belongs to $\qs(t, \PIIIK)$. More precisely, $t^{i+j}C_{i,j}$ belongs to $\qs(\PIIIK)$.  If $i=j$ or $i=j\pm1$ with $i,j\ge 0$, then $t^{i+j}C_{i,j}$ has degree $6$ at most and belongs to $\qs(\PIIK)$.  
\end{cor}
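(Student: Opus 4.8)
The plan is to deduce the statement from the explicit algebraic expressions of Theorems~\ref{thm:K-U} and~\ref{thm:K-D}, by extracting coefficients and tracking the algebraic extensions involved. First I would record the periodicity of the model: each Kreweras step changes $i+j$ by $+2$ or by $-1$, so an $\cS$-walk ending at $(i,j)$ has length congruent to $-(i+j)$ modulo $3$, and hence $t^{i+j}C_{i,j}$ is a power series in $t^3$. Since $\PIK^3=t^3(2+\PIK^3)^3$, the substitution $t^3\mapsto\PIK^3$ is an invertible transformation of power series, so one may use $\qs(t^3)$ and $\qs(\PIK^3)$ interchangeably as ground field for series in $t^3$; moreover $t=\PIK/(2+\PIK^3)$, and $1-\PIK^3=(1-2\PIIK)^2$ so that $\sqrt{1-\PIK^3}=1-2\PIIK\in\qs(\PIIK)$. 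It then suffices to prove that $t^{i+j}C_{i,j}\in\qs(\PIIIK)$ for every $(i,j)\in\Cc$, and that it lies in the smaller field $\qs(\PIIK)$ when $i=j$ or $i=j\pm1$ with $i,j\ge0$: the assertions on $C_{i,j}$ and on its degree follow, since $\qs(t,\PIIIK)$ and $\qs(t,\PIIK)$ have degrees $12$ and $6$ over $\qs(t)$.

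For the bound near the diagonal I would start from the explicit expression of $D(x)=\sum_{k\ge0}C_{k,k}x^k$ in Theorem~\ref{thm:K-D} and perform the change of variable $x=t^2u$, so that $\tilde D(u):=D(t^2u)=\sum_k\bigl(t^{2k}C_{k,k}\bigr)u^k$. Then $\Delta(t^2u)=(1-t^3u)^2-4/u$, the argument of the inner radical becomes a quadratic polynomial in $u$ over $\qs(\PIK^3)$, and --- the key point --- using $t=\PIK/(2+\PIK^3)$ all powers of $\PIK$ recombine into powers of $\PIK^3$, while $\sqrt{1-\PIK^3}=1-2\PIIK$; one finds that $t^3u\,\tilde D(u)+1$ equals the square root of a power series in $u$ over $\qs(\PIIK)$ whose constant term is $1$. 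Being therefore itself a power series over $\qs(\PIIK)$, $\tilde D(u)$ has all of its coefficients $t^{2k}C_{k,k}$ in $\qs(\PIIK)$. The cases $i=j\pm1$ are handled identically, starting from the (analogously derived) expression for $\sum_k C_{k+1,k}x^k$ and using the $x/y$-symmetry $C_{k,k+1}=C_{k+1,k}$.

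For the general bound I would use the last-step decomposition. Writing $D_m(x)=\sum_k C_{k+m,k}x^k$, a walk's final step yields, for $m\ge1$, the recursion $t\,D_{m+1}(x)=(1-tx)D_m(x)-t\bx D_{m-1}(x)+t\bx\,C_{m-1,0}-t\,C_{m-1,-1}$, together with analogous recursions along the lines $j=-\ell$ and $i=-\ell$; after the periodicity substitution these become $\qs(\PIK^3)$-linear relations. Since $D_0,D_1$ are in hand (with coefficients in $\qs(\PIIK)$) and $C_{m-1,0}=[x^0]D_{m-1}$, iterating these relations expresses every $C_{i,j}$ in terms of the coefficients of the single series $C_-$ of Theorem~\ref{thm:K-U}, so it remains to show that the normalised coefficients $[v^k]\tilde C_-(v)$, where $\tilde C_-(v):=C_-(v/t)=\sum_k\bigl(t^{-k}C_{-k,0}\bigr)v^k$, lie in $\qs(\PIIIK)$. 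Substituting $x=v/t$ in the expression of Theorem~\ref{thm:K-U} and simplifying with $t=\PIK/(2+\PIK^3)$, one finds again that all powers of $\PIK$ collapse to powers of $\PIK^3$, that the inner radical becomes $\sqrt{P(v/t)}$ with $P(v/t)$ a quadratic in $v$ over $\qs(\PIK^3)$, and that
\[
\tilde C_-(v)=R_0(v)-\frac{R_1(v)}{\sqrt2}\,\sqrt{H(v)},\qquad R_0,R_1\in\qs(\PIK^3)[v],
\]
where $H(v)$ is a power series in $v$ over $\qs(\PIIK)$ --- its apparent pole at $v=0$ cancelling --- with constant term $H(0)=2(1+\PIIK)^2\,(1-\PIIK^2)$. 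Hence $\sqrt{H(v)}/\sqrt{H(0)}$ is a power series over $\qs(\PIIK)$ while $\sqrt{H(0)}/\sqrt2=(1+\PIIK)\sqrt{1-\PIIK^2}$; the $\sqrt2$'s cancel, and every coefficient of $\tilde C_-$ lies in $\qs(\PIIK)\bigl(\sqrt{1-\PIIK^2}\bigr)=\qs(\PIIIK)$. Feeding this back into the recursions gives $t^{i+j}C_{i,j}\in\qs(\PIIIK)$ for all $(i,j)\in\Cc$.

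The main obstacle is the collection of algebraic verifications underlying these two normal forms: that after the rescaling $x=t^2u$ (resp.\ $x=v/t$) every occurrence of $\PIK$ collapses to a power of $\PIK^3$; that the apparent simple poles at $0$ cancel, so that the series under the outer square root is a genuine power series over $\qs(\PIIK)$; and --- for the general bound --- that $\sqrt{H(0)/2}$ is $(1+\PIIK)\sqrt{1-\PIIK^2}$, i.e.\ generates exactly the quadratic extension $\qs(\PIIK)\subset\qs(\PIIIK)$ and not a larger one. A priori the nested-radical structure of the degree-$24$ series $C_-$ could produce a much bigger field; that it does not is a reflection of the precise arithmetic of the Kreweras model, in practice of the decoupling of Section~\ref{sec:dec} and of the identification in Section~\ref{sec:K} of $C(x,y)$ with the degree-$6$ quadrant series $\tQ$ of the companion model $\tS$. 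The near-diagonal estimate is genuinely easier only because there the relevant square root has a rational constant term, which keeps it --- and hence the coefficients $t^{2k}C_{k,k}$ and $t^{2k+1}C_{k+1,k}$ --- inside $\qs(\PIIK)$.
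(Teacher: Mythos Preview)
Your approach is essentially the paper's: rescale $x$ so that the expressions of Theorems~\ref{thm:K-U} and~\ref{thm:K-D} become power series in the new variable with coefficients in $\qs(\PIIK)$, read off the field of the coefficients of $D$ and $C_-$ from the constant term under the outer square root (a rational square for $D$, giving $\qs(\PIIK)$; a $\qs(\PIIK)$-multiple of $1-\PIIK^2$ for $C_-$, giving $\qs(\PIIIK)$), and finally propagate by the step recurrence. The paper uses the single substitution $x\mapsto x/\PIK$ in both cases (rather than your $t^2u$ and $v/t$), which makes the collapse to powers of $\PIK^3$ immediate; it handles $i=j\pm1$ via the one-line identity $C_{i,i}-tC_{i-1,i-1}=2tC_{i,i+1}+\mathbbm 1_{i=0}$ rather than an ``analogously derived'' series; and for general $(i,j)$ it runs a direct pointwise induction on $j$ (assuming $j\ge i$) using $C_{i,j-1}=t(C_{i-1,j-2}+C_{i+1,j-1}+C_{i,j})$, which is tidier than your $D_m$-recursions.
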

For instance, we have
    \allowdisplaybreaks
\begin{align}
\label{C00-K}
  C_{0,0}&={\frac {  \left(1+2\,\PIIK- 2\,{\PIIK}^{2} \right) \left( 3\,{\PIIK}^{3}-2\,{\PIIK}^{2}-4\,\PIIK+4 \right)
     }{4(1-\PIIK)}},
\\
  tC_{0,1} = \frac {C_{0,0}-1} {2} & =
  {\frac {\PIIK \left( -6 {\PIIK}^{4}+10 {\PIIK}^{3}+7 {\PIIK}^{2}-18 \PIIK+8
      \right) }{8(1-\PIIK)}}, \nonumber
  \\
t^2  C_{1,1}&= {\frac { \PIIK\left( 35 {\PIIK}^{6}-100 {\PIIK}^{5}+20 {\PIIK}^{4}+144 {\PIIK}^{3
}-96 {\PIIK}^{2}-32 \PIIK+32 \right)  
}{ 64\left(1- \PIIK \right)   \left(1+2 \PIIK- 2 {\PIIK}^{2} \right) }}, \nonumber
\\ \label{Cm10-K}
  t^2 C_{-1,0}&= {\frac {\PIIIK \left( {\PIIIK}^{3}+3 {\PIIIK}^{2}-3 \PIIIK+1 \right) }{ {\PIIIK}^{4}+4
       {\PIIIK}^{3}-6 {\PIIIK}^{2}+4 \PIIIK+1 }}.
  \end{align}
We can derive from such results asymptotic estimates for the number of walks ending at $(i,j)$ via \emm singularity analysis,~\cite[Ch.~VII.7]{flajolet-sedgewick}. For instance,
\[
    c_{0,0}(3m) \sim -\frac{27\sqrt 3}{4\Gamma(-3/4)} 3^{3m} (3m)^{-7/4}.
\]
Note that this asymptotic behaviour was already known, but only up to a constant~\cite{mustapha-3quadrant}. More generally, we obtain an explicit description of the \emm discrete positive harmonic function, associated with $\cS$-walks confined to the three-quadrant cone $\cC$.

\begin{cor}\label{cor:harmonic-K}
For $(i,j)\in \cC$, there exists a positive constant $H_{i,j}$ such that, as $n\rightarrow \infty$ with $n+i+j \equiv 0 \! \mod 3$,
\beq\label{cij-est}
  c_{i,j}(n) \sim -\frac{H_{i,j}}{\Gamma(-3/4)}\, 3^n n^{-7/4}.
\eeq
The function $H$ is \emm discrete harmonic, for the step set $\{\nearrow, \leftarrow, \downarrow\}$. That is,
\beq\label{harmonic}
  H_{i,j} = \frac 1 3 \left( H_{i-1,j-1}+ H_{i+1,j} + H_{i,j+1}\right),
\eeq
where  $H_{i,j}=0$ if $(i,j) \not \in \cC$. By symmetry, $H_{i,j}=H_{j,i}$. The \gf\
\beq\label{H-def}
  \Hc(x,y):=\sum_{j\ge 0, i\le j} H_{i,j} x^{j-i} y^j,
\eeq
which is a \fps\ in $x$ and $y$, is algebraic of degree $16$, given by
\beq\label{H-eq}
  \left(1+xy^2+x^2y-3xy\right)  \Hc(x,y)=  \Hc_-(x)+\frac 1 2 \left(2+xy^2-3xy\right) \Hc_d(y),
\eeq
where
\beq\label{Hminus-K}
  \Hc_-(x):=\sum_{i>0} H_{-i,0} x^i=\frac {9x}{2} \sqrt{ \frac{1+2x}{1-x} \sqrt{\frac {4-x}{1-x}}+2}
\eeq
and
\beq\label{Hd-K}
  \Hc_d(y):=\Hc(0,y)=\sum_{i\ge 0} H_{i,i}y^i = \frac 9  { (1-y) \sqrt{y(4-y)}}\sqrt{ \frac{1+2y}{1-y} \sqrt{\frac {4-y}{1-y}}-2}\,.
\eeq
\end{cor}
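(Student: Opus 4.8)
The plan is to deduce everything from the explicit algebraic results already in hand --- Theorem~\ref{thm:K-U} together with its boundary series $C_-$, Theorem~\ref{thm:K-D} with its diagonal series $D$, and Corollary~\ref{cor:excursions-K} --- by singularity analysis at the dominant singularity $t=1/3$ of Kreweras walks.

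\emph{Step 1 (the estimate~\eqref{cij-est}).} Fix $(i,j)\in\Cc$ and let $r\in\{0,1,2\}$ be the residue of $-(i+j)$ modulo $3$, so that $C_{i,j}(t)=t^{r}\psi(t^3)$ for a power series $\psi$; by Corollary~\ref{cor:excursions-K}, $\psi$ is a rational function of $\PIIIK$ (or of $\PIIK$ in the special cases), regular at the relevant critical value. From~\eqref{PIK-k-def} one gets the Puiseux expansion $\PIK=1-(1-3t)^{1/2}+\cdots$, hence $1-\PIK^3=3(1-3t)^{1/2}+\cdots$, hence $\PIIK=\tfrac12-\tfrac{\sqrt3}{2}(1-3t)^{1/4}+\cdots$ and $\PIIIK=(2-\sqrt3)+\cdots$; in each case the generator minus its critical value is $(1-3t)^{1/4}$ times a series with nonzero constant term. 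Substituting into the Taylor expansion of $\psi$ shows that the singular expansion of $C_{i,j}$ at $t=1/3$ only involves powers $(1-3t)^{m/4}$, $m\ge0$, and that the coefficients of its two lowest fractional powers $(1-3t)^{1/4}$ and $(1-3t)^{1/2}$ are proportional to the first and second derivatives of this rational function at the critical value. \emph{The one non-routine point is to prove that these two coefficients vanish:} this follows either from Mustapha's estimate $c_{i,j}(n)=\Theta(3^n n^{-7/4})$~\cite{mustapha-3quadrant} (which excludes the larger contributions $n^{-5/4}$ and $n^{-3/2}$), or else directly from the explicit formulas --- it is the statement, checked for instance on $C_{0,0}$ via~\eqref{C00-K}, that the pertinent rational function of $\PIIK$ or $\PIIIK$ has vanishing first and second derivatives at the critical point. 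The dominant singular term of $C_{i,j}$ is then $\kappa_{i,j}(1-3t)^{3/4}$, and singularity analysis~\cite[Ch.~VII]{flajolet-sedgewick} (carried out on $\psi$, the periodicity then contributing a factor $3$) gives~\eqref{cij-est} with $H_{i,j}:=-3\,[(1-3t)^{3/4}]\,C_{i,j}(t)$. Positivity of $H_{i,j}$ is clear since $c_{i,j}(n)>0$ for $n\equiv r\pmod 3$ while $\Gamma(-3/4)<0$.

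\emph{Step 2 (harmonicity, symmetry, power-series nature of $\Hc$).} For $(i,j)\in\Cc$ the one-step recursion reads $c_{i,j}(n+1)=\sum_{s\in\cS}c_{(i,j)-s}(n)$, with the convention $c_{k,\ell}=0$ off $\Cc$. Since the three points $(i,j)-s$ all have coordinate sum $\equiv i+j+1\pmod 3$, the residue classes involved are compatible, and comparing the leading asymptotics given by~\eqref{cij-est} on both sides yields $3H_{i,j}=H_{i-1,j-1}+H_{i+1,j}+H_{i,j+1}$, i.e.\ \eqref{harmonic}, with the boundary conditions automatic. The identity $H_{i,j}=H_{j,i}$ follows from the invariance of $\cS$ and of $\Cc$ under $(i,j)\mapsto(j,i)$, so that $c_{i,j}(n)=c_{j,i}(n)$. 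Finally $\Hc(x,y)$ is a well-defined formal power series in $x,y$: the coefficient of $x^ay^b$ is exactly $H_{b-a,b}$.

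\emph{Step 3 (the functional equation~\eqref{H-eq}).} Apply $[(1-3t)^{3/4}]$ to the identity of Theorem~\ref{thm:K-U}. The kernel $1-t(\bx+\by+xy)$ is a polynomial in $t$, hence is simply evaluated at $t=1/3$, and the constant $1$ on the right is analytic; using Step~1 and the periodicity factor $3$, and writing $\Hc_-(x):=-3\,[(1-3t)^{3/4}]\,C_-(x)=\sum_{i>0}H_{-i,0}x^i$, one obtains a linear relation between $\sum_{(i,j)\in\Cc}H_{i,j}x^iy^j$, $\Hc_-(\bx)$ and $\Hc_-(\by)$. Performing the change of variables $x\mapsto\bx$, $y\mapsto xy$ --- which, using $H_{i,j}=H_{j,i}$, re-expresses $\sum_{(i,j)\in\Cc}H_{i,j}x^iy^j$ in terms of $\Hc(x,y)$, $\Hc(y,x)$ and the diagonal part $\Hc_d$ --- and combining this with the symmetric identity and with $\Hc(0,y)=\Hc_d(y)$, one arrives at~\eqref{H-eq}; note that $1+xy^2+x^2y-3xy$ is $3xy$ times minus the companion kernel $1-t\tS(x,y)$ evaluated at $t=1/3$, which is the mechanism behind the link with quadrant $\tS$-walks. \emph{The bookkeeping of which monomials fall outside $\Cc$ under this change of variables is the main technical obstacle.} Since $1+xy^2+x^2y-3xy$ has constant term $1$, \eqref{H-eq} determines $\Hc$ uniquely as a formal power series.

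\emph{Step 4 (the explicit formulas~\eqref{Hminus-K},~\eqref{Hd-K}, and the degree).} Write $C_-(x;t)=c_-(x)+\bigl(-\tfrac13\Hc_-(x)\bigr)(1-3t)^{3/4}+\cdots$ near $t=1/3$, with $c_-(x)=C_-(x;1/3)$, and substitute into the explicit identity for $C_-$ in Theorem~\ref{thm:K-U}. On its right-hand side every term other than $(1-\PIK^3)^{3/2}/\PIK^2$ is a power series in $(1-3t)^{1/2}$, hence contributes nothing to the coefficient of $(1-3t)^{3/4}$, while $(1-\PIK^3)^{3/2}/\PIK^2=3\sqrt3\,(1-3t)^{3/4}+\cdots$; matching the coefficients of $(1-3t)^{3/4}$ gives $\Hc_-(x)=-\frac{27\sqrt3\,x}{4F_0(x)}$ with $F_0(x):=\frac{c_-(x)}{3x}+x-\frac32$, while setting $t=1/3$ (so $\PIK=1$) in the same identity gives $2F_0(x)^2=-\frac{(1-x)^3}{x}+\frac{(1-x)(1+2x)}{2x}\sqrt{(4-x)(1-x)}$. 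Eliminating $F_0$ and rationalising, with the help of $\frac{(1+2x)^2(4-x)}{4}-(1-x)^3=\frac{27x}{4}$, produces~\eqref{Hminus-K}. The same argument applied to Theorem~\ref{thm:K-D}, using $\Delta(x)|_{t=1/3}=-\frac{(1-x)^2(4-x)}{9x}$ (obtained from $x(3-x)^2-4=(1-x)^2(x-4)$), produces~\eqref{Hd-K}. For the degree: $\sqrt{(4-x)/(1-x)}$ has degree $2$ over $\qs(x)$ and $\frac{1+2x}{1-x}\sqrt{(4-x)/(1-x)}+2$ is not a square in $\qs\bigl(x,\sqrt{(4-x)/(1-x)}\bigr)$ (a square root of it would force $\sqrt{-3x/(1-x)^3}$ into $\qs(x)$), so $\Hc_-(x)$ and, likewise, $\Hc_d(y)$ have degree $4$ over $\qs(x)$, resp.\ $\qs(y)$; thus $\Hc\in\qs\bigl(x,y,\Hc_-(x),\Hc_d(y)\bigr)$, an extension of $\qs(x,y)$ of degree $16$, and --- specialising~\eqref{H-eq} at $x=0$ recovers $\Hc_d$ and at $y=0$ recovers $\Hc_-$ --- one checks (e.g.\ in the accompanying \Maple\ session) that $\Hc$ generates it, so that $\Hc$ is algebraic of degree exactly $16$.
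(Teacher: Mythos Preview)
Your argument is correct provided you take the Mustapha shortcut in Step~1, but it departs from the paper's proof in two places, and one of your offered alternatives has a genuine gap.

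\textbf{Step~1.} The paper does not invoke~\cite{mustapha-3quadrant}. It establishes~\eqref{cij-est} first only on the diagonal and on the negative $x$-axis, by singularity analysis (in the variable $t^3$, at $1/27$) of the \emph{squares} $J(y/t)$ and $I(x/t)$. The point is that in the explicit formulas of Theorems~\ref{thm:K-U} and~\ref{thm:K-D} the only source of non--half-integer powers of $1-27t^3$ is the term $(1-\PIK^3)^{3/2}/\PIK^2$, so the leading singular exponent $3/4$ appears automatically and no vanishing of lower-order coefficients has to be checked. The estimate is then propagated to all $(i,j)$ by the step recursion (as in the proof of Corollary~\ref{cor:excursions-K}), together with the combinatorial lower bound $c_{i,j}(3m+k)\ge c_{0,0}(3m)$ (concatenation of a fixed walk to $(i,j)$ with an excursion translated there), which guarantees each $H_{i,j}>0$. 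Your second alternative to Mustapha --- verifying ``for instance on $C_{0,0}$'' that the rational function of $\PIIK$ or $\PIIIK$ has vanishing first and second derivatives at the critical point --- is only a single case and does not establish the claim for general $(i,j)$; that branch of Step~1 is incomplete.

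\textbf{Step~3.} The paper derives~\eqref{H-eq} directly from the harmonic relation~\eqref{harmonic} by a short sum manipulation on $\Hc(x,y)=\sum_{j\ge 0,\,i\le j}H_{i,j}x^{j-i}y^j$ (splitting into three shifted sums according to the three steps, and using $3H_{i,i}=H_{i-1,i-1}+2H_{i,i+1}$ for the near-diagonal term). This sidesteps entirely the change of variables $x\mapsto\bx$, $y\mapsto xy$ and the ``bookkeeping obstacle'' you flag. Your route --- extracting the $(1-3t)^{3/4}$-coefficient from the kernel equation and then splitting --- works in principle, but it amounts to re-deriving a harmonic analogue of Lemma~\ref{lem:func_eq}; the paper does something like this only \emph{a posteriori}, in Remark~\ref{rem:invariants-harmonic}, after~\eqref{H-eq} has already been obtained.

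\textbf{Step~4} is essentially the paper's computation, phrased at $t=1/3$ rather than at $t^3=1/27$.
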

\hyphenation{Tro-ti-gnon}
A first expression of the harmonic function, implying its algebraicity, was  given by Trotignon~\cite{trotignon-harmonic}. It is, however,  less explicit.  The connection between walks in $\cC$ with steps in $\tS$ and walks in $\Qc$ with steps is the companion model $\tS$, already observed at the level of \gfs\ (see Theorem~\ref{thm:K-U} and~\eqref{Qexpr-RK}), is also visible at the level of harmonic functions. Indeed, the number of quadrant $\tS$-walks of length~$n$ ending at $(i,j)\in \Qc$ is
\beq\label{RK-quadrant-asympt}
\tilde q_{i,j}(n) \sim \frac{h_{i,j} }{\Gamma(-3/2)} 3^n n^{-5/2}
\eeq
(for $n\equiv i+j  $ mod $3$) with
\beq\label{harmonic-RK-quadrant}
  \sum_{i\ge0} h_{i,0} x^i= \frac 9 4 \left( \frac{1+2x}{1-x} \sqrt{\frac{4-x}{1-x}}+2\right).
\eeq
This can be derived from~\eqref{Qexpr-RK} by singularity analysis. See also~\cite{raschel-harmonic} for a more general result that applies to any quadrant model with small steps and zero drift. We explain in Remark~\ref{rem:invariants-harmonic}  how one can predict, under a natural assumption, the relation between the $\cS$-harmonic function $H_{i,j}$ in~$\cC$ and the $\tS$-harmonic function $h_{i,j}$ in~$\Qc$. Note that~\eqref{RK-quadrant-asympt} describes also the asymptotic number of walks with steps in the original model $\cS$ contained in the non-positive quadrant, joining $(0,0)$ to $(-i,-j)$.

\medskip
  
Let us finally consider the total number  of $n$-step walks avoiding the negative quadrant, which we denote by $c_n$.

\begin{cor}\label{cor:all-K}
  The \gf\ $C(1,1)$ counting all walks confined to the three-quadrant cone $\Cc$ is algebraic of degree $24$, and given by:
  \begin{align} \label{C11-K}
    \frac 1 2 (1-3t)^2\left( C(1,1)+\frac 1 t\right)^2
&=  2\left(t C_-(1)+ 1 - \frac 1 {2t}\right)^2
\\ &   = \frac{ (1-\PIK^3)^{3/2}}{\PIK^2}+
    (1-\PIK)^2 \left(\frac 1 {\PIK^2}-1\right)
  + \left(1+\PIK-\frac{2}\PIK\right) \sqrt{1-\PIK\frac{4+\PIK^3}4 +\frac{ \PIK^2} 4 }, \nonumber
\end{align}
where $\PIK$ is the series defined by~\eqref{PIK-k-def}. The series $C(1,1)$ has radius of convergence $1/3$, with dominant singularities at $\zeta/3$, where $\zeta$ is any cubic root of unity.  For the first order asymptotics of the coefficients $c_n$, only the singularity at $1/3$ contributes, and
  \beq\label{cn-est-K}
    c_n \sim \frac{3^{3/4}\sqrt{2-\sqrt 2}}{\Gamma(5/8)} 3^{n} n^{-3/8}.
\eeq
\end{cor}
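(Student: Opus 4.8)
The plan is to obtain everything from Theorem~\ref{thm:K-U} by the specialization $x=y=1$, followed by a standard singularity analysis. First I would set $x=y=1$ in the functional equation of Theorem~\ref{thm:K-U}: since $t(\bx+\by+xy)$ becomes $3t$ and both $C_-(\bx)$ and $C_-(\by)$ become $C_-(1)$, this gives the \emph{linear} relation
\[
  (1-3t)\,C(1,1)=1-2t\,C_-(1)
\]
between formal power series in $t$. Rewriting it as $(1-3t)\bigl(C(1,1)+\tfrac1t\bigr)=-2\bigl(tC_-(1)+1-\tfrac1{2t}\bigr)$, an identity of Laurent series, and squaring, one gets the first equality of~\eqref{C11-K}; the second is Theorem~\ref{thm:K-U} evaluated at $x=1$, the substitution being transparent (it is also useful to record that the radicand factors as $1-\PIK\frac{4+\PIK^3}4+\frac{\PIK^2}4=\frac14\bigl((2-\PIK)^2-\PIK^4\bigr)$). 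The sign ambiguity created by squaring is resolved by the fact that $C(1,1)+\tfrac1t$ is a Laurent series with leading term $\tfrac1t$ (equivalently, by positivity of the coefficients of $C(1,1)$), so~\eqref{C11-K} together with this normalisation characterises $C(1,1)$.

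For the algebraicity and the exact degree I would run through the tower of field extensions visible in~\eqref{C11-K}. By~\eqref{PIK-k-def}, $\qs(\PIK)$ has degree $3$ over $\qs(t)$; by Table~\ref{tab:K} one has $\sqrt{1-\PIK^3}=1-2\PIIK$, so the first radical generates $\qs(\PIIK)$, of degree $6$; adjoining the remaining radical $\sqrt{(2-\PIK)^2-\PIK^4}$ gives a further quadratic extension $\GL$ of degree $12$; and~\eqref{C11-K} exhibits $tC_-(1)+1-\tfrac1{2t}$, hence $C(1,1)$ (a Möbius transform of $C_-(1)$ over $\qs(t)$), inside a quadratic extension of $\GL$. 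Thus $\deg C(1,1)\le24$, and it equals $24$ provided none of these four successive quadratic steps collapses; the cleanest verification is to compute the minimal polynomial of $C(1,1)$ over $\qs(t)$ explicitly in the accompanying \Maple\ session. This bookkeeping with nested square roots is the part I expect to be the most delicate.

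For the analytic statements I would use singularity analysis~\cite[Ch.~VI--VII]{flajolet-sedgewick}. The bound $c_n\le3^n$ gives radius of convergence $\ge1/3$, while $c_n\ge c_{0,0}(n)$ together with Corollary~\ref{cor:excursions-K} (and the ensuing estimate for $c_{0,0}$) gives radius $\le1/3$; hence the radius is exactly $1/3$ and $C(1,1)$ is analytic on $|t|<1/3$. Being algebraic, it has only finitely many singularities on the circle $|t|=1/3$, and using~\eqref{C11-K} one locates them at the points $\zeta/3$ with $\zeta^3=1$: the defining polynomial $tu^3-u+2t$ of $\PIK$ has discriminant $4t(1-27t^3)$, so the branch points of $\PIK$ are exactly the $\zeta/3$, while the radicands of~\eqref{C11-K} vanish only when $\PIK^3=1$ (forcing $27t^3=1$) or when $(2-\PIK)^2=\PIK^4$ (forcing $\PIK\in\{1,-2\}$, both at $t=1/3$, or $\PIK^2-\PIK+2=0$, forcing $t=-1$).

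Finally, the local expansions. Writing $\PIK=t\phi(\PIK)$ with $\phi(u)=2+u^3$, the smooth-to-singular transition occurs at the value $\rho=1$ where $\phi(\rho)=\rho\phi'(\rho)$, so $\PIK(1/3)=1$ and $\PIK=1-\delta+O(\delta^2)$ with $\delta:=\sqrt{1-3t}$. Substituting into~\eqref{C11-K}, the first and third summands on the right are each $\sim$ (constant)$\cdot\delta^{3/2}$ and the middle one is $O(\delta^3)$, so the right-hand side equals $\kappa\,\delta^{3/2}(1+o(1))$ with $\kappa=\tfrac{3\sqrt3}2(2-\sqrt2)$; since $(1-3t)^2=\delta^4$, this gives $\bigl(C(1,1)+\tfrac1t\bigr)^2\sim2\kappa\,\delta^{-5/2}$, hence $C(1,1)\sim3^{3/4}\sqrt{2-\sqrt2}\,(1-3t)^{-5/8}$ near $t=1/3$ (the sign forced by $C(1,1)\to+\infty$). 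At $t=\zeta/3$ with $\zeta\ne1$ only the radicand $1-\PIK^3$ degenerates, and the analogous expansion shows $C(1,1)$ has there merely a bounded, square-root-type singularity, contributing $O(3^n n^{-3/2})$ --- negligible against the main term. The transfer theorem then yields $c_n\sim\frac{3^{3/4}\sqrt{2-\sqrt2}}{\Gamma(5/8)}\,3^n n^{-3/8}$, which is~\eqref{cn-est-K}. The point requiring the most care is precisely this Puiseux expansion at $t=1/3$, where \emph{both} radicands of~\eqref{C11-K} degenerate simultaneously, so one must expand to sufficient order and track the cancellations to pin down the constant $3^{3/4}\sqrt{2-\sqrt2}$.
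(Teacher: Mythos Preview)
Your proof is correct and follows the same approach as the paper: specialize Theorem~\ref{thm:K-U} at $x=y=1$ to obtain~\eqref{C11-K}, confirm the degree $24$ by elimination, and carry out a singularity analysis of the right-hand side at the three points $\zeta/3$. The paper's computation (done in \Maple) actually finds the leading singular exponent of $(1-3t)(C(1,1)+1/t)$ at the non-real $\zeta/3$ to be $3/4$ rather than your $1/2$ --- note that the $s^{1/2}$ behaviour comes from the branch point of $\PIK$ itself, not from the radicand $1-\PIK^3$ vanishing --- but either exponent yields a contribution that is $o(3^n n^{-3/8})$, so your conclusion is unaffected.
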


%%%%%%%%%%%%%%%%%%%%%%%%%%%%%%%%%%%%%%%%%%%%%%%%%%%%%%%%%%%%%%%%%%%%%%%
\section{General tools}
\label{sec:tools}
%%%%%%%%%%%%%%%%%%%%%%%%%%%%%%%%%%%%%%%%%%%%%%%%%%%%%%%%%%%%%%%%%%%%%%%
We first introduce some basic notation.
For a ring $R$, we denote by $R[t]$
 (resp.~$R[[t]]$) the ring of polynomials (resp. formal power series) 
in $t$ with coefficients in $R$. If $R$ is a field, then $R(t)$ stands for the field of rational functions in~$t$, and  $R((t))$ for the
field of Laurent series in $x$, that is, series of the form
$ 
\sum_{n \ge n_0} a_n t^n,
$
with $n_0\in \zs$ and $a_n\in R$.
This notation is generalized to several variables.
For instance, the \gf\  $C(x,y;t)$ of walks confined to~$\cC$ belongs to $\qs[x,\bx,y,\by ][[t]]$, where we denote $\bx=1/x$ and $\by=1/y$. We often omit in our notation the dependency in $t$ of our series, writing for instance $C(x,y)$ instead of $C(x,y;t)$. For a series $F(x,y) \in \qs[x, \bx, y, \by][[t]]$ and two integers
$i$ and $j$, we denote by $F_{i,j}$ the coefficient of $x^i y^j$ in $F(x,y)$. 
This is a series in $\qs[[t]]$. Similarly, if  $F(x) \in \qs[x, \bx][[t]]$, then  $F_{i}$  stands for the coefficient of $x^i$ in $F(x)$.

\medskip
We consider  a set of ``small'' steps $\cS$ in $\zs^2$, meaning that $\cS\subset  \{-1, 0, 1\}^2\setminus\{(0,0)\}$.
We define the \emm step polynomial, $S(x,y)$ by
\[
  S(x,y)=\sum_{(i,j)\in \cS} x^i y^j,
\]
and six (Laurent) polynomials $H_0$, $H_-$, $H_+$, $V_0$, $V_-$ and $V_+$ by
\beq\label{S-split-H}
S(x,y)= \by H_-(x)+ H_0(x)+yH_+(x)
 = \bx V_-(y) + V_0(y) + xV_+(y).
\eeq
The notation $H$ (resp. $V$) recalls that these polynomials record horizontal (resp. vertical) moves: for instance, $H_-(x)$ describes the horizontal displacements of steps that move down. The group associated to $\cS$ can be easily defined at this stage, but we do not use it in this paper, and simply refer the interested reader to, e.g.,~\cite{BMM-10}.

%===============================================================
\subsection{Functional equations for walks avoiding a quadrant}
\label{sec:eqfunc}
% ==============================================================

In this subsection, we first restrict our attention to  step sets~$\cS$ satisfying the following two assumptions:
\beq\label{conds}\begin{aligned}
   \bullet & \ \cS \text{ is  symmetric in the first diagonal,} \hskip 70mm \ \\
   \bullet &\  \cS\text{ does not contain the steps } (1,-1) \text{ nor } (-1,1).
 \end{aligned}
 \eeq
This corresponds to the  models shown in Table~\ref{tab:sym}, apart
from the diagonal model. As we shall explain, the latter model should still be considered in the same class, as the associated \gf\ satisfies the same kind of functional equation.
%By symmetry,
%\[  S(x,y)= \bx H_-(y)+ H_0(y)+xH_+(y).\]

It is easy to write a functional equation defining the series $C(x,y)$, based in a step-by-step construction of the walks:
\[ %\beq\label{eqfunc-C}
 C(x,y)= 1 +tS(x,y)C(x,y) -t\by H_-(x) C_{-}(\bx) -t\bx H_-(y)C_{-}(\by) -t\bx\by
 C_{0,0} \mathbbm 1_ {(-1,-1) \in \cS},
\] %eeq
where  the series $C_{-}(\bx)$ counts walks ending on  the negative $x$-axis:
\beq  \label{Chv}
 C_{-}(\bx) =  \sum_{i<0, n \geq 0} c_{i,0}(n)x^i t^n.
 %\in \bx   \qs[\bx][[t]].
\eeq
In the above functional equation, each term with a minus sign  corresponds to a forbidden move: moving down from the negative $x$-axis, moving left from the negative $y$-axis, or performing a South-West step from the point $(0,0)$. Observe that we have exploited the $x/y$-symmetry.
%  and that we forbid steps from $(-1,0)$ to $(0,-1)$, and vice-versa, even though their endpoints lie in $\Cc$.
The above equation rewrites as:
 \beq\label{eqfunc-gen}
 K(x,y)C(x,y)= 1 -t\by H_-(x) C_{-}(\bx) -t\bx H_-(y)C_{-}(\by) -t\bx\by
 C_{0,0} \mathbbm 1_{(-1,-1)\in \cS},
 \eeq
 where $K(x,y):=1-tS(x,y)$ is the \emm kernel, of the equation.

 \begin{figure}[t!]
   \centering
 \scalebox{0.95}{\input{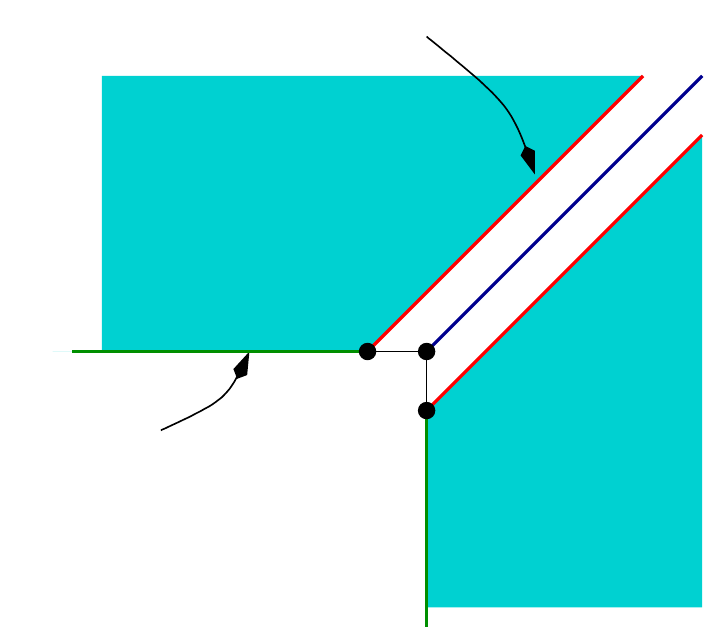_t}}   
   \caption{The series $D(xy)$ counts walks ending on the diagonal, and $\bx U(\bx, xy)$ those ending above the diagonal.}
   \label{fig:DU-def}
 \end{figure}

 Following~\cite{RaTr-19}, we will now work in convex cones by splitting the three-quadrant cone in two symmetric halves. The rest of this subsection simply  rephrases Section~2.2 of~\cite{RaTr-19}, with a slightly different presentation. (Another strategy, namely splitting in three quadrants, is applied in~\cite{BM-three-quadrants,mbm-wallner-conf,mbm-wallner,elvey-price-trois-quarts}.)  We write
 \beq\label{C-split}
  C(x,y)=\bx U(\bx, xy)+ D(xy)+ \by U(\by,xy),
\eeq
with
$D(y) \in \qs[y][[t]]$ and $U(x,y) \in \qs [x,y][[t]]$. Observe that these properties, together with the above identity,  define the series $D$ and $U$ uniquely: the series $\bx U(\bx, xy)$ (resp. $D(xy)$, $\by U(\by,xy)$) counts walks ending strictly above (resp. on, strictly below) the diagonal; see Figure~\ref{fig:DU-def}. We will now write functional equations for $U$ and $D$, based again on a step-by-step construction (we could also derive them algebraically from~\eqref{eqfunc-gen}, but we prefer a combinatorial argument). This requires to  classify steps of $\cS$ according to whether they lie above, on or below the main diagonal. We write accordingly
\beq\label{V-def}
  S(x,y)=\bx\, \tV_+(xy) + \tV_0(xy) +x  \tV_-(xy) .
\eeq
Note that this is only possible because we have forbidden steps $(1,-1)$ and $(-1,1)$, which would contribute terms of the form $x^2 (\bx\by)$ and $\bx^2 (xy)$, respectively. Let us explain the notation $\tV_+, \tV_0, \tV_-$, which is reminiscent of~\eqref{S-split-H}. We define the \emm companion, set of steps
\beq\label{tS-def}
\tS:= \{(j-i,j): (i,j) \in \cS\},
\eeq
with step polynomial $\tS(x,y)= S(\bx,xy)$ (we hope that using the same notation $\tS$ for the  set of steps and its generating polynomial will not cause any inconvenience). Then
\[ %\beq\label{V-def-alt}
\tS(x,y)= S(\bx, xy)=
x\tV_+(y) + \tV_0(y) + \bx\,\tV_-(y),
\] %\eeq
which is indeed  the decomposition of $\tS$ along horizontal moves. Note that the transformation $\cS\mapsto \tS$ maps Kreweras steps to reverse Kreweras steps, and vice-versa, and leaves double Kreweras steps globally unchanged. The full correspondence between $\cS$ and $\tS$ is shown in Table~\ref{tab:sym}.

\medskip
We claim that the \gf\ of walks ending on the diagonal satisfies:
\begin{multline*}
  D(xy)= 1+ t \tV_0(xy) D(xy) - t\bx\by D_0\mathbbm 1_{(-1,-1) \in \cS}
  +2t \left(x  \tV_-(xy)\right) \left(\bx U(0,xy)\right) -2t \bx\by U_{0,0}\mathbbm 1_{(0,-1) \in \cS}.
\end{multline*}
The two terms involving $D$ on the right-hand side count walks whose last step starts from the diagonal, and those involving $U$ count walks whose last step  starts from the lines $j=i\pm 1$. For instance, $\bx U(0,xy)$ counts walks ending on the line $j=i+1$, and the multiplication by $ t \left(x  \tV_-(xy)\right)$ corresponds to adding a sub-diagonal step. The factor $2$ accounts for the $x/y$ symmetry of the model. The final term prevents from moving from $(-1,0)$ to $(-1,-1)$ (or symmetrically).
 For walks ending (strictly) above the diagonal, we obtain:
\begin{multline*}
  \bx U(\bx,xy)=    t \bx \tV_+(xy)  D(xy)\\
 + t S(x,y) \bx U(\bx, xy) -t \by H_-(x) \bx U(\bx,0)
  -t \left(x  \tV_-(xy)\right) \bx U(0,xy)  + t \bx\by U_{0,0}\mathbbm 1_{(0,-1) \in \cS}.
\end{multline*}
Again, the terms involving $D$ (resp. $U$) count walks whose last step starts on (resp. above) the diagonal. The term involving
$H_-(x)$ corresponds to walks that would enter the negative quadrant through the negative $x$-axis, and the term involving $\tV_-(xy)$ counts walks that would in fact end on the diagonal. The final term corresponds to walks ending at $(-1,0)$, extended by a South step: they have been subtracted twice, once in each of the previous two terms.
  
We now perform the change of variables  $(x,y) \mapsto (\bx,xy)$. This gives:
\begin{align}\label{eqD2}
\left(1- t \tV_0(y)\right)   D(y)&= 1- t\by D_{0}\mathbbm 1_{(-1,-1) \in \cS}
  +2t  \tV_-(y) U(0,y) -2t\by U_{0,0}\mathbbm 1_{(0,-1) \in \cS},
\\
\left(1- t S(\bx,xy) \right)  x U(x,y)&=    t x \tV_+(y)  D(y)
  -t \by H_-(\bx)  U(x,0)
                                        -t   \tV_-(y)  U(0,y)  + t \by U_{0,0}\mathbbm 1_{(0,-1) \in \cS}.
    \label{eqU2}
\end{align}
Using the first equation, we can eliminate $U(0,y)$ (and $U_{0,0}$) from the second. Multiplying finally by $2y$, we obtain the functional equation that we are going to focus on:
\begin{multline*}
  2xy \tK(x,y) U(x,y)=   y +  y\, \big(t  \tV_0(y)+ 2tx \tV_+(y) -1\big) D(y)
  -2t H_-(\bx) U(x,0)  %
  -  t D_0\mathbbm 1_{(-1,-1) \in \cS}.
\end{multline*}
It will be  convenient to write it in terms of $\tS$ only: we thus observe that \beq\label{rewrite}
(-1,-1) \in \cS \Leftrightarrow (0,-1) \in \tS \qquad \text{and} \qquad
H_-(\bx)=x \tH_-(x),
\eeq
where we write
\beq \label{HV-def-alt}
  \tS(x,y)=\bx \, \tV_-(y) + \tV_0(y) +x  \tV_+(y)
= \by \tH_-(x) +\tH_0 +y \tH_+(x).
\eeq 

\begin{lemma}\label{lem:func_eq}
  For a step set $\cS$ satisfying the assumptions~\eqref{conds}, the series $U(x,y)$ and $D(y)$ defined by~\eqref{C-split} are related by
\begin{multline}\label{eq-U}
  2xy \tK(x,y) U(x,y)=   y +  y\, \big(t  \tV_0(y)+ 2tx \tV_+(y) -1\big) D(y)
  -2t  x \tH_-(x)  U(x,0)  %H_-(\bx)
  -  t D_0\mathbbm 1_{(0,-1) \in \tS},  %1_{(-1,-1) \in \cS},
\end{multline}
with $\tK(x,y):=K(\bx,xy)=1-tS(\bx,xy)$. Note that $\tK(x,y)$ is the kernel associated with the step set $\tS$ defined by~\eqref{tS-def}, which may not be $x/y$-symmetric.

This equation holds for the diagonal model $\cS=\{ \nearrow, \nwarrow,
\swarrow, \searrow\}$ as well, provided we define the series $D$ and~$U$ by
 \beq\label{C-split-diag}
  C(x,y)=\bx^2 U(\bx^2, xy)+ D(xy)+ \by^2 U(\by^2,xy),
  \eeq
  and take $\tS= \{\nearrow, \uparrow, \swarrow, \downarrow\}$, that
  is,
  \[
    \tK(x,y)= 1-t \tS(x,y)=
  % =K(\sqrt {\bx}, \sqrt xy)=
    1-tS\left(\sqrt {\bx},\sqrt xy\right)= 1-t(xy+y+\bx\by +\by).
    \]
\end{lemma}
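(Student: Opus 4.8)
The plan is to derive~\eqref{eq-U} directly by the same step-by-step combinatorial bookkeeping that produced the three displayed equations immediately preceding the lemma, being careful about which steps of $\cS$ are forbidden at which boundary, and then to deal with the diagonal model separately by a small variant of the same argument. Concretely, I would first re-examine the decomposition~\eqref{C-split} of $C(x,y)$ into the three pieces (strictly above, on, strictly below the diagonal) and confirm, using the hypotheses~\eqref{conds}, that the substitution $(x,y)\mapsto(\bx,xy)$ correctly extracts the ``above diagonal'' generating function as $xU(x,y)$; the forbidden-step hypothesis is exactly what guarantees~\eqref{V-def} and hence that no step jumps more than one lattice line away from the diagonal, so that $U$, $D$ and the line-$j=i\pm1$ series $U(0,\cdot)$ are the only unknowns needed.

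Next I would assemble equations~\eqref{eqD2} and~\eqref{eqU2}: the $D$-equation tracks walks ending on the diagonal, whose last step either started on the diagonal (contributing $t\tV_0(y)D(y)$), or started on a line adjacent to the diagonal and moved onto it (contributing $2t\tV_-(y)U(0,y)$, the factor $2$ from the $x/y$-symmetry), minus the spurious South-from-$(-1,-1)$ and South-from-$(-1,0)$ terms recorded by the indicator $\mathbbm 1_{(-1,-1)\in\cS}$ and $U_{0,0}$; the $U$-equation similarly splits the last step of an above-diagonal walk according to whether it started on or above the diagonal, subtracts walks that would have fallen into the negative quadrant through the negative $x$-axis (the $H_-(\bx)U(x,0)$ term), walks that would have landed on the diagonal (the $\tV_-(y)U(0,y)$ term), and adds back the doubly-subtracted $(-1,0)\to(-1,-1)$ contribution. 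The key algebraic step is then to solve~\eqref{eqD2} for the combination $2t\tV_-(y)U(0,y)-2t\by U_{0,0}\mathbbm 1_{(0,-1)\in\cS}$ and substitute it into~\eqref{eqU2}, eliminating $U(0,y)$ and $U_{0,0}$ entirely; multiplying through by $2y$ and tidying the coefficient of $D(y)$ into the form $y\big(t\tV_0(y)+2tx\tV_+(y)-1\big)$ gives~\eqref{eq-U}. Finally one invokes the dictionary~\eqref{rewrite}--\eqref{HV-def-alt} to rewrite everything in terms of $\tS$: $(-1,-1)\in\cS\Leftrightarrow(0,-1)\in\tS$ turns the indicator into $\mathbbm 1_{(0,-1)\in\tS}$, and $H_-(\bx)=x\tH_-(x)$ turns the $H_-$ term into $2tx\tH_-(x)U(x,0)$, yielding the stated form with kernel $\tK(x,y)=1-tS(\bx,xy)$.

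For the diagonal model $\cS=\{\nearrow,\nwarrow,\swarrow,\searrow\}$ the hypotheses~\eqref{conds} fail (the model is $x/y$-symmetric but \emph{every} step is $\nwarrow$ or $\searrow$ in the obstructed sense), yet the geometry is actually simpler: diagonal walks live on one of the two sublattices $\zs^2$ split by parity, and the natural diagonal coordinate changes by $\pm1$ at each step. I would redo the above bookkeeping with the modified splitting~\eqref{C-split-diag}, where the ``above diagonal'' part is $\bx^2U(\bx^2,xy)$ rather than $\bx U(\bx,xy)$, reflecting that a step $\nwarrow$ or $\swarrow$ changes $i$ by an \emph{even} amount when measured against the antidiagonal coordinate; tracking last steps exactly as before (on-diagonal step $\nearrow$ or $\swarrow$, versus a step from an adjacent line) produces the same shape of coupled equations for $U$ and $D$, and the elimination of $U(0,y)$ goes through identically. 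The only thing to check is that the substitution $(x,y)\mapsto(\sqrt{\bx},\sqrt x y)$ sends $S(x,y)=xy+\bx y+\bx\by+x\by$ to $xy+y+\bx\by+\by$, so that $\tS=\{\nearrow,\uparrow,\swarrow,\downarrow\}$ and $\tK(x,y)=1-t(xy+y+\bx\by+\by)$ as claimed; this is a one-line verification. The main obstacle is not conceptual but purely bookkeeping: keeping straight the signs and indicator conditions for the four distinct ``illegal move'' corrections (down from $(-1,0)$, down from $(-1,-1)$, left from $(0,-1)$, and onto the diagonal), and making sure each is subtracted exactly once after the elimination — a place where it is easy to double-count, which is why the careful combinatorial derivation is preferable to a purely algebraic manipulation of~\eqref{eqfunc-gen}.
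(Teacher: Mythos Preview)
Your proposal is correct and follows essentially the same approach as the paper's own proof: derive~\eqref{eqD2} and~\eqref{eqU2} by step-by-step combinatorics, eliminate $U(0,y)$ and $U_{0,0}$ from the second using the first, multiply by $2y$, and then invoke~\eqref{rewrite}--\eqref{HV-def-alt} to rewrite everything in terms of $\tS$; for the diagonal model, redo the same bookkeeping with the parity-based splitting~\eqref{C-split-diag} and the substitution $(x,y)\mapsto(\sqrt{\bx},\sqrt x\,y)$. One small wording slip: the diagonal model does not have ``every step $\nwarrow$ or $\searrow$'' --- rather, it \emph{contains} the two forbidden steps $\nwarrow$ and $\searrow$ (along with $\nearrow$ and $\swarrow$), which is why~\eqref{conds} fails and the parity argument is needed instead.
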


\begin{proof}
  We have already justified the equation under the
  assumptions~\eqref{conds}. Now consider the diagonal model.  The
  only points $(i,j)$ that can be visited by a walk starting from
  $(0,0)$ are such that $i+j$ is even. This allows us to write
  $C(x,y)$ as~\eqref{C-split-diag}, where $U(x,y)$ and $D(y)$ are in
  $\qs[x,y][[t]]$ and $\qs[y][[t]]$, respectively. One then writes
  basic step-by-step equations for $D(xy)$ and $\bx^2 U(\bx^2, xy)$,
  and then performs the change of variables $(x,y) \mapsto (
  \sqrt{\bx}, \sqrt x y)$. This leads to the following counterparts
  of~\eqref{eqD2} and~\eqref{eqU2}:
  \begin{align*}
     \left(1-t(\by+y)\right) D(y)&= 1-t\by D_0+2t\by U(0,y)-2t\by U_{0,0},\\
    \left(1-t(y+\by +\bx\by+xy)\right)x U(x,y)&=txyD(y)
-t\by (1+x) U(x,0)  -t\by U(0,y)+ t\by U_{0,0}.
  \end{align*}
 The combination of these two equations gives
  \beq\label{eqT-C-diag}
  2xy \left(1-t(y+\by+xy+\bx\by)\right)
  % \tK(x,y)
  U(x,y)=y+y\big(t(y+\by)+2txy-1\big) D(y) -2t(1+x)U(x,0)-tD_0,
  \eeq
  which coincides with~\eqref{eq-U}.
\end{proof}

\begin{remark}
  The (single) equation in Lemma~\ref{lem:func_eq} really defines the \emm two, series $U(x,y)$ and $D(y)$. Indeed, the equation~\eqref{eqD2} relating $D(y)$ and $U(0,y)$ can be recovered by setting $x=0$ in~\eqref{eq-U}. \qee
\end{remark}

 \medskip

Our solution of three-quadrant models will  involve the \gf\
of  walks with steps in $\tS$ confined to the first (non-negative) quadrant $\Qc$. This series $\tQ(x,y)\equiv \tQ(x,y;t)\in \qs[x,y][[t]]$ %is defined by~\eqref{Q-def}  and
satisfies a similar looking equation~\cite{BMM-10}:
 \beq\label{eqfunc-gen-qu}
xy \tK(x,y)\tQ(x,y)= xy -tx \tH_-(x) Q(x,0) -ty \tV_-(y)\tQ(0,y) +t   \tQ_{0,0} \mathbbm 1_{(-1,-1)\in \tS},
 \eeq
 where we have used the notation~\eqref{HV-def-alt}.  This equation is simpler than~\eqref{eq-U}, because its right-hand side, apart from the simple term $xy$, is the sum of a function of $x$ and a function of $y$. This is not the case in~\eqref{eq-U}, because the factor $\left(t  \tV_0(y)+ 2tx \tV_+(y) -1\right)$ involves~$x$. However, the \emm square,  of this term is a function of $y$ only --- modulo $\tK(x,y)$. This property is already exploited in~\cite{RaTr-19}.

 \begin{lemma}\label{lem:square}
     Let $\tS$ be a collection of small steps,
     and define $\tV_+, \tV_0$ and $\tV_-$ by~\eqref{HV-def-alt}. Then
   \[
     \big(t  \tV_0(y)+ 2tx \tV_+(y) -1\big)^2=\Delta(y)-4tx  \tV_+(y) \tK(x,y),
   \]
   where $\tK(x,y)=1-t\tS(x,y)$ and 
\[ %   \beq\label{Delta-def}
     \Delta(y)= \left(1-t\tV_0(y)\right)^2-4t^2\tV_-(y) \tV_+(y)
\] %     \eeq
     is the discriminant (in $x$) of $x\tK(x,y)$.
 \end{lemma}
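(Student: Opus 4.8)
The assertion is a polynomial identity in $x$ whose coefficients lie in $\qs[t][y,\by]$, so the plan is a direct algebraic verification; the cleanest way to organize it is to recognize the left-hand side as the square produced by completing the square in a quadratic polynomial in~$x$.

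First I would use the decomposition in~\eqref{HV-def-alt}, namely $\tS(x,y)=\bx\,\tV_-(y)+\tV_0(y)+x\,\tV_+(y)$, together with $x\bx=1$, to rewrite $x\tK(x,y)=x\bigl(1-t\tS(x,y)\bigr)$ as a quadratic polynomial in~$x$:
\[
  x\tK(x,y)= -\,t\tV_+(y)\,x^2+\bigl(1-t\tV_0(y)\bigr)x-t\tV_-(y).
\]
Its discriminant in~$x$ is $\bigl(1-t\tV_0(y)\bigr)^2-4\bigl(t\tV_+(y)\bigr)\bigl(t\tV_-(y)\bigr)=\Delta(y)$, which already establishes the parenthetical claim of the lemma that $\Delta(y)$ is the discriminant (in $x$) of $x\tK(x,y)$.

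Then I would multiply by $-4t\tV_+(y)$ and complete the square, obtaining
\[
  -\,4t\tV_+(y)\,x\tK(x,y)=\bigl(2t\tV_+(y)\,x-(1-t\tV_0(y))\bigr)^2-\Delta(y),
\]
which is checked by expanding the square on the right and using $\Delta(y)=\bigl(1-t\tV_0(y)\bigr)^2-4t^2\tV_-(y)\tV_+(y)$. Since $2t\tV_+(y)\,x-(1-t\tV_0(y))=t\tV_0(y)+2tx\tV_+(y)-1$, moving $\Delta(y)$ to the other side yields $\bigl(t\tV_0(y)+2tx\tV_+(y)-1\bigr)^2=\Delta(y)-4tx\tV_+(y)\tK(x,y)$, which is exactly the claim.

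I do not expect any genuine obstacle: this is essentially a one-line completion of squares. The only point to watch is the degenerate case $\tV_+(y)\equiv 0$ (when $\tS$ has no step above the diagonal), in which the "quadratic in $x$'' viewpoint collapses; but then $\tV_-(y)\tV_+(y)=0$ as well, so $\Delta(y)=(1-t\tV_0(y))^2$ and both sides of the identity reduce to $(1-t\tV_0(y))^2$. Alternatively — and perhaps preferably for a short write-up — I would simply expand both sides of the stated identity directly and compare the coefficients of $1$, $x$ and $x^2$, using $x\bx=1$ once, and present the completing-the-square argument only as the conceptual explanation of where the identity comes from.
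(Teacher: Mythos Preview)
Your proof is correct and is exactly the ``simple calculation'' the paper has in mind; indeed the paper leaves the proof to the reader, and your completing-the-square argument (with the degenerate-case check) is a clean way to write it up.
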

 The proof is a simple calculation, which we leave to the reader.

%========================================================
 \subsection{Invariants}
 % ========================================================
 We now define the notion of \emm invariant, that we use in this paper. We refer to~\cite{BeBMRa-FPSAC-16,BeBMRa-17} for a slightly different notion that applies to rational functions only. The definition that we adopt here allows us a uniform treatment of all the models that we solve, whereas in~\cite{BeBMRa-17}, quadrant walks with reverse Kreweras steps had to be handled by a different argument. We will only use our notion of invariants for small step models, but this subsection could actually be applied to any model in $ \{-1, 0, 1, 2, 3, \ldots\}^2$.

 For reasons that are related to the form of the functional equation~\eqref{eq-U}, we  consider in this subsection a set of steps that we denote $\tS$ rather than $\cS$. Before all, observe that the series
 \beq\label{K-inv}
   \frac 1{\tK(x,y)}=\frac 1{1-t \tS(x,y)} =\sum_{n\ge 0} t^n \tS(x,y)^n
 \eeq
  counts \emm all, walks with steps in $\tS$, starting from the
  origin, by the length (variable $t$) and the coordinates of the
  endpoint (variables $x$ and $y$).  The coefficient of $t^n$ in this series  is a Laurent polynomial in $x$ and $y$. As soon as $\tS$ is not contained in a half-plane, the collection of these polynomials has unbounded degree and valuation.

  Recall that $ \qs(x,y)((t))$ is the field of Laurent series in $t$ with rational coefficients in $x$ and~$y$. We denote by  $ \qs_{{mult}}(x,y)((t))$ the subring consisting of series in which for each $n$, the coefficient of $t^n$ is of the form $p(x,y)/(d(x)d'(y))$, where $p(x,y) \in \qs[x,y]$, $d(x) \in \qs[x]$ and $d'(y)\in \qs[y]$. The reason why we focus on this subring is that:
  \begin{itemize}
  \item all the series that we handle are of this type,
    \item later (in the proof of Lemma~\ref{lem:invariants}) we will expand their coefficients as Laurent series in~$x$ and $y$, and with this condition the two expansions can be performed independently, without having to prescribe an order on $x$ and $y$ (as would be the case if we had to expand $1/(x-y)$, for instance).
  \end{itemize}
 
 \begin{definition}\label{def:bounded}
   A Laurent series $H(x,y)$ in $ \qs_{{mult}}(x,y)((t))$ is said to have \emm poles of bounded order at $0$, if the collection of its coefficients (in the $t$-expansion) has poles of bounded order at $x=0$, and also at $y=0$. In other words, there exists a monomial $x^i y^j$ such that  $x^i y^j H(x,y)$ expands as a series in $t$ whose coefficients have no pole at $x=0$ nor at $y=0$.
 \end{definition}

 Clearly, the series $ 1/{\tK(x,y)}$ shown in~\eqref{K-inv} lies in $ \qs_{{mult}}(x,y)((t))$, but does not have poles of bounded order at $0$ (unless $\tS$ is contained in the first quadrant).
 
\begin{definition}[{\bf Divisibility}] \label{def:divisible}
   Fix a step set $\tS$ with kernel $\tK(x,y)=1-t\tS(x,y)$. A Laurent series $F(x,y)$ in $ \qs_{{mult}}(x,y)((t))$ is said to be \emm divisible, by $\tK(x,y)$ if the ratio $F(x,y)/\tK(x,y)$ has poles of bounded order at $0$.
    \end{definition}

    Note that if $F(x,y)$ in $ \qs_{{mult}}(x,y)((t))$, the ratio $F(x,y)/\tK(x,y)$ is always an element of $ \qs_{{mult}}(x,y)((t))$ (by~\eqref{K-inv}).  The following alternative characterization of divisibility will not be used, but it may clarify the notion.

    \begin{lemma}\label{lem:divisibility}
 Assume that $\tK(x,y)$ has valuation $-1$ and degree $1$ in $x$ and in $y$.      The Laurent series $F(x,y)$ in $ \qs_{{mult}}(x,y)((t))$ is divisible by $\tK(x,y)$ if and only if
      \begin{enumerate}
      \item $F(x,y)$ has poles of bounded order at $0$,
        \item if $\tX\equiv \tX(y)$ (resp. $\tY\equiv \tY(x)$) denotes the unique root of $\tK(\cdot, y)$ (resp. $\tK(x, \cdot)$) that is a formal power series in $t$ (with coefficients in some algebraic closure of $\qs(y)$ or $\qs(x))$, then $F(\tX,y)=F(x,\tY)=0$.
      \end{enumerate}
    \end{lemma}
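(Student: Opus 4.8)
The plan is to prove the two directions of the equivalence in Lemma~\ref{lem:divisibility} separately, using the specific structure of the kernel $\tK(x,y)$, which by hypothesis has valuation $-1$ and degree $1$ in each variable. Write $\tK(x,y) = 1 - t\tS(x,y)$ with $\tS(x,y) = \bx\,\tV_-(y) + \tV_0(y) + x\,\tV_+(y)$, where $\tV_-(y)$ and $\tV_+(y)$ are nonzero (this is what ``valuation $-1$ and degree $1$ in $x$'' means), and similarly in $y$. The key auxiliary fact, which I would establish first, is that $\tK(\cdot,y)$, viewed as a polynomial in $x$ over the ring $\qs(y)((t))$, has exactly one root $\tX(y)$ that is a formal power series in $t$ (necessarily with constant term $0$, since at $t=0$ the equation $-t\tV_-(y) + (1-t\tV_0(y))x + tx^2\tV_+(y)\cdot\bx\cdot x = \ldots$ degenerates): writing $x\tK(x,y) = x - t(x\tS(x,y))$ and noting $x\tS(x,y) = \tV_-(y) + x\tV_0(y) + x^2\tV_+(y)$ is a polynomial in $x$ of degree $2$, the two roots behave, as $t\to 0$, one like a power series and the other like $1/(t\tV_+(y))$, i.e.\ a Laurent series starting at $t^{-1}$. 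So $\tX(y)$ and $\tY(x)$ are well-defined; these are the objects referred to in condition (2).

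For the ``only if'' direction: suppose $F(x,y)$ is divisible by $\tK(x,y)$, i.e.\ $G(x,y) := F(x,y)/\tK(x,y)$ has poles of bounded order at $0$. Then $F = \tK\cdot G$, and since $\tK(x,y)$ is a Laurent polynomial with poles of bounded order at $0$ (it involves only $\bx$, $\by$ to order $1$), the product $F$ also has poles of bounded order at $0$; this gives condition (1). For condition (2), I would argue by substitution. The point is that $\tX(y)$ is a formal power series in $t$ with coefficients in $\overline{\qs(y)}$, so $G(\tX(y),y)$ makes sense as an element of $\overline{\qs(y)}((t))$: here I would use that $G$ has poles of bounded order at $0$, so after multiplying by a suitable $x^iy^j$ it is a genuine power series in $t$ with coefficients rational in $x,y$, and substituting $x=\tX(y)$ (a power series in $t$ with no constant term, hence substitutable) yields a well-defined Laurent series, with the factor $\tX(y)^i$ contributing only finitely negative powers of $t$. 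Then $F(\tX(y),y) = \tK(\tX(y),y)\cdot G(\tX(y),y) = 0$ because $\tK(\tX(y),y) = 0$ by definition of $\tX$. Symmetrically $F(x,\tY(x)) = 0$.

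For the ``if'' direction: suppose $F$ satisfies (1) and (2). I want to show $G = F/\tK$ has poles of bounded order at $0$. The idea is a partial-fraction / division argument in the variable $x$. Multiply through by a monomial so that $P(x,y) := x^i y^j F(x,y)$ and $\tilde K(x,y) := x\tK(x,y)$ both lie in $\qs(y)[x]((t))$ with no poles at $x=0$; note $\tilde K = x - t(\tV_- + x\tV_0 + x^2\tV_+)$ is, coefficientwise in $t$, a polynomial in $x$ of degree $\le 2$ whose two roots are $\tX(y)$ (a power series in $t$) and a second root $\tX^\ast(y)$ of the form $c(y)/t + O(1)$. Condition (2), $F(\tX(y),y)=0$, together with $\tX(y)$ having no constant term, forces $P(\tX(y),y) = 0$, so $P(x,y)$ is divisible (in the appropriate localized polynomial ring) by $(x - \tX(y))$; combined with the analogous statement in $y$ one upgrades this to divisibility by $\tilde K(x,y)$ up to a factor involving $\tX^\ast$, and the quotient $P/\tilde K$ then has poles of bounded order at $0$ because the ``large root'' factor $x - \tX^\ast(y)$, being $\sim -c(y)/t$, contributes only a bounded-order pole in $t$ after one divides. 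Tracking the monomial $x^iy^j$ back through gives the claim for $G = F/\tK$.

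The main obstacle I expect is the ``if'' direction, specifically making the division argument rigorous in the ring $\qs_{\mathrm{mult}}(x,y)((t))$: one has to be careful that substituting $x=\tX(y)$ is legitimate (this needs the bounded-order-pole hypothesis, which is exactly why condition (1) cannot be dropped), that the two conditions $F(\tX,y)=0$ and $F(x,\tY)=0$ together genuinely imply divisibility by the \emph{full} kernel rather than just by one linear factor in each variable, and that the ``large root'' $\tX^\ast(y) \sim c(y)/t$ contributes only a pole of bounded order — equivalently, that $1/(x - \tX^\ast(y))$ expands as a Laurent series in $t$ with bounded negative part. Since this characterization is explicitly flagged in the paper as not being used in the sequel (``will not be used, but it may clarify the notion''), a clean way to present the proof is to keep it at the level of these structural observations about the roots of the kernel, deferring the routine verifications that one root is a power series and the other a simple pole in $t$ to the standard kernel-method folklore.
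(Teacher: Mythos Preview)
Your ``only if'' direction is fine and matches the paper's argument essentially verbatim.

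The ``if'' direction has the right ingredients (two roots of $\tK(\cdot,y)$, one a power series in $t$ and one a Laurent series of valuation $-1$) but the organization is confused, and you flag the confusion yourself. You write that the conditions in $x$ and in $y$ must be ``combined'' to upgrade divisibility by $(x-\tX)$ to divisibility by the full kernel; this is not how it works, and worrying about it is a symptom of not having found the clean argument. The two variables are handled \emph{separately}: one shows that $F/\tK$ has poles of bounded order at $x=0$ using only $F(\tX,y)=0$, and then invokes the symmetric argument in $y$ using only $F(x,\tY)=0$. There is no step where both vanishing conditions are used simultaneously.

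Concretely, the paper does the $x$-variable as follows. Write the second root as $\tX'(y)=\dfrac{\tV_-(y)}{\tV_+(y)}\cdot\dfrac{1}{\tX(y)}$ (a Laurent series in $t$ of valuation $-1$) and take the partial-fraction expansion
\[
\frac{1}{\tK(x,y)}=\frac{1}{\sqrt{\Delta(y)}}\left(\frac{1}{1-\bx\,\tX(y)}+\frac{x}{1-x/\tX'(y)}\right).
\]
Since $1/\tX'(y)=\LandauO(t)$, the second summand contributes no pole at $x=0$, so the only issue is $\dfrac{F(x,y)}{1-\bx\,\tX(y)}$. This is where $F(\tX,y)=0$ enters, via a divided-difference trick: if $x^iF(x,y)=\sum_n \dfrac{p_n(x,y)}{d_n(x)d'_n(y)}t^n$ with $d_n(0)\neq 0$, then
\[
\frac{x^iF(x,y)}{1-\bx\,\tX}=\frac{x^iF(x,y)-\tX^iF(\tX,y)}{1-\bx\,\tX}
=\sum_n\Big(\frac{p_n(x,y)}{d_n(x)}-\frac{p_n(\tX,y)}{d_n(\tX)}\Big)\frac{x}{x-\tX}\,\frac{t^n}{d'_n(y)},
\]
and each summand is $\dfrac{x\,q_n(x,\tX,y)}{d_n(x)d_n(\tX)}\cdot\dfrac{t^n}{d'_n(y)}$ for a polynomial $q_n$, hence has no pole at $x=0$. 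This is the missing mechanism in your sketch: you gesture at ``$P$ is divisible by $(x-\tX)$ in the appropriate localized polynomial ring'', but $P$ is not a polynomial in $x$ and the divided-difference computation is what makes this rigorous at the level of coefficients.
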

\begin{proof}
      Assume that $F(x,y)$ is divisible by $\tK(x,y)$, and write
      \beq\label{FH-div}
        F(x,y)= \tK(x,y) H(x,y),
      \eeq
      where $H(x,y)$ has poles of bounded order at $0$. Since $\tK(x,y)$ is a Laurent polynomial in $x$,~$y$ and $t$, then $F(x,y)$ has poles of bounded order at $0$ as well. Regarding  Property~(2), it suffices to prove it for $\tX$, by symmetry. Observe that the equation $\tK(x, y)=0$ can be rewritten as
         \[
        x= tx \tS(x,y) = t \left( \tV_-(y)+ x \tV_0(y) + x^2 \tV_+(y)\right).
      \]
      This shows that exactly one of the roots  of $\tK(\cdot,y)$, denoted by $\tX$,
      is a formal power series in $t$; moreover, it has  constant term $0$ and coefficients in $\qs(y)$. Since $F(x,y)$ and $H(x,y)$ have poles of bounded order at $0$, we can specialize $x$ to $\tX$ in~\eqref{FH-div}, and this proves that $F(\tX,y)=0$. 

      Now assume that the two conditions of the lemma hold, and let us prove that the series $H(x,y)$ defined by~\eqref{FH-div} has poles of bounded order at $0$. By symmetry, it suffices to prove this for the variable $x$. The second root of $\tK(\cdot, y)$ is
      \[
        \tX'(y)= \frac{\tV_-(y)}{\tV_+(y)} \ \frac 1 {\tX(y)}.
      \]
      It is a Laurent series in $t$ of valuation $-1$, with coefficients in $\qs(y)$. A partial fraction expansion of $1/\tK(x,y)$ (in the variable $x$) gives
      \[
        \frac 1 {\tK(x,y)}=\frac 1{\sqrt{\Delta(y)}} \left( \frac 1{1-\bx \tX(y)} + \frac x {1-x/\tX'(y)}\right).
      \]
  By  assumption, $F(x,y)$ has poles of bounded order at $x=0$.     Since $\tX(y)=\LandauO(t)$ and $1/\tX'(y)=\LandauO(t)$, only the first part of the above series has poles at $x=0$, and it suffices to prove that 
      \beq\label{F-rat}
        \frac{F(x,y)}{1-\bx\tX(y)} = F(x,y) \sum_{n\ge 0} \bx^n \tX(y)^n,
      \eeq
      % which is a series of $\qs_{mult}(x,y)((t))$,
      has poles of bounded order at $x=0$.  By assumption, there exists an integer $i$ such that
      \[
      x^i  F(x,y)= \sum_{n\ge 0}\frac{p_n(x,y)}{ d_n(x) d'_n(y)} t^n,
      \]
for polynomials $p_n$, $d_n$ and $d'_n$ such that $d_n(0)\not =0$.     
Then, by the second assumption,
  \[
   \frac{ x^iF(x,y)}{1-\bx\tX} =        \frac{x^iF(x,y)-\tX^iF(\tX,y)}{1-\bx\tX}
    =
    \sum_{n\ge 0}\left( \frac{p_n(x,y)}{ d_n(x) }-
      \frac{p_n(\tX,y)}{d_n(\tX) }\right) \frac x {x-\tX} \frac{t^n}{d'_n(y)}.
  \]
  The $n$th summand can be written
  \[
    \frac{ x   q_n(x,\tX,y)}{d_n(x) d_n(\tX)}\cdot \frac{t^n}{d'_n(y)},
  \]
for some trivariate polynomial $q_n$. As a series in $t$, it belongs to  $\qs_{mult}(x,y)((t))$, and its coefficients have no pole at $x=0$ since $d(0)\not = 0$ (they are  even  multiples of $x$).  We thus conclude that the series~\eqref{F-rat}, as claimed, has poles of bounded order at $x=0$. 
  \end{proof}
    
\begin{definition}[{\bf Invariants}] \label{def:invariants}
   Fix a step set $\tS$ with kernel $\tK(x,y)=1-t\tS(x,y)$.  Let $I(x)$ and $J(y)$ be Laurent series in~$t$ with coefficients in $\qs(x)$ and $\qs(y)$ respectively. 
   We say that $(I(x), J(y))$ is   a \emph{pair of $\tS$-invariants} if $I(x)-J(y)$ is divisible by $\tK(x,y)$.
\end{definition}

Note that $I(x)-J(y)$ is always an element of  $\qs_{{mult}}(x,y)((t))$.
Also, if $(I(x), J(y))$ is   a pair of invariants, then  $I(x)$ and $J(y)$ have poles of bounded order at $0$.

For any series $A\in \qs((t))$, the pair $(A,A)$ is a (trivial) invariant. More interesting examples will be given in the next subsection; see for instance~\eqref{I0-K}.

The term \emm invariant, comes from the fact that, if both roots $\tX$ and $\tX'$ of $\tK(\cdot, y)$ can be substituted for $x$ in $I(x)$, then $I(\tX)=I(\tX')$. Of course, a similar observation holds for $J(y)$.
\noindent

\begin{lemma}\label{lem:linear} The componentwise sum of two pairs of invariants $\left(I_1(x), J_1(y)\right)$ and $\left(I_2(x), J_2(y)\right)$
    is still a pair of invariants. The same holds for their componentwise product.
  \end{lemma}
  
 \begin{proof}
   The first result is obvious by linearity. For the second, we observe that
   \begin{align*}
      I_1(x) I_2(x) -J_1(y) J_2(y)&= \left(I_1(x)-J_1(y)\right) I_2(x)
                                    +J_1(y) \left(I_2(x)-J_2(y)\right) \\
     &= \tK(x,y) \left( H_1(x,y)I_2(x) + J_1(y) H_2(x,y)\right),
   \end{align*}
 where  $H_k(x,y)=(I_k(x)-J_k(y))/\tK(x,y)$, for $k=1,2$.  The result follows because series with poles of bounded order at $0$ form a ring.
  \end{proof}

 The following lemma is a key tool  when playing with  invariants.

 \begin{lemma}[{\bf Invariant lemma}] \label{lem:invariants}
 Let $I(x)$ and $J(y)$ be Laurent series in~$t$ with coefficients in $\qs(x)$ and $\qs(y)$ respectively. Assume that for all $n$, the coefficient of $t^n$ in the ratio series
   \[
     \frac{I(x)-J(y)}{\tK(x,y)}
   \]
   vanishes at $x=0$ and at $y=0$.   Then  $(I(x), J(y))$ is a pair of invariants by Definition~\ref{def:invariants}, but in fact 
 $I(x)$ and $J(y)$ depend only on $t$, and are equal. That is, the above ratio is zero.   
 \end{lemma}
 \begin{proof}
   The argument is adapted from Lemma~4.13 in~\cite{BeBMRa-17}. Let us assume, without loss of generality, that $I(x)$ and
   $J(y)$ contain no negative power of $t$. By assumption, we have
   \[
     I(x)-J(y) = xy \tK(x,y) G(x,y),
   \]
   where for each $n$, the coefficient of $t^n$ in $G(x,y) $ is of the form $p(x,y)/(d(x) d'(y))$ and has no pole at $x=0$ nor $y=0$.
    Hence this coefficient expands  as a formal power series in $x$ and~$y$, and we consider $G(x,y)$ itself as an element of $\qs[[x,y,t]]$.
    We define a total order on monomials $t^n x^i y^j$, with $(n,i,j) \in \ns^3$, by the lexicographic order on the exponents $(n,i,j)$. Observe that $xy\tK(x,y) \in \qs[t,x,y]$, and the smallest monomial that occurs in it is $xy$. Assume that $G(x,y)\not=0$, and  let $M$ be the smallest monomial in $G(x,y)$. Then $xyM$ is the smallest monomial in $xy\tK(x,y)G(x,y)$, hence in $I(x)-J(y)$ (once expanded as a series in $t$, whose coefficients are Laurent series in $x$ or in $y$). But the latter series cannot contain monomials involving both~$x$ and $y$. Hence $G(x,y)=0$ and the lemma is proved.
 \end{proof}

 % =======================================================
 \subsection{Some known invariants}
 \label{sec:known-inv}
 % ========================================

 For each model $\tS$ of Table~\ref{tab:sym}, at least one non-trivial pair of invariants is already known from~\cite[Sec.~4]{BeBMRa-17}, as we now review.

 Consider for instance the set $\tS=\{\nearrow, \leftarrow, \downarrow\}$ of Kreweras' steps, with $\tK(x,y)=1-t(xy+\bx+\by)$. The first pair of invariants exhibited in~\cite{BeBMRa-17} is rational,  and symmetric in $x$ and $y$: 
 \[
   I_0(x)=\bx^2-\bx/t -x, \qquad J_0(y)=I_0(y).
 \]
 Indeed, it is elementary to check that 
\beq\label{I0-K}
   I_0(x)-J_0(y)= \tK(x,y) \frac{x-y}{txy},
 \eeq
and the ``series'' $\frac{x-y}{txy}$  has poles of bounded order at $0$.  To describe the second pair, let $\tQ(x,y)$ be the \gf\ of quadrant walks with steps in $\tS$. Then $\tQ(x,y)$ satisfies the functional equation
 \beq\label{eqQK}
   xy \tK(x,y) \tQ(x,y) =xy- tx \tQ(x,0) -ty \tQ(0,y),
 \eeq
 and the $x/y$-symmetry entails that $\tQ(x,0)=\tQ(0,x)$. Observe that
  \beq\label{dec:xy-k}
 xy =   \frac 1 t  - \bx -\by- \frac 1 t \tK(x,y).
 \eeq
   Then the second pair of invariants $(I_1(x), J_1(y))$ is derived from~\eqref{eqQK} and~\eqref{dec:xy-k}:
 \[
   I_1(x)=%tx \tQ(x,0)- f(x)=
   tx \tQ(x,0)+ \bx -\frac 1{2t}, \qquad
   J_1(y)=  -I_1(y)= -ty\tQ(0,y) -\by+\frac 1{2t} .
 \]
 Indeed,
\[ 
  I_1(x)-J_1(y)
  = -\tK(x,y) \left(xy\tQ(x,y)+\frac 1t\right). %\label{I1-K}
\]

 More generally, it is proved in~\cite{BeBMRa-17} that among the nine models $\tS$ of Table~\ref{tab:sym}, exactly the first five (those associated with a finite group) admit a pair of rational invariants $(I_0(x), J_0(y))$. We make them explicit in Table~\ref{tab:inv-qu-1}. On the other hand, the above construction of the $\tQ$-related pair $(I_1(x), J_1(y))$ generalizes to all nine models, upon noticing that for each of them, the monomial $xy$ \emm decouples, explicitly as the sum of a function of $x$ and a function of $y$ --- modulo~$\tK$:
 \beq\label{dec:xy}
  xy =   f(x)+g(y)+ h(x,y)\tK(x,y),
 \eeq
 for rational series $f(x), g(y)$ and $h(x,y)$ having poles of bounded order at $0$. A possible choice of  $f(x)$ and $g(y)$, borrowed from~\cite{BeBMRa-17}, is given in Tables~\ref{tab:inv-qu-1} (for finite groups) and~\ref{tab:inv-qu-2} (for infinite groups). The decoupling identity~\eqref{dec:xy} allows us to rewrite the quadrant equation~\eqref{eqfunc-gen-qu} as
 \[
   \tK(x,y)  \left(h(x,y) -xy \tQ(x,y)\right)  = I_1(x)-J_1(y),
 \]
 where
 \beq\label{I1J1-def}
   I_1(x)=tx\tH_-(x)\tQ(x,0)-f(x), \qquad J_1(y)=-ty\tV_-(y) \tQ(0,y)+t \tQ_{0,0}\mathbbm 1_{(-1,-1)\in \tS} +g(y).
 \eeq
 Since $h(x,y) -xy \tQ(x,y)$ has poles of bounded order at $0$, the pair $(I_1(x), J_1(y))$ is indeed a pair of invariants. This pair is used in~\cite{BeBMRa-17} to prove that $\tQ(x,y;t)$ is algebraic (for finite group models) and D-algebraic (for infinite group models), for all models $\tS$ in Table~\ref{tab:sym}.
 % (the fourth and fifth models $\tS$ being equivalent).
 The same approach applies to  five other models $\tS$  that are not relevant for this paper~\cite{BeBMRa-17}.
 
 \newcommand\Tstrut{\rule{0pt}{2.6ex}}       % Top strut
\newcommand\Bstrut{\rule[-2ex]{0pt}{2pt}} % Bottom strut

  \begin{table}[ht] 
    \centering
    \begin{tabular}{|c||ccc:ca|}
      \hline
%      \Tstrut \Tstrut
\rule{0pt}{7ex} $\cS$&  \begin{tikzpicture}[scale=.45] % q6654
    \draw[->] (0,0) -- (0,-1);
    \draw[->] (0,0) -- (1,1);
    \draw[->] (0,0) -- (-1,0);
 %   \draw[-] (0,-1) -- (0,-1) node[below] {\phantom{$\scriptstyle 1$}};
  \end{tikzpicture}     &
 \begin{tikzpicture}[scale=.45] % q6654
    \draw[->] (0,0) -- (0,1);
    \draw[->] (0,0) -- (-1,-1);
    \draw[->] (0,0) -- (1,0);
%    \draw[-] (0,-1) -- (0,-1) node[below] {\phantom{$\scriptstyle 1$}};
  \end{tikzpicture}&
    \begin{tikzpicture}[scale=.45] % q6654
      \draw[->] (0,0) -- (0,-1);
    \draw[->] (0,0) -- (1,1);
    \draw[->] (0,0) -- (-1,0);
    \draw[->] (0,0) -- (0,1);
    \draw[->] (0,0) -- (-1,-1);
    \draw[->] (0,0) -- (1,0);
 %   \draw[-] (0,-1) -- (0,-1) node[below] {\phantom{$\scriptstyle 1$}};
  \end{tikzpicture}
&
    \begin{tikzpicture}[scale=.45] % simple
      \draw[->] (0,0) -- (0,-1);
      \draw[->] (0,0) -- (-1,0);
    \draw[->] (0,0) -- (0,1);
      \draw[->] (0,0) -- (1,0);
   % \draw[-] (0,-1) -- (0,-1) node[below] {\phantom{$\scriptstyle 1$}};
  \end{tikzpicture} 
&  \begin{tikzpicture}[scale=.45] % simple
      \draw[->] (0,0) -- (1,-1);
      \draw[->] (0,0) -- (-1,1);
    \draw[->] (0,0) -- (1,1);
      \draw[->] (0,0) -- (-1,-1);
   % \draw[-] (0,-1) -- (0,-1) node[below] {\phantom{$\scriptstyle 1$}};
  \end{tikzpicture}\\
                    %  \hline
$\tS$ & \begin{tikzpicture}[scale=.45] % q6654
    \draw[->] (0,0) -- (0,1);
    \draw[->] (0,0) -- (-1,-1);
    \draw[->] (0,0) -- (1,0);
%    \draw[-] (0,-1) -- (0,-1) node[below] {\phantom{$\scriptstyle 1$}};
  \end{tikzpicture}
 &  \begin{tikzpicture}[scale=.45] % q6654
    \draw[->] (0,0) -- (0,-1);
    \draw[->] (0,0) -- (1,1);
    \draw[->] (0,0) -- (-1,0);
    % \draw[-] (0,-1) -- (0,-1) node[below] {\phantom{$\scriptstyle 1$}};     
  \end{tikzpicture}
&\begin{tikzpicture}[scale=.45] % q6654
      \draw[->] (0,0) -- (0,-1);
    \draw[->] (0,0) -- (1,1);
    \draw[->] (0,0) -- (-1,0);
    \draw[->] (0,0) -- (0,1);
    \draw[->] (0,0) -- (-1,-1);
    \draw[->] (0,0) -- (1,0);
 %   \draw[-] (0,-1) -- (0,-1) node[below] {\phantom{$\scriptstyle 1$}};
  \end{tikzpicture}
&   
\begin{tikzpicture}[scale=.45] % q6654
       \draw[->] (0,0) -- (1,1);
    \draw[->] (0,0) -- (-1,0);
       \draw[->] (0,0) -- (-1,-1);
    \draw[->] (0,0) -- (1,0);
 %   \draw[-] (0,-1) -- (0,-1) node[below] {\phantom{$\scriptstyle 1$}};
  \end{tikzpicture}
& \begin{tikzpicture}[scale=.45] % q6654
       \draw[->] (0,0) -- (1,1);
    \draw[->] (0,0) -- (0,-1);
       \draw[->] (0,0) -- (-1,-1);
    \draw[->] (0,0) -- (0,1);
 %   \draw[-] (0,-1) -- (0,-1) node[below] {\phantom{$\scriptstyle 1$}};
  \end{tikzpicture}
  \\
      \hline
%      $\tS$-decoupling for $xy$ & \checkmark & \checkmark & \checkmark &  & & & & \checkmark\\
%      \hline

      $I_0(x)$ &$\bx+x/t-x^2$ &$\bx^2-\bx/t -x$&$\bx -x -\frac{1+2t}{t(1+x)}$& $x+\bx-t(\bx-x)^2$&$\frac x{t(1+x)^2}+\frac{t(1+x)^2}x$
                                   \\
      $J_0(y)$ &   $\by+y/t-y^2$  &$\by^2-\by/t -y$&$\by -y -\frac{1+2t}{t(1+y)}$&$\frac y{t(1+y)^2}+\frac{t(1+y)^2}y$ \Bstrut
 & $y+\by-t(\by-y)^2$    \\
      \hline
      $f(x) $ &  $\frac x t -x^2$&$\frac 1 {2t}  - \bx$& $ \frac{x-t-tx^2}{t(1+x)}$& $-\bx$ 
 &   $ \frac x {t(1+x)}$  \\
      $g(y)$ & $-\by $&$\frac 1 {2t}  - \by$ & $-\by $ & $ \frac y {t(1+y)}$
                                                         \Bstrut
& $-\by$ \\ 
       \hline
    \end{tabular}
\vskip 3mm    \caption{Rational  $\tS$-invariants $(I_0,J_0)$, and $\tS$-decoupling of $xy$ by $f$ and $g$ for finite group models. The associated invariants $I_1(x), J_1(y)$ defined by~\eqref{I1J1-def} are algebraic.}
    \label{tab:inv-qu-1}
  \end{table}

   \begin{table}[ht]
    \centering
    \begin{tabular}{|c||c:ccc|}
      \hline
 %     \Tstrut \Tstrut
 \rule{0pt}{7ex}$\cS$
&
    \begin{tikzpicture}[scale=.45] % DA
      \draw[->] (0,0) -- (0,-1);
    \draw[->] (0,0) -- (1,1);
    \draw[->] (0,0) -- (-1,0);
    \draw[->] (0,0) -- (0,1);
       \draw[->] (0,0) -- (1,0);
%    \draw[-] (0,-1) -- (0,-1) node[below] {\phantom{$\scriptstyle 1$}};
     \end{tikzpicture}
&   \begin{tikzpicture}[scale=.45] % 
      \draw[->] (0,0) -- (0,-1);
    \draw[->] (0,0) -- (1,1);
    \draw[->] (0,0) -- (-1,0);
      \draw[->] (0,0) -- (-1,-1);
  %    \draw[-] (0,-1) -- (0,-1) node[below] {\phantom{$\scriptstyle 1$}};
  \end{tikzpicture}
&
    \begin{tikzpicture}[scale=.45] % 
        \draw[->] (0,0) -- (1,1);
       \draw[->] (0,0) -- (0,1);
    \draw[->] (0,0) -- (-1,-1);
    \draw[->] (0,0) -- (1,0);
  %  \draw[-] (0,-1) -- (0,-1) node[below] {\phantom{$\scriptstyle 1$}};
  \end{tikzpicture}
&
    \begin{tikzpicture}[scale=.45] % 
      \draw[->] (0,0) -- (0,-1);
       \draw[->] (0,0) -- (-1,0);
    \draw[->] (0,0) -- (0,1);
    \draw[->] (0,0) -- (-1,-1);
    \draw[->] (0,0) -- (1,0);
 %   \draw[-] (0,-1) -- (0,-1) node[below] {\phantom{$\scriptstyle 1$}};
  \end{tikzpicture}
\\
                    %  \hline
      $\tS$
      &\begin{tikzpicture}[scale=.45] % q6654
    %  \draw[->] (0,0) -- (0,-1);
    \draw[->] (0,0) -- (1,1);
    \draw[->] (0,0) -- (-1,0);
    \draw[->] (0,0) -- (0,1);
    \draw[->] (0,0) -- (-1,-1);
    \draw[->] (0,0) -- (1,0);
 %   \draw[-] (0,-1) -- (0,-1) node[below] {\phantom{$\scriptstyle 1$}};
  \end{tikzpicture}
&  
\begin{tikzpicture}[scale=.45] % q6654
      \draw[->] (0,0) -- (0,-1);
  %    \draw[->] (0,0) -- (-1,0);
    \draw[->] (0,0) -- (0,1);
    \draw[->] (0,0) -- (-1,-1);
    \draw[->] (0,0) -- (1,0);
 %   \draw[-] (0,-1) -- (0,-1) node[below] {\phantom{$\scriptstyle 1$}};
  \end{tikzpicture}
&   
\begin{tikzpicture}[scale=.45] % q6654
      \draw[->] (0,0) -- (0,-1);
    \draw[->] (0,0) -- (1,1);
    \draw[->] (0,0) -- (-1,0);
    \draw[->] (0,0) -- (0,1);
  %   \draw[-] (0,-1) -- (0,-1) node[below] {\phantom{$\scriptstyle 1$}};
  \end{tikzpicture}
&\begin{tikzpicture}[scale=.45] % q6654
      \draw[->] (0,0) -- (0,-1);
    \draw[->] (0,0) -- (1,1);
    \draw[->] (0,0) -- (-1,0);
     \draw[->] (0,0) -- (-1,-1);
    \draw[->] (0,0) -- (1,0);
 %   \draw[-] (0,-1) -- (0,-1) node[below] {\phantom{$\scriptstyle 1$}};
  \end{tikzpicture}
      \\
      \hline
      $f(x) $ &$-\bx$ & $-\bx$ & $-\bx + \frac 1 t $ & $-\bx$ 
      \\
      $g(y)$ & $ \frac{y(1-ty)}{t(1+y)}$ & $-1 + \frac y t -y^2 $ & $-\by -y$ & $\frac{y-t}{t(1+y)}$ \Bstrut
\\ 
       \hline
    \end{tabular}
\vskip 3mm    \caption{$\tS$-decoupling of $xy$ by $f$ and $g$ for infinite group models. The associated invariants $I_1(x), J_1(y)$ defined by~\eqref{I1J1-def} are D-algebraic.}
    \label{tab:inv-qu-2}
  \end{table}

 %%%%%%%%%%%%%%%%%%%%%%%%%%%%%%%%%%%%%%%%%%%%%%%%%%%
 \section{Decoupling and invariants for three-quadrant walks}\label{sec:dec}
 %%%%%%%%%%%%%%%%%%%%%%%%%%%%%%%%%%%%%%%%%%%%%%%%%%%

  %============================================
 \subsection{A new type of decoupling}
 %============================================
 For the nine models $\tS$ of Table~\ref{tab:sym},  we have exhibited in the previous section invariants $(I_1(x),J_1(y))$ involving the \gf\ $\tQ(x,y;t)$ for quadrant walks, by combining  the functional equation~\eqref{eqfunc-gen-qu} with the  decoupling of $xy$ modulo $\tK$, given by~\eqref{dec:xy}.

 Consider now the three-quadrant equation~\eqref{eq-U}. One main difference with~\eqref{eqfunc-gen-qu} is that the coefficient of $yD(y)$, being $\left(t  \tV_0(y)+ 2tx \tV_+(y) -1\right)$, involves both $x$ and~$y$.
  Our aim will be to write the right-hand side of~\eqref{eq-U} as the sum of a function of $x$ and a function of $y$ \emm multiplied by  $\left(t  \tV_0(y)+ 2tx \tV_+(y) -1\right)$,, modulo the kernel $\tK(x,y)$. Clearly, the only difficulty is to write the constant term $y$ in this form. This turns out to be possible in exactly four of the nine cases, and this property is in fact related to ``classical'' decoupling of $xy$ modulo  the original model~$\cS$ (not $\tS$!).

 \begin{prop}\label{prop:dec-three-quadrants}
   Let  $\cS$ be a one of the nine models of Table~\ref{tab:sym}, and  $\tS$ its companion model, with associated kernels $K(x,y)$ and $\tK(x,y)$, respectively. Define $\tV_-$, $\tV_0$ and $\tV_+$ by~\eqref{HV-def-alt}. The following conditions are equivalent:
   \begin{enumerate}
   \item there exist rational functions $F(x)\in \qs(x,t)$ and $G(y)\in \qs(y,t)$ such that the numerator of
     \beq\label{expr-rat}
       y- \left(t  \tV_0(y)+ 2tx \tV_+(y) -1\right) G(y) - F(x)
     \eeq
     contains a factor   $xy\tK(x,y)$,
    \item there exists rational functions ${\sf f(x)}\in \qs(x,t)$ and ${\sf g}(y)\in \qs(y,t)$ such that the numerator of 
  \beq\label{expr-rat-usual}  
       xy- {\sf f}(x) -{\sf g}(y) 
     \eeq
  contains a factor $xyK(x,y)$.   
\end{enumerate}
% When these properties hold,  the second problem has a solution with ${\sf f}(x)= {\sf g}(x)$, and one can then take  \beq\label{FG-f}     F(x)=2{\sf f}(\bx), \qquad \text{and} \qquad 2 t \tV_+(y) G(y)=- \frac{F(\tX_0)-F(\tX_1)}{\tX_0-\tX_1},\eeq where $\tX_{0,1}$ are the two roots of $\tK(x,y)$, solved for $x$ (when $\cS$ is  the diagonal model, the first identity should be $F(x^2)=2 {\sf f}(\bx)$).

The only models of Table~\ref{tab:sym} for which these properties hold  are  the three models of the Kreweras trilogy, and  the $6$th model.
A solution is then given by
\beq\label{F-expr}
  F(x)= \bx^2+ \frac{K(\bx,\bx)}{t H_-(\bx)}, \qquad tG(y)=
  \frac{1+H_-(\by)}{y H_+(\by)} -1,
  \eeq
  and
  \beq\label{fg-dec-classic-expr}
  {\sf f}(x)= {\sf g}(x)=\frac 1 2 \left( x^2+ \frac{K(x,x)}{t H_-(x)}\right).
\eeq
  For this choice, stronger properties hold, as the rational functions~\eqref{expr-rat} and~\eqref{expr-rat-usual} are
  % $y- \left(t  \tV_0(y)+ 2tx \tV_+(y) -1\right) G(y) -F(x)$ is
  divisible by $\tK(x,y)$ and $K(x,y)$, respectively, in the sense of Definition~\ref{def:divisible}.
  %, and $xy- {\sf f}(x) -{\sf g}(y) $ is divisible by $K(x,y)$.
\end{prop}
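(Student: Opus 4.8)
The plan is to organise the proof around the substitution $\phi\colon(x,y)\mapsto(\bx,xy)$, which is an involution and exchanges the two kernels: since $\tK(x,y)=K(\bx,xy)$ one has $\tK(\phi(x,y))=K(x,y)$. Applying $\phi$ to the rational function in~\eqref{expr-rat-usual} turns $xy-{\sf f}(x)-{\sf g}(y)$ into $y-{\sf f}(\bx)-{\sf g}(xy)$ and $K(x,y)$ into $\tK(x,y)$, so condition~(2) says precisely that $y-{\sf f}(\bx)-{\sf g}(xy)$ is divisible by $\tK(x,y)$. This is almost the shape of~\eqref{expr-rat}, the only difference being that the coefficient $t\tV_0(y)+2tx\tV_+(y)-1$ multiplying $G(y)$ in~\eqref{expr-rat} is replaced by the way ${\sf g}(xy)$ depends on $x$.

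To bridge the two shapes I would reduce everything modulo $\tK$. Writing $\Lambda(x,y):=t\tV_0(y)+2tx\tV_+(y)-1$, Lemma~\ref{lem:square} gives $\Lambda^2\equiv\Delta(y)\pmod{\tK}$; since $\Lambda$ genuinely involves $x$ (because $\tV_+\ne0$ for the relevant models), $\{1,\Lambda\}$ is a $\qs(y,t)$-basis of the quadratic extension cut out by $\tK(x,y)=0$. In that extension both $x$ and the product $xy$ --- hence ${\sf f}(\bx)$ and ${\sf g}(xy)$ --- can be written $a(y)+b(y)\Lambda(x,y)$ with $a,b$ rational in $y$; indeed $x$ itself equals $\frac{1-t\tV_0(y)+\Lambda(x,y)}{2t\tV_+(y)}$. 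Reconciling the two expansions manufactures, out of a decoupling of one type, a decoupling of the other, so that conditions~(1) and~(2) are equivalent; and one tracks the poles at $x=0$ and $y=0$ along the way, which upgrades the mere divisibility of the numerators by $xy\tK$ (resp.\ $xyK$) to divisibility in the sense of Definition~\ref{def:divisible}. The explicit solutions are then obtained concretely: by the $x/y$-symmetry of $K$ one may look for ${\sf f}={\sf g}$, a short computation leads to~\eqref{fg-dec-classic-expr}, and transporting it through $\phi$ and the rewriting above yields~\eqref{F-expr}. For each of the four models --- the three of the Kreweras trilogy and model~$\#6$ --- it then only remains to check, after clearing denominators, that the numerators of~\eqref{expr-rat-usual} and~\eqref{expr-rat} are divisible by $xyK(x,y)$ and $xy\tK(x,y)$ respectively, with quotients having poles of bounded order at $0$; these are finite verifications, carried out in the accompanying \Maple\ sessions.

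It remains to see that \emph{only} these four models of Table~\ref{tab:sym} have the property. But condition~(2) is nothing but the decoupling of the monomial $xy$ modulo the kernel $K$ of the original model $\cS$, which is exactly the phenomenon underlying the solution of quadrant $\cS$-walks by invariants in~\cite{BeBMRa-17}: among the nine step sets of the table, this decoupling is known to exist for the three Kreweras models and for model~$\#6$, and to be impossible for the simple and diagonal models and for models $\#7$, $\#8$, $\#9$ (for the last three, impossibility is also forced by the fact that $C(x,y;t)$ is not D-algebraic there~\cite{dreyfus-trotignon}, since the invariants produced from~(1) would make it so). Combined with the equivalence $(1)\Leftrightarrow(2)$, this pins down the four models for which~(1) holds as well.

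I expect the main obstacle to be the equivalence itself, and in particular the fact that $\phi$ carries a genuine function of $y$ into a function of the \emph{product} $xy$: re-expanding ${\sf g}(xy)$ modulo $\tK$ in the basis $\{1,\Lambda\}$ is what makes the translation work, and this relies essentially on Lemma~\ref{lem:square} and the quadratic shape of the kernel. One must also be careful that $\tK$ need not be $x/y$-symmetric and that the various rational functions keep poles of bounded order at the origin, so that one really obtains divisibility and not merely an identity on the kernel curve. Once the closed forms~\eqref{F-expr}--\eqref{fg-dec-classic-expr} are in hand, the rest --- although it involves some lengthy elimination --- is routine.
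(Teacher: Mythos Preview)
Your overall strategy --- exploiting the substitution $\phi\colon(x,y)\mapsto(\bx,xy)$ that swaps the two kernels, citing~\cite{BeBMRa-17} for the classification of models satisfying condition~(2), and verifying the explicit formulas~\eqref{F-expr}--\eqref{fg-dec-classic-expr} --- matches the paper's. The gap is in your abstract argument for the equivalence $(1)\Leftrightarrow(2)$. Working in the quadratic extension modulo $\tK$ and expanding in the basis $\{1,\Lambda\}$, condition~(1) amounts to ``there exists $F(x)\in\qs(x,t)$ whose $1$-component equals $y$'' (and then $G(y)$ is forced to be the negative of its $\Lambda$-component), whereas condition~(2), after applying $\phi$, amounts to ``there exist $\mathsf f,\mathsf g$ with the $1$-components of $\mathsf f(\bx)$ and $\mathsf g(xy)$ summing to $y$ and their $\Lambda$-components cancelling''. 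These are genuinely different constraints, and neither obviously implies the other: for instance, setting $F(x):=\mathsf f(\bx)$ from a solution of~(2) yields a $1$-component equal to $y-a_g(y)$ rather than $y$, and nothing in your argument absorbs the leftover $a_g(y)$ into a function of $x$ alone. So ``reconciling the two expansions'' is not, by itself, a proof.

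The paper supplies the missing implication $(1)\Rightarrow(2)$ by a direct symmetrisation trick. After substituting $(x,y)\mapsto(\bx,xy)$ in~\eqref{expr-rat} to obtain a congruence modulo $K$, one writes down its $x\leftrightarrow y$ swap (using $K(x,y)=K(y,x)$) and \emph{adds} the two. The key identity, a consequence of the diagonal symmetry of $\cS$, is $\by\,\tV_+(xy)=x\,\tV_-(xy)$, so that the coefficient of $G(xy)$ in the sum becomes $-2\bigl(t\tV_0(xy)+t\bx\tV_+(xy)+tx\tV_-(xy)-1\bigr)=2K(x,y)$ and drops out modulo $K$, leaving $2xy-F(\bx)-F(\by)\equiv 0\pmod{K}$. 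This is condition~(2) with $\mathsf f(x)=\mathsf g(x)=\tfrac12 F(\bx)$. It is precisely this implication that lets one conclude that~(1) fails for the simple and diagonal models --- the two cases where your D-algebraicity argument is unavailable, since $C(x,y;t)$ is D-finite there. The converse $(2)\Rightarrow(1)$ is handled in the paper exactly as you propose: the classification from~\cite{BeBMRa-17} leaves only the four models, and one checks~\eqref{F-expr} directly.
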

   The explicit values of the rational functions $F(x)$ and $ G(y)$ are given in Table~\ref{tab:decoupling} for further reference.

  \begin{remark}
 It follows from Lemma~\ref{lem:divisibility} that if a rational function is divisible by  $K(x,y)$,  its numerator contains a  factor $xyK(x,y)$. But the converse is wrong, as shown by $xyK(x,y)/(x-t)$. Hence the term  ``stronger property'' used in the proposition. \qee
  \end{remark}

 \begin{table}[htb]
 %  \centering
   \makebox[\textwidth][c]{
     \begin{tabular}{|c||c|c|c||c|}
    \hline %\Tstrut \Tstrut
\rule{0pt}{6ex}   $\cS$ &
 \begin{tikzpicture}[scale=.4] % q6654
    \draw[->] (0,0) -- (1,1);
    \draw[->] (0,0) -- (-1,0);
    \draw[->] (0,0) -- (0,-1);
  \end{tikzpicture}% Kreweras 
&\begin{tikzpicture}[scale=.4] % q6654
    \draw[->] (0,0) -- (-1,-1);
    \draw[->] (0,0) -- (1,0);
    \draw[->] (0,0) -- (0,1);
  \end{tikzpicture}% reverse Kreweras  
 &\begin{tikzpicture}[scale=.4] % 
    \draw[->] (0,0) -- (-1,-1);
    \draw[->] (0,0) -- (1,1);
    \draw[->] (0,0) -- (-1,0);
    \draw[->] (0,0) -- (1,0);
    \draw[->] (0,0) -- (0,-1);
    \draw[->] (0,0) -- (0,1);
  \end{tikzpicture}% double Kreweras &&
 &   \begin{tikzpicture}[scale=.45] % DA
      \draw[->] (0,0) -- (0,-1);
    \draw[->] (0,0) -- (1,1);
    \draw[->] (0,0) -- (-1,0);
    \draw[->] (0,0) -- (0,1);
       \draw[->] (0,0) -- (1,0);
%    \draw[-] (0,-1) -- (0,-1) node[below] {\phantom{$\scriptstyle 1$}};
     \end{tikzpicture}
  \\
   $\tS$ &
   \begin{tikzpicture}[scale=.4] % q6654
    \draw[->] (0,0) -- (-1,-1);
    \draw[->] (0,0) -- (1,0);
    \draw[->] (0,0) -- (0,1);
  \end{tikzpicture}% reverse Kreweras
   &
      \begin{tikzpicture}[scale=.4] % q6654
    \draw[->] (0,0) -- (1,1);
    \draw[->] (0,0) -- (-1,0);
    \draw[->] (0,0) -- (0,-1);
  \end{tikzpicture}% Kreweras 
   &\begin{tikzpicture}[scale=.4] % 
    \draw[->] (0,0) -- (-1,-1);
    \draw[->] (0,0) -- (1,1);
    \draw[->] (0,0) -- (-1,0);
    \draw[->] (0,0) -- (1,0);
    \draw[->] (0,0) -- (0,-1);
    \draw[->] (0,0) -- (0,1);
  \end{tikzpicture}
   & \begin{tikzpicture}[scale=.45] % q6654
    \draw[->] (0,0) -- (1,1);
    \draw[->] (0,0) -- (-1,0);
    \draw[->] (0,0) -- (0,1);
    \draw[->] (0,0) -- (-1,-1);
    \draw[->] (0,0) -- (1,0);
 %   \draw[-] (0,-1) -- (0,-1) node[below] {\phantom{$\scriptstyle 1$}};
  \end{tikzpicture}
  \\
  % \Bstrut  \\
    \hline    
    % \Tstrut
    $t  \tV_0(y)+ 2tx \tV_+(y) -1$ &$ty+2tx-1$& 
                                                $ t\by+2txy-1$&$ t(\by+y) +2tx(1+y)-1$ &$ty+2xt(y+1)-1$ %\Bstrut
    \\ \hline
    %\Tstrut
    $F(x)$ & $1/t-2x$ & $-\bx^2+\bx/t-x$ &$ \frac{1+2t}{t(1+x)}-1-x-\bx $ & $\frac 1t - 2x-2\bx $   \Bstrut\\
    $G(y)$ &$1/t$ &$\by/t$& $ \frac 1 {t(1+y)}$ &
                                                  $ \frac {1-y}{ t(1+y)}$%\Bstrut \Bstrut
    \\
    $H(x,y)$ &$0$ &$\bx\by(x-y)/t$ & $\frac{x-y}{t(1+x)(1+y)}$
                                     & $-\frac{2y}{t(1+y)}$ \Bstrut
    \\ \hline
  \end{tabular}}
  \vskip 2mm
  \caption{Decoupling of $y$ in the form $y=(t  \tV_0(y)+ 2tx \tV_+(y) -1)G(y)+F(x)+\tK(x,y) H(x,y)$ for four symmetric models in the three-quadrant cone.}
   \label{tab:decoupling}
 \end{table}

\begin{proof}[Proof of Proposition~\ref{prop:dec-three-quadrants}]
  We first consider the eight models
  with no step $\nwarrow$ nor $\searrow$.  Assume that
   the numerator of~\eqref{expr-rat} contains a factor $xy\tK(x,y)$.  Since $K(x,y)= \tK(\bx, xy)$, it is equivalent to say that the numerator of 
  \[
    xy-\left(t \tV_0(xy)+2t\bx \tV_+(xy)-1\right) G(xy)-F(\bx)
  \]
  contains a factor $xyK(x,y)$. Since  $K(x,y)=K(y,x)$, the same holds for the numerator of
  \[
    xy-\left(t \tV_0(xy)+2t\by \tV_+(xy)-1\right) G(xy)-F(\by),
  \]
  and, by summing the two previous rational functions, for the numerator of
 \[
    2xy-2 \left(t \tV_0(xy)+t(\bx+\by) \tV_+(xy)-1\right) G(xy)-F(\bx)-F(\by).
  \]
  Since $S(x,y)=S(y,x)$, it follows from the expression~\eqref{V-def} of $S(x,y)$ that $\by \tV_+(xy)=x \tV_-(xy)$. Hence the coefficient of $G(xy)$ in the above function is in fact $2 K(x,y)$. Since the denominator of $G(xy)$ cannot contain a factor $xyK(x,y)$ (all steps of $\cS$ would be on the diagonal), we conclude that the numerator of $2xy-F(\bx)-F(\by)$ contains a factor $xyK(x,y)$, which means that Condition (b) holds with ${\sf f}(x)={\sf g}(x)= F( \bx)/2$.

% The second part of~\eqref{FG-f} follows from Condition (a), by specialization at $x=\tX_0$ and then $x=\tX_1$ (the rational function~\eqref{expr-rat} must vanish at these two points).

    It is straightforward to adapt the above argument to the diagonal case, where now $S(x,y)=\tS(\bx^2,xy)= x^2\tV_-(xy)+\tV_0(xy)+\bx^2 \tV_+(xy)$, with $\by^2\tV_+(xy)=x^2\tV_-(xy)$. One then finds a solution to Problem (b) with ${\sf f}(x)={\sf g}(x)=F(\bx^2)/2$.

  \medskip
  Conversely, assume that Condition (b) holds. As proved in~\cite[Sec.~4.2]{BeBMRa-17}, then $\cS$ is either one of three models of the Kreweras trilogy, or the $6$th model of Table~\ref{tab:sym}. Moreover, Condition~(b) then holds for a pair $({\sf f}, {\sf g})$ such that ${\sf f}={\sf g}$: it suffices to take for $
  {\sf f}$ the half-sum of the two functions $F$ and $G$ of
  \cite[Tables~4-5]{BeBMRa-17}, and this yields~\eqref{fg-dec-classic-expr}. We leave it to the reader to check
  that the functions $F(x)$ and $G(y)$ defined by~\eqref{F-expr}, and
  listed in Table~\ref{tab:decoupling}, then satisfy Condition~(a) of
  the lemma. Finally, since  ${\sf f}$, ${\sf g}$, $F$ and $G$ are Laurent polynomials in $t$, the same holds when dividing~\eqref{expr-rat}   by $\tK(x,y)$ or~\eqref{expr-rat-usual}   by $K(x,y)$, and the resulting ratios thus have poles of bounded order at zero. This establishes the claimed divisibility properties.
\end{proof}

 %=======================================================
 \subsection{A new pair of invariants}
  %=======================================================
We now restrict our attention to the four models of Table~\ref{tab:decoupling}, for which the conditions of Proposition~\ref{prop:dec-three-quadrants} hold. 
We return to the functional equation~\eqref{eq-U} that relates $U(x,y)$ and $D(y)$. We rewrite the first term $y$ in the right-hand side using Proposition~\ref{prop:dec-three-quadrants}, and obtain:
 \[
   \tK(x,y) \left(2xy U(x,y)-H(x,y)\right) = \big(t  \tV_0(y)+ 2tx \tV_+(y) -1\big) S(y)-R(x),
 \]
 with
 \[
   H(x,y)= \frac{y-(t  \tV_0(y)+ 2tx \tV_+(y) -1)G(y)-F(x)}{\tK(x,y)}
 \]
 and
 \beq\label{RS-def}
 S(y)=yD(y)+G(y), \qquad
 R(x)= 2tx\tH_-(x) U(x,0) -F(x) +tD_0 \mathbbm 1_{(0,-1)\in \tS}.
 \eeq
 The functions $F(x)$, $G(y)$, and $H(x,y)$ are those of Table~\ref{tab:decoupling}. Upon multiplying the above identity by
 \[
   \left(t  \tV_0(y)+ 2tx \tV_+(y) -1\right) S(y)+R(x),
 \]
 and using Lemma~\ref{lem:square}, we exhibit a new pair of invariants.

 \begin{prop} \label{prop:inv-tq}
   With the above notation, the pair $(I(x), J(y)):=(R(x)^2, \Delta(y) S(y)^2)$ is a pair of invariants for the step set $\tS$, in the sense of Definition~\ref {def:invariants}. More precisely,
   \beq%   \begin{multline}
   \label{IJ-id}
     \frac{     I(x)-J(y)}{ \tK(x,y) }
     = -4tx \tV_+(y)S(y)^2
 +    \left( H(x,y) -2xy U(x,y) \right)
 \big(  \left(t  \tV_0(y)+ 2tx \tV_+(y) -1\right) S(y)+R(x)\big),
\eeq %\end{multline}
where we recall that $\Delta(y)=\left(1-t\tV_0(y)\right)^2-4t^2\tV_-(y) \tV_+(y)$.
\end{prop}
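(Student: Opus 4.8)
The plan is to obtain the identity~\eqref{IJ-id} directly by algebraic manipulation of the rewritten functional equation, and then read off from it that $(I(x),J(y))$ is a pair of invariants. Recall that, thanks to Proposition~\ref{prop:dec-three-quadrants} applied to the first term $y$ in~\eqref{eq-U}, we have the relation
\[
  \tK(x,y) \left(2xy U(x,y)-H(x,y)\right) = \big(t  \tV_0(y)+ 2tx \tV_+(y) -1\big) S(y)-R(x),
\]
with $S(y)$, $R(x)$ and $H(x,y)$ as defined above. Abbreviate $P(x,y):= t  \tV_0(y)+ 2tx \tV_+(y) -1$ and $E(x,y):= 2xy U(x,y)-H(x,y)$, so that this reads $\tK(x,y)E(x,y)=P(x,y)S(y)-R(x)$.

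\textbf{Step 1: multiply by the conjugate.} First I would multiply both sides of the displayed identity by $P(x,y)S(y)+R(x)$. On the right-hand side this produces $P(x,y)^2 S(y)^2 - R(x)^2$, and on the left-hand side it produces $\tK(x,y)E(x,y)\big(P(x,y)S(y)+R(x)\big)$. Thus
\[
  P(x,y)^2 S(y)^2 - R(x)^2 = \tK(x,y)\,E(x,y)\big(P(x,y)S(y)+R(x)\big).
\]

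\textbf{Step 2: linearize $P^2$ using Lemma~\ref{lem:square}.} By Lemma~\ref{lem:square}, applied with the step set $\tS$ and the polynomials $\tV_+,\tV_0,\tV_-$ of~\eqref{HV-def-alt}, we have $P(x,y)^2 = \Delta(y) - 4tx\tV_+(y)\tK(x,y)$, where $\Delta(y)=(1-t\tV_0(y))^2-4t^2\tV_-(y)\tV_+(y)$. Substituting this into the left-hand side of Step~1 gives
\[
  \Delta(y)S(y)^2 - R(x)^2 - 4tx\tV_+(y)\tK(x,y)S(y)^2 = \tK(x,y)\,E(x,y)\big(P(x,y)S(y)+R(x)\big),
\]
hence, recalling $I(x)=R(x)^2$ and $J(y)=\Delta(y)S(y)^2$ and moving the $\tK$-multiple to the other side,
\[
  I(x)-J(y) = -\tK(x,y)\Big( 4tx\tV_+(y)S(y)^2 + E(x,y)\big(P(x,y)S(y)+R(x)\big)\Big).
\]
Dividing by $\tK(x,y)$ and substituting back $E(x,y)=2xyU(x,y)-H(x,y)$ yields exactly the claimed identity~\eqref{IJ-id}.

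\textbf{Step 3: check divisibility.} It then remains to verify that the right-hand side of~\eqref{IJ-id} has poles of bounded order at $0$, so that $(I(x),J(y))$ is a pair of $\tS$-invariants by Definition~\ref{def:invariants}. Here $S(y)=yD(y)+G(y)$, with $D(y)\in\qs[y][[t]]$ and $G(y)$ a Laurent polynomial in $t$ with a pole at $y=0$ of bounded order (see Table~\ref{tab:decoupling}); similarly $R(x)=2tx\tH_-(x)U(x,0)-F(x)+tD_0\mathbbm 1_{(0,-1)\in\tS}$ where $U(x,0)\in\qs[x][[t]]$ and $F(x)$ is a Laurent polynomial in $t$ with a pole of bounded order at $x=0$; and $H(x,y)$ is divisible by $\tK$ in the strong sense of Proposition~\ref{prop:dec-three-quadrants}, hence has poles of bounded order at $0$. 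The factors $\tV_+(y)$, $P(x,y)=t\tV_0(y)+2tx\tV_+(y)-1$ are Laurent polynomials, and $xyU(x,y)$ has no pole at $0$ since $U(x,y)\in\qs[x,y][[t]]$. Since the series with poles of bounded order at $0$ form a ring (and products of polynomials with such series stay in the ring), every summand on the right-hand side of~\eqref{IJ-id} lies in that ring. Thus $I(x)-J(y)$ is divisible by $\tK(x,y)$, which is the definition of a pair of invariants. The main point requiring care is bookkeeping the orders of the poles at $x=0$ and $y=0$ contributed by $F$, $G$, $H$, $\tH_-$ and $\tV_+$; none of it is deep, but it is what makes the divisibility claim literally true rather than merely formal. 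I expect no genuine obstacle beyond this routine verification, since Steps~1 and~2 are purely formal given Lemma~\ref{lem:square} and Proposition~\ref{prop:dec-three-quadrants}.
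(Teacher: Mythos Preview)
Your proposal is correct and follows essentially the same approach as the paper: the identity~\eqref{IJ-id} is obtained exactly by multiplying the rewritten functional equation by the conjugate $P(x,y)S(y)+R(x)$ and invoking Lemma~\ref{lem:square}, and the invariance claim then reduces to the routine check that each factor on the right-hand side has poles of bounded order at~$0$. Your write-up is actually a bit more explicit than the paper's, which simply calls the identity ``straightforward'' and focuses on the pole-order verification.
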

 \begin{proof}
The identity is straightforward, and  we only need to check that all series that occur on the right-hand side have poles of bounded order at $0$. First, this holds for the series    $F$, $G$ and~$H$; see Proposition~\ref{prop:dec-three-quadrants} or Table~\ref{tab:decoupling}. Then $D(y)$ belongs to $\qs[y][[t]]$, $U(x,y)$ to $\qs[x,y][[t]]$, and $\tH_-(x)$ is a Laurent polynomial in $x$. Thus the series $R(x)$ and $S(y)$ defined by~\eqref{RS-def} have poles of bounded order at $0$. Since $\tV_0(y)$ and $\tV_+(y)$ are Laurent polynomials in $y$, we conclude that the right-hand side of~\eqref{IJ-id} has poles of bounded order at $0$.
 \end{proof}

 \bigskip
 \begin{remark}
 There is a useful alternative expression of $R(x)$, and thus of $I(x)=R(x)^2$. Let us return to the basic functional equation~\eqref{eqfunc-gen} for $C(x,y)$, written at $(\bx, \bx)$, and observe that $C_-(x)=xU(x,0)$ and $C_{0,0}=D_0$ (see~\eqref{C-split}). This gives:
 \begin{align*}
   K(\bx,\bx)C(\bx,\bx)&=1-2tx^2 H_-(\bx)U(x,0)-tx^2 D_0  \mathbbm 1_{(-1,-1)\in \cS}\\
                       &= 1- x^2\left(R(x) + F(x)\right)  \hskip 40mm \hbox{by~\eqref{RS-def} and~\eqref{rewrite}}\\
                       &=  -x^2\left(R(x) + \frac{K(\bx,\bx)}{t H_-(\bx)}\right) \hskip 38mm \hbox{by~\eqref{F-expr}}.
 \end{align*}
We conclude that
 \[
   R(x)=- K(\bx, \bx) \left(\bx^2 C(\bx, \bx)+ \frac 1 {tH_-(\bx)}\right).
 \]
 In particular, for $x=1$ we obtain:
 \beq\label{I1-C11}
 R(1)=
 % \sqrt{I(1)}=
 -(1-|\cS|t) \left( C(1,1) + \frac 1  {tH_-(1)}\right).
 \eeq
 Since  we are going to provide the value of $I(x)=R(x)^2$  explicitly, we will obtain  an expression for the square of the above series. This is reflected in the characterization of $C(1,1)$ already given for  Kreweras' model; see~\eqref{C11-K}. Further illustrations are~\eqref{C11-RK}, \eqref{C11-DK}, \eqref{C11-fork}. \qee
  \end{remark}

 %===============================================================
 \subsection{General strategy}
  %===============================================================
  Let us now describe the strategy that we are going to apply to each of the four models of Table~\ref{tab:decoupling}, in Sections~\ref{sec:K} to~\ref{sec:DA}. For each of these models, we have at hand two, or sometimes three, pairs of invariants for the step set $\tS$:
 \begin{itemize}
      \item  the pair $(I(x),J(y))$ from Proposition~\ref{prop:inv-tq}; it involves the series $U(x,0)=\bx C_-(x)$ and $D(y)$, which are keys in the determination of the \gf\ $C(x,y)$ of walks avoiding the negative quadrant;
 \item  a rational one, $(I_0(x), J_0(y))$, for the three models of the Kreweras trilogy  (Table~\ref{tab:inv-qu-1});
 \item for all four models, the pair   $(I_1(x), J_1(y))$ defined by~\eqref{I1J1-def}, where the functions $f$ and~$g$ are given in Tables~\ref{tab:inv-qu-1} and~\ref{tab:inv-qu-2}. This pair of invariants involves  series counting quadrant walks  with steps in $\tS$, and these series are known. They are algebraic for the Kreweras trilogy~\cite{BMM-10}, and D-algebraic for the last model~\cite{BeBMRa-17}.
\end{itemize}
Using Lemma~\ref{lem:linear}, we are going to form polynomial combinations of these pairs to construct a new pair of invariants involving $U(x,0)$ and $D(y)$ and satisfying the condition of Lemma~\ref{lem:invariants} --- and thus trivial. This will give us expressions of $I(x)$ and $J(y)$, hence $U(x,0)$ and
$D(y)$, in terms of some known quadrant series. For the Kreweras trilogy,  an alternative would be to construct trivial invariants based on $(I(x), J(y))$ and $(I_0(x),J_0(y))$ only, as is done in~\cite{BeBMRa-FPSAC-16,BeBMRa-17} to (re)derive quadrant \gfs. But exploiting the three pairs that we have at hand yields more direct derivations.
%

%%%%%%%%%%%%%%%%%%%%%%%%%%%%%%%%%%%%%%%%%%%%%%%%%%%%%%%%%%%%%
 \section{Kreweras steps}
 \label{sec:K}
 %%%%%%%%%%%%%%%%%%%%%%%%%%%%%%%%%%%%%%%%%%%%%%%%%%%%%%%%%%%%%

 In this section  we  take $\cS= \{\nearrow, \leftarrow, \downarrow\}$, so that $\tS=\{\rightarrow, \uparrow, \swarrow\}$. We prove the results that were stated at the end of  the introduction (Section~\ref{sec:K-statements}): first the exact results in Section~\ref{sec:GF-K-proofs}, then the asymptotic ones in Section~\ref{sec:K-asympt-proofs}. We refer to the \Maple\ session available on the author's~\href{http://www.labri.fr/perso/bousquet/publis.html}{webpage} for details of the calculations.

 %========================================
 \subsection{Generating functions}
 \label{sec:GF-K-proofs}
 %========================================
 To begin with, we establish the expressions of $U(x,0)$ and $D(y)$.

\begin{proof}[Proof of Theorems~\ref{thm:K-U} and~\ref{thm:K-D}.]
 
 The first equation in Theorem~\ref{thm:K-U} is of course the basic functional equation~\eqref{eqfunc-gen}. The invariants of Proposition~\ref{prop:inv-tq} are:
 \beq\label{IJ-K}
   I(x)=\left( 2tU(x,0)+2x-\frac 1 t \right)^2, \qquad J(y)=\Delta(y)\left(yD(y)+ \frac 1 t \right)^2
 \eeq
with
 $
   \Delta(y)= (1-ty)^2-4t^2\by.
   $
The rational invariants $(I_0,J_0)$ are given in Table~\ref{tab:inv-qu-1} (but we will not use them, in fact).  The invariants related to quadrant walks with steps in $\tS$, defined by~\eqref{I1J1-def}, are
   \beq\label{I1-K}
   I_1(x)=t\tQ(x,0)-x/t+x^2, \qquad J_1(y) = -t\tQ(0,y)-\by +t\tQ_{0,0}.
 \eeq
 They satisfy
 \beq\label{ratio:K1}
     I_1(x)-J_1(y)=-\frac x t \tK(x,y) \left( 1+ty \tQ(x,y)\right).
\eeq
 We want to construct a pair $\left(\tilde I(x), \tilde J(y)\right)$ of invariants satisfying the condition of Lemma~\ref{lem:invariants}: the ratio $(\tilde I(x)-\tilde J(y))/\tK(x,y)$ should be a multiple of $xy$ (here we are abusing terminology, since the property that we actually require involves the coefficient of $t^n$ for each $n$). The ratio  $(I(x)-J(y))/\tK(x,y)$ is the right-hand side of~\eqref{IJ-id}. It is easily seen to be a Laurent series in $t$ with coefficients in $\qs[x, y]$, and in fact a multiple of $x$. It  equals $-4x/t$ at $y=0$.
 Similarly, the ratio $(I_1(x)-J_1(y))/\tK(x,y)$, derived from~\eqref{ratio:K1}, is a multiple of $x$
 % belongs to $x \qs[x,y][[t]]$
 and equals $-x/t$ at $y=0$. This leads us to introduce the following pair of invariants:
 \[
\left(\tilde I(x), \tilde J(y)\right):=   \big(I(x)-4I_1(x), J(y)-4J_1(y)\big).
 \]
 By Lemma~\ref{lem:invariants}, this pair of invariants is trivial, of the form $(A,A)$ where $A$ is a series in $t$. Returning to the explicit values~\eqref{IJ-K} and~\eqref{I1-K} of our invariants, this gives
 \beq\label{eqinv-K}
 \begin{aligned}
 I(x)-4I_1(x)&=  \left( 2tU(x,0)+2x-\frac 1 t \right)^2-4t\tQ(x,0)+4x/t-4x^2 =A,
   \\
   J(y)-4J_1(y)&= \Delta(y)\left(yD(y)+ \frac 1 t \right)^2 +4 t\tQ(0,y)+4\by -4t\tQ_{0,0} =A.
 \end{aligned}
 \eeq
 To complete the solution, it suffices to:
 \begin{itemize}
 \item inject the known algebraic expressions of $\tQ(x,0)$ and $\tQ_{0,0}$, taken for instance from Proposition~14 in~\cite{BMM-10} (recall that $\tQ(0,y)=\tQ(y,0)$). These expressions involve a series in $t$ denoted $W$ in~\cite{BMM-10}, which is the series $\PIK$ of Theorem~\ref{thm:K-U}. We refer to the \Maple\ session  for details;
 \item specialize the second identity obtained in this way, of the form
   \[
     \Delta(y)\left(yD(y)+ \frac 1 t \right)^2 -4J_1(y) =A,
   \]
(where $J_1(y)$ is now explicit),   at $y=\PIK^2$: this is the (only) root of $\Delta(y)$ lying in $\qs[[t]]$.  This  gives:
   \[
     A= -4 J_1(\PIK^2)=2\,{\frac { (1-\PIK^3)^{3/2}}{{\PIK}^{2}}}
     +{\frac {{\PIK}^{6}+12\,{\PIK}^{3}+8}{4{\PIK}^{2}}}
.
\]
   \end{itemize}
   These two ingredients yield, after elementary manipulations involving the equation $\PIK=t(2+\PIK^3)$, the expressions announced in Theorems~\ref{thm:K-U} and~\ref{thm:K-D}, which are those of $I(x)/2$ and $J(x)/2$  (recall that $U(x,0)=\bx C_-(x)$).

   Since $\PIK$ has degree $3$ over $\qs(t)$,  it follows from these expressions that $C_-(x)$ and $D(x)$ have degree at most $24$. This bound is proved to be tight  by computing explicit minimal polynomials, at $x=2$ for instance (this is lighter than keeping the indeterminate $x$). Now the identities of Theorem~\ref{thm:K-U} shows that $C(x,y)$ will have degree $24\times 4=96$ at most, and at any rate the same degree as $xC_-(\bx)+yC_-(\by)=U(\bx,0)+U(\by,0)$. So we only need to determine the degree of $U(x,0)+U(y,0)$, which we do at $x=2$ and $y=3$ (by successive eliminations as before). We find it to be $96$ indeed, which concludes the proof of Theorems~\ref{thm:K-U} and~\ref{thm:K-D}.
    \end{proof}

    Now we prove the results that deal with walks ending at a prescribed position $(i,j)$.
    
\begin{proof}[Proof of Corollary~\ref{cor:excursions-K}]
We begin with walks ending on the diagonal, the \gf\ of which is given in Theorem~\ref{thm:K-D}. We consider the series 
\beq\label{diag-K-shift}
x D(x/\PIK) + \frac \PIK t 
\eeq
which, due to the periodicity of the model and the fact that $\PIK/t$ is a series in $t^3$, is also a series in~$t^3$ (or equivalently, in $\PIK^3$) with coefficients in $\qs[x]$. Using  Theorem~\ref{thm:K-D} and the identity $t=\PIK/(2+\PIK^3)$, we can write:
\begin{align*}
  \left( x D(x/\PIK) + \frac \PIK t \right) ^2 &= \PIK^2 \frac{J( x /\PIK)}{\Delta( x /\PIK)}
  \\
 & =\frac {2(2+\PIK^3)^2}{(x-\PIK^3)(4-4x-x\PIK^3+x^2)} %\label{Jmod}
   \\&\quad \times  \left( x(1-\PIK^3)^{3/2} +(1-x)^2 (x- \PIK^3) - ( \PIK^3+x\PIK^3-2x^2) \sqrt{1- \frac{4+\PIK^3} 4 x +\frac{x^2}4}\right).
 \end{align*}
 The right-hand side appears as a formal power series in $x$, with constant term $(2+\PIK^3)^2$ and coefficients in $\qs[  \sqrt{1-\PIK^3},1/\PIK^3]$. Upon taking square roots,
 we conclude that the series~\eqref{diag-K-shift} is also a series in $x$ with coefficients in $\qs[\sqrt{1-\PIK^3},1/\PIK^3]$. For $i\ge 0$, the coefficient of $x^{i+1}$ in this series is  $C_{i,i}/\PIK^{i}$.  We have thus proved that
 \beq\label{Cdiag-i}
 D_i=C_{i,i}\in \PIK^{i} \qs\left[  \sqrt{1-\PIK^3},1/\PIK^3\right], \quad \text{so that}\quad 
 C_{i,i}\in t^i \qs\left[  \sqrt{1-\PIK^3},1/\PIK^3\right].
 \eeq
 Since  $\PIK=t(2+\PIK^3)$, and  $\sqrt{1-\PIK^3}=1-2\PIIK$ by definition of $\PIIK$,   the part of the corollary dealing with walks that end on the diagonal follows. We still retain the above more precise statement for further use. We obtain in particular the values of $C_{0,0}$ and $C_{1,1}$ given below Corollary~\ref{cor:excursions-K}.  To obtain the claimed result for walks ending just above of below the diagonal ($j-i=\pm1$, with $i,j\ge 0$), it suffices to note that $C_{i,i}-tC_{i-1,i-1}=2t C_{i,i+1} + \mathbbm1_{i=0}$ due to the choice of steps. In particular, we thus obtain the value of $C_{0,1}$ given below Corollary~\ref{cor:excursions-K}.

We go on with walks ending on the negative $x$-axis, the \gf\ of which is given in Theorem~\ref{thm:K-U}. By periodicity, the series
\beq\label{x-axis-K-shift}
\PIK \left(2t U(x/\PIK)+2x/\PIK-\frac 1 t\right) =
 \PIK \left(2t \bx \PIK C_-(x/\PIK)+2x/\PIK-\frac 1 t\right) 
\eeq
is a series in $t^3$ with coefficients in $\qs[x]$. Using  Theorem~\ref{thm:K-U}, and the definition of $\PIIK$, we first rewrite its square in terms of $x$ and  $\PIIK$:
\begin{multline*}
 \PIK^2 \left(2t \bx \PIK C_-(x/\PIK)+2x/\PIK-\frac 1 t\right) ^2=  \PIK^2 I\left(\frac x \PIK\right)
   \\=2\bx   \left( x(1-\PIK^3)^{3/2} +(1-x)^2 (x- \PIK^3) + ( \PIK^3+x\PIK^3-2x^2) \sqrt{1- \frac{4+\PIK^3} 4 x +\frac{x^2}4}\right).
 \end{multline*}
 The right-hand side is a \fps\ in $x$ with constant term $2(1-\PIK^3)^{3/2}+2+5\PIK^3-\PIK^6/4=4(1-\PIIK^2)(1+\PIIK)^2$, while the coefficients of higher powers of $x$ are polynomials in $\PIK^3$. Given that $\PIK^3=4\PIIK(1-\PIIK)$, we conclude that~\eqref{x-axis-K-shift} has its coefficients in $\sqrt{1-\PIIK^2}\, \qs[\PIIK, 1/(1-\PIIK^2)]$.
 For $i\ge 1$, the coefficient of  $x^{i-1}$ in~\eqref{x-axis-K-shift} is
\[
  2t\PIK^2 C_{-i,0}/\PIK^{i} -\frac{\PIK} t \mathbbm 1_{i=1} + 2 \mathbbm1_{i=2}.
\]
Hence, using once again $\PIK=t (2+\PIK^3)$, we find that
\beq\label{Cmi0}
 C_{-i,0} \in t^i \qs\left[\sqrt{1-\PIIK^2}, \PIIK, 1/\PIK^3, 1/(1-\PIIK^2)\right].
  \eeq
  We now introduce the series $\PIIIK$, which satisfies  $\sqrt{1- \PIIK^2}= (1-\PIIIK^2)/(1+\PIIIK^2)$. The corollary now follows for walks that end on negative $x$-axis.
  %We still retain for further reference the following more precise result:
  We obtain in particular the expression~\eqref{Cm10-K} of $C_{-1,0}$.
  
  Let us finally prove the corollary for any point $(i,j)$ in the three quadrant cone. Without loss of generality, we assume $j\ge i$
  % (which forces $j\ge 0$)
  and argue by induction on $j\ge -1$. 
   If $j=-1$, then $i<0$ and the result is obvious because $C_{i,j}=0$.  The corollary has also been proved for $j=0$, $j=i$ and $j=i+1$.  Now assume that $j\ge 1$ and $j\ge i+2$. We clearly have
\beq\label{id-rec-K}
  C_{i,j-1} = t \left( C_{i-1, j-2} + C_{i+1,j-1} + C_{i,j} \right),
\eeq
and the result holds for the three series $C_{i,j-1}, C_{i-1, j-2} $ and $ C_{i+1,j-1} $ by the induction hypothesis. Thus it holds for $C_{i,j}$ as well.
\end{proof}

%========================================
\subsection{Asymptotic results}
\label{sec:K-asympt-proofs}
%========================================
We go on  with the determination of the harmonic function.

\begin{proof}[Proof of Corollary~\ref{cor:harmonic-K}]
  Let us first assume that we have established an estimate of the form~\eqref{cij-est} for $i=j\ge 0$ and for $i<0, j=0$.
  For $(i,j)\in \Cc$, fix a walk $w$ from $(0,0)$ to $(i,j)$ in $\cC$,
  and let $k$ be its length, with $k+i+j\equiv 0$ mod $3$. Then
  $c_{i,j}(3m+k)\ge c_{0,0}(3m)$ (because one can concatenate~$w$ with a walk in $\cC$ starting and ending at $(0,0)$, translated so that its starting point is $(i,j)$).  Given the estimation of $c_{0,0}(3m)$, the numbers $c_{i,j}(n)$ cannot be $o(3^n n^{-7/4})$.

Now for $i\ge 0$, 
  \[
    c_{i,i}(n+1)=c_{i-1,i-1}(n)+ 2c_{i,i+1} (n).
  \]
This identity implies that an estimate of the form~\eqref{cij-est} also holds  for $j=i+1 \ge 1$, with
  \beq\label{H-surdiag}
    3H_{i,i}=H_{i-1,i-1}+2H_{i,i+1}.
  \eeq
 Here  we use the fact  that  $ 3H_{i,i}-H_{i-1,i-1}\not =0$;  otherwise $c_{i,i+1} (n)$ would be $o(3^n n^{-7/4})$, which we have excluded.   Then the induction on $j$, with $j\ge i$, that we have just used in the proof of the previous corollary establishes~\eqref{cij-est} for all $i$ and $j$, based on the following version of~\eqref{id-rec-K}:
  \[
 c_{i,j-1}(n+1) = c_{i-1, j-2}(n) + c_{i+1,j-1} (n)+ c_{i,j} (n).
\]

The identity~\eqref{harmonic} saying that $H$ is harmonic similarly follows from
  \[
 c_{i,j}(n+1) = c_{i-1, j-1}(n) + c_{i+1,j} (n)+ c_{i,j+1} (n).
\]

Let us now define $\Hc(x,y)$, $\Hc_-(x)$ and $\Hc_d(y)$ in terms of the numbers $H_{i,j}$ as in Corollary~\ref{cor:harmonic-K}. Then, using the harmonicity of $H$, we have:
  \allowdisplaybreaks
  \begin{align*}
    \Hc(x,y)& =\frac 1 3 \sum_{j\ge 0, i\le j } \left( H_{i-1,j-1}+ H_{i+1,j} + H_{i,j+1}\right)x^{j-i} y^{j}
    \\             &= \frac 1 3 \sum_{j\ge -1, i\le j }  H_{i,j}x^{j-i} y^{j+1}+ \frac 1 3 \sum_{j\ge 0, i\le j+1 } H_{i,j}x^{j-i+1} y^{j} +  \frac 1 3 \sum_{j\ge 1, i\le j-1 }H_{i,j}x^{j-i-1} y^{j-1}
    \\                     &=\frac y 3 \Hc(x,y) +\frac x 3\left( \Hc(x,y) + \bx\sum_{j \ge 0} H_{j+1,j} y^j\right) +\frac {\bx \by}3\left( \Hc(x,y)- \sum_{i
                             >0} H_{-i,0} x^{i}- \sum_{i\ge 0} H_{i,i} y^i\right)
    \\ &= \frac 1 3 \left( (y+x+\bx\by)\Hc(x,y)+ \sum_{j \ge 0} H_{j+1,j} y^j- \bx\by \Hc_-(x)-\bx\by \Hc_d(y)\right)
         \\ &=\frac 1 3 \left( (y+x+\bx\by)\Hc(x,y)+ \frac {3-y} 2 \Hc_d(y)- \bx\by \Hc_-(x)-\bx\by \Hc_d(y)\right),
  \end{align*}
  where we have used~\eqref{H-surdiag} to express the final sum. This gives~\eqref{H-eq} upon regrouping terms.

  \smallskip

  At this stage, it remains to prove~\eqref{cij-est} for $i=j\ge 0$ and for $i<0, j=0$, and to establish the announced values of the corresponding \gfs\ $\Hc_d(y)$ and $\Hc_-(x)$. Let us begin with the numbers $c_{i,i}(n)$, for $n$ of the form $i+3m$. Recall the form of $C_{i,i}/t^i$ given by~\eqref{Cdiag-i}.
    An elementary singular analysis of the series $\PIK$
     (seen as a function of $t^3$) shows that it has a unique dominant singularity, located at $t^3=1/27$. It increases on the interval $[0,1/27]$, and we have  there the following singular expansions:
  \begin{align}
    \PIK ^3&= 1-\sqrt 3 \sqrt{1-27t^3} + \LandauO(1-27t^3),  \label{PIK3-sing} \\
    \sqrt{1-\PIK^3} &= 3^{1/4} (1-27t^3)^{1/4}+ \LandauO\left((1-27t^3)^{3/4}\right). \nonumber
  \end{align}
Moreover, the equation $\PIK=t(2+\PIK^3)$ shows that $\PIK$ cannot vanish on its disk of convergence. In sight of~\eqref{Cdiag-i}, $C_{i,i}/t^i$ is thus an algebraic series in $t^3$, with radius of convergence at least $1/27$, taking a finite value at this point, and we expect a singular behaviour in $(1-27t^3)^{\alpha}$ for some $\alpha\ge 1/4$. Note that there cannot be any singularity other than $1/27$ on the circle of radius $1/27$, because this is the only value of $t^3$ for which $\PIK$ equals $1$.

  We now  use Theorem~\ref{thm:K-D} to express the series $(yD(y/t)+1)^2$ (which is a series in $t^3$) in terms of $\PIK^3$. Plugging in this expression the above expansion of $\PIK^3$ then gives:
  \begin{align*}
    (yD(y/t)+1)^2  &= t^2 \frac{J(y/t)}{\Delta(y/t)}\\
    & =    2\frac{1-3y}{4-3y} +\frac{1+6y}{\sqrt{(1-3y)(4-3y)}}
 -   6\cdot{3}^{3/4} y\,{\frac {{(1-27t^3)}^{3/4}}{ \left(1- 3\,y\right) ^{2}
 \left(4- 3\,y \right) }} + \LandauO(1-27t^3).
  \end{align*}
We refer once again to the \Maple\ session for details. It follows that as $t^3$ approaches $1/27$,
\beq\label{Dyt-sing}
  D(y/t) = c_0(y) + c_1(y) (1-27t^3)^{3/4}+ \LandauO(1-27t^3),
\eeq
where $c_0(y)$ is an algebraic function of $y$ that we do not need to make explicit, and
\[
  c_1(y)= -\frac 1{3^{1/4}(1-3y)\sqrt{y(4-3y)}}\sqrt{\frac{1+6y}{1-3y} \sqrt{\frac{4-3y}{1-3y}}-2}.
\]
By extracting from~\eqref{Dyt-sing} the coefficient of $y^i$, we obtain
\[
  C_{i,i} = t^i [y^i] c_0(y) + t^i [y^i] c_1(y) (1-27t^3)^{3/4}+ \LandauO(1-27t^3),
\]
so that, for $n=i+3m$,
\begin{align*}
  c_{i,i}(n) &\sim \frac{ [y^i] c_1(y) } {\Gamma(-3/4)}\, 27^m {m^{-7/4}}\\
             &\sim \frac{ 3^{-i+7/4}[y^i] c_1(y) } {\Gamma(-3/4)}\, 3^n{n^{-7/4}}\\
  & \sim -\frac{H_{i,i}}{\Gamma(-3/4)}\, 3^n{n^{-7/4}}
\end{align*}
where
$
  H_{i,i}=- 3^{7/4} [y^i] c_1(y/3).
$
The announced expression of $\Hc_d(y)=- 3^{7/4} c_1(y/3)$ follows.

Note that the above argument would fail  if the coefficient of $y^i$ in $c_1(y)$ were zero. But in this case $c_{i,i}(n)$ would be $ o(3^n n^{-7/4})$, which we have excluded at the beginning of the proof.

\smallskip
We proceed similarly for walks ending on the negative $x$-axis. Equation~\eqref{Cmi0} gives the form of $C_{-i,0}/t^i$ in terms of the series $\PIK$ and $\PIIK$. As $\PIK^3$, the series $\PIIK$ has a unique dominant singularity at $t^3=1/27$. It increases on the interval  $[0, 1/27]$ and as $t^3$ approaches $1/27$,
\beq\label{PIIK-sing}
\begin{aligned}
    \PIIK&= \frac 1 2 - \frac{3^{1/4}}2 (1-27t^3)^{1/4} + \LandauO\left((1-27t^3)^{3/4}\right),\\
   1-\PIIK^2&=\frac 3 4 + \frac{3^{1/4}}2 (1-27t^3)^{1/4}+ \LandauO\left(\sqrt{1-27t^3}\right).
 \end{aligned}
 \eeq
Since $\PIIK$ has non-negative coefficients, and equals $1/2$ when $t^3=1/27$, it remains away from the values $\pm1$ on the disk $|t^3|<1/27$. Hence by~\eqref{Cmi0}, the series $C_{-i,0}/t^i$ has radius of convergence at least $1/27$, and no other singularity of modulus $1/27$.

Then we consider the series  $2C_-(x/t)+2x^2/t^3-x/t^3= x^2 I(x/t)/t^4$, which is a series in $t^3$. We express its square in terms of $\PIK^3$ using Theorem~\ref{thm:K-U}, and inject the singular expansion of $\PIK^3$. This gives
\begin{multline}\label{Imodsing}
    \left(2C_-(x/t)+2x^2/t^3-x/t^3\right)^2=
27x(1-3x)(1+6x)\sqrt{(1-3x)(4-3x)}\\  -54 x(1-3x)^3
+ 162\cdot {3}^{3/4}\,{x}^{2}(1-27t^3)^{3/4} + \LandauO(1-27t^3).
\end{multline}
We must be a bit careful with the sign when taking square roots. Indeed, both sides are $\LandauO(x^2)$, and the coefficient of $x^2$ in the right-hand side is
\[
  {\frac{2187}{4}}+ \LandauO((1-27t^3)^{3/4}),
  \]
  and this is the singular expansion of $(2C_{-1,0}/t-1/t^3)^2$ at $t^3=1/27$. But it follows from the expression~\eqref{Cm10-K} of $C_{-1,0}$, evaluated at $t=1/3$ (where $\PIIIK=2-\sqrt 3$), that at this point, $2C_{-1,0}/t-1/t^3= -27 \sqrt 3/2$, with a minus sign. Hence the square root of the right-hand side of~\eqref{Imodsing} is $-(2C_-(x/t)+2x^2/t^3-x/t^3)$, and  as $t^3$ approaches $1/27$, 
\[
  C_-(x/t) = \tilde c_0(x)+\tilde c_1(x) (1-27t^3)^{3/4} + \LandauO(1-27t^3),
\]
where 
\[
  \tilde c_1(x)=- \frac{3^{5/4} x}2 \sqrt{\frac{1+6x}{1-3x}\sqrt{\frac{4-3x}{1-3x}}+2}
  .\]
As we have just done for walks ending on the diagonal, we  conclude that for $n$ of the form $i+3m$ and $i>0$,
\[
  c_{-i,0}(n) \sim -\frac{H_{-i,0}}{\Gamma(-3/4)}\, 3^n{n^{-7/4}},
\]
where
$
  H_{-i,0}=- 3^{7/4} [x^i] \tilde c_1(x/3).
$
The announced expression of $\Hc_-(x)=- 3^{7/4} \tilde c_1(x/3)$ follows.
\end{proof}

\begin{remark}
  \label{rem:invariants-harmonic}
  \noindent{\bf An invariant approach for harmonic functions.} Let us now explain how we can predict the relation that we observed below Corollary~\ref{cor:harmonic-K} between the $\cS$-harmonic function $H_{i,j}$ in $\Cc$ and the $\tS$-harmonic function $h_{i,j}$ in $\Qc$.
  
Assume that an asymptotic estimate of the form~\eqref{cij-est} holds for the numbers $c_{i,j}(n)$.  Then the function $H_{i,j}$ is $\cS$-harmonic in $\cC$,  symmetric in the first diagonal, and the series $\Hc(x,y)$ defined by~\eqref{H-def} satisfies~\eqref{H-eq}. Denoting by $c(x,y)=\frac 1 2 \left(2+xy^2-3xy\right) $ the coefficient of $\Hc_d(y)$ in this equation, we observe that
\[
  c(x,y)^2= \frac {x^2\delta(y)} 4-3xy \tK(x,y;1/3) = \frac {x^2\delta(y)} 4 + (1+xy^2+x^2y-3xy),
\]
where $\delta(y)=y(y-4)(y-1)^2$ is the discriminant of $3xy\tK(x,y;1/3)$ with respect to $x$. This identity is of course closely related to Lemma~\ref{lem:square}.  After multiplying~\eqref{H-eq} by
\[
 \bx^2\left( \Hc_-(x) - c(x,y)\Hc_d(y)\right),
\]
we obtain
\beq\label{eq-H-barre}
  (1+xy^2+x^2y-3xy)\btH(x,y)=
  \bx^2\,\Hc_-(x)^2-  \frac {\delta(y)} 4\Hc_d(y)^2,
  \eeq
  where
   \beq\label{btH}
    \btH(x,y):=  \bx^2 \,\Hc ( x,y ) \left(\Hc_-(x) - c(x,y)\Hc_d(y) \right)  +\bx^2\, {\Hc_d}( y)   ^{2}.
  \eeq
Using the functional equation~\eqref{H-eq} satisfied by $\Hc(x,y)$,   we can  check that $\btH(x,y)$ is a \fps \ in $x$ and $y$, and that moreover,
\beq\label{H-Hbar}
  \btH(x,0)= \bx^2\,\Hc_-(x)^2 \qquad \text{and} \qquad 
  \btH(0,y)= -\frac {\delta(y)} 4\, \Hc_d(y)^2+H_{-1,0}^2.
  \eeq
  In particular, if we denote  $\btH(x,y)=\sum_{i,j\ge 0} \bh_{i,j} x^i y^j$, we have $\btH(0,0)=\bh_{0,0}=H_{-1,0}^2$. Equation~\eqref{eq-H-barre} thus reads
  \[
    (1+xy^2+x^2y-3xy)\btH(x,y)=\btH(x,0)+\btH(0,y)-\btH(0,0),
  \]
  which precisely means that the function $(\bh_{i,j})_{(i,j)\in \Qc}$
  is $\tS$-harmonic in the quadrant $\Qc$. 

Now  recall that  there exists a unique \emm positive, $\tS$-harmonic function $h_{i,j}$ in $\Qc$ (up to a multiplicative constant)~\cite{raschel-harmonic}. The associated \gf\ is $\tH(x,y)=\sum_{i,j\ge 0} h_{i,j} x^i y^j$, where $\tH(x,0)=\tH(0,x)$ is given by~\eqref{harmonic-RK-quadrant}. Hence, if we assume that the above series $\btH(x,y)$  has positive coefficients, then there exists a positive constant $\kappa$ such that $\btH(x,y)=\kappa^2 \tH(x,y)$. 
Returning to~\eqref{H-Hbar}, we thus predict in particular that
  \[
  \Hc_-(x) =\kappa x \sqrt{\tH(x,0)} \qquad \text{and} \qquad 
   \Hc_d(y)=2\kappa\,   \sqrt{-\frac { \tH(0,y)-\tH(0,0)}{ \delta(y)}},
 \]
 where the constant $\kappa$ must be
 \[
   \kappa = \frac{H_{-1,0}}{\sqrt{h_{0,0}}}=  \frac{H_{0,0}}{\sqrt{h_{0,1}}}.
 \]
Using the expression~\eqref{harmonic-RK-quadrant} of $\tH(x,0)=\tH(0,x)$, we can now check from the  expressions~\eqref{Hminus-K} and~\eqref{Hd-K} of $\Hc_-(x)$ and $\Hc_d(y)$ that this prediction indeed holds true,  with $\kappa=3$.
 \qee
\end{remark} 
    
    We finally complete this section with the enumeration of all walks avoiding the negative quadrant, regardless of their final position.

\begin{proof}[Proof of Corollary~\ref{cor:all-K}]
  We specialize the equations of Theorem~\ref{thm:K-U} at $x=y=1$. The first one gives the link between $C(1,1)$ and $C_-(1)$ shown on the first line of~\eqref{C11-K} (this is in fact $I(1)/2$), 
  and the second one yields the expression on the second line.
  The degree of $C(1,1)$ is found to be $24$ by elimination, first of the two square roots, and finally of $\PIK$.

    Now for the asymptotics, we need more details about $\PIK$ than what we have used so far, which only involved $\PIK^3$ (seen as a series in $t^3$). The series $\PIK$ has three dominant singularities, located at $\zeta/3$ where $\zeta$ is any cubic root of unity. The singular expansions of $\PIK$ at these points can be computed by combining $\PIK=t(2+\PIK^3)$ with the known expansion of $\PIK^3$ around $1/27$ (see~\eqref{PIK3-sing}). Denoting $\zeta_0=1$ and $\zeta_\pm= (-1\pm i\sqrt 3)/2$, we find that around any dominant singularity $\zeta/3$,
\beq\label{PIK-sing-refined}
      \frac \PIK\zeta =          1-     {(1-3t/\zeta)}^{1/2}+
                        (1-3 t/\zeta)/3-4/9
 {(1-3t/\zeta)}^{3/2}+\LandauO \left( {(1-3 t/\zeta)}^{2} \right).
\eeq
    Observe that in the expression of $(1-3t)^2 \left (C(1,1)+1/t\right)^2$, the square root term involving 
    \[
      1-\PIK\frac{4+\PIK^3}4 +\frac{ \PIK^2} 4 =\frac{(1-\PIK)(2+\PIK)(2-\PIK+\PIK^2)}4
    \]
    does not vanish for $|\PIK|<1$ (but does vanish  at $\PIK=1$ of course). 
We now plug the above expansion of $\PIK$ in the expression of $(1-3t)^2 \left (C(1,1)+1/t\right)^2$, extract square roots,   and find the following behaviours, respectively at  $\zeta_0/3$ and $\zeta_\pm/3$:
      \begin{align*}
        (1-3t) \left (C(1,1)+1/t\right) &=
                                        {3}^{3/4}\sqrt {2-\sqrt 2}\,{(1-3t)}^{3/8} +\LandauO((1-3t)^{7/8}),\\
   (1-3t) \left (C(1,1)+1/t\right)       &= a_\pm + b_\pm  (1-3\zeta_\mp t)^{3/4}+\LandauO \left( 1-3\zeta_\mp t \right),
      \end{align*}
      for some non-zero constants $a_\pm$ and $b_\pm$. In sight of the factor $(1-3t)$ on the left-hand side, we conclude that, as far as the first order of $c_n$ is concerned, the only singularity that contributes is $1/3$, and that $c_n$ satisfies~\eqref{cn-est-K}.      
\end{proof}

%%%%%%%%%%%%%%%%%%%%%%%%%%%%%%%%%%%%%%%%%%%%%%%%%%%%%%%%%%%%%
\section{Reverse Kreweras steps}
\label{sec:RK}
 %%%%%%%%%%%%%%%%%%%%%%%%%%%%%%%%%%%%%%%%%%%%%%%%%%%%%%%%%%%%%

In this section we take $\cS= \{\rightarrow, \uparrow, \swarrow\}$, so that $\tS=\{\nearrow, \leftarrow, \downarrow\}$.

%========================================
\subsection{Statements of the results}
%========================================
We will establish the following counterpart of Theorems~\ref{thm:K-U} and~\ref{thm:K-D}.

\begin{theorem}\label{thm:RK}
  The \gf\  $C(x,y)$  of walks with steps in $\{\rightarrow, \uparrow, \swarrow\}$ starting from $(0,0)$ and avoiding the negative quadrant is algebraic of degree $96$. It is given by the following equation:
\[
  (1-t(x+y +\bx\by)) C(x,y)=1 -t\bx\by C_-(\bx) -t\bx\by C_-(\by)- t \bx\by C_{0,0},
\]
where $C_{0,0}$ is algebraic of degree $6$ and $\C_-(x)$ is algebraic of degree $24$. These series can be expressed explicitly in terms of the series $\PIK$,
$\PIIK$ and $\PIIIK$ defined in Section~\ref{sec:intro} (see Table~\ref{tab:K}).
Indeed,
\[
  C_{0,0}= \frac\PIK t \cdot \frac{4-4\PIIK -2\PIIK^2+3\PIIK^3}{8 (1-\PIIK)},
  \]
  and
\[ %  \begin{multline*}
    \left(2t C_-(x)+\bx^2-\frac \bx t +x +t C_{0,0}\right)^2=
    \left(\bx^2-\frac\bx t-x\right)^2
  +A_2\left(\bx^2-\frac\bx t-x\right)+ A_1\left( \frac 1 x - \frac 1 \PIK\right) \sqrt{1-x \PIK^2} + A_0,
\] %\end{multline*}
where the series $A_0$, $A_1$ and $A_2$ belong to $\qs(t,\PIIIK)$, with respective degrees $6,12$, and $6$:
\[ %\begin{align*}
      \frac{ A_0}{\PIK^2}= -  \frac{8+18\PIIK-20\PIIK^2+5\PIIK^3-6\PIIK^4+4\PIIK^5}{8\PIIK (1-\PIIK)},
\]
\[
      A_1=  \frac{(2-\PIIK)^3(1+\PIIK)}2 \sqrt{\frac{1+\PIIK}{1-\PIIK}}
     = \frac{(2-\PIIK)^3(1+\PIIK)}2       \cdot {\frac{1+\PIIIK}{1-\PIIIK}},
\]
and
\[
      \frac{A_2}\PIK=
    \frac{4-4\PIIK -2\PIIK^2+3\PIIK^3}{4 (1-\PIIK)}.
\]
The \gf\ $D(x)$ of walks ending on the diagonal is algebraic of degree $24$, given by
\[ %\begin{multline*}
 \Delta(x) \left(xD(x)+ \frac{1}{tx}\right)^2=\left(\bx^2-\frac\bx t-x\right)^2
  +A_2\left(\bx^2-\frac\bx t-x\right)- A_1\left( \frac 1 x - \frac 1 \PIK\right) \sqrt{1-x \PIK^2} + A_0,
\] %\end{multline*}
with the above values of $A_0, A_1, A_2$ and
\beq\label{Delta-def-RK}
   \Delta(x)= (1-t\bx)^2-4t^2x.
 \eeq
\end{theorem}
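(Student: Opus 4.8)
The plan is to run the general strategy of Section~\ref{sec:dec} — the one already carried out for Kreweras steps in the proof of Theorems~\ref{thm:K-U} and~\ref{thm:K-D} — now with $\cS=\{\rightarrow,\uparrow,\swarrow\}$, whose companion is the (finite group) Kreweras model $\tS=\{\nearrow,\leftarrow,\downarrow\}$. First I would collect the data attached to this $\tS$: the kernel $\tK(x,y)=1-t(xy+\bx+\by)$, the splitting $\tS(x,y)=\bx\tV_-(y)+\tV_0(y)+x\tV_+(y)$ with $\tV_-(y)=1$, $\tV_0(y)=\by$, $\tV_+(y)=y$, so that $\Delta(y)=(1-t\by)^2-4t^2y$ as in~\eqref{Delta-def-RK}, and $\tH_-(x)=1$. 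Since $(-1,-1)\in\cS$, equivalently $(0,-1)\in\tS$, the basic functional equation~\eqref{eqfunc-gen} is precisely the first displayed identity of Theorem~\ref{thm:RK}, with $C_-(x)=xU(x,0)$ and $C_{0,0}=D_0$; Lemma~\ref{lem:func_eq} rewrites it as~\eqref{eq-U}. Reading off $F,G,H$ from the reverse Kreweras column of Table~\ref{tab:decoupling} ($F(x)=-\bx^2+\bx/t-x$, $G(y)=\by/t$, $H(x,y)=\bx\by(x-y)/t$), Proposition~\ref{prop:inv-tq} then produces the pair of $\tS$-invariants
\[ I(x)=R(x)^2=\Big(2tC_-(x)+\bx^2-\frac{\bx}{t}+x+tC_{0,0}\Big)^2,\qquad J(y)=\Delta(y)S(y)^2=\Delta(y)\Big(yD(y)+\frac{\by}{t}\Big)^2, \]
which are exactly the left-hand sides of the two displays in the theorem, via Lemma~\ref{lem:square}.

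Next I would bring in the other pairs of $\tS$-invariants available for this model: the rational pair $(I_0(x),J_0(y))=\big(\bx^2-\bx/t-x,\ \by^2-\by/t-y\big)$ of Table~\ref{tab:inv-qu-1}, which satisfies $I_0(x)-J_0(y)=\tK(x,y)(x-y)/(txy)$ (this is~\eqref{I0-K} for the present $\tS$), and the quadrant pair $(I_1(x),J_1(y))$ of~\eqref{I1J1-def}, namely $I_1(x)=tx\tQ(x,0)+\bx-\frac{1}{2t}$ and $J_1(y)=-ty\tQ(0,y)-\by+\frac{1}{2t}$, with $I_1(x)-J_1(y)=-\tK(x,y)\big(xy\tQ(x,y)+\frac1t\big)$, where $\tQ$ is the (algebraic) generating function of quadrant Kreweras walks. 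The combinatorial core is then the same move as in Section~\ref{sec:K}: using Lemma~\ref{lem:linear}, build from these three pairs a pair $\big(\tilde I(x),\tilde J(y)\big)$ satisfying the hypothesis of Lemma~\ref{lem:invariants}, hence trivial. The new feature compared with Kreweras steps is that the right-hand side of~\eqref{IJ-id} now has poles of order up to $3$ at $x=0$ (because $R(x)$ itself has a pole of order $2$ there, coming precisely from $\bx^2-\bx/t$, i.e.\ from $I_0(x)$), so the combination must involve $I_0^2$ and $I_0I_1$ — both legitimate invariants by Lemma~\ref{lem:linear} — besides $I_1$ and a scalar; I would fix all the coefficients (elements of $\qs((t))$) by demanding that the coefficient of each $t^n$ in $(\tilde I-\tilde J)/\tK$ vanish at $x=0$, the vanishing at $y=0$ following by the $x/y$-symmetry of $\tS$. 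Lemma~\ref{lem:invariants} then gives $\tilde I(x)=\tilde J(y)=A$ for some $A\in\qs((t))$, hence two identities expressing $R(x)^2$ and $\Delta(y)S(y)^2$ as explicit combinations of $I_0$, $I_0I_1$, $I_1$, the scalars and $A$.

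To finish, I would substitute into the first identity the known algebraic expression of $\tQ(x,0)=\tQ(0,x)$ for the quadrant Kreweras model, taken from~\cite{BMM-10} and written in terms of the series $\PIK$ of~\eqref{PIK-k-def}: this produces the radical $\sqrt{1-x\PIK^2}$ and, after rewriting with $\PIK,\PIIK,\PIIIK$ and the relations of Table~\ref{tab:K}, the announced formula for $R(x)^2$ with coefficients $A_0,A_1,A_2\in\qs(t,\PIIIK)$; the series $A$ itself is identified by specializing the $J$-identity at a root $y_0$ of $\Delta(y)$ (an element of $\qs((t^{1/2}))$, the two conjugate roots giving the same, $x$-free value), where $J(y_0)=0$, and simplifying once more. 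Then $C_{0,0}=D_0$ comes out as $\frac1t[x^0]R(x)$ from the now-explicit $R(x)=+\sqrt{I(x)}$ (the sign fixed by $R(x)\sim\bx^2$ as $x\to0$), giving the stated degree-$6$ expression; and the formula for $D(x)$ follows from the $J$-identity in the same way, with the sign in front of the radical reversed relative to the $C_-$ identity — exactly as $D(x)$ relates to $C_-(x)$ for Kreweras steps in Theorems~\ref{thm:K-U}--\ref{thm:K-D}. For the degrees, since $\PIIIK$ has degree $12$ over $\qs(t)$ and $\sqrt{1-x\PIK^2}$ contributes a further quadratic extension, $C_-(x)$ and $D(x)$ are algebraic of degree at most $24$, and $C(x,y)$ — which by the functional equation carries the same degree bound $24\times4=96$ as for Kreweras — has degree exactly $96$; sharpness I would confirm by computing explicit minimal polynomials at convenient numerical specializations (e.g.\ $x=2$, then $x=2,y=3$), as in the accompanying \Maple\ session. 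I expect the main obstacle to be exactly the bookkeeping of that combination — matching the principal parts at $x=0$ through order $t^n$ while keeping track of the period-$3$ structure (so that $t^3$ is the right variable) — together with the not-entirely-mechanical reduction of $\tQ(x,0)$, $\tQ(0,y)$ and of $A$ to the $\PIK,\PIIK,\PIIIK$ normal forms, in particular checking that $A_0,A_1,A_2$ really land in $\qs(t,\PIIIK)$ with the precise degrees $6,12,6$.
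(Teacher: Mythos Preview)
Your overall plan matches the paper's, and the setup of $I(x),J(y),I_0,J_0,I_1,J_1$ is correct. Two small corrections: the right linear combination is $\tilde I(x)=I(x)-I_0(x)^2-A_2I_0(x)-A_1I_1(x)$ (with $I_0$, not $I_0I_1$; after subtracting $I_0^2$ the pole at $x=0$ is already of order~$2$), and the vanishing of $(\tilde I-\tilde J)/\tK$ at $x=0$ and $y=0$ does \emph{not} follow from the $x/y$-symmetry of $\tS$ alone --- $I(x)$ involves $U(x,0)$ while $J(y)$ involves $D(y)$, and one has to feed in the functional equations~\eqref{eqD2}--\eqref{eqU2} (here the equations labelled~\eqref{eqD-RK}--\eqref{eqU-RK} in the paper) to check both conditions.

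The genuine gap is in the determination of the constants. After the invariant lemma you have
\[
  I(x)=I_0(x)^2+A_2\,I_0(x)+A_1\,I_1(x)+A_0,\qquad
  J(y)=J_0(y)^2+A_2\,J_0(y)+A_1\,J_1(y)+A_0,
\]
where $A_0$ is the value of the trivial pair, but $A_2=2tD_0$ and $A_1=4(1+tU_{0,0})$ are themselves \emph{unknown} series (they were fixed only in terms of $D_0$ and $U_{0,0}$, which are precisely what we are solving for). So there are three unknowns, not one, and a single specialization at a root of $\Delta$ cannot close the system. Moreover the two roots $Y_\pm\in\qs((t^{1/2}))$ of $\Delta(y)$ do \emph{not} give the same condition: $J_1(Y_+)\neq J_1(Y_-)$, and both are needed. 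Even with these two equations you are still one short. The missing idea is that $I(x)=R(x)^2$ is a perfect square, so the unique $X\in\qs[[t]]$ with $R(X)=0$ is a \emph{double} root of $I$; after rewriting $I(x)=P_4(I_1(x))$ via the relation $I_1(x)^2=I_0(x)+1/\PIK^2+2\PIK$, this forces $I_1(X)$ to be a double root of the quartic $P_4$, yielding the factorization $P_4(u)=c\,(u-I_1(X))^2P_2(u)$ with $P_2$ the quadratic whose roots are $J_1(Y_\pm)$. Comparing coefficients of $u^3,\dots,u^0$ then gives four polynomial relations among $I_1(X),A_0,A_1,A_2$, from which one eliminates to obtain the explicit values in Theorem~\ref{thm:RK}; in particular $C_{0,0}=D_0=A_2/(2t)$ directly, rather than by reading off $[x^0]R(x)$.
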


For walks ending at a prescribed position, we obtain the following result.

\begin{cor}\label{cor:excursions-RK}
  Let us define $\PIK, \PIIK $ and $\PIIIK$ as above.   For any $(i,j)\in \Cc$, the \gf\ of walks avoiding the negative quadrant and ending at $(i,j)$ is algebraic of degree (at most) $12$ and belongs to $\qs(t, \PIIIK)$.
  More precisely, $C_{i,j}/t^{i+j}$ belongs to $\qs(\PIIIK)$.
\end{cor}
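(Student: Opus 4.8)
The plan is to follow, step for step, the proof of Corollary~\ref{cor:excursions-K}, feeding in the explicit formulas of Theorem~\ref{thm:RK} for $D(x)$ and $C_-(x)$. Two preliminary remarks organise the argument. First, the steps of $\cS=\{\rightarrow,\uparrow,\swarrow\}$ change $i+j$ by $+1$, $+1$ and $-2$, so an $\cS$-walk in $\Cc$ ending at $(i,j)$ has length $\equiv i+j\pmod 3$; hence $C_{i,j}/t^{i+j}$ is a \fps\ in $t^3$, and because $\PIK/t=2+\PIK^3$ is also a \fps\ in $t^3$, so is $C_{i,j}/\PIK^{i+j}$. Second, the right substitution to perform is $x\mapsto x/\PIK^2$, since it turns the square root $\sqrt{1-x\PIK^2}$ appearing in Theorem~\ref{thm:RK} into the $\PIK$-free series $\sqrt{1-x}$.

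First I would treat walks ending on the diagonal. Substituting $x=z/\PIK^2$ in the identity of Theorem~\ref{thm:RK} for $D(x)$, multiplying by $\PIK^4$, and eliminating $t$ via $t=\PIK/(2+\PIK^3)$, one gets
\[
\left(zD(z/\PIK^2)+\frac{\PIK^4}{tz}\right)^{2}=\mathcal R(z),
\]
where $\mathcal R(z)$ is an explicit Laurent series in $z$ of lowest order $z^{-2}$. Using the shapes $A_{0}=\PIK^{2}\cdot(\text{rational in }\PIIK)$, $A_{2}=\PIK\cdot(\text{rational in }\PIIK)$, $A_{1}\in\qs(\PIIIK)$ coming from Theorem~\ref{thm:RK}, together with $\PIK^{3}=4\PIIK(1-\PIIK)$ and $\sqrt{1-\PIIK^{2}}=(1-\PIIIK^{2})/(1+\PIIIK^{2})$, one checks that the extra powers of $\PIK$ generated by the substitution make every term a genuine series in $t^{3}$, so that all coefficients of $\mathcal R(z)$ lie in $\qs(\PIIIK)$. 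Its leading coefficient is $(2+\PIK^{3})^{2}\PIK^{6}$, a square in $\qs(\PIIIK)$; taking the square root (the sign being fixed, as in the proof of Corollary~\ref{cor:harmonic-K}, by matching leading terms, or by specialising $t$ to $1/3$) shows that $zD(z/\PIK^{2})+\PIK^{4}/(tz)$ has all its coefficients in $\qs(\PIIIK)$. Extracting the coefficient of $z^{i+1}$ gives $C_{i,i}/\PIK^{2i}\in\qs(\PIIIK)$, hence $C_{i,i}/t^{2i}=(2+\PIK^{3})^{2i}\cdot C_{i,i}/\PIK^{2i}\in\qs(\PIIIK)$.

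Running the same computation on the identity of Theorem~\ref{thm:RK} for $C_-(x)=\sum_{i\ge1}C_{-i,0}x^{i}$ — substitute $x=z/\PIK^{2}$, multiply by $\PIK^{4}$ — shows that the square of $2t\PIK^{2}C_-(z/\PIK^{2})+\PIK^{6}/z^{2}-\PIK^{4}/(tz)+z+t\PIK^{2}C_{0,0}$ is a Laurent series in $z$ with coefficients in $\qs(\PIIIK)$ and leading coefficient $\PIK^{12}$; the same square-root argument yields $C_{0,0}\in\qs(\PIIIK)$ and $C_{-i,0}/\PIK^{2i}\in\qs(\PIIIK)$ for $i\ge1$, whence $t^{i}C_{-i,0}=(t\PIK^{2})^{i}\cdot C_{-i,0}/\PIK^{2i}\in\qs(\PIIIK)$. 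To reach an arbitrary $(i,j)\in\Cc$ I would then argue exactly as at the end of the proof of Corollary~\ref{cor:excursions-K}: by the $x/y$-symmetry $C_{i,j}=C_{j,i}$ one may assume $j\ge i$, which forces $j\ge0$, and the cases $j=i$ and $j=0$ (with $i\le0$) are now settled; for $j\ge1$ and $j>i$ the point $(i-1,j-1)$ lies in $\Cc$, the step-by-step construction gives $C_{i-1,j-1}=t\,(C_{i-2,j-1}+C_{i-1,j-2}+C_{i,j})$, and therefore
\[
\frac{C_{i,j}}{t^{i+j}}=\frac{1}{t^{3}}\left(\frac{C_{i-1,j-1}}{t^{(i-1)+(j-1)}}-\frac{C_{i-2,j-1}}{t^{(i-2)+(j-1)}}-\frac{C_{i-1,j-2}}{t^{(i-1)+(j-2)}}\right).
\]
Each of the three points on the right either lies outside $\Cc$ (so that its series vanishes) or lies in $\Cc$ with second coordinate $\ge$ first coordinate and second coordinate $<j$, so induction on $j$, together with $t^{3}\in\qs(\PIIIK)$, gives $C_{i,j}/t^{i+j}\in\qs(\PIIIK)$. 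Finally $C_{i,j}=t^{i+j}\cdot C_{i,j}/t^{i+j}$ lies in $\qs(t,\PIIIK)$, and since $[\qs(t,\PIIIK):\qs(t)]=12$ it is algebraic of degree at most $12$.

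I expect the bulk of the work to be exactly the bookkeeping highlighted above: checking, term by term after the substitution $x\mapsto z/\PIK^{2}$, that the compensating powers of $\PIK$ keep every coefficient inside $\qs(\PIIIK)$ rather than a larger extension. The only genuinely subtle point is the choice of sign in the two square-root extractions, which — as in the proof of Corollary~\ref{cor:harmonic-K} — I would pin down by specialising $t$ to a convenient value such as $t=1/3$. All the explicit algebra is routine and I would relegate it to the accompanying \Maple\ session.
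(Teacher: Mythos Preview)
Your proposal is correct and follows essentially the same route as the paper: the same substitution $x\mapsto x/\PIK^{2}$, the same square-root extraction after checking that the leading coefficient is a perfect square in $\qs(\PIIIK)$, and the same induction on $j$ via the step recursion $C_{i-1,j-1}=t(C_{i-2,j-1}+C_{i-1,j-2}+C_{i,j})$. One small remark: the ``sign'' issue you flag is not really present here (unlike in the singular analysis of Corollary~\ref{cor:harmonic-K}); once the leading coefficient of the squared Laurent series is a square in $\qs(\PIIIK)$, the formal square root automatically has all its coefficients in $\qs(\PIIIK)$, so no specialisation of $t$ is needed.
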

This holds obviously for the series $C_{0,0}$ given in Theorem~\ref{thm:RK}. Note that this series coincides with the series $C_{0,0}$ obtained for Kreweras walks (it suffices to rewrite $\PIK/t$ in terms of $\PIIK$ to obtain the form~\eqref{C00-K}): this is clear, upon reversing the direction of walks.  Other examples are
\beq\label{Cat11-RK}
  tC_{1,1} = {\frac {2\PIIIK \left( 2 {\PIIIK}^{9}-{\PIIIK}^{8}-4 {\PIIIK}^{7}+10 {\PIIIK}^{6}-10 {\PIIIK}^{4}+6 {\PIIIK}^{3}+4 {\PIIIK}^{2}-4 \PIIIK+1 \right) }
    { \left(1- \PIIIK \right) ^{2}
 \left(1+ {\PIIIK}^{2} \right) ^{4}}}
,
\eeq
\[
  t  C_{-1,0} = \frac {A_1}4-1=\frac{(2-\PIIK)^3(1+\PIIK)}8 \cdot{\frac{1+\PIIIK}{1-\PIIIK}}-1.
\]

\medskip
The length \gf\  $C(1,1)$ of walks avoiding the negative quadrant can be characterized using the first two results of Theorem~\ref{thm:RK}.

\begin{cor}\label{cor:C11-RK}
  The  \gf\  $C(1,1)$ is algebraic of degree $24$ over $\qs(t)$. It is given by
\beq\label{C11-RK}
  (1-3t)^2(1+tC(1,1))^2=1- tA_2 + t^2A_1\left(1- \frac 1 \PIK\right)\sqrt{1-\PIK^2}+t^2A_0,
  \eeq
  where the series $\PIK$, $A_0$, $A_1$ and $A_2$ are those of Theorem~\ref{thm:RK}.
  The asymptotic behaviour of its $n$-th coefficient  $c_n$ is  obtained via singularity analysis:
\[
  c_n \sim \frac 9 {4\Gamma(5/8)} \left( \frac 9 2 - 3\sqrt 2\right) ^{1/4} 3^n n^{-3/8}.
\]
\end{cor}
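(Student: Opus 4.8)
The plan is to mimic the proof of Corollary~\ref{cor:all-K}: specialize the results of Theorem~\ref{thm:RK} at $x=y=1$ to get the algebraic equation, and then run a singularity analysis. Only the second part requires real work.

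\textbf{The equation \eqref{C11-RK}.} For reverse Kreweras steps one has $|\cS|=3$ and $H_-(x)=\bx$, so $H_-(1)=1$, and the identity~\eqref{I1-C11} reads $R(1)=-(1-3t)\left(C(1,1)+1/t\right)$; hence
\[
  (1-3t)^2\left(1+tC(1,1)\right)^2=t^2 R(1)^2=t^2 I(1).
\]
Now I would evaluate at $x=1$ the explicit expression of $I(x)=\left(2tC_-(x)+\bx^2-\bx/t+x+tC_{0,0}\right)^2$ provided by Theorem~\ref{thm:RK}. Since $\bx^2-\bx/t-x=-1/t$ at $x=1$, this gives $I(1)=1/t^2-A_2/t+A_1\left(1-1/\PIK\right)\sqrt{1-\PIK^2}+A_0$, and multiplying by $t^2$ yields exactly~\eqref{C11-RK}. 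This step is immediate once Theorem~\ref{thm:RK} is known.

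\textbf{The degree.} To prove that $C(1,1)$ has degree $24$, I would, as in the Kreweras case, extract an explicit polynomial equation from~\eqref{C11-RK} by successive eliminations — first clear $\sqrt{1-\PIK^2}$ (squaring once more), then eliminate $\PIIIK$, $\PIIK$, and finally $\PIK$ using the defining relations of Table~\ref{tab:K} — and then check minimality of the degree-$24$ factor by a direct series computation. This is carried out in the accompanying \Maple\ session.

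\textbf{Asymptotics.} As in Corollary~\ref{cor:all-K}, the expression~\eqref{C11-RK} and the analyticity of $\PIK$ away from the points $t=\zeta/3$ ($\zeta^3=1$) show that $C(1,1)$ has radius of convergence $1/3$, with dominant singularities exactly at these three points. Near $t=1/3$ I would substitute the refined expansion~\eqref{PIK-sing-refined} of $\PIK$ and the expansion~\eqref{PIIK-sing} of $\PIIK$ into the right-hand side of~\eqref{C11-RK}, expressed as a power series in $u:=(1-3t)^{1/4}$. One first sees that this right-hand side vanishes at $t=1/3$ (there $\PIK=1$, $\PIIK=1/2$, and the $A_1$-term vanishes because $1-1/\PIK=0$), and — this is the computational heart — that the coefficients of $u$ and $u^2$ vanish as well, leaving
\[
  (1-3t)^2\left(1+tC(1,1)\right)^2=\frac{9}{16}\sqrt{\frac 92-3\sqrt 2}\;(1-3t)^{3/4}+o\!\left((1-3t)^{3/4}\right).
\]
Taking the positive square root (justified since $1+tC(1,1)>0$ and $\frac 92-3\sqrt 2>0$) gives $1+tC(1,1)\sim\frac 34\left(\frac 92-3\sqrt 2\right)^{1/4}(1-3t)^{-5/8}$ as $t\to1/3$. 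At the two other singularities $\zeta_\pm/3$ one has again $\PIK=\zeta_\pm$ and $\PIIK=1/2$, but now a short computation with $\zeta_\pm=(-1\pm i\sqrt 3)/2$ shows that the right-hand side of~\eqref{C11-RK} does \emph{not} vanish there, so $1+tC(1,1)$ remains bounded with a singular exponent $1/4$, contributing only $\LandauO(n^{-5/4})$. Hence only $t=1/3$ matters, and the transfer theorem~\cite{flajolet-sedgewick} applied to $C(1,1)\sim 3\cdot\frac 34\left(\frac 92-3\sqrt 2\right)^{1/4}(1-3t)^{-5/8}$ gives the announced estimate of $c_n$.

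\textbf{Main obstacle.} The delicate point is the singular expansion near $t=1/3$: one must verify that the $(1-3t)^{1/4}$ and $(1-3t)^{1/2}$ coefficients of the right-hand side of~\eqref{C11-RK} vanish (otherwise $c_n$ would behave like $n^{-1/8}$ or $n^{-1/4}$) and then identify the constant $\frac 9{16}\sqrt{\frac 92-3\sqrt 2}$, all the while tracking the correct sign of the square root. Everything else — including the elimination steps — is routine and delegated to the \Maple\ session.
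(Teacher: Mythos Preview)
Your proposal is correct and follows essentially the same route as the paper: specialize the first and third identities of Theorem~\ref{thm:RK} at $x=y=1$ (equivalently, use~\eqref{I1-C11}) to obtain~\eqref{C11-RK}, establish the degree by elimination, and then run a singularity analysis based on the expansions~\eqref{PIK-sing-refined} and~\eqref{PIIK-sing} of $\PIK$ and $\PIIK$, showing that only $t=1/3$ contributes at leading order. The paper's own proof is extremely terse (it just cites these expansions and refers to the Kreweras computation), and your write-up spells out the same argument in more detail, including the key cancellation down to order $(1-3t)^{3/4}$ and the identification of the constant $\tfrac{9}{16}\sqrt{9/2-3\sqrt2}$.
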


\noindent{\bf Harmonic function.} We have also derived the counterpart of Corollary~\ref{cor:harmonic-K}, that is, the
harmonic function for walks with steps in $\cS=\{\rightarrow,
\uparrow, \swarrow\}$ confined to $\Cc$. We still have an asymptotic behaviour of the form~\eqref{cij-est}, this time with $n\equiv i+j$ mod $3$. We do not give here all details of the  numbers $H_{i,j}$,
but simply the values of
\[
  \Hc_-(x):=\sum_{i>0} H_{-i,0} x^i=\frac {27\sqrt 3}{8} \left(\sqrt{1+ \frac{3\sqrt 3 x}{ 2(1-x)^{3/2}}}-1\right)
\]
and
\[
  \Hc_d(y):=\sum_{i\ge 0} H_{i,i}y^i =\frac{27\sqrt 3}{4(1-y)
    \sqrt{1-4y}}
  \sqrt{1- \frac{3\sqrt 3 y}{ 2(1-y)^{3/2}}},
\]
from which all numbers $H_{i,j}$ can be reconstructed via an equation
that is the counterpart of~\eqref{H-eq}. The proof is analogous to
Kreweras' case. Once again, details of the calculations can be found
in our \Maple\ session. For comparison, the number of quadrant $\tS$-walks of length~$n$ going from $(0,0)$ to $(i,j)$ is  asymptotic to $h_{i,j} 3^n n^{-5/2}/\Gamma (-3/2)$, with
\[
  \sum_{i\ge 0 } h_{i,0} x^i= \frac 9{(1-x)^{3/2}}.
\]

%========================================
\subsection{Proofs for reverse Kreweras steps}
%========================================
\begin{proof}[Proof of Theorem~\ref{thm:RK}]
  The first equation of the  theorem is of course the basic functional equation~\eqref{eqfunc-gen}.  The functional equations~\eqref{eqD2} and~\eqref{eqU2} defining $D(y)$ and $U(x,y)$ read
  \begin{align}
 \label{eqD-RK}
(1-t\by) D(y) &=1 -t\by D_0+2tU(0,y) ,
\\
 \label{eqU-RK}
   x(1-t(\bx+\by+xy)) U(x,y) &=     tx y  D(y)-tx\by U(x,0) -tU(0,y).
  \end{align}
 The invariants of Proposition~\ref{prop:inv-tq} are
 \beq\label{IJ-RK}
   I(x)=\left( 2txU(x,0)+\frac 1 {x^2} -\frac 1 {tx} + x+ tD_0\right)^2, \qquad J(y)=\Delta(y)\left(yD(y)+ \frac 1 {ty} \right)^2,
 \eeq
 with $\Delta$ defined by~\eqref{Delta-def-RK}.
 The rational invariants $(I_0,J_0)$ are given in Table~\ref{tab:inv-qu-1}:
 \beq\label{I0-RK}
   I_0(x)=\bx^2-\bx/t -x, \qquad J_0(y)=I_0(y).
 \eeq
 They satisfy
\beq\label{ratio:I0-RK}
\frac{I_0(x)-J_0(y)}{ \tK(x,y)}= \frac{x-y}{txy}.
\eeq
 The invariants related to quadrant walks with steps in $\tS$, defined by~\eqref{I1J1-def}, are
  \beq\label{I1-RK}
    I_1(x)=   tx \tQ(x,0)+ \bx -\frac 1{2t}, \qquad
   J_1(y)= -ty\tQ(0,y) -\by+\frac 1{2t}  = -I_1(y).
 \eeq
 They satisfy
 \beq\label{ratio:I1-RK}
  \frac {  I_1(x)-J_1(y)}{\tK(x,y)}=- xy\tQ(x,y)-\frac 1t. 
  \eeq

\medskip
\paragraph{{\bf Applying the invariant lemma.}} 
 As in the previous section, we want to combine the pair of invariants $(I(x),J(y))$ defined by~\eqref{IJ-RK} with those of~\eqref{I0-RK} and~\eqref{I1-RK} to form a pair $(\tilde I(x), \tilde J(y))$ satisfying the condition of Lemma~\ref{lem:invariants}.
 Since the conclusion of this lemma is that $\tilde I(x)$ and $\tilde J(y)$ belong to $\qs((t))$,
 we must look for a polynomial combination of $I(x)$, $I_0(x)$ and $I_1(x)$ that, at least, has no pole at $x=0$. In sight of the expansions of these three series at $x=0$, we are led to consider
 \beq\label{tildeI-RK}
 \tilde I(x):=  I(x) -I_0(x)^2 -A_2I_0(x)-A_1 I_1(x),
 \eeq
 with
 \beq\label{A12-KR}
   A_2=2tD_0, \qquad A_1= 4(1+tU_{0,0}),
 \eeq
 which indeed has no pole at $x=0$.  Let us define accordingly
\beq\label{tildeJ-RK}
 \tilde J(y):=     J(y) -J_0(y)^2 -A_2J_0(y)-A_1 J_1(y),
 \eeq
 and examine whether $(\tilde I(x)-\tilde J(y))/\tK(x,y)$ is a multiple of $xy$. Denoting by $\Rat$, $\Rat_0$ and $\Rat_1$ the right-hand sides of~\eqref{IJ-id}, \eqref{ratio:I0-RK} and~\eqref{ratio:I1-RK} respectively, we have
\beq\label{ratio-RK-tilde} 
   \frac{\tilde I(x)-\tilde J(y)}{\tK(x,y)}=  \Rat -(I_0(x)+J_0(y))\Rat_0 -A_2 \Rat_0-A_1 \Rat_1.
\eeq
This ratio has poles of bounded order at $0$. We first expand it around $x=0$, and observe that it is $\LandauO(1/x)$. In order to prove that the coefficient of $1/x$ is in fact $0$,  we use~\eqref{eqD-RK} (see our \Maple\ session for details). In order to prove that the coefficient of $x^0$ is also $0$, we use moreover the first term in the expansion of~\eqref{eqU-RK} at $x=0$, which gives an expression of  $U'_x(0,y)$ in terms of $U(0,y)$, $D(y)$ and $U_{0,0}$. Similarly, in order to prove that the  ratio~\eqref{ratio-RK-tilde}  is a multiple of $y$, we  inject in its $y$-expansion (which is $\LandauO(y^0)$) the expansion at $y=0$ of~\eqref{eqD-RK}, which relates $D_0$, $D_1$ and $U_{0,0}$,  and the first term in the expansion of~\eqref{eqU-RK} at $y=0$, which gives an expression of  $U'_y(x,0)$ in terms of $U(x,0)$ and $U_{0,0}$.  Then we happily conclude that the pair $(\tilde I(x), \tilde J(y))$ is independent of $x$ and $y$, equal to $(A_0, A_0)$ for some series $A_0$ that depends on $t$ only.

 Recall the expression~\eqref{I1-RK} of $I_1(x)$ in terms of the \gf\ $\tQ(x,0)$ of quadrant walks with Kreweras steps. This series is known, and we obtain from Proposition~13 in~\cite{BMM-10}:
 \beq\label{I1-expl-K}
   I_1(x)= \left(\frac 1 x-\frac 1 \PIK\right) \sqrt{1-x \PIK^2}=-J_1(x).
 \eeq
 This gives the expressions of $I(x)$ and $J(x)$ announced in Theorem~\ref{thm:RK}, but we still need to determine the three series $A_0, A_1$ and $A_2$.

\medskip
\paragraph{{\bf The series $A_0$, $A_1$, and $A_2$.}} 
 In the Kreweras case (Section~\ref{sec:K}) we only had one unknown series to determine, denoted~$A$ (see~\eqref{eqinv-K}). We derived it using the (only) root of $\Delta(y)$ that was finite at $t=0$ for this model.  Now with the new value of $\Delta(y)$, we have two such roots -- but three series to determine. A third identity between the $A_i$'s will follow from the fact that $I(x)$ has a double root; see~\eqref{IJ-RK}.

 We begin with a useful property, which tells us that $I_0(x)$ is (almost) the square of $I_1(x)$. Indeed, the expressions of $I_0(x)$ and $I_1(x)$, together with the definition~\eqref{PIK-k-def} of $\PIK$, imply that
 \beq\label{I1-I0-RK}
   I_1(x)^2= I_0(x)+ \frac 1 {\PIK^2} + 2\PIK.
 \eeq
The series $J_1(y)=-I_1(y)$ and $J_0(y)=I_0(x)$ are related by the
same identity. (If we did not know the explicit
expression~\eqref{I1-expl-K} of $I_1(x)$, we could still derive this identity from the invariant lemma, which gives a polynomial relation between the pairs $(I_1(x), J_1(y))$ and $(I_0(x), J_0(y))$; this is how quadrant walks with steps in $\tS$ are solved in~\cite{BeBMRa-17}.)

 Using this, and the definitions~\eqref{tildeI-RK} and~\eqref{tildeJ-RK} of $\tilde I$ and $\tilde J$, the fact that $\tilde I(x)=\tilde J(y)=A_0$ translates into
 \beq\label{IJ-P-RK}
   I(x)= P_4(I_1(x)), \qquad J(y)=P_4(J_1(y)),
 \eeq
 where
\[ % \beq \label{P4-def-RK}
   P_4(u)= \left(u^2- \frac 1 {\PIK^2} - 2\PIK\right)^2+A_2\left(u^2- \frac 1 {\PIK^2} - 2\PIK\right) + A_1 u+A_0.
\] % \eeq
 Note that the identities of~\eqref{IJ-P-RK} are those that we would obtain from the invariant lemma by playing with the pairs $(I(x), J(y))$ and $(I_1(x),J_1(y))$ only, with no reference to $(I_0(x),J_0(y))$.

 As already mentioned, two roots of $\Delta(y)$ are finite at $t=0$. We denote them by $Y_+$ and $Y_-$:
 \[
   Y_{\pm}= t \pm 2t^{5/2} +6t^4\pm 21 t^{11/2} +\mathcal O(t^7).
\]
By replacing $t$ by its expression in terms of $\PIK$ in $\Delta(y)$, we see that $Y_\pm$  are the roots of the following quadratic polynomial in $Y$:
\[
  4Y^2-\PIK(\PIK^3+4) Y+\PIK^2.
\]
The third root of $\Delta$ is $1/\PIK ^2$. From this, and the  expression~\eqref{I1-expl-K} of $I_1(x)=-J_1(x)$,
we conclude that the values $J_1(Y_\pm)$ are  roots of
\[
  \tilde P_4(u)=4u^4\PIK^4+\PIK^2(\PIK^6-20\PIK^3-8)u^2-4\PIK^9+12\PIK^6-12\PIK^3+4.
\]
Since $J(y)$ contains a factor $\Delta(y)$, the values $J_1(Y_\pm)$ are also roots of $P_4(u)$, and hence of the following quadratic polynomial:
\begin{align*}
  P_2(u)&= 4\PIK^2P_4(u) -\tilde P_4(u)/\PIK^2\\
  &=
  \PIK^2(4A_2-\PIK^4+4\PIK)u^2+4\PIK^2A_1u+4(\PIK^7-2A_2\PIK^3+\PIK^4+A_0\PIK^2-A_2+7\PIK).
\end{align*}
Hence $P_4(u)$ contains a factor $P_2(u)$.

Now consider the equation $I(X)=0$, that is,
\[
  X=t \left(1+X^3+tX^2D_0+2tX^3U(X,0)\right).
\]
From the form of this equation, we see that it admits a unique solution $X$ in the ring $\qs[[t]]$. The first terms in its expansion are
\[
  X= t+ 2t^4+16 t^7 + \mathcal O(t^{10}).
\]
Since $I(X)=0$ and $I'(X)=0$, it follows from~\eqref{IJ-P-RK} that $I_1(X)$ is a root of $P_4(u)$, and even a double root, unless $I'_1(X)=0$. But $I'_1(X)= -1/t^2 + \LandauO(t)$, hence $P_4(u)$ admits indeed $I_1(X)$ as a double root. Moreover, we easily check that  $I_1(X)\not = J_1(Y_\pm)$.  Thus we have the following factorization:
\[
  P_4(u)= \frac 1 { \PIK^2(4A_2-\PIK^4+4\PIK)} (u-I_1(X))^2 P_2(u).
\]
(The multiplicative constant is adjusted using the leading coefficients of $P_2$ and $P_4(u)$.) By extracting  the coefficients of $u^3, \ldots , u^0$ in this identity, we obtain four polynomial equations that relate $I_1(X)$, $A_0$, $A_1$, and $A_2$. By eliminating the first three series, we obtain a polynomial equation for $A_2$. We  determine which of its factors  vanishes thanks to the first coefficients of $A_2=2tD_0$ (see~\eqref{A12-KR}), and, after introducing the series $\PIIK$,  we obtain the value of $A_2$ given in the theorem. The values of $A_1$ and $A_0$ follow similarly. We refer to our \Maple\ session for details.
Finally, the expression of $C_{0,0}$ follows from $C_{0,0}=D_0=A_2/(2t)$.

\medskip
\paragraph{\bf Degrees.} Let us now discuss the algebraic degrees of our \gfs. The expression of $C_{0,0}$ shows that it has degree at most $6$, and this is easily checked to be tight by elimination of $\PIIK$ and $\PIK$. The expression of $I(x)$ shows that it belongs to an extension of degree $12$ of $\qs(t,x)$: this can be seen by extending this field first by $\PIK$ (degree $3$), then $\PIIK$ (degree $2$), and finally by $A_1\sqrt{1-x\PIK^2}$; the square of the latter series being in $\qs(x,t, \PIK, \PIIK)$ (because $\frac{1+\PIIIK}{1-\PIIIK}= \sqrt{\frac{1+\PIIK}{1-\PIIK}} $), this yields another extension of degree $2$, resulting in total into a degree $12$ at most for $I(x)$. Since $C_{0,0} \in \qs(\PIIK)$, this implies that $U(x,0)$ has degree at most $24$. An effective elimination procedure  shows that these bounds are tight. The same argument shows that $J(x)$ has degree $12$, and $D(x)$ degree~$24$. In fact, $I(x)$ and $J(x)$ satisfy the same equation over $\qs(t,x)$, as shown by their respective expressions. Finally, the basic functional equation implies that the degree of $C(x,y)$ is the degree of $C_-(x)+C_-(y)+C_{0,0}$. We have seen that $C_{0,0}$ belongs to $\qs(t,\PIK, \PIIK)$, and that $C_-(x)$ belongs to an extension of degree $4$ of $\qs(t,x, \PIK, \PIIK)$. Hence the above sum belongs to an extension of $\qs(t,x, \PIK, \PIIK)$ of degree at most $4\times 4=16$, and thus has degree at most $16 \times 6=96$. We  compute its minimal polynomial over $\qs(t)$ for $x=2$ and $y=3$, and find this bound to be tight.
\end{proof}

Next we establish the results that deal with walks ending at a prescribed point $(i,j)$.
\begin{proof}[Proof of Corollary~\ref{cor:excursions-RK}]
We begin with walks ending on the diagonal, the \gf\ of which is given in Theorem~\ref{thm:RK}. We consider the series 
\beq\label{diag-RK-shift}
x D(x/\PIK^2) + \frac {\PIK^4}{ tx},
\eeq
which, due to the periodicity of the model and the properties of $\PIK$, is a series in $t^3$ (or equivalently, in $\PIK^3$) with coefficients in $\qs[\bx,x]$. Using the second part of Theorem~\ref{thm:RK}, we first rewrite its square in terms of $x$ and the series $\PIIIK$ (this series is only needed because of the term $A_1$, otherwise we could do with $\PIIK$ as in Kreweras' case).  We thus obtain a (Laurent) series in $x$ with rational coefficients in $\PIIIK$. The coefficient of $x^{-2}$ in this series is ${\PIK^8}/{t^2}$.
Hence the square root of this series, which is~\eqref{diag-RK-shift}, is also a series in $x$ with coefficients in $\qs(\PIIIK)$. The coefficient of $x^i$ in~\eqref{diag-RK-shift} is
$C_{i-1,i-1}/\PIK^{2i-2}$ as soon as $i>0$. The  corollary thus follows for  walks  ending on the diagonal.   In particular, for $i=2$ we obtain the expression~\eqref{Cat11-RK} of $C_{1,1}$.
%given below Corollary~\ref{cor:excursions-RK}.

The same argument, starting now from
\[ %\beq\label{x-axis-RK-shift}
  2
  % \frac x {\PIK^2}
  C_-\left( \frac x {\PIK^2}\right) + \frac{\PIK^4}{tx^2} - \frac{\PIK^2}{t^2x} + \frac x{t\PIK ^2} + C_{0,0},
\] %\eeq
establishes the result for walks ending on the negative $x$-axis, thanks to the expression of $I(x)$. Indeed, one can show that this is a (Laurent) series in $x$ with rational coefficients in $\PIIIK$, and the coefficient of $x^i$ in this series, for $i>0$, is $2 C_{-i,0}/\PIK^{2i}$. We obtain in particular the expression of $C_{-1,0}=U_{0,0}$ given below Corollary~\ref{cor:excursions-RK} (but it also follows from~\eqref{A12-KR} and the expression of $A_1$).

We can now prove the corollary for any point $(i,j)$ in the three quadrant cone. Without loss of generality, we assume $j\ge i$, and  argue by induction on $j\ge -1$. The result is obvious when $j=-1$ (because then $i<0$ and $C_{i,j}=0$), and has just been established for $j=0$. Since it has also been proved  for $j=i$,  we assume that $j\ge i+1$ and $j\ge 1$. We have
\[
  C_{i-1,j-1} = t \left( C_{i-1, j-2} + C_{i-2,j-1} + C_{i,j} \right),
\]
and the result holds for the three series $  C_{i-1,j-1}, C_{i-1, j-2} $ and $ C_{i-2,j-1} $ by the induction hypothesis. Thus it holds for $C_{i,j}$ as well.
\end{proof}

We conclude this section with the series $C(1,1)$.
\begin{proof}[Proof of Corollary~\ref{cor:C11-RK}] 
  The value of $C(1,1)$ is obtained by specializing at $x=y=1$ the first and third identities of Theorem~\ref{thm:RK}. The fact that the degree of $C(1,1)$ is $24$ follows either by direct elimination, or by using the connection~\eqref{I1-C11} between $I(1)=R(1)^2$ and $C(1,1)$ and the  polynomial equation (of degree $12$) over $\qs(t,x)$ that we have established for $I(x)$ in the proof of Theorem~\ref{thm:RK}.

  The asymptotic result is obtained through singularity analysis, using the expressions of $A_0$, $A_1$ and $A_2$, and the singular properties of $\PIK$ and $\PIIK$ (see~\eqref{PIK3-sing},~\eqref{PIIK-sing},~\eqref{PIK-sing-refined}, and the proof of Corollary~\ref{cor:all-K} at the end of Section~\ref{sec:K}).
\end{proof}

%%%%%%%%%%%%%%%%%%%%%%%%%%%%%%%%%%%%%%%%%%%%%%%%%%%%%%%%%%%%%
\section{Double Kreweras steps}
\label{sec:DK}
 %%%%%%%%%%%%%%%%%%%%%%%%%%%%%%%%%%%%%%%%%%%%%%%%%%%%%%%%%%%%%

In this section we take $\cS= \{\rightarrow, \uparrow, \swarrow,\nearrow, \leftarrow, \downarrow\}$, so that $\tS=\cS$.

%========================================
\subsection{Statements of the results}
%========================================
In order to state  the counterparts of Theorems~\ref{thm:K-U} and~\ref{thm:K-D}, we first need to introduce an extension of $\qs(t)$ of degree~$16$.

First, we define $\PID\equiv \PID(t)$ as the unique formal power series in $t$
satisfying
\[
  \PID=t(1+2\PID+4\PID^2).
\]
The  coefficient of $t^{n+1}$ in $\PID$ is $2^n$ times the $n$th Motzkin number, hence the notation $M$. The series $\PID/t$ is known to count walks with steps in $\cS$ confined to the North-West quadrant $\{(i,j), i\le 0, j \ge 0\}$ (see~\cite[Prop.~10]{BMM-10}). Then we consider an extension of degree $4$ of $\qs(\PID)$, 
generated by the series
$\UI=t+ \LandauO(t^2)$ satisfying
 \[
   \UI ={\frac {\PID \left(1+ \PID \right) ^
      {3} }{ \left(1+ 4\,\PID \right) ^{3}}}\cdot\frac{\left(1+ \UI \right) ^{4}}{\left( 1-\UI \right) ^{2}}
\]
Finally, we define
  \[
    A_1= 4 (1+\PID) \sqrt{\frac{(1+\PID)\UI}\PID} =
    \frac{4\,(1+\PID)^3}{(1+4\PID)^{3/2}} \cdot \frac
    {(1+\UI)^2}{1-\UI                    },
  \]
  which has degree $8$ above $\qs(\PID)$ and degree $16$ above $\qs(t)$. Between $\qs(\PID)$ and $\qs(\PID,A_1)$, there are three extensions of $\qs(\PID)$ of degree $2$, respectively generated by $\sqrt{1+4\PID}$, $\sqrt{1-4\PID^2}$ and $\sqrt{(1+4\PID)(1-4\PID^2)}$.  We shall consider in particular a generator of $\qs(\sqrt{1+4\PID})$, denoted~$\PIID$, defined by $\PIID=\mathcal O(t)$ and
\beq\label{PIID-def}
    \PIID = t\, \frac{\PIID^4-2\PIID^3+6\PIID^2-2\PIID+1}{(1-\PIID)^2}, \quad \hbox{or equivalently} \quad \PIID=\PID(1-\PIID)^2.
\eeq
Figure~\ref{fig:ext-DK} gives a complete description of $\qs(t,A_1)$
and its subfields. The series denoted by $\UII$ is the only solution of
\[
 \UII^4 - (1+4\PID)^3\UII^2 + 4\PID(1+ \PID)^3(1+4\PID)^3 =0
  \]
  satisfying $\UII=1+\LandauO(t)$. It is related to $\PID$, $\UI$ and
  $A_1$ by
  \beq\label{P2-char}
    \UII= (1+4\PID)^{3/2} \, \frac{1-\UI}{1+\UI} = \frac{\PID  A_1}{4} + 4 \frac{(1+\PID)^3}{A_1}.
    \eeq

\begin{figure}[t!]
%  \centering
\hskip -15mm  \scalebox{0.8}{\input{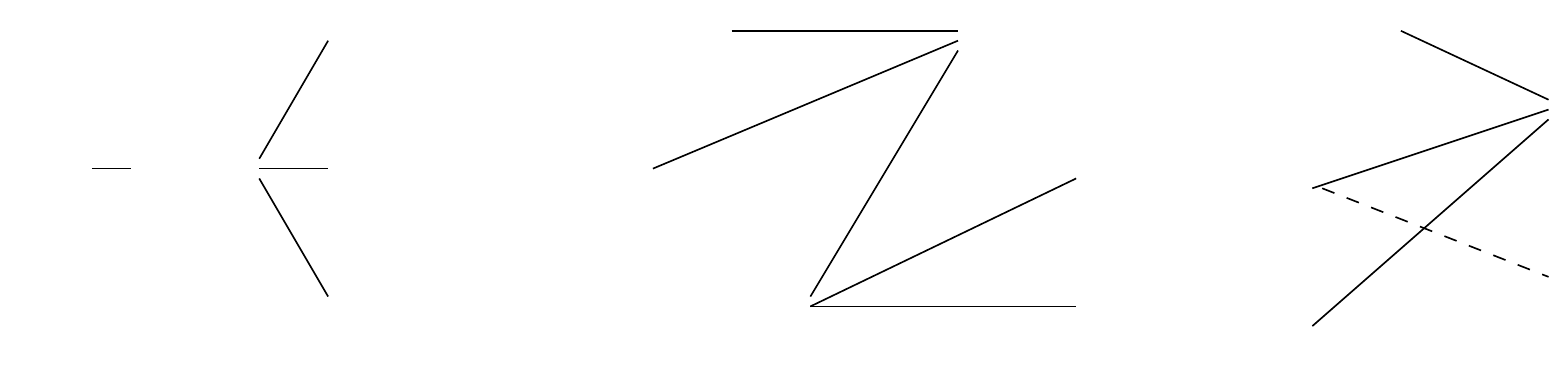_t}}
  \caption{The extension of $\qs(t)$ of degree $16$ generated by $A_1$. All elementary extensions have degree $2$. The series $C(1,1)$ has degree $16$ over $\qs(t)$, as $A_1$, but lies in a different extension of $\qs(t)$.}
  \label{fig:ext-DK}
\end{figure}

\begin{theorem}\label{thm:DK}
  The \gf\  $C(x,y)$  of walks with steps in $\{\rightarrow, \uparrow, \swarrow,\nearrow, \leftarrow, \downarrow \}$ starting from $(0,0)$ and avoiding the negative quadrant is algebraic of degree $256$. It is given by the following equation:
\[
  (1-t(x+y +xy+\bx\by+\bx+\by)) C(x,y)=1 -t\by(1+\bx) C_-(\bx) -t\bx(1+\by) C_-(\by)- t \bx\by C_{0,0},
\]
where $C_{0,0}$ is algebraic of degree $16$ and $\C_-(x)$ is algebraic of degree $64$. These series can be expressed in terms of the series $\PID$, $\PIID$ and $A_1$  defined  above. Indeed,
  \beq\label{C00-DK}
    t C_{0,0}=
       1+ \frac{(1+2\PID)^2}{2\PID} -\frac{3A_1}8 - \frac{2(1+\PID)^3}{\PID A_1},
  \eeq
 and
\begin{multline}\label{C-DK}
    \left(2t (1+\bx) C_-(x)+\bx +1+x-\frac{1+2t}{t(1+x)} +t C_{0,0}\right)^2=
    \left( \frac 1 x-x-{\frac {1+2\,t}{t \left( x+1 \right) }}\right)^2\\
    +\tA_2 \left( \frac 1 x-x-{\frac {1+2\,t}{t \left( x+1 \right) }}\right)+A_1 \frac{\PIID+2x\PIID/(1-\PIID)-x^2}{2x(1+x)\PIID}\sqrt{\Delta_+(x)}+\tA_0,   
\end{multline}
  where $\Delta_+(x)$ is the following polynomial in $x$:
  \beq\label{Delta-plus-DK}
    \Delta_+(x)=1- \frac{2\PIID(1+\PIID^2)}{(1-\PIID)^2} x +\PIID^2 x^2,
    \eeq
    and  $\tA_2$ and $\tA_0$ are algebraic series of respective degree $8$ and $16$, both in $\qs(t, A_1)$:
\[ %\begin{align*}
      \tA_2= \frac{(1+2\PID)^2}\PID -\frac{A_1}4 -
      \frac{4(1+\PID)^3}{\PID A_1}
      = \frac{(1+2\PID)^2}\PID - \frac{\UII}{\PID},
\]
\[ %\begin{multline*}
      \tA_0=
      \frac{A_1^{2}}8
   +   \left({\frac {A_1\,  }{8\PID}}+2\,{
\frac { \left( \PID+1 \right) ^{3}  }{A_1\,{\PID}
  ^{2}}} \right) \left((1+4\PID)^{3/2}-(1+2\PID)^2\right)
  +{\frac {2\,{\PID}^{3}-14\,{\PID}^  {2}-12\,\PID-3}{\PID}}
.
\] %\end{multline*}

The \gf\ $D(x)$ of walks ending on the diagonal is algebraic of degree $64$, given~by
\begin{multline}\label{D-DK}
  \Delta(x) \left(xD(x)+ \frac{1}{t(1+x)}\right)^2=
     \left( \frac 1 x-x-{\frac {1+2\,t}{t \left( x+1 \right) }}\right)^2\\
     +\tA_2 \left( \frac 1 x-x-{\frac {1+2\,t}{t \left( x+1 \right) }}\right)-A_1 \frac{\PIID+2x\PIID/(1-\PIID)-x^2}{2x(1+x)\PIID}
     \sqrt{\Delta_+(x)}+\tA_0,   
\end{multline}
with the above values of $\Delta_+(x), \tA_0, A_1, \tA_2$ and
\beq\label{Delta-def-DK}
   \Delta(x)= (1-t(x+\bx))^2-4t^2\bx(1+x)^2.
 \eeq
\end{theorem}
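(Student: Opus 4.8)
The plan is to follow the general strategy of Section~\ref{sec:dec}, exactly as implemented for Kreweras and reverse Kreweras steps in Sections~\ref{sec:K} and~\ref{sec:RK}. Since $\tS=\cS$ here, we have $\tH_-(x)=1+\bx$, $\tV_0(y)=y+\by$, $\tV_+(y)=1+y$ and $\tV_-(y)=1+\by$; the first displayed identity of Theorem~\ref{thm:DK} is then just the basic functional equation~\eqref{eqfunc-gen}, rewritten using $C_-(x)=xU(x,0)$ and $C_{0,0}=D_0$. Proposition~\ref{prop:inv-tq}, together with the double Kreweras entry of Table~\ref{tab:decoupling}, supplies the pair of $\tS$-invariants
\[
  I(x)=R(x)^2,\qquad J(y)=\Delta(y)\,S(y)^2,
\]
with $R(x)=2t(1+x)U(x,0)-F(x)+tD_0$, $S(y)=yD(y)+\frac1{t(1+y)}$, $F(x)=\frac{1+2t}{t(1+x)}-1-x-\bx$ and $\Delta$ given by~\eqref{Delta-def-DK}. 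On top of this we have the rational pair $(I_0(x),J_0(y))$ of Table~\ref{tab:inv-qu-1}, with $I_0(x)=\bx-x-\frac{1+2t}{t(1+x)}$, and the pair $(I_1(x),J_1(y))$ defined by~\eqref{I1J1-def}, built from the algebraic quadrant series $\tQ(x,0)$ for double Kreweras walks~\cite{BMM-10}.

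The first step is to apply the invariant lemma, following~\eqref{tildeI-RK}. I would set
\[
  \tilde I(x):=I(x)-I_0(x)^2-\tA_2\,I_0(x)-A_1\,I_1(x),\qquad
  \tilde J(y):=J(y)-J_0(y)^2-\tA_2\,J_0(y)-A_1\,J_1(y),
\]
so that by Lemma~\ref{lem:linear} (applied, for the $I_0^2$ term, to the product of $(I_0,J_0)$ with itself) the pair $(\tilde I,\tilde J)$ is again a pair of $\tS$-invariants; the series $\tA_2$ and $A_1$ (roughly $\tA_2=2tD_0+\cdots$ and $A_1=4(1+tU_{0,0})$, the precise constants being dictated by the Laurent expansions at $x=0$ of $I$, $I_0$ and $I_1$) are chosen so that $\tilde I(x)$ has no pole at $x=0$. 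Next, using the functional equations for $D(y)$ and $U(x,y)$ obtained by specialising~\eqref{eqD2} and~\eqref{eqU2} to the double Kreweras step set, I would check that, for each $n$, the coefficient of $t^n$ in the ratio $(\tilde I(x)-\tilde J(y))/\tK(x,y)$ — which has a pole of bounded order at $0$ — in fact vanishes at $x=0$ and at $y=0$; exactly as in Section~\ref{sec:RK} this only needs the value at $x=0$ and the first Taylor coefficient at $x=0$ (and symmetrically at $y=0$) of the $U$-equation. By Lemma~\ref{lem:invariants} the ratio is then zero, so $(\tilde I,\tilde J)=(\tA_0,\tA_0)$ for some $\tA_0\in\qs((t))$; unfolding the definitions of $\tilde I$ and $\tilde J$ yields identities~\eqref{C-DK} and~\eqref{D-DK} (which differ only by the branch of $\sqrt{\Delta_+(x)}$, since $I_1$ and $J_1$ involve opposite square roots), once $I_1(x)$, $\tA_0$, $A_1$ and $\tA_2$ are made explicit.

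It remains to identify these four series. The known expression of $\tQ(x,0)$~\cite{BMM-10} gives $I_1(x)$ explicitly as a rational multiple of $\sqrt{\Delta_+(x)}$, with $\Delta_+$ as in~\eqref{Delta-plus-DK}; moreover — either directly from this expression, or intrinsically from the invariant lemma applied to the pairs $(I_1,J_1)$ and $(I_0,J_0)$, as in the resolution of quadrant double Kreweras walks in~\cite{BeBMRa-17} — one obtains a polynomial relation expressing $I_0(x)$ as a polynomial in $I_1(x)$ over $\qs(t,\PID)$. Substituting, $I(x)=R(x)^2$ becomes a polynomial in $I_1(x)$, and the three remaining unknowns $\tA_2$, $A_1$, $\tA_0$ are then pinned down by two further constraints, exactly as for reverse Kreweras: first, $J(y)=\Delta(y)S(y)^2$ must vanish at every root of $\Delta(y)$ that is a formal power series in $t$; second, $R(x)$ — hence $I(x)$ — has a double root $X\in\qs[[t]]$, namely the unique power-series solution of $R(X)=0$. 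Eliminating yields a polynomial equation for $\tA_2$, then for $A_1$, then for $\tA_0$; the correct branch is selected in each case from the first few coefficients of the series, and introducing $\PID$, $\PIID$, $\UI$ and $A_1$ as in Figure~\ref{fig:ext-DK} brings the answers to the stated form, giving~\eqref{C00-DK}, \eqref{C-DK} and~\eqref{D-DK}. The value of $C_{0,0}=D_0$ follows from the relation between $\tA_2$ and $U_{0,0}$ read off from the $x=0$ expansion.

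Finally, the degrees. By Figure~\ref{fig:ext-DK}, the series $A_1$, and with it $\tA_2$, $\tA_0$ and $C_{0,0}$, lies in an extension of $\qs(t)$ of degree $16$; the right-hand side of~\eqref{C-DK} lies in a further degree-$2$ extension of $\qs(t,x,A_1)$ because of the factor $\sqrt{\Delta_+(x)}$, so $C_-(x)=xU(x,0)$, being a square root of it, has degree at most $16\cdot2\cdot2=64$ over $\qs(t)$ — and likewise $D(x)$ from~\eqref{D-DK}; tightness is checked by an explicit elimination, say at $x=2$. By the first displayed identity of the statement, $C(x,y)$ has the same degree as $\by(1+\bx)C_-(\bx)+\bx(1+\by)C_-(\by)+\bx\by C_{0,0}$, which over $\qs(t,x,y,A_1)$ has degree at most $4\cdot4=16$, hence degree at most $16\cdot16=256$ over $\qs(t)$; one verifies by elimination at $x=2$, $y=3$ that this bound is attained. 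All effective computations are carried out in the accompanying \Maple\ session. I expect the main obstacle to be this last, computational part: organising the large algebraic extensions and, at each elimination step, identifying which irreducible factor is actually satisfied, so that the final expressions collapse to the compact form recorded in the statement.
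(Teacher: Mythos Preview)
Your outline is correct through the application of the invariant lemma, and the degree discussion is fine. But there is a genuine gap at the step ``Substituting, $I(x)=R(x)^2$ becomes a polynomial in $I_1(x)$''. For the reverse Kreweras model this worked because the relation between the rational invariant and the quadrant invariant was $I_1(x)^2=I_0(x)+\text{const}$, so $I_0$ is a polynomial in $I_1$. For double Kreweras the analogous relation is
\[
  I_2(x)^2=\tfrac14\,I_0(x)^2+c_1\,I_0(x)+c_0,
\]
where $I_2(x)$ is the explicit square-root part of $I_1(x)$ (so $I_1(x)=-\tfrac12 I_0(x)+I_2(x)+\text{const}$). This is \emph{quadratic} in $I_0$, not linear; hence $I_0$ is only rational in $I_1$, and substituting into $I(x)=I_0(x)^2+\tA_2 I_0(x)+A_1 I_2(x)+\tA_0$ does not give a polynomial in $I_1(x)$. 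The paper flags this explicitly and bypasses it with a different idea: because $J_0(y)=I_0(y)$ while the $I_2$-terms in $I$ and $J$ enter with opposite signs, the \emph{product} $I(x)J(x)$ eliminates the square root and is a genuine degree-$4$ polynomial $P_4(I_0(x))$. One then factors $P_4$ using (i) the two roots $X_\pm$ of $\Delta(x)$ that are finite at $t=0$, and (ii) the double root of $I(x)$, obtaining $P_4(u)=\text{const}\cdot(u-I_0(X))^2 P_2(u)$; equating coefficients yields the stated values of $\tA_2$, $A_1$, $\tA_0$.

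A smaller point: for this model $I(x)$, $I_0(x)$ and $I_1(x)$ all have poles at $x=-1$ as well as at $x=0$ (coming from the $\frac{1}{t(1+x)}$ terms), so the constants in your $\tilde I(x)$ must be chosen to kill \emph{both} sets of poles, not just those at $x=0$. The paper does this in two stages: first $I_0$ to clear the poles at $0$, then $I_1$ to clear the remaining pole at $-1$; this is what fixes $A_2=2(1+tD_0+2tU_{0,0})$ and $A_1=4(1+tU_{0,0})$, after which $\tA_2=A_2-A_1/2$.
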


For walks ending at a prescribed position, we obtain the following result.

\begin{cor}\label{cor:excursions-DK}
  Let us define $A_1$ as above.   For any $(i,j)\in \Cc$, the \gf\ of walks avoiding the negative quadrant and ending at $(i,j)$ is algebraic of degree (at most) $16$ and belongs to $\qs(t, A_1)$.
\end{cor}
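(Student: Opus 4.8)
The plan is to mimic the structure of the proofs of Corollaries~\ref{cor:excursions-K} and~\ref{cor:excursions-RK}, replacing Kreweras-specific extensions by the degree-$16$ extension $\qs(t,A_1)$ described in Figure~\ref{fig:ext-DK}. First I would treat walks ending on the diagonal. Starting from the expression~\eqref{D-DK} of $D(x)$ in Theorem~\ref{thm:DK}, I would form an appropriately shifted series $x\mapsto D(x/\rho)$ (plus a correcting term), where $\rho$ is the element of $\qs(t,\PID)$ that rescales the periodicity of the model, so that the resulting series becomes a formal power series in $x$ with coefficients in $\qs(t,A_1)$. Here the point is that the double Kreweras model has no periodicity obstruction ($\gcd$ of step lengths issues aside), so the shift is milder than for Kreweras — but one still wants to arrange that $\sqrt{\Delta_+(x)}$ and the prefactor in~\eqref{D-DK} expand cleanly in $x$. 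Squaring~\eqref{D-DK}, substituting the singular data, one obtains a Laurent series in $x$ whose coefficients lie in $\qs(t,A_1)$; extracting a square root (with the sign fixed by checking the first term, as in the reverse Kreweras case via $C_{-1,0}$) shows that each $C_{i,i}$ lies in $\qs(t,A_1)$.

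Next I would handle walks ending on the negative $x$-axis, i.e. the coefficients $C_{-i,0}$, using the expression~\eqref{C-DK} of $(1+\bx)C_-(x)$ — equivalently the invariant $I(x)=R(x)^2$ of Proposition~\ref{prop:inv-tq}. The factor $(1+x)$ in front of $C_-(x)$ is a small nuisance compared to Kreweras: one must invert $1+x$ as a power series (harmless) after the shift, and then read off the coefficient of each $x^{i}$. Again the key observation is that $\Delta_+(x)$ is a polynomial in $x$ with coefficients in $\qs(t,\PIID)\subset\qs(t,A_1)$, so $\sqrt{\Delta_+(x)}$ expands as a power series in $x$ with coefficients in $\qs(t,\PIID)$; after multiplying by the rational prefactor and taking one more square root (sign fixed by the known first term of $C_{-i,0}$, cf.\ the value of $C_{-1,0}=U_{0,0}$ available from~\eqref{C00-DK}-type data), one gets $C_{-i,0}\in\qs(t,A_1)$. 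One also records the stronger degree bound $16$, which follows because all ingredients — $\PID$ (degree $3$… actually degree $3$? no: $\PID$ has degree $3$ over $\qs(t)$), $\PIID$, $A_1$ — lie in the single field $\qs(t,A_1)$ of degree $16$, so no genuine new algebraic element is introduced when extracting these square roots.

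Finally I would propagate to an arbitrary point $(i,j)\in\Cc$ by the same induction on $j$ (with $j\ge i$ by the $x/y$-symmetry) used at the end of the proofs of Corollaries~\ref{cor:excursions-K} and~\ref{cor:excursions-RK}: the case $j=-1$ is trivial since $C_{i,j}=0$; the cases $j=0$ and $j=i$ have just been settled; and for $j\ge i+1$, $j\ge1$, the step-by-step recurrence
\[
  C_{i-1,j-1}=t\bigl(C_{i-1,j-2}+C_{i-2,j-1}+C_{i,j}+C_{i,j-1}+C_{i-1,j}+C_{i-2,j-2}\bigr)
\]
(appropriately restricted near the axes, using all six double-Kreweras steps) expresses $C_{i,j}$ as a $\qs(t)$-linear combination of series known by induction to lie in $\qs(t,A_1)$, so $C_{i,j}$ does too. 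The main obstacle I anticipate is purely bookkeeping: verifying that the two successive square-root extractions really stay inside $\qs(t,A_1)$ rather than producing a larger field — this amounts to checking that the relevant radicands are squares in $\qs(t,A_1)$, which is where the explicit relations~\eqref{PIID-def}, \eqref{P2-char} among $\PID$, $\PIID$, $\UI$, $\UII$ and $A_1$ (equivalently the subfield lattice of Figure~\ref{fig:ext-DK}) must be invoked carefully; the sign ambiguities are the other delicate point, resolved as above by evaluating at a convenient value of $t$ (e.g.\ a root of the relevant defining equation) and comparing with the first few coefficients.
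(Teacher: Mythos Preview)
Your overall plan --- settle the diagonal, settle the negative $x$-axis, then propagate by the step recurrence --- is exactly the paper's. Two comments, one cosmetic and one substantive.

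\medskip
\textbf{Cosmetic.} You overcomplicate the first two parts. This model is aperiodic, so no shift $x\mapsto x/\rho$ is needed at all. The right-hand side of~\eqref{C-DK} is already a Laurent series in $x$ with coefficients in $\qs(t,A_1)$: indeed $\sqrt{\Delta_+(x)}\in\qs(\PIID)[[x]]\subset\qs(t,A_1)[[x]]$ since $\Delta_+(0)=1$, and the other ingredients $\tA_0,A_1,\tA_2,C_{0,0}$ are in $\qs(t,A_1)$ by Theorem~\ref{thm:DK}. The leading Laurent term is $1/x^2$, a rational square, so the square root stays in $\qs(t,A_1)((x))$ with no field extension and no sign ambiguity to resolve. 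Same for~\eqref{D-DK}. (Incidentally, $\PID$ has degree~$2$ over $\qs(t)$, not~$3$.)

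\medskip
\textbf{Substantive.} Your propagation step has a real gap. You propose to induct on $j$ using the recurrence at $(i-1,j-1)$. But for double Kreweras the six incoming steps to $(i-1,j-1)$ include the $\downarrow$ step from $(i-1,j)$, so your recurrence reads
\[
  C_{i-1,j-1}=t\bigl(C_{i-1,j-2}+C_{i-2,j-1}+C_{i,j}+C_{i,j-1}+C_{i-1,j}+C_{i-2,j-2}\bigr),
\]
and $C_{i-1,j}$ sits at the \emph{same} $j$-level as $C_{i,j}$. Induction on $j$ alone therefore does not close, unlike in the Kreweras and reverse-Kreweras proofs where only three steps are present. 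The paper repairs this in two moves: first it handles the superdiagonal points $(i,i+1)$ separately, via the symmetric form
\[
  C_{i,i}=\mathbbm 1_{i=0}+t\bigl(2C_{i,i+1}+2C_{i-1,i}+C_{i-1,i-1}+C_{i+1,i+1}\bigr),
\]
inducting on $i$; then for general $(i,j)$ with $j\ge i$ it inducts on $2j-i$, using the recurrence at $(i,j-1)$ and solving for $C_{i,j}$ (this comes from the $\downarrow$ step into $(i,j-1)$). One checks that every other term in that recurrence --- $C_{i,j-1}$, $C_{i,j-2}$, $C_{i\pm1,j-1}$, $C_{i-1,j-2}$, $C_{i+1,j}$ --- has strictly smaller $2j-i$, so the induction goes through.
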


A first example is provided by the expression of $C_{0,0}$ in~\eqref{C00-DK}. A simpler one is 
\beq\label{Cm10-D}
 t C_{-1,0} =\frac{A_1}4-1.
\eeq

Now for the \gf\ C(1,1) that counts all walks avoiding the negative quadrant, it follows from Theorem~\ref{thm:DK} that
\[ % \begin{multline*}
   (1-6t)^2 \left( C(1,1) + \frac 1 {2t} \right)^2=
    \left( {\frac {1+2\,t}{2t  }}\right)^2
    -\tA_2 {\frac {1+2\,t}{2t }}+A_1 \frac{\PIID+2\PIID/(1-\PIID)-1}{4\PIID}\sqrt{\Delta_+(1)}+\tA_0.
\] %  \end{multline*}
  Upon noticing that $\Delta_+(1)=(1-\PIID)^2(1-4\PID^2)$, one sees
  that the right-hand side belongs to $\qs(t, A_1)$, and could thus be
  expected to have degree $16$ (which would give an expected
  degree~$32$ for $C(1,1)$). However, the right-hand side belongs to
  $\qs(t,A_1^2)$ and has degree $8$ only, as made explicit in the
  following corollary. Details of the derivation are available in our \Maple\ session.

\begin{cor}\label{cor:C11-DK}
  The \gf\ $C(1,1)$ counting by their length all walks with steps in $\cS$ avoiding the negative quadrant is algebraic of degree $16$ over $\qs(t)$, and of degree $2$ over $\qs(t,A_1^2)$. More precisely,
  \begin{multline}
  (1-6t)^2 \left( C(1,1) + \frac 1 {2t} \right)^2=    \label{C11-DK} \\
  {\frac {\PID A_1^{4}}{512\, \left( \PID+1 \right) ^{3}}}+
  {\frac { \left( 10\,{\PID}^{4}-34\,{\PID}^{3}-18\,{\PID}^{2}-2\,\PID-1 \right) A_1
      ^{2}}{32\PID \left( \PID+1 \right) ^{3}}}
  -{\frac {14\,{\PID}^{4}-22\,{\PID}^{3}
-6\,{\PID}^{2}+2\,\PID-1}{{4\PID}^{2}}} %\nonumber
    .
   \end{multline}
  It has radius $1/6$, with a unique dominant singularity at $1/6$. As $t$ approaches this point,
  \[
    C(1,1) \sim  \kappa (1-6t)^{-5/8},
  \]
  with
  \[
    \kappa= 2^{1/4} 3^{9/8} (\sqrt 2-1).
    \]
  Hence  the number of walks of length $n$ avoiding the negative quadrant satisfies
  \[
    c_n \sim \frac  {\kappa} {\Gamma(5/8)} 6^{n} n^{-3/8}.
  \]
  \end{cor}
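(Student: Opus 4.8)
The plan is to combine the two main identities of Theorem~\ref{thm:DK}, namely the expressions for $C_-(x)$ (see~\eqref{C-DK}) and for $D(x)$ (see~\eqref{D-DK}), specialized at $x=1$, with the basic functional equation of the same theorem, also specialized at $x=y=1$. First I would record that the basic functional equation at $x=y=1$ reads
\[
  (1-6t) C(1,1)= 1 - 2t(1+1) C_-(1) - t C_{0,0} = 1 - 4t\, C_-(1) - t C_{0,0},
\]
which, after using $tC_{0,0}$ from~\eqref{C00-DK}, expresses $C(1,1)$ linearly in terms of $C_-(1)$. Hence it suffices to determine $C_-(1)$, or rather the square
\[
  \left(2t(1+1) C_-(1)+ 1+1+1 -\frac{1+2t}{2t} + tC_{0,0}\right)^2 = \left(4tC_-(1)+3 -\frac{1+2t}{2t}+tC_{0,0}\right)^2,
\]
which is precisely what~\eqref{C-DK} gives at $x=1$. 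The right-hand side of~\eqref{C-DK} at $x=1$ involves $\Delta_+(1)$, and the key simplification, already flagged just before the statement of Corollary~\ref{cor:C11-DK}, is that $\Delta_+(1)=(1-\PIID)^2(1-4\PID^2)$, so that $\sqrt{\Delta_+(1)}=(1-\PIID)\sqrt{1-4\PID^2}$ lands in $\qs(t,A_1)$ rather than forcing a further quadratic extension. Substituting the values of $\tA_0$, $\tA_2$, $A_1$ given in Theorem~\ref{thm:DK}, together with the relation $\PIID=\PID(1-\PIID)^2$ from~\eqref{PIID-def} and the relation~\eqref{P2-char} for $\UII$, I would reduce everything to a rational expression in $\PID$ and $A_1$. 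The claim is that all odd powers of $A_1$ cancel, leaving an element of $\qs(t,A_1^2)$; collecting terms then yields~\eqref{C11-DK}. This is a finite, if somewhat lengthy, rational computation, and is exactly the one deferred to the accompanying \Maple\ session.

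Once~\eqref{C11-DK} is established, the degree statement follows from the lattice of subfields in Figure~\ref{fig:ext-DK}: the right-hand side lies in $\qs(t,A_1^2)$, which is the degree-$8$ subfield $\qs(t,\UI)=\qs(t,A_1^2)$, and $(1-6t)^2(C(1,1)+1/(2t))^2$ is a square root away from an element of that field, so $C(1,1)$ has degree at most $16$; tightness is checked by an explicit minimal-polynomial computation over $\qs(t)$. For the asymptotics, I would run singularity analysis. The series $\PID$ satisfies $\PID=t(1+2\PID+4\PID^2)$, hence $\PID=(1-\sqrt{1-8t-\cdots})/\cdots$; more precisely its discriminant vanishes at $t=1/6$, giving a unique dominant singularity there with a square-root expansion $\PID = \PID(1/6) + c\,(1-6t)^{1/2}+\LandauO(1-6t)$. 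One propagates this through the defining equation~\eqref{PIID-def} of $\PIID$ and the quartic for $\UII$ in~\eqref{P2-char} to get the singular expansions of $\PIID$, $\UII$, and thence of $A_1$ and $A_1^2$ near $1/6$. Plugging these into~\eqref{C11-DK}, one sees that the leading singular term of the right-hand side is of order $(1-6t)^{1/4}$ (coming from the fractional power of $1+4\PID$ hidden in $A_1$ and $\UII$), so that
\[
  (1-6t)^2\left(C(1,1)+\tfrac{1}{2t}\right)^2 = \text{(analytic)} + \text{const}\cdot(1-6t)^{3/4}+\cdots,
\]
and after taking the square root and dividing by $(1-6t)$ one obtains $C(1,1)\sim \kappa (1-6t)^{-5/8}$; the constant $\kappa= 2^{1/4}3^{9/8}(\sqrt2-1)$ is read off from the explicit values of $\PID(1/6)$ and the expansion coefficients, with a sign check (as in the Kreweras and reverse-Kreweras cases) to pick the correct branch of the square root. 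Transfer theorems~\cite[Ch.~VI]{flajolet-sedgewick} then give $c_n\sim \kappa\, 6^n n^{-3/8}/\Gamma(5/8)$.

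The main obstacle is twofold. On the algebraic side, it is the bookkeeping: verifying that in the specialization of~\eqref{C-DK} at $x=1$ all the odd-in-$A_1$ contributions coming from $A_1$, $\tA_0$, $\tA_2$ and the $\sqrt{\Delta_+(1)}$ term conspire to cancel, so that the result genuinely drops from a degree-$16$ to a degree-$8$ extension — this is the non-obvious point (the ``expected degree $32$'' discussion preceding the corollary makes clear it could have gone the other way), and it is handled by direct computation rather than by a conceptual argument. On the asymptotic side, the delicate point is the sign of the square root when passing from $(1-6t)^2(C(1,1)+1/(2t))^2$ to $C(1,1)$: one must evaluate both sides at a convenient interior point (e.g. a small rational $t$, or $t$ near $1/6$ where $\PID=1/2$) to fix the branch, exactly as was done via the value of $C_{-1,0}$ at $t=1/3$ in the Kreweras proof. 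Everything else is routine once these two checks are in place.
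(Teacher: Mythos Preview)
Your algebraic derivation is the paper's own: specialize Theorem~\ref{thm:DK} at $x=y=1$, use $\Delta_+(1)=(1-\PIID)^2(1-4\PID^2)$, and verify via direct computation that the odd powers of $A_1$ cancel. The paper gives no further proof beyond this remark (made just before the corollary) and a pointer to its \Maple\ session.

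Your asymptotic sketch, however, contains two concrete errors. First, the factor $(1+4\PID)^{3/2}$ is \emph{not} the source of the singularity at $t=1/6$: there $\PID=1/2$, so $1+4\PID=3$ and that factor is analytic. The $(1-6t)^{3/4}$ behaviour of $A_1^2$ comes instead from a confluence in the defining equation of~$\UI$: writing $\phi(\UI)=\psi(\PID)$ with $\phi(u)=u(1-u)^2/(1+u)^4$ and $\psi(m)=m(1+m)^3/(1+4m)^3$, one finds $\phi'(\UI_c)=0$ at $\UI_c=3-2\sqrt2$ while $\psi-\psi(1/2)$ has a \emph{triple} zero at $\PID=1/2$, so that $(\UI-\UI_c)^2\sim\mathrm{const}\,(\PID-1/2)^3\sim\mathrm{const}\,(1-6t)^{3/2}$. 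Second, and more seriously, writing the right-hand side of~\eqref{C11-DK} as ``(analytic)${}+{}$const${}\cdot(1-6t)^{3/4}$'' only yields $C(1,1)\sim\kappa(1-6t)^{-5/8}$ if that analytic part \emph{vanishes} at $t=1/6$; otherwise $(1-6t)\bigl(C(1,1)+1/(2t)\bigr)$ tends to a nonzero constant and $C(1,1)\sim(1-6t)^{-1}$. One must therefore check both that the right-hand side of~\eqref{C11-DK} vanishes at the critical values $(\PID,A_1^2)=\bigl(1/2,\,108(3-2\sqrt2)\bigr)$ and that its $\PID$-partial (holding $A_1^2$ fixed) also vanishes there, so that no $(1-6t)^{1/2}$ term survives. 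Both checks hold, but neither is automatic, and your sketch does not mention them.
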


\noindent{\bf Harmonic function.} We have also derived the counterpart of Corollary~\ref{cor:harmonic-K}, that is, the
harmonic function for walks with steps in $\cS=\{\rightarrow, \nearrow,
\uparrow, \leftarrow, \swarrow, \downarrow\}$ confined to $\Cc$. We still have an asymptotic behaviour of the form~\eqref{cij-est}, with the factor $3^n$ replaced of course by $6^n$, and without any periodicity condition. We do not give here all details of the  numbers $H_{i,j}$,
but simply the values of
\[
  \Hc_-(x):=\sum_{i>0} H_{-i,0} x^i=
  \frac{3^{7/4} x\sqrt{\sqrt 2 -1}}{\sqrt 2 (1+x) }
   \left(
  \sqrt{\sqrt 2 + \frac{2-\sqrt 3 +x}{1-x}\sqrt{\frac{7+4\sqrt 3 -x}{1-x}}}
    -\sqrt{\sqrt 2-1}\right)
  \]
and
\[
  \Hc_d(y):=\sum_{i\ge 0} H_{i,i}y^i =
  \frac{3^{7/4}\sqrt 2 \sqrt{\sqrt 2 -1}}{(1-y)\sqrt{1-14y+y^2}}
  \sqrt{\sqrt 2 - \frac{2-\sqrt 3 +y}{1-y}\sqrt{\frac{7+4\sqrt 3-y}{1-y}}}.
\]
The proof is analogous to Kreweras' case, and in fact a bit easier since the model $\cS$ is aperiodic. Once again, details of the calculations can be found in our \Maple\ session. The number of quadrant $\tS$-walks of length~$n$ going from $(0,0)$ to $(i,j)$ in the first quadrant is now asymptotic to $h_{i,j} 6^n n^{-5/2} / \Gamma(-3/2)$, with
\[
  \sum_{i\ge 0} h_{i,0} x^i=\frac 3 {2(1+x)} \left( \frac{2-\sqrt 3 +x}{1-x} \sqrt{\frac{7+4\sqrt 3 -x}{1-x}} +1 \right).
\]

%========================================
\subsection{Proofs for double Kreweras steps}
%========================================

\begin{proof}[Proof of Theorem~\ref{thm:DK}]
The first equation of the  theorem is of course the basic functional equation~\eqref{eqfunc-gen}. 
  The functional equations~\eqref{eqD2} and~\eqref{eq-U} defining $D(y)$ and $U(x,y)$ read
  \beq\label{eqD-DK}
  (1-t(y+\by)) D(y) =1 -t\by D_0+2t(1+\by)U(0,y) -2t\by U_{0,0},
  \eeq
 \begin{multline*}%\label{eqU-DK}
      2xy(1-t(\bx\by+x+y+\bx+\by+xy)) U(x,y) = \\
  y+
  y\left(t(y+\by) +2tx(1+y)-1\right) D(y) -2t(1+x) U(x,0) - t D_0.
 \end{multline*}
 The invariants of Proposition~\ref{prop:inv-tq} are
\[ % \beq\label{IJ-DK}
   I(x)=\left( 2t(1+x)U(x,0)+\frac 1 {x}+1+x -\frac {1+2t} {t(1+x)} + tD_0\right)^2, \qquad J(y)=\Delta(y)\left(yD(y)+ \frac 1 {t(1+y)} \right)^2,
\] % \eeq
 with $\Delta$ defined by~\eqref{Delta-def-DK}.  Note that they have now poles at $0$ \emm and, at $-1$.
 The rational invariants $I_0,J_0$ are given by Table~\ref{tab:inv-qu-1}:
  \[
   I_0(x)= \bx -x -\frac{1+2t}{t(1+x)}, \qquad J_0(y)=I_0(y).
 \]
They satisfy
 \beq\label{I0-DK}
   I_0(x)-J_0(y)= \frac{x-y}{t(1+x)(1+y)} \tK(x,y).
 \eeq
 The invariants~\eqref{I1J1-def} related to quadrant walks with steps in $\tS$ are
 \[
 I_1(x)= t(1+x)\tQ(x,0) -\frac{x-t-tx^2}{t(1+x)}, \qquad
 J_1(y)=-t(1+y)\tQ(0,y)+t\tQ_{0,0}-\by .
 \]
 They satisfy
   \beq\label{I1-DK}
     I_1(x)-J_1(y)= -\tK(x,y) \left( xy\tQ(x,y)+ \frac x{t(1+x)}\right).
   \eeq
   
\medskip
\paragraph{{\bf Applying the invariant lemma.}}  Again, we want to combine these three pairs of invariants  to form a pair satisfying the condition of Lemma~\ref{lem:invariants}.  As in the previous section, we look for a polynomial combination of $I(x), I_0(x)$ and $I_1(x)$ that, at  least, has no pole at $x=0$ nor at $x=-1$. We first eliminate poles at~$0$ using $I_0(x)$, and then poles at $-1$ using $I_1(x)$. We are thus led to consider
 \[
\tilde I(x):=   I(x) -I_0(x)^2 -A_2I_0(x)-A_1 I_1(x),
 \]
 with
 \beq\label{A12-DK}
   A_2=2(1+tD_0+2tU_{0,0}), \qquad A_1= 4(1+tU_{0,0}),
 \eeq
 which indeed has no pole at $x=0$ nor at $x=-1$. Let us define accordingly
 \[
  \tilde J(y)=       J(y) -J_0(y)^2 -A_2J_0(y)-A_1 J_1(y),
 \]
 and examine whether the ratio $(\tilde I(x)-\tilde J(y))/\tK(x,y)$ is a multiple of $xy$. We proceed as in the previous section. Denoting by $\Rat$, $\Rat_0$ and $\Rat_1$  the  right-hand sides of~\eqref{IJ-id},~\eqref{I0-DK} and~\eqref{I1-DK},  the expression~\eqref{ratio-RK-tilde} of  $(\tilde I(x)-\tilde J(y))/\tK(x,y)$
 still holds (because $\tilde I(x)$ and $\tilde J(y)$ have the same form for both models). In order to see that this ratio is indeed a multiple of $x$, we need to inject~\eqref{eqD-DK}. Proving that it is a multiple of $y$ requires no additional information. We conclude that the pair $(\tilde I(x), \tilde J(x))$ is constant, equal to $(A_0, A_0)$ for some series $A_0$ that depends on $t$ only.

\medskip Recall that $I_1(x)$ is expressed in terms of the \gf\ $\tQ(x,0)$ of quadrant walks with double Kreweras steps. This series is known, and we obtain from Proposition~15 in~\cite{BMM-10}:
\[ 
I_1(x)= -\frac 1 2 I_0(x) + I_2(x)
   -{\frac {2\,{\PIID}^{4}+{\PIID}^{3}+3\,{\PIID}^{2}-\PIID+1}{2\PIID \left(1- \PIID \right) ^{2}}},
\]
where $\PIID$ is defined by~\eqref{PIID-def} (this series is denoted
by $Z$ in~\cite{BMM-10}) and
\[
  I_2(x):= \frac{\PIID+2x\PIID/(1-\PIID)-x^2}{2x(1+x)\PIID}\sqrt{\Delta_+(x)},
\]
with $ \Delta_+(x)$ given by~\eqref{Delta-plus-DK}. In what follows,
we use $I_2(x)$ rather than $I_1(x)$. That is, we rewrite the identity $\tilde I(x)=A_0$, which reads
\[
  I(x)= I_0(x)^2 +A_2I_0(x)+A_1 I_1(x)+A_0
\]
as
\beq \label{I-expr-DK}
  I(x)= I_0(x)^2 +\tA_2I_0(x)+A_1 I_2(x)+\tA_0,
\eeq
where the two new series $\tA_0$ and $\tA_1$ are
\begin{align}
  \label{tA2-def}
  \tA_2&= A_2-A_1/2=2t(D_0+U_{0,0}),
\\
  \tA_0&=A_0-A_1 {\frac {2\,{\PIID}^{4}+{\PIID}^{3}+3\,{\PIID}^{2}-\PIID+1}{2\PIID \left(1- \PIID \right) ^{2}}}. \nonumber
\end{align}
Similarly, we find that the quadrant invariant $J_1(y)$ is given by
\[ 
J_1(y)= -\frac 1 2 J_0(y) - I_2(y)
%\frac{\PIID+2y\PIID/(1-\PIID)-y^2}{2y(1+y)\PIID}\sqrt{\Delta_+(y)}
   -{\frac {2\,{\PIID}^{4}+{\PIID}^{3}+3\,{\PIID}^{2}-\PIID+1}{2\PIID \left(1- \PIID \right) ^{2}}},
 \]
 and write
\beq\label{J-expr-DK}
  J(y)= J_0(y)^2 +\tA_2J_0(y)-A_1 I_2(y)+\tA_0,
\eeq
with the above values of $\tA_2$ and $ \tA_0$.

This gives the expressions~\eqref{C-DK} and~\eqref{D-DK} announced in Theorem~\ref{thm:DK}, which are those  of $I(x)$ and $J(x)$.   But we still need to determine the three series $\tA_0, A_1$ and $\tA_2$.

\medskip
\paragraph{{\bf The series $\tA_0, A_1$ and $\tA_2$.}} We cannot follow exactly  the same approach as in the previous section, where we had written all invariants as polynomials\footnote{It is still possible to write $I(x)$ as a \emm rational function, of $I_1(x)$; see Section~\ref{sec:rat-uniform}.} in $I_1(x)$ using the identity~\eqref{I1-I0-RK}. Indeed, the counterpart of this identity relates now $I_0$ and $I_2$, and is no longer linear in $I_0$:
\[
   I_2(x)^2= \frac{1}4 I_0(x)^2+ c_1 I_0(x) + c_0,
\]
 with
 \[
   c_1= \frac{1+\PIID+\PIID^2-\PIID^3}{2\PIID(1-\PIID)^2} \qquad \hbox{and} \qquad
   c_0={\frac { \left( \PIID^{2}+1 \right)  \left( 4\,\PIID^{4}-9\,\PIID^{3}+
         13\,\PIID^{2}-\PIID+1 \right) }{4 \PIID^{2}\left( 1-\PIID                                  \right) ^{3}}}.
 \]
However, compare the expressions~\eqref{I-expr-DK}
and~\eqref{J-expr-DK} of $I(x)$ and $J(y)$, and recall that
$J_0(y)=I_0(y)$. This, and the above expression of $I_2(x)^2$, suggests to form the product $I(x) J(x)$, which will be a  polynomial in $I_0(x)$:
 \begin{align*}
   I(x) J(x)&= \left( I_0(x)^2 +\tA_2I_0(x)+\tA_0\right)^2-A_1^2
              I_2(x)^2 \\
            &= \left( I_0(x)^2 +\tA_2I_0(x)+\tA_0\right)^2-A_1^2
   \left(\frac{1}4 I_0(x)^2+ c_1 I_0(x) + c_0\right)
 \\ &  := P_4(I_0(x)),
 \end{align*}
 where
 \[
 P_4(u)= \left( u^2 +\tA_2u+\tA_0\right)^2-A_1^2
 \left(\frac{1}4 u^2+ c_1 u + c_0\right).
 \]
 We can now apply the same ideas  as in the previous case. The product $I(x)J(x)$  vanishes
 \begin{itemize}
 \item 
   when $x$ is one of the two roots of $\Delta(x)$, denoted $X_+$ and $X_-$, that are finite at $t=0$,
   \item  when $x=X=t+3t^3+ \mathcal O(t^4)$ is the (only) root of $I(x)$ lying in $\qs[[t]]$, that is,
 \[
X= t\,\frac{1+X}{1+2t}\left(   2t (1+X) C_-(X)+1 +X+X^2 +t XC_{0,0}\right).
\]
 \end{itemize}
The values $I_0(X_\pm)$ are found to be the two roots of
\[
  P_2(u)= \PIID \left( 1-\PIID \right) ^{3}u^{2}+2\,\PIID \left( 1-\PIID \right) 
 \left( {\PIID}^{3}+{\PIID}^{2}+\PIID-1 \right) u- \left( {\PIID}^{2}+1
 \right)  \left( {\PIID}^{4}-{\PIID}^{3}+13\,{\PIID}^{2}-9\,\PIID+4 \right) 
 .
\]
(This polynomial should not be mixed up with the series $\UII$ of~\eqref{P2-char}.) They expand as
\[
I_0(X_\pm)=\pm \frac 2{\sqrt t} + 1 \pm \sqrt t +\LandauO(t).
\]
Now the series $I_0(X)=-1+\LandauO(t)$ is also a root of $P_4(u)$, distinct from $I_0(X_\pm)$, and in fact is even a double root of $P_4$ (as before, one has to check that $I'_0(X)\not = 0$). Hence we have the following factorization:
\[
  P_4(u)=\frac 1{\PIID\left( 1-\PIID \right) ^{3}} (u-I_0(X))^2 P_2(u).
\]
We now equate the coefficients of $u^3, \ldots, u^0$ in both sides of this identity. From the coefficients of $u^3$, we obtain
\[
  I_0(X)= \frac{\PIID^3+\PIID^2+\PIID-1}{(1-\PIID)^2}-\tA_2.
\]
We then replace every occurrence of $I_0(X)$ by this expression.  The coefficients of $u^2$ give
\beq \label{tA0sol-DK}
\tA_0= {\frac { \left( {\PIID}^{3}+{\PIID}^{2}+\PIID-1 \right) {\tA_2}}{
    \left( 1-\PIID \right) ^{2}}}
+\frac{{A_1}^{2}}8-{\frac {{\PIID}^{7}+4\,{\PIID}^{6}-3\,{\PIID}^
    {5}+12\,{\PIID}^{4}-15\,{\PIID}^{3}+10\,{\PIID}^{2}-5\,\PIID+2}{\PIID
    \left( 1-\PIID \right) ^{4}}}.
\eeq
We proceed with the coefficients of $u^1$, after replacing every occurrence of $\tA_0$ by the above expression. This gives
\beq \label{tA2sol-DK}
  \tA_2=
  -2\,{\frac { \left( {\PIID}^{3}-{\PIID}^{2}-\PIID-1 \right)  \left( 1-\PIID \right) ^
{4}A_1^{2}-16\, \left( {\PIID}^{3}+{\PIID}^{2}+\PIID-1 \right)  \left( {\PIID}^
{2}-\PIID+1 \right) ^{3}}{ \left( 1-\PIID \right) ^{2} \left( A_1^{2}\PIID
 \left( 1-\PIID \right) ^{4}+16\, \left( {\PIID}^{2}-\PIID+1 \right) ^{3}
 \right) }}
,
\eeq
and we finally derive by comparing constant terms that
\begin{multline}\label{algA1Z-DK}
    {\PIID}^{2} \left( 1-\PIID \right) ^{8}A_1^{4}
  +4\,\PIID \left( \PIID+1 \right) ^{3} \left( 1-\PIID \right) ^{7}A_1^{3}
  +32\,\PIID \left( {\PIID}^{2}-\PIID+1 \right) ^{3} \left( 1-\PIID \right) ^{4}A_1^{2}\\
  -64\, \left( 1-\PIID \right) ^{3} \left( \PIID+1 \right) ^{3} \left( {\PIID}^{2}-\PIID+1
 \right) ^{3}A_1+256\, \left( {\PIID}^{2}-\PIID   +1 \right) ^{6}
 =0.
\end{multline}
One can now check, using this equation and the first terms $A_1=4+4t^2+\LandauO(t^3)$
that $A_1$ is really the series described at the beginning of the section, and that the values of $\tA_2$ and $\tA_0$ given in the theorem in terms of the series $\PID$ and $A_1$ coincide with those derived from~\eqref{tA2sol-DK} and~\eqref{tA0sol-DK}.

It also follows from~\eqref{A12-DK} and~\eqref{tA2-def} that
\[
tC_{0,0}=tD_0=1-\frac{A_1}4 + \frac {\tA_2}2 ,
\]
and the expression~\eqref{C00-DK} can again be checked using~\eqref{tA2sol-DK} and~\eqref{algA1Z-DK}.

\medskip
\paragraph{{\bf Degrees.}} From the identities in  this theorem, it follows that $\C_-(x,0)$ and $D(x)$ have degree at most $16\times 4=64$ above $\qs(t,x)$. To prove that this bound is tight, we compute by elimination the minimal polynomials  of $C_-(2,0)$ and $D(2)$. We proceed similarly for  the degree of $C(x,y)$, for which the natural upper bound is $16\times 4\times 4=256$. In this case we not only specialize $x$ and $y$, but also $t$, in such a way that $xyt<1/6$, which guarantees the convergence of all series under consideration. 
\end{proof}

\begin{remark}
 It may be interesting for some readers to know how one can derive from the polynomial equation~\eqref{algA1Z-DK} the description of the series $A_1$, and of the subfields of $\qs(t,A_1)$, given at the beginning of the section (see Figure~\ref{fig:ext-DK}). This is fully documented in the \Maple\ session that accompanies this paper, but here are a few details.

A first step is to investigate the subextensions of $\qs(t,\PIID)$ and to discover the series $\PID$. This can be done, for instance, using the {\tt Subfields} command.

Then, it is easy to prove, by elimination of $\PID$, that $A_1$ has degree $16$ over $\qs(t)$.  The idea of introducing the generator $\UI$ comes by looking at the denominator of $\tA_2$ in~\eqref{tA2sol-DK}, and observing that the defining equation~\eqref{algA1Z-DK} of $A_1$ can also be written in the form:
\[
  \left(A_1^2\PIID(1 - \PIID)^4 + 16(\PIID^2 - \PIID + 1)^3\right)^2+
  4(1+\PIID )^3(1 - \PIID)^3A_1 \left(A_1^2\PIID(1 - \PIID)^4 - 16(\PIID^2 - \PIID + 1)^3\right)=0,
\]
or, in terms of $\PID$,
\[
  \left(\PID A_1^2 + 16 (1+\PID )^3\right)^2 + 4 A_1 (1+4 \PID )^{3/2}
  \left(\PID A_1^2 - 16 (1+\PID )^3\right)=0.
\]
Once $\UI$ is thus defined, one explores the subfields of $\qs(t,A_1)$ using the {\tt Subfields} command. \qee
 \end{remark}

\medskip

We now consider  number of walks ending at $(i,j)$.

\begin{proof}[Proof of Corollary~\ref{cor:excursions-DK}]
  This model is aperiodic. The right-hand side of~\eqref{C-DK} is a  (Laurent) series in $x$  with coefficients in $\qs(t,A_1)$, with first term $1/x^2$. So its square root  is a Laurent series in $x$, starting with $1/x$, with coefficients in $\qs(t,A_1)$ again. This proves the corollary for the coefficients of $U(x,0)$, that is, for the series $C_{i,0}$ with $i<0$. The value of $C_{-1,0}$ given in~\eqref{Cm10-D} comes directly from~\eqref{A12-DK}.

  A similar argument, based on~\eqref{D-DK}, proves the result for the series $C_{i,i}$.
  We then prove the corollary for the points $(i,i+1)$, by induction on $i\ge 0$, by writing
  \begin{align*}
    C_{i,i}&= \mathbbm1_{i=0}+ t \left( C_{i,i+1} + C_{i-1,i}+ C_{i-1,i-1} + C_{i,i-1} + C_{i+1,i}+ C_{i+1,i+1}\right)\\
    &=\mathbbm1_{i=0}+ t \left( 2C_{i,i+1} +2 C_{i-1,i}+ C_{i-1,i-1} + C_{i+1,i+1}\right).
  \end{align*}
  We finally prove it for all points $(i,j)$ with $j\ge i$ by induction on $2j-i\ge 0$, using
  \[
    C_{i,j-1}= t(C_{i,j-2}+C_{i,j}+C_{i-1,j-1}+ C_{i+1,j-1} +C_{i-1,j-2} +C_{i+1,j}).
    \]
\end{proof} 

%\notem{Finir le degré 256}

%%%%%%%%%%%%%%%%%%%%%%%%%%%%%%%%%%%%%%%%%%%%%%%%%%%%%%%%%%%%%
\section{A D-algebraic model}
\label{sec:DA}
%%%%%%%%%%%%%%%%%%%%%%%%%%%%%%%%%%%%%%%%%%%%%%%%%%%%%%%%%%%%%

Let us now consider the $6$th model of the  Table~\ref{tab:sym}, with step polynomial $S(x,y)= x+\bx +y+\by +xy$. The companion model is given by $\tS(x,y)= x+\bx +xy +\bx \by +y$.
In this case there are no rational invariants for $\tS$, but there
exists a pair of quadrant invariants of the form~\eqref{I1J1-def}  (see Table~\ref{tab:inv-qu-2}), namely $(I_1(x), J_1(y))$, with
\beq \label{inv:DA}
  I_1(x) =  t \tQ(x,0)+ \bx, \qquad J_1(y) = -t(1+y)\tQ(0,y)+t\tQ_{0,0} + \frac{y(1-ty)}{t(1+y)} .
\eeq
This pair satisfies
\[
  I_1(x) -J_1(y) =- \tK(x,y) \left(xy\tQ(x,y)+\frac y{t(1+y)} \right) .
\]
The series $\tQ(x,y)$ is known to be D-algebraic, and admits an explicit expression  in terms of elliptic functions~\cite[Thm.~5.7]{BeBMRa-17}. Our approach relates $C(x,y)$ to this series.

%==============================================
\subsection{Generating functions}
%===============================================

\begin{theorem}\label{thm:DA}
Let $I_1(x)$ and $J_1(y)$ be defined as above.   The \gf\  $C(x,y)$  of walks with steps in $\{\rightarrow, \nearrow, \uparrow, \leftarrow, \downarrow \}$ starting from $(0,0)$ and avoiding the negative quadrant  is given by the following equation:
\[
  (1-t(x+y +\bx+\by+xy)) C(x,y)=1 -t\by C_-(\bx) -t\bx C_-(\by),
\]
where  $\C_-(x)$ can be expressed in terms of $I_1$ and $J_1$ by
\beq\label{Cm-expr-DA}
  \left( t \bx C_-(x) +x+\bx-\frac 1{2t} \right) ^{2}
  = \frac{ (I_1(x)- A)^2\left( {I_1(x)}- {J_1(Y)}\right) }{I_1(x)}.
\eeq
In the above expression, $Y$ is the only root of 
\beq\label{Delta-DA}
  \Delta(y):=(1-ty)^2-4t^2\by(1+y)^2
\eeq
that is a formal power series in $t$. Then, 
\[
A=\sqrt{\frac{1-t^2\tQ_{0,0} -t^2\tQ_{0,1}}{tJ_1(Y)}}.
\]

The \gf\ $D(y)$ of walks ending on the diagonal satisfies
\beq\label{Dy-expr-DA}
  \frac{\Delta(y) }4 \left( yD \left( y  \right) +
       \frac {1-y}{t (1+y)}  \right) ^{2}
  = \frac{ (J_1(y)- A)^2\left( J_1(y)- {J_1(Y)}\right) }{J_1(y)}.
\eeq
In particular, $C_-(x)$, $D(y)$ and $C(x,y)$ are D-algebraic in all variables.
\end{theorem}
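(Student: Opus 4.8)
The plan is to apply the general strategy of Section~\ref{sec:dec}: combine the two pairs of $\tS$-invariants available for this model into a trivial one by Lemma~\ref{lem:invariants}, read off the identities~\eqref{Cm-expr-DA} and~\eqref{Dy-expr-DA}, and then conclude by closure properties of D-algebraic series. The first displayed equation of the theorem is the basic functional equation~\eqref{eqfunc-gen} (here $H_-(x)=1$ and $(-1,-1)\notin\cS$). By Proposition~\ref{prop:dec-three-quadrants}, $\cS$ is the sixth model of Table~\ref{tab:decoupling}, so Proposition~\ref{prop:inv-tq} provides a pair $(I(x),J(y))$ of $\tS$-invariants; unwinding the definitions (with $F(x)=1/t-2x-2\bx$, $G(y)=(1-y)/(t(1+y))$ read off Table~\ref{tab:decoupling}, and $\tH_-(x)=\bx$, so that $U(x,0)=\bx C_-(x)$), one finds that $I(x)/4$ and $J(y)/4$ are exactly the left-hand sides of~\eqref{Cm-expr-DA} and~\eqref{Dy-expr-DA}, that $I(x)=\bigl(2t\bx C_-(x)+x+\bx-\tfrac1{2t}\bigr)^2$ is a square, and that $\Delta(y)$ is the polynomial~\eqref{Delta-DA}. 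The second pair is the $\tQ$-related pair $(I_1(x),J_1(y))$ of~\eqref{inv:DA}. Finally, let $X\equiv X(t)\in t\,\qs[[t]]$ be the unique formal power series root of the inner factor $2t\bx C_-(x)+x+\bx-\tfrac1{2t}$ (a routine kernel argument), so that $I$ has a \emph{double} zero at $x=X$, and let $Y\equiv Y(t)$ be the unique formal power series root of $\Delta(y)$, so that $J(Y)=0$.

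Next I would build the trivial invariant. Near $x=0$ one has $I(x)\sim 4\bx^2$ and $I_1(x)\sim\bx$, so $I_1(x)I(x)$ has a pole of order $3$ there, matched by $4I_1(x)^3$. Using Lemma~\ref{lem:linear} (componentwise sums and products of invariants, with series in $t$ allowed as coefficients), there exist series $A,B\in\qs((t))$ for which
\[
  \mathcal I(x):=I_1(x)\,I(x)-4\bigl(I_1(x)-A\bigr)^2\bigl(I_1(x)-B\bigr)
\]
has no pole at $x=0$: expanding at $x=0$ and cancelling the coefficients of $\bx^2$ and $\bx$ gives two equations determining $A$ and $B$ in terms of the unknowns $U_{0,0}$, $D_0$, $\tQ_{0,0}$, $\tQ_{0,1}$. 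Set $\mathcal J(y):=J_1(y)J(y)-4\bigl(J_1(y)-A\bigr)^2\bigl(J_1(y)-B\bigr)$, so that $(\mathcal I(x),\mathcal J(y))$ is a pair of $\tS$-invariants. The crux is to check that, for every $n$, the coefficient of $t^n$ in $(\mathcal I(x)-\mathcal J(y))/\tK(x,y)$ vanishes at $x=0$ and at $y=0$; exactly as in the reverse Kreweras case (Section~\ref{sec:RK}), this is where the functional equations enter: one feeds into the $x$-expansion of this ratio the relation~\eqref{eqD2} together with the first Taylor coefficient of~\eqref{eq-U} at $x=0$ (which expresses $U'_x(0,y)$ through $U(0,y)$, $D(y)$, $U_{0,0}$), and symmetrically for the $y$-expansion. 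I expect this bookkeeping, best handled with computer algebra, to be the main obstacle. Granting it, Lemma~\ref{lem:invariants} gives $\mathcal I(x)=\mathcal J(y)=\mathcal A$ for some $\mathcal A\in\qs((t))$.

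It then remains to identify $A$, $B$ and $\mathcal A$. Rewrite $\mathcal I(x)=\mathcal A$ as $4\bigl(I_1(x)-A\bigr)^2\bigl(I_1(x)-B\bigr)=I_1(x)I(x)-\mathcal A$. Since $I$ has a double zero at $x=X$, the right-hand side and its $x$-derivative vanish at $x=X$; as $I_1'(X)\neq0$, differentiating forces $I_1(X)=A$, and then the identity at $x=X$ gives $\mathcal A=-4\bigl(I_1(X)-A\bigr)^2\bigl(I_1(X)-B\bigr)=0$. Evaluating $\mathcal J(y)=0$ at $y=Y$, where $J(Y)=0$, yields $0=-4\bigl(J_1(Y)-A\bigr)^2\bigl(J_1(Y)-B\bigr)$; since $A\sim\tfrac1{2t}$ while $J_1(Y)=\LandauO(t)$, this forces $B=J_1(Y)$. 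Hence $I_1(x)I(x)=4\bigl(I_1(x)-A\bigr)^2\bigl(I_1(x)-J_1(Y)\bigr)$ with $A=I_1(X)$, which is~\eqref{Cm-expr-DA} after dividing by $4I_1(x)$, and $\mathcal J(y)=0$ is~\eqref{Dy-expr-DA}. The explicit value of $A$ comes from matching the residues at $y=0$ on the two sides of~\eqref{Dy-expr-DA}: near $y=0$ one has $J_1(y)=\bigl(\tfrac1t-t\tQ_{0,0}-t\tQ_{0,1}\bigr)y+\LandauO(y^2)$ and $J(y)/4$ has residue $-1$, whence $A^2 J_1(Y)=\tfrac1t-t\tQ_{0,0}-t\tQ_{0,1}$, i.e.\ $A^2=\dfrac{1-t^2\tQ_{0,0}-t^2\tQ_{0,1}}{tJ_1(Y)}$; the positive branch is the correct one since $A=I_1(X)=t\tQ(X,0)+1/X\sim\tfrac1{2t}$.

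Finally, the D-algebraicity statement follows from standard closure properties. By~\cite[Thm.~5.7]{BeBMRa-17}, $\tQ(x,y)$ is D-algebraic in $x$, $y$ and $t$, hence so are $\tQ(x,0)$, $\tQ(0,y)$, $\tQ_{0,0}$ and $\tQ_{0,1}$, while $Y$ is algebraic over $\qs(t)$. Since D-algebraic series form a field stable under composition with algebraic series, $I_1(x)$ is D-algebraic in $x$ and in $t$, and $J_1(Y)$ and $A$ are D-algebraic in $t$. Therefore the right-hand side of~\eqref{Cm-expr-DA} is D-algebraic in $x$ and in $t$, and taking a square root (an algebraic operation) shows that $C_-(x)$ is D-algebraic in $x$ and in $t$; the same argument applied to~\eqref{Dy-expr-DA} handles $D(y)$. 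The basic functional equation then expresses $C(x,y)$ as a rational function of $x$, $y$, $t$, $C_-(\bx)$ and $C_-(\by)$, so $C(x,y)$ is D-algebraic in $x$, in $y$ and in $t$, which is the last assertion of the theorem.
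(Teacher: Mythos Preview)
Your proof follows the paper's strategy closely—form $I_1(x)I(x)$, subtract a cubic in $I_1$ to kill poles, apply the invariant lemma, then pin down the cubic via the zeros of $I$ and $J$—and your residue computation at $y=0$ to extract $A^2=(1-t^2\tQ_{0,0}-t^2\tQ_{0,1})/(tJ_1(Y))$ is correct and matches the paper. However, your parameterization of the correction as $4(I_1-A)^2(I_1-B)$ \emph{before} knowing the factorization creates a genuine gap in the identification step.

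Concretely: the pole-cancellation conditions at $x=0$ determine $2A+B$ and $A^2+2AB$, a \emph{quadratic} (not linear) system with two solutions in $\qs((t))$ that you never distinguish. Then in ``$4(I_1(x)-A)^2(I_1(x)-B)=I_1(x)I(x)-\mathcal A$'', the right-hand side does \emph{not} vanish at $x=X$ (it equals $-\mathcal A$, which is not yet known to be zero); only its derivative does. And differentiating the left-hand side gives $4I_1'(x)\,(I_1(x)-A)\,(3I_1(x)-A-2B)$, so the derivative condition at $X$ forces $I_1(X)=A$ \emph{or} $I_1(X)=(A+2B)/3$. Which case occurs depends precisely on which root of the quadratic you chose, and you do not rule out the second. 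The paper avoids both issues by subtracting the \emph{linear} combination $4J_1^3+A_2J_1^2+A_1J_1$ (to cancel the triple pole at $y=-1$), obtaining $I(x)I_1(x)=P(I_1(x))$ with $P(u)=4u^3+A_2u^2+A_1u+A_0$, and only \emph{then} factoring $P$ from its roots: a simple root $J_1(Y)$ and a double root coming from a double zero of $J$. Your use of the double zero $X$ of $I(x)$ instead is a perfectly valid alternative; the clean fix is to argue directly that $I_1(X)$ is a double root of the cubic $P$ (since $I(X)=I'(X)=0$, $I_1(X)\neq0$ and $I_1'(X)\neq0$ give $P(I_1(X))=P'(I_1(X))=0$), and then conclude $P(u)=4(u-I_1(X))^2(u-J_1(Y))$ because $I_1(X)\neq J_1(Y)$, without ever naming $A,B,\mathcal A$.
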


      \begin{proof}
         The invariants of Proposition~\ref{prop:inv-tq} are
\beq\label{IJ-DA}       
          I(x)= \left( 2\,tU \left( x,0 \right) +2\,x+2\bx-\frac 1{t} \right) ^{2},
          \qquad
          J(y)=\Delta(y)  \left( yD \left( y\right)
            +{\frac {1-y}{t \left(1+ y \right) }}
          \right) ^{2},
\eeq
where $\Delta(y)$ is defined by~\eqref{Delta-DA}.
We want to combine them polynomially with the invariants $(I_1(x), J_1(y))$ given by~\eqref{inv:DA} to form a pair of invariants $(\tilde I(x), \tilde J(y))$ to which Lemma~\ref{lem:invariants} would apply, thus proving that $\tilde I(x)= \tilde J(y)$ is a series in $t$.

Observe that $J(y) $ has a pole at $y=0$ (coming from $\Delta(y)$) and
at $y=-1$. The modified
invariant $\tilde J(y)$ should have no poles. Observing that
$J_1(0)=0$, we first form the product $J(y) J_1(y)$ to remove the pole
of  $J(y)$ at $0$. The resulting series now has a triple pole at $-1$, which can be removed by subtracting a cubic polynomial in $J_1(y)$. This leads us to introduce
\[
  \tilde J(y) = J(y) J_1(y)- 4J_1(y)^3 - A_2 J_1(y)^2 - A_1 J_1(y),
\]
and to define accordingly
\[
  \tilde I(x) = I(x) I_1(x)- 4I_1(x)^3 - A_2 I_1(x)^2 - A_1 I_1(x),
\]
for values of $A_2$ and $A_1$ that involve $D(-1), D'(-1)$ and various by-products of $\tQ(x,y)$ (we refer to our \Maple\ session for details).
Then, we use the functional equations satisfied by $U(x,y)$, $D(y)$ and $\tQ(x,y)$ to check that the ratio
\[
  \frac{\tilde I(x) -\tilde J(y)}{\tK(x,y)}
\]
is a multiple of $xy$, and conclude that $ \tilde I(x)=  \tilde J(y)=A_0$, for some series $A_0\in \qs((t))$. Hence, denoting $P(u)= 4u^3+A_2u^2+A_1u+A_0$, we have
\beq\label{IJ-DA-form}
  I(x) I_1(x)= P(I_1(x)), \qquad   
  J(y) J_1(y)= P(J_1(y)).
\eeq
Since the left-hand sides of~\eqref{Cm-expr-DA} and~\eqref{Dy-expr-DA} in Theorem~\ref{thm:DA} are $I(x)/4$ and $J(y)/4$, the proof will be complete if we can prove that
\[
  P(u)= 4(u-A)^2(u-J_1(Y)).
\]
We will identify the roots of $P(u)$ by looking at the values of $y$ that cancel $J(y)$, and thus $P(J_1(y))$. First, $\Delta(y)$ has a (unique) solution $Y=\LandauO(t^2)$ in $\qs[[t]]$. Thus $J_1(Y)=4t+\LandauO(t^3)$ must be a root of $P$. Then, another  root of $J(y)$ arises from the factor of $J(y)$ involving $D(y)$. Indeed, the (unique) series $Y_0=1+2t+\LandauO(t^2)$ such that $Y_0=1+tY_0(1+Y_0)D(Y_0)$ is a double root of $J(y)$, and thus $J_1(Y_0)=1/(2t)+\LandauO(1)$ is a double root of $P(u)$, unless $J_1'(Y_0)=0$, which we readily check not to be the case. 
At this stage we can write
\[
  P(u)=4\left(u-J_1(Y_0)\right)^2\left(u-J_1(Y)\right).
\]
But we would like to make $A:=J_1(Y_0)$ more explicit. We expand around $y=0$ the second identity of~\eqref{IJ-DA-form}, and obtain
\[
  tJ_1(Y) J_1(Y_0)^2= 1-t^2\tQ_{0,0} -t^2\tQ_{0,1} ,
\]
which concludes our derivation after choosing the correct (positive) sign for the square root.

The fact that all series under consideration are D-algebraic comes from the closure properties of D-algebraic series~\cite[Sec.~6.1]{BeBMRa-17}, and from the  fact that $\tQ(x,y)$ is  D-algebraic~\cite[Sec.~6.4]{BeBMRa-17}.
\end{proof}

We can now specialize the first two equations of Theorem~\ref{thm:DA} to relate the series $C(1,1)$ to the quadrant series $\tQ$.

\begin{cor} 
  The \gf\ $C(1,1)$ counting  walks with steps in $\{\rightarrow, \nearrow, \uparrow, \leftarrow, \downarrow \}$ that start from $(0,0)$ and avoid the negative quadrant  is given by
  \beq\label{C11-fork}
    \frac 1 4 (1-5t)^2 \left( C(1,1)+\frac 1 t \right)^2 = \left( tC_-(1)+2 - \frac 1 {2t}\right)^2= \frac{(I_1(1)-A)^2(I_1(1)-J_1(Y))}{I_1(1)},
  \eeq
  where $I_1(x)$ and $J_1(y)$ are given by~\eqref{inv:DA}, and $Y$ and $A$ are defined in Theorem~\ref{thm:DA}; in particular, $I_1(1)= 1+t\tQ(1,0)$.
\end{cor}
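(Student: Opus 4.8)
The plan is to obtain both equalities in~\eqref{C11-fork} by specializing the identities of Theorem~\ref{thm:DA} at $x=y=1$. The first displayed equation of Theorem~\ref{thm:DA} is the basic functional equation~\eqref{eqfunc-gen} for the $6$th model; since $S(1,1)=5$, at $x=y=1$ it reads $(1-5t)C(1,1)=1-2tC_-(1)$, so in particular $C(1,1)$ and $C_-(1)$ are determined in $\qs[[t]]$. For this model the only down step is $\downarrow$, hence $H_-(x)=1$ and $H_-(1)=1$; plugging $|\cS|=5$ and $H_-(1)=1$ into the alternative expression~\eqref{I1-C11} gives $R(1)=-(1-5t)\bigl(C(1,1)+1/t\bigr)$, and therefore $I(1)=R(1)^2=(1-5t)^2\bigl(C(1,1)+1/t\bigr)^2$. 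This produces the left-hand side of~\eqref{C11-fork} up to the factor $1/4$.

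Next I would use the explicit form~\eqref{IJ-DA} of $I(x)$. Recalling $U(x,0)=\bx C_-(x)$, so that $U(1,0)=C_-(1)$, we get
\[
I(1)=\Bigl(2tC_-(1)+2+2-\tfrac1t\Bigr)^2=4\Bigl(tC_-(1)+2-\tfrac1{2t}\Bigr)^2 .
\]
Comparing with the previous step yields the first equality of~\eqref{C11-fork}. The second equality is simply~\eqref{Cm-expr-DA} evaluated at $x=1$: its left-hand side $\bigl(t\bx C_-(x)+x+\bx-\tfrac1{2t}\bigr)^2$ specializes to $\bigl(tC_-(1)+2-\tfrac1{2t}\bigr)^2$, and its right-hand side specializes to $\dfrac{(I_1(1)-A)^2(I_1(1)-J_1(Y))}{I_1(1)}$. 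Finally, the stated value $I_1(1)=1+t\tQ(1,0)$ is immediate from the definition $I_1(x)=t\tQ(x,0)+\bx$ in~\eqref{inv:DA}.

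The only point needing a word of justification — and the closest thing here to an obstacle — is the legitimacy of setting $x=1$ in these identities. This is harmless: $U(x,0)$ and $C_-(x)$ lie in $\qs[x][[t]]$ and so are regular at $x=1$; $I_1(x)=t\tQ(x,0)+\bx$ is regular at $x=1$ with $I_1(1)=1+\LandauO(t)$, so $I_1(1)$ is invertible in $\qs[[t]]$ and the right-hand side of~\eqref{Cm-expr-DA} is well defined after specialization; and $K(1,1)=1-5t$ is a unit in $\qs[[t]]$. No further computation is required beyond the elementary substitutions above.
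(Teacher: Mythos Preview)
Your proof is correct and follows the same approach as the paper, namely specialization of the first two identities of Theorem~\ref{thm:DA} at $x=y=1$; you simply spell out in more detail the intermediate step via~\eqref{I1-C11} and~\eqref{IJ-DA} to obtain the first equality, and the direct evaluation of~\eqref{Cm-expr-DA} at $x=1$ for the second.
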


%=================================================
\subsection  {An expression in terms of a \emm weak, invariant}
%=================================================
Instead of using the quadrant-related invariants $I_1$ and $J_1$ of~\eqref{inv:DA}, one can use the analytic approach of~\cite[Sec.~5]{BeBMRa-17} and express
$D(y)$ (say) in terms of an explicit \emm weak, invariant $w(y)$ (expressed itself in terms of elliptic functions). Let us
give a few details, using the notation of~\cite{BeBMRa-17}, which we
will not re-define. First, the model denoted $\tS$ here is model $\#5$ in~\cite[Tab.~5]{BeBMRa-17}. Then, for $t$ a small positive real number, the series $D(y)$ can be defined analytically in the domain $\mathcal G_{\mathcal L}$, with finite limits on the boundary curve $\mathcal L$~\cite[Lem.~5]{RaTr-19}. This curve is bounded because $\tS$ contains a West step~\cite[Lem.~5.2]{BeBMRa-17}. The points $0$ and $-1$ both belong to $\mathcal G_{\mathcal L}$, by~\cite[Lem.~5.2 and Lem.~5.8]{BeBMRa-17}. Hence  the invariant $J(y)$ defined by~\eqref{IJ-DA}, which is a simple variation on $D(y)$, is meromorphic in $\mathcal G_{\mathcal L}$, with finite limits on $\mathcal L$, and exactly two poles in $\mathcal G_{\mathcal L}$, at $0$ and $-1$. The first pole is simple, the second is double. Thus $J(y)$ is a \emm weak invariant, for the model $\tS$, in the sense of~\cite[Def.~5.4]{BeBMRa-17}. We can  apply the (analytic) invariant lemma~\cite[Lem.~5.6]{BeBMRa-17}: there exists a polynomial $P(u)$  (with complex coefficients that depend on the value $t$), of degree at most $3$,  such that
\[
  J(y)= \Delta(y)  \left( y D(y) + \frac{1-y}{t(1+y)}\right)^2 = \frac{P(w(y))}{(w(y)-w(0))(w(y)-w(-1))^2}.
\]
As before, we can obtain some information on the roots of $P(u)$ by examining the roots of $J(y)$. First, the formal power series $Y=\LandauO(t^2)$ that cancels $\Delta(y)$ specializes to the value denoted $y_2$ in~\cite[Lem.~5.1]{BeBMRa-17} (the value $y_1$ is zero for this model). This is precisely the (unique) pole of $w(y)$ in $\mathcal G_{\mathcal L}$, by~\cite[Prop.~5.5]{BeBMRa-17}, and we conclude that $P(u)$ has  degree~$2$ at most. The series $Y_0=1+2t+\LandauO(t ^3)$ that cancels the factor of $J(y)$ involving $D(y)$  can also be shown to specialize to a value $y_0$ of $\mathcal G_{\mathcal L}$ (because the positive point where the curve $\mathcal L$ intersects the real axis, denoted $Y(x_2)$ in~\cite{BeBMRa-17}, is  $1/\sqrt t$ at first order, so that $y_0$ is smaller), so that $P(u)$ admits $w(y_0)$ as a double root.
Finally,
\[
J(y)=  \Delta(y)  \left( y D(y) + \frac{1-y}{t(1+y)}\right)^2 = \frac{\alpha (w(y)-a)^2}{(w(y)-w(0))(w(y)-w(-1))^2}
\]
with $a=w(y_0)$. One can determine $\alpha$ and $a$ in terms of $w(0)$, $w(-1)$, $w'(0)$ and $w'(-1)$ by expanding this identity at first order around $y=0$ and $y=1$.

Alternatively, the above identity  can  be derived by combining  the expression of $J(y)/4$ given in~\eqref{Dy-expr-DA}  in terms of $J_1(y)$ (or equivalently, in terms of $\tQ(0,y)$) with the following expression, borrowed from~\cite[Thm.~5.7]{BeBMRa-17}:
\[
  y(1+t) \tQ(0,y)=-y-\frac{1+t}{t(1+y)} -1 +\frac{1+t}t \left( \frac {w'(-1)}{w(y)-w(-1)} + \frac{w''(-1)}{2w'(-1)}\right).
\]
Note that this gives in particular the following simple expression:
\begin{align*}
  J_1(y)&=-t(1+y)\tQ(0,y)+t\tQ_{0,0} + \frac{y(1-ty)}{t(1+y)}\\
  &=\frac{1+t} t \cdot \frac{w'(-1)}{w(0)-w(-1)} \cdot \frac{w(y)-w(0)}{w(y)-w(-1)},
\end{align*}
and, since $Y=y_2$ is a pole of $w(y)$, 
\[
  J_1(Y)=\frac{1+t} t \cdot \frac{w'(-1)}{w(0)-w(-1)} .
  \]

  It is interesting to note that, while  quadrant \gfs, when they can be expressed  in terms of $w(y)$, are in fact homographic functions of $w(y)$, the above expression of $J(y)$ has degree $3$ in $w(y)$.  Moreover, the three-quadrant \gf\ $D(y)$ is no longer rational, but algebraic in $w(y)$. This is another sign of the higher difficulty of three-quadrant problems. \qee

%=================================================
  \subsection{Harmonic functions}
  % =================================================
  As already sketched in Remark~\ref{rem:invariants-harmonic} in the Kreweras case, we can relate the asymptotic behaviours of $\cS$-walks in $\Cc$ and $\tS$-walks in $\Qc$, under highly plausible assumptions.

Let us begin with quadrant $\tS$-walks. The number $\tilde q_{i,j}(n)$ of $\tS$-walks of length $n$ in the quadrant $\Qc$ ending at $(i,j)$ is known to have the following asymptotic form~\cite{BoRaSa-14,denisov-wachtel}:
\[
  \tilde q_{i,j}(n) \sim \frac{h_{i,j} }{\Gamma(-\alpha)} \mu^n n^{-1-\alpha},
\]
where $\mu\simeq 4.729$ is the positive solution of $\mu^3+\mu^2-18\mu-43=0$, and
$\alpha= \pi/\arccos(-c)\simeq 1.39$, where $c\simeq 0.626$ satisfies $64\,{c}^{6}-64\,{c}^{4}+28\,{c}^{2}-5=0$. 
 The numbers $h_{i,j}$ satisfy  the harmonic relation
  \[
    h_{i,j}= \frac 1 \mu \left(h_{i-1,j}+h_{i+1,j}+h_{i,j-1}+h_{i-1,j-1}+h_{i+1,j+1} \right),
  \]
  from which one derives the functional equation
\beq\label{eq-H-DA-comp}
    \left(1+y+xy^2+x^2y+x^2y^2-\mu xy\right) \tH(x,y)=\tH(x,0) +(1+y) \tH(0,y) -\tH(0,0),
    \eeq
    where $  \tH(x,y)=\sum_{i,j \ge 0} h_{i,j} x^i y^j$.
    As discussed in~\cite[Sec.~6]{raschel-harmonic},    no simple expression is known for $ \tH(x,y)$ (the results of~\cite{raschel-harmonic} only hold for walks with no drift; the same is true for the results of~\cite{trotignon-harmonic} on harmonic functions  in $\Cc$).

The number $ c_{i,j}(n)$ of $\cS$-walks of length $n$ in the three-quadrant cone $\Cc$ ending at $(i,j)$ is widely believed  to have the following asymptotic form:
\[
  c_{i,j}(n) \sim- \frac{H_{i,j} }{\Gamma(-\alpha/2)} \mu^n n^{-1-\alpha/2},
\]
with  $\mu$ and $\alpha$  as above.
More precisely, what is known is a pair of lower and upper bounds on $c_{i,j}(n)$ that have the above form and differ simply by a multiplicative constant~\cite{mustapha-3quadrant}. The minus sign has been chosen so that $H_{i,j}>0$.

If this holds, then
\[
  H_{i,j}=H_{j,i}= \frac 1 \mu \left(H_{i-1,j}+H_{i+1,j}+H_{i,j-1}+H_{i,j+1}+H_{i-1,j-1} \right),
\]
which yields for the \gf\ $\Hc(x,y) = \sum_{j \ge 0, j\ge i} H_{i,j} x^{j-i} y^j$ the functional equation
\beq\label{eq-Hc-DA}
    \left(1+y+xy^2+x^2y+x^2y^2-\mu xy\right) \Hc(x,y)=\Hc_-(x)+\frac 1 2 \left( 2+2y+ xy^2-\mu xy\right) \Hc_d(y)
  \eeq  
where as before $\Hc_-(x)=\sum_{i>0} H_{-i,0} x^i$ and $\Hc_d(y)=\Hc(0,y)$.

  We can now adapt the argument of Remark~\ref{rem:invariants-harmonic}. The term $c(x,y)= \left( 2+2y+ xy^2-\mu xy\right)/2$ satisfies
  \begin{align*}
    c(x,y)^2 &=\frac{\delta(y)}4 x^2 -\mu xy (1+y) \tK(x,y;1/\mu) \\
    & = \frac{\delta(y)}4 x^2+ (1+y) \left(1+y+xy^2+x^2y+x^2y^2-\mu xy\right),
  \end{align*}
  where $\delta(y)= (\mu^2y-2\mu y^2+y^3-4 y^2-8 y-4) y$ is the discriminant of $\mu xy  \tK(x,y;1/\mu) $ with respect to $x$. Upon multiplying~\eqref{eq-Hc-DA} by $\bx^2(\Hc_-(x) - c(x,y) \Hc_d(y))$, we conclude that the series
  \[
    \btH(x,y) :=  \bx^2\Hc \left( x,y \right) \left({ {\Hc_- \left( x \right)-  c(x,y)\Hc_d \left( y \right) } } \right) 
    + \bx^2  { \left( 1+y \right)   \Hc_d\left( y \right)   ^{2}}
  \]
  is a \fps \ in $x$ and $y$, which satisfies the same equation~\eqref{eq-H-DA-comp} as $\tH(x,y)$, as well as
  \[
    \btH(x,0)= \bx^2 \Hc_-(x)^2, \qquad \text{and} \qquad
    \btH(0,y)= -\frac{\delta(y)}{4(1+y)}\Hc(0,y)^2  + \frac{H_{-1,0}^2}{1+y} .
  \]
  In particular, $\btH(0,0)=H_{-1,0}^2$. Let us now assume that
  \begin{itemize}
  \item $\tH(x,y)$ is uniquely determined  (up to a multiplicative factor) by Equation~\eqref{eq-H-DA-comp} and the positivity of its coefficients,
    \item the series $\btH(x,y)$ has positive coefficients.
  \end{itemize}
  Then  there exists a positive constant $\kappa^2$ such that $\btH(x,y)=\kappa^2 \tH(x,y)$, which implies in particular that
 \beq \label{prediction}
 \Hc_-(x) =\kappa x \sqrt{\tH(x,0)}  \qquad \text{and} \qquad
 \Hc_d(y) =    2\kappa \sqrt{-\frac{(1+y) \tH(0,y)-\tH(0,0)}{\delta(y)}}.
  \eeq
Expanding these predictions at $x=0$ and $y=0$ gives
\[ 
    \kappa = \frac{H_{-1,0}}{\sqrt{h_{0,0}}}= \frac{H_{0,0}}{\sqrt{h_{0,0}+h_{0,1}}},
\]
so that we expect
\[
      \frac{H_{0,0}}{H_{-1,0}} = \sqrt{1 + \frac{h_{0,1}}{h_{0,0}}}.
    \]
This seems to hold, as shown by Figure~\ref{fig:plot}, where we have plotted against $1/n$ the sequences
    \beq\label{sequences}
    \frac{c_{0,0}(n)}{c_{-1,0}(n)} \qquad \text{and} \qquad
     \sqrt{1 + \frac{\tilde q_{0,1}(n)}{\tilde q_{0,0}(n)}}.
    \eeq
    We have checked numerically more consequences of the prediction~\eqref{prediction}, like
    \[
      \frac{H_{-1,0}}{H_{-2,0}}= 2\, \frac{h_{0,0}}{h_{1,0}}
      \qquad \text{and} \qquad
     \frac{H_{1,1}}{H_{0,0}}=    \frac{\mu^2} 8 -1 +\frac 1 2 \cdot \frac{h_{0,1}+h_{0,2}}{h_{0,0}+h_{0,1}}, 
    \]
    each time with perfect agreement.

      \begin{figure}[htb]
        \centering
        \includegraphics[height=40mm]{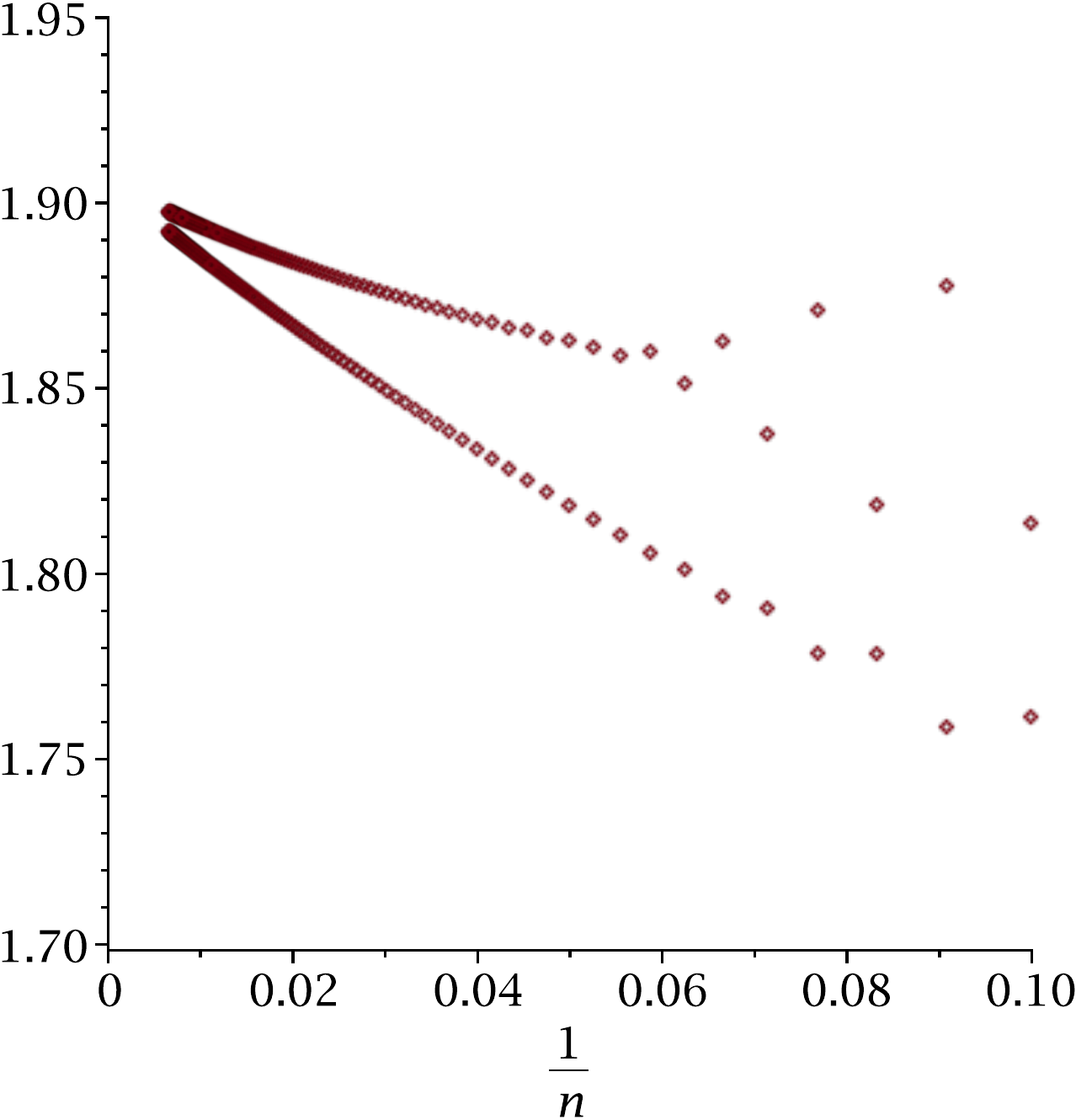}
        \caption{Plots of the sequences~\eqref{sequences} against $1/n$, for $n\le 150$.}
        \label{fig:plot}
      \end{figure}

%%%%%%%%%%%%%%%%%%%%%%%%%%%%%%%%%%%%%%%%%%%%%
\section{The simple and diagonal models}
\label{sec:DF}
%%%%%%%%%%%%%%%%%%%%%%%%%%%%%%%%%%%%%%%%%%%

In this section, we explain how the use of invariants also give new solutions of three-quadrant walks when $\cS_1=\{\rightarrow, \uparrow, \leftarrow, \downarrow\}$ (simple model) and $\cS_2=\{\nearrow, \nwarrow, \swarrow, \searrow\}$ (diagonal model). This may seem unexpected, for the following reason.

For both models, the basic functional equation of
Lemma~\ref{lem:func_eq} holds (with a slightly different definition of $U$ and $D$ for the model $\cS_2$); but by
Proposition~\ref{prop:dec-three-quadrants}, there is no way to decouple~$y$ in the desired form. Moreover, if we could still construct a pair of invariants involving $U(x,0)$, and relate it polynomially to the invariants $(I_1(x), J_1(y))$ that involve quadrant \gfs\  with steps in $\tS_1$ or $\tS_2$ (the so-called Gessel model for $\tS_1$, or its reflection in the main diagonal for $\tS_2$), then $U(x,0)$ would be algebraic, because quadrant walks with steps in $\tS_1$ (or $\tS_2$) \emm are, algebraic. But it is known that $U(x,0)$ is transcendental for both models~\cite{BM-three-quadrants}.

  However, for both models ($\cS=\cS_1$ or $\cS_2$), it is natural~\cite{BM-three-quadrants,mbm-wallner} to introduce the series $A(x,y)$ defined by
  \beq\label{A-def}
    A(x,y)= C(x,y) - \frac 1 3 \left(Q(x,y) -\bx^2 Q(\bx,y) - \by^2
Q(x,\by) \right),
\eeq
where $Q(x,y)$ counts quadrant $\cS$-walks. The series $A(x,y)$ is easily shown to satisfy the same functional equation~\eqref{eqfunc-gen} as $C(x,y)$, but with the initial term $1$ replaced by $ (2+\bx^2+\by^2)/3$. And then the heart of~\cite{BM-three-quadrants} is to prove that $A(x,y)$ \emm is, algebraic, for both models.

In this section, we show how to determine the series $A(x,y)$ using
invariants, rather than the method of~\cite{BM-three-quadrants}. More
precisely,  we  establish a relation between $A(x,y)$ and the \gf\ of quadrant walks with Gessel's steps.
We then derive from this the harmonic functions $H_{i,j}$ for the
simple and diagonal models in the three-quadrant plane.  The \gfs\ $\sum _{(i,j)\in \cC} H_{i,j} x^i y^j$ are algebraic in both cases, even though the \gfs\ $C(x,y)$ are D-finite but transcendental. We do not work out more exact results (for instance the degree of $A(x,y)$ or of the series $A_{i,j}$), as such results already appear in~\cite{BM-three-quadrants}.

%=============================================
\subsection{The simple model}
\label{sec:simple}
% =============================================
When $\cS=\{\rightarrow, \uparrow, \leftarrow, \downarrow\}$, the series $A(x,y)$ defined by~\eqref{A-def} satisfies
\beq\label{A-eq-simple}
(1-t(x+\bx+y+\by))A(x,y)= (2+\bx^2+\by^2)/3-t\by A_-(\bx) -t \bx A_-(\by) ,
\eeq
where the series $A_-(\bx)$ is defined in a similar fashion as $C_-(\bx)$; see~\eqref{Chv}.
The series $A(x,y)$ has a simple combinatorial interpretation: it counts weighted walks in the three-quadrant cone~$\Cc$, starting either from $(0,0)$, or from $(-2,0)$, or from $(0,-2)$, with a weight $2/3$ in the first case, $1/3$ in the other two. We now define two series $U(x,y)$ and $D(y)$ by
\beq\label{A-UD-simple}
  A(x,y) = \bx U(\bx,xy)+ D(xy)+ \by U(\by,xy).
\eeq
Note that they do \emm not, have  the same combinatorial meaning as in the previously solved four models.  We can reproduce the step-by-step arguments that led to Lemma~\ref{lem:func_eq},
% for the series $C(x,y)$,
and we thus obtain
\beq\label{eqU-simple}
  2 xy\tK(x,y)U(x,y)= \frac 2 3 y(1+x^2) +y\left(2tx(1+y)-1\right)D(y)-2 tU(x, 0),
\eeq
where $\tK(x,y)=1-t\tS(x,y)=1-t(x+\bx +xy+\bx\by)$ is the step polynomial of Gessel's model. Compared to the original functional equation~\eqref{eq-U}, the only thing that has changed is the initial term $y$, which has become $2 y(1+x^2) /3$.

For the model $\tS$, we  know two pairs of invariants (see Table~\ref{tab:inv-qu-1}): one is rational, and will not be used; the other takes the form~\eqref{I1J1-def} and involves quadrant \gfs:
\beq\label{I1J1-gessel}
  I_1(x)=  t\tQ(x, 0)+ \bx , \qquad
  J_1(y) =-t(1+y)\tQ(0, y)+t\tQ_{0, 0} +\frac y{t(1+y)}.
\eeq
These two series are algebraic, and we refer to~\cite{BoKa-10,mbm-gessel} for explicit expressions. They satisfy
\[
 I_1(x)-J_1(y)= -y\tK(x,y) \left(x \tQ(x,y) + \frac 1 {t(1+y)}\right)  .
\]
We will  obtain expressions for $U(x,0)$ and $D(y)$ in terms of  $I_1(x)$ and $J_1(y)$, which are reminiscent of those of the previous section (Theorem~\ref{thm:DA}). Given that a polynomial identity relates $(I_0(x),J_0(y))$ and $(I_1(x),J_1(y))$, of degree $3$ in the latter pair, we could also decrease to~$2$ the degree in $I_1(x)$ and $J_1(y)$ of our expressions, upon introducing $I_0(x)$ and $J_0(y)$.

\begin{theorem}\label{thm:simple}
Let $I_1(x)$ and $J_1(y)$ be defined as above.   The \gf\  $A(x,y)$ defined by~\eqref{A-def} satisfies  the  equation~\eqref{A-eq-simple}, where  $A_-(x)$ can be expressed in terms of $I_1$ and $J_1$ by
\beq\label{Am-simple}
  \left(3 t \bx A_-(x) +1+\bx^2-\frac\bx t \right) ^{2}
  = I_1(x)\, \big(I_1(x)- B\big)^2\big(I_1(x)- J_1(Y)\big).
\eeq
In the above expression,  $Y$ is the only root of
\beq\label{Delta-simple}
  \Delta(y)= 1-4t^2\by (1+y)^2
\eeq
that is a formal power series in $t$, and
\[
B=\frac 1 t + 2 t \tQ_{0,0} - \frac{J_1(Y)}2.
\]

The \gf\ $D(y)$ defined by~\eqref{A-UD-simple} satisfies
\beq \label{Dy-simple}
\frac 94 {\Delta(y) }          \left( yD ( y) +\frac {2y}{3t^2(1+y)^2} \right) ^{2}
= J_1(y)\, \big(J_1(y)- B\big)^2\big(J_1(y)- J_1(Y)\big).
\eeq
In particular, the series $A_-(x)$, $D(y)$ and $A(x,y)$ are algebraic.
\end{theorem}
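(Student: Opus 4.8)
The plan is to repeat, for Gessel's companion model $\tS$, the invariant argument used for the D-algebraic model in the proof of Theorem~\ref{thm:DA}. The relation~\eqref{A-eq-simple} is just the basic equation~\eqref{eqfunc-gen} written for $A$ (recalled above), so the content of the theorem is the determination of $A_-(x)$ and $D(y)$. I would start from~\eqref{eqU-simple}, which differs from the general equation~\eqref{eq-U} (for $\cS=\cS_1$) only in that the initial term $y$ is replaced by $\tfrac23 y(1+x^2)$. The decisive point — and the reason why passing from $C$ to $A$ unlocks the method, even though Proposition~\ref{prop:dec-three-quadrants} rules out decoupling $y$ for the simple model — is that $\tfrac23 y(1+x^2)$ \emph{does} decouple modulo $\tK$: one checks the identity
\[
  \tfrac23 y(1+x^2)=\Big(\tfrac{2\bx}{3t}-\tfrac23(1+\bx^2)\Big)
  +\big(2tx(1+y)-1\big)\,\tfrac{2y}{3t^2(1+y)^2}+\tK(x,y)\,h(x,y)
\]
for a rational $h$ with poles of bounded order at $0$. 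Plugging this into the construction of Proposition~\ref{prop:inv-tq} — moving the pure-$x$ part onto $R(x)=2tU(x,0)-\big(\tfrac{2\bx}{3t}-\tfrac23(1+\bx^2)\big)$, keeping $S(y)=yD(y)+\tfrac{2y}{3t^2(1+y)^2}$ on the other side, multiplying by the conjugate and using Lemma~\ref{lem:square} with $\Delta$ as in~\eqref{Delta-simple} — produces a pair of $\tS$-invariants $(I(x),J(y))=(R(x)^2,\Delta(y)S(y)^2)$, and $\tfrac94 I(x)$, $\tfrac94 J(y)$ are exactly the two left-hand sides of~\eqref{Am-simple} and~\eqref{Dy-simple} (using $A_-(x)=xU(x,0)$).

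Next I would bring in the second, known pair of $\tS$-invariants~\eqref{I1J1-gessel}, $(I_1(x),J_1(y))$, attached to the (algebraic) quadrant series $\tQ$ of Gessel's model. Following Lemma~\ref{lem:linear} one forms $\tilde I(x)=\tfrac94 I(x)-\Phi_0(I_1(x))$ and $\tilde J(y)=\tfrac94 J(y)-\Phi_0(J_1(y))$ for a monic quartic $\Phi_0$ whose lower coefficients — series in $t$ built from boundary data such as $D(-1)$, $D'(-1)$ and by-products of $\tQ$ — are fixed by requiring all poles of $\tilde J$ at $y=0,-1$ and of $\tilde I$ at $x=0,-1$ to disappear. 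One then checks, using~\eqref{eqU-simple} (and its $x=0$ specialisation, relating $D$ and $U(0,\cdot)$) together with the quadrant equation for $\tQ$, that the coefficient of $t^n$ in $(\tilde I(x)-\tilde J(y))/\tK(x,y)$ vanishes at $x=0$ and at $y=0$ for all $n$; Lemma~\ref{lem:invariants} then forces $\tilde I(x)=\tilde J(y)$ to be a constant series $c$. Since $y=0$ is a common zero of $J$ and $J_1$, evaluating at $y=0$ gives $c=-\Phi_0(0)$, so $\tfrac94 I(x)=\Phi(I_1(x))$ and $\tfrac94 J(y)=\Phi(J_1(y))$ with $\Phi(u):=\Phi_0(u)-\Phi_0(0)$ monic of degree $4$ and $\Phi(0)=0$. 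It remains to factor $\Phi$: the unique power-series root $Y=\LandauO(t^2)$ of $\Delta$ makes $J_1(Y)$ a simple root of $\Phi$; the series $Y_0$ cancelling the factor $yD(y)+\tfrac{2y}{3t^2(1+y)^2}$ of $J(y)$ makes $J_1(Y_0)=:B$ a double root (after checking $J_1'(Y_0)\neq0$); and $u=0=J_1(0)$ is the remaining root. Hence $\Phi(u)=u\,(u-B)^2\,(u-J_1(Y))$, which is~\eqref{Am-simple} and~\eqref{Dy-simple}; expanding $\tfrac94 I(x)=\Phi(I_1(x))$ near $x=0$ (or $\tfrac94 J(y)=\Phi(J_1(y))$ near $y=0$) and matching a few coefficients — with the correct square-root branch — yields $B=\tfrac1t+2t\tQ_{0,0}-\tfrac{J_1(Y)}{2}$. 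The algebraicity of $A_-(x)$, $D(y)$, and hence through~\eqref{A-eq-simple} of $A(x,y)$, then follows at once, $\tQ$, $I_1$, $J_1$, $Y$, $J_1(Y)$ and $B$ all being algebraic.

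The main obstacle is the verification, in the second step, that the combination $(\tilde I,\tilde J)$ meets the hypotheses of the invariant lemma: choosing the coefficients of $\Phi_0$ so that the poles at $x=0$ (where the unknown $U(x,0)$ lives) and those at $y=-1$ are killed by the \emph{same} polynomial — an over-determined system that is consistent only because $(I_1,J_1)$ is a genuine invariant — and then certifying that the $x^0$- and $y^0$-parts of each $t^n$-slice of $(\tilde I-\tilde J)/\tK$ vanish, a finite but delicate computation that genuinely uses the functional equations and the precise shape of the initial term. This is exactly what fails for $C$ itself on the simple model. A secondary, routine difficulty is selecting the right branches of the square roots when reading $B$, $A_-(x)$ and $D(y)$ off the factorisation of $\Phi$; this is settled by comparing the first few power-series coefficients, as in the accompanying Maple session.
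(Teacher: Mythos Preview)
Your outline matches the paper's proof almost exactly: same decoupling of $\tfrac23 y(1+x^2)$ (the paper's $F,G,H$ are $\tfrac32$ times yours), same invariant pair $(I,J)=(R^2,\Delta S^2)$, same quartic correction in $(I_1,J_1)$, same appeal to the invariant lemma, and same factorisation $\Phi(u)=u(u-B)^2(u-J_1(Y))$.

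There is one genuine slip. For this model the relevant poles sit only at $x=0$ (for $I$ and $I_1$) and at $y=-1$ (for $J$ and $J_1$); there are none at $x=-1$, and the paper accordingly kills the quadruple pole of $I(x)$ at $x=0$, so the free coefficients of $\Phi_0$ come out in terms of $U_{0,0},U_{1,0}$ and by-products of $\tQ$, not of $D(-1),D'(-1)$ --- you have carried these details over from the D-algebraic case. More substantively, your identification of the double root via a series $Y_0$ cancelling $S(y)=yD(y)+\tfrac{2y}{3t^2(1+y)^2}$ does not go through: away from $y=0$ this equation reads $3t^2(1+y)^2D(y)+2=0$, which has no power-series solution (it reduces to $2=0$ at $t=0$). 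The paper instead finds the double root on the $I$-side: the fixed-point equation $X=t+tX^2+3t^2X^2\,U(X,0)$ has a unique solution $X=t+\LandauO(t^3)$ in $\qs[[t]]$, this $X$ is a \emph{double} zero of $I$, one checks $I_1'(X)\neq0$, and hence $B:=I_1(X)$ is the double root of $P$ (equivalently of $\Phi$). Expanding $I(x)=I_1(x)P(I_1(x))$ around $x=0$ then yields the stated formula $B=\tfrac1t+2t\tQ_{0,0}-\tfrac12 J_1(Y)$. With this correction your argument is complete and coincides with the paper's.
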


\begin{remark}
  \label{rem:degrees-simple}
   {\bf  Degrees and rational parametrizations of $A_-(x)$ and $D(y)$.}
In~\cite{mbm-gessel}, the series $\tQ(xt,0)$  and $\tQ(0,y)$ (involved in the expressions of  $I_1(xt)$ and  $J_1(y)$, respectively) are expressed as rational functions  in four algebraic series denoted $T$, $Z=\sqrt T$, $U\equiv U(x)$ and $V\equiv V(y)$.
Here~$T$ has degree~$4$ over $\qs(t)$, while $U(x)$ is cubic over $\qs(x, T)$ and $V(y)$ is cubic over $\qs(y,T)$. All these series are series in $t^2$. Moreover, $t^2\in \qs(T)$, $x\in \qs(T,U)$ and $y\in \qs(T,V)$.  One can derive from these results rational expressions of $I_1(xt)/t$ and $J_1(y)/t$ in terms of $\sqrt T$, $U$ and $V$, and well as the following simple expressions:
  \[
    J_1(Y)=\frac{256 t  T \sqrt{T} }{(T+3)^3}, \qquad B= {\frac {64t\,T\sqrt{T} \left( {T}^{2}+4\,T-1 \right) }{ \left( T-1
 \right)  \left( T+3 \right) ^{3}}}.
\]
 Details of these calculations are given in our \Maple\ session. Using~\eqref{Am-simple} (resp.~\eqref{Dy-simple}), we then obtain a rational expression of
\[
  \left( 3 \bx A_-(xt) +1 + \frac{\bx^2}{t^2}-\frac{\bx}{t^2}\right)^2
  \qquad \qquad \left( \text{resp.} \quad
  \left( y D(y) + \frac{2y}{3t^2(1+y)^2} \right)^2\right)
\]
in terms of $T$ and $U$ (resp.~$T$ and $V$), showing that this series has degree $12$ only (as $\sqrt T$ is not involved here). Moreover, this rational expression is of the form $T \Rat(T,U)^2$ (resp.~$T \Rat(T,V)^2$) for some rational function $\Rat$, so that
\[
  3 \bx A_-(xt) +1 + \frac{\bx^2}{t^2}-\frac{\bx}{t^2}
  \qquad \qquad \left( \text{resp.} \quad
 y D(y) + \frac{2y}{3t^2(1+y)^2} \right)
\]
belongs to $\sqrt T \qs(T,U)$ (resp. $\sqrt T \qs(T,V)$), hence to the same extension of degree $24$ of $\qs(t,x)$ (resp.~$\qs(t,y)$) as $\tQ(xt,0)$ (resp.~$\tQ(0,y)$).
 This is in contrast with the  Kreweras-like models solved in Sections~\ref{sec:K} to~\ref{sec:DK}, for which the degree of $C_-(x)$ and $D(y)$ is twice the degree of $\tQ(x,0)$ (or $\tQ(0,y)$). \qee
\end{remark}

\begin{proof}[Proof of Theorem~\ref{thm:simple}.]
 We start from the functional equation~\eqref{eqU-simple}, and observe that  we have  a decoupling relation:
\[
y  (1+x^2) = (2tx(1+y)-1)G(y)+F(x) + H(x,y) \tK(x,y),
\]
where
\[
  F(x)=-1-\bx^2+ \frac 1{tx}, \qquad G(y)= \frac{y}{t^2(1+y)^2}, \qquad H(x,y)=\frac {y\left(1-t(x+\bx+\bx y+xy)\right)}{t^2(1+y)^2}.
\]
This, together with Lemma~\ref{lem:square}, leads us to define a new pair of $\tS$-invariants $(I(x),J(y))$:
\[ %\begin{align*}
   I(x)= \left(2tU(x,0) + \frac 2 3 \left( 1+\bx^2- \frac\bx t\right)\right)^2,
\qquad  \qquad 
  J(y)= \Delta(y) \left(yD(y) + \frac{2y}{3t^2(1+y)^2}\right)^2,
\] %end{align*}
where $\Delta(y)$ is given by~\eqref{Delta-simple}. We want to combine this new pair with the pair $(I_1(x), J_1(y))$ given by~\eqref{I1J1-gessel} to form a pair of invariants $(\tilde I(x), \tilde J(y))$ to which Lemma~\ref{lem:invariants} would apply, thus proving that $\tilde I(x)= \tilde J(y)$ is a series in $t$.

Observe that $I_1(x)$ and $I(x)$ have poles at $x=0$, while $J_1(y)$ and $J(y) $ have poles at  $y=-1$ (only). More precisely, the series $I(x)$  has a (quadruple) pole at $0$, which can be removed by subtracting a well-chosen quartic polynomial in $I_1(x)$. This leads us to introduce
\[
  \tilde I(x) = I(x) - \frac 4 9 I_1(x)^4-B_3 I_1(x)^3 - B_2 I_1(x)^2 - B_1 I_1(x),
\]
and analogously
\[
  \tilde J(y) = J(y) - \frac 4 9 J_1(y)^4 - B_3 J_1(y)^3 - B_2 J_1(y)^2 - B_1 J_1(y),
\]
for values of $B_3$, $B_2$ and $B_1$ that involve $U_{0,0}, U_{1,0}$ and various by-products of $\tQ(x,y)$ (we refer to our \Maple\ session for details; we have denoted the auxiliary series $B_i$ instead of~$A_i$ to avoid any confusion with the series $A(x,y)$ or $A_-(x)$).
Then, we use the functional equations satisfied by $U(x,y)$, $D(y)$  and $\tQ(x,y)$ to check that the ratio
\[
  \frac{\tilde I(x) -\tilde J(y)}{\tK(x,y)}
\]
is a multiple of $xy$, and conclude that $ \tilde I(x)=  \tilde J(y)=B_0$, for some series $B_0\in \qs((t))$. By expanding $\tilde J(y)$ at $y=0$, we realize that $B_0=0$. Hence, denoting $P(u)= 4u^3/9+B_3u^2+B_2u+B_1$, we have
\beq\label{IJ-simple-form}
  I(x)= I_1(x)P(I_1(x)) \qquad    \text{and} \qquad
  J(y)= J_1(y)P(J_1(y)).
\eeq
We can express the roots of $P$ in terms of the series $\tQ$ by looking at the formal power series $X\in \qs[[t]]$ or $Y\in \qs[[t]]$ that cancel $I(x)$ or $J(y)$. First, $\Delta(y)$ has a unique solution in $\qs[[t]]$, which we denote by $Y=4t^2+\LandauO(t^4)$. Thus $J_1(Y)=4t+\LandauO(t^3)$  must be a root of $P$. Then, another  root of $P$ arises from the  series $X=t+\LandauO(t^3)$ such that $I(X)=0$, that is,
$X=3t^2U(X, 0)X^2+tX^2+t$. This series is a double root of $I(x)$, and thus $I_1(X)=1/t+\LandauO(t^3)$ is a double root of $P(u)$, unless $I_1'(X)=0$, which we readily check not to be the case. 
      At this stage we can write 
      \[
      P(u)=\frac 4 9 \big(u-J_1(Y)\big)\big(u-I_1(X)\big)^2,
      \]
but we still have to relate the double root $B:=I_1(X)$ to~$\tQ$. We expand around $x=0$ the first identity of~\eqref{IJ-simple-form}, and obtain the expression of the root $B$ given in the statement of the theorem.

The expressions~\eqref{Am-simple} and~\eqref{Dy-simple}, which are those of $9I(x)/4$ and $9J(y)/4$, are now proved.
\end{proof}

Here is now  our description of the harmonic function associated with simple walks in $\Cc$. 

\begin{cor}\label{cor:harmonic-simple}
  For $(i,j) \in \cC$, there exists a positive constant $H_{i,j}$ such that, as $n\rightarrow \infty$ with $n\equiv i+j \mod 2$,
  \beq\label{cij-est-simple}
  c_{i,j}(n) \sim -\frac{H_{i,j}}{\Gamma(-2/3)}\, 4^n n^{-5/3}.
\eeq
The \gf\
\[
  \Hc(x,y):=\sum_{j\ge 0, i\le j} H_{i,j} x^{j-i} y^j,
\]
which is a \fps\ in $x$ and $y$, is algebraic of degree $9$, given by
\[ %\beq\label{H-eq-simple}
  \left(1+y+x^2y+x^2y^2-4xy\right)  \Hc(x,y)=  \Hc_-(x)+(1+y-2xy) \Hc_d(y),
\] %eeq
where
\[
  \Hc_-(x):=\sum_{i>0} H_{-i,0} x^i\qquad \text{and} \qquad
   \Hc_d(y):=\sum_{i\ge 0} H_{i,i}y^i.
\]
Each of these series is algebraic of degree $3$. Let us define $L\equiv L(x)=\sqrt 3+\LandauO(x)$ and $P\equiv P(y)=1/3+\LandauO(y)$ by
\[
  x=9\,{\frac {3-{L}^{2}}{ \left( 2\,L-3 \right)  \left( {L}^{2}-12\,L+9
 \right) }}, 
\qquad
y={\frac {1-3\,P}{{P}^{2} \left( P-3 \right) }}.
\]
Then
\beq\label{Hminus-simple}
  \Hc_-(x) =
   {\frac {128\,\sqrt {3} x \left( 2\,L-3 \right) }{9\, \left( L-3 \right) 
^{2}}}
\qquad \text{and} \qquad 
  \Hc_d(y)
  = {\frac {64\,\sqrt {3} \left( P+1 \right)  \left( P-3 \right) ^{2}{P}^
{3}}{27\, \left(1- P \right) ^{5}}}.
\eeq
\end{cor}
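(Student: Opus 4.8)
\emph{Proof plan.} The plan is to follow, essentially line by line, the proof of Corollary~\ref{cor:harmonic-K}, substituting for the Kreweras ingredients their simple‑model analogues furnished by Theorem~\ref{thm:simple}. First I would reduce the estimate~\eqref{cij-est-simple} to the two boundary families $i=j\ge 0$ (walks ending on the diagonal) and $i<0$, $j=0$ (walks ending on the negative $x$‑axis). For this one needs the elementary remark that $A(x,y)$ and $C(x,y)$ have, on these two families, the same coefficients up to a negligible quadrant correction: for $i\ge 0$ one has $A_{i,i}=C_{i,i}-\frac13 Q_{i,i}$, and for $i\ge 1$ one has $A_{-i,0}=C_{-i,0}+\frac13 Q_{i-2,0}$ (the other two pieces of~\eqref{A-def} contributing $0$ there), and the simple‑model quadrant numbers $q_{i,j}(n)$ grow only like $4^n n^{-3}$, hence are $o(4^n n^{-5/3})$. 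So it suffices to do singularity analysis on $D(y)$ (which encodes the $A_{i,i}$, $i\ge 0$) and on $A_-(x)$ (which encodes the $A_{-i,0}$). Once the estimate is established for these two families, the bootstrap is identical to the Kreweras case: a concatenation argument (prepend an excursion in $\Cc$ to a fixed walk from $(0,0)$ to $(i,j)$) shows that no $c_{i,j}(n)$ is $o(4^n n^{-5/3})$; the on‑diagonal reflection identity $c_{i,i}(n+1)=2c_{i-1,i}(n)+2c_{i,i+1}(n)$ propagates the estimate to the super‑diagonal, with $H_{i,i+1}=2H_{i,i}-H_{i-1,i}$ (non‑zero, else the left‑out term would be $o(\cdot)$), by induction on $i$; and an induction on $2j-i$ using $c_{i,j}(n+1)=c_{i-1,j}(n)+c_{i+1,j}(n)+c_{i,j-1}(n)+c_{i,j+1}(n)$ covers all of $\Cc$, yielding at the same time the harmonicity $4H_{i,j}=H_{i-1,j}+H_{i+1,j}+H_{i,j-1}+H_{i,j+1}$ and the symmetry $H_{i,j}=H_{j,i}$.

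Second, I would obtain the functional equation for $\Hc(x,y)=\sum_{j\ge 0,\,i\le j}H_{i,j}x^{j-i}y^j$ by exactly the generating‑function computation of Corollary~\ref{cor:harmonic-K}: expand using harmonicity, regroup the four shifted sums, peel off the diagonal and boundary terms, and use $2H_{i,i}=H_{i-1,i}+H_{i,i+1}$ to rewrite the diagonal contribution, arriving at
\[
  \left(1+y+x^2y+x^2y^2-4xy\right)\Hc(x,y)=\Hc_-(x)+(1+y-2xy)\Hc_d(y).
\]
Given this equation, $\Hc(x,y)$ is a rational function of $\Hc_-(x)$ and $\Hc_d(y)$, so once the latter are known to be algebraic of degree $3$, $\Hc$ has degree at most $9$; tightness is then a direct elimination.

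Third comes the analytic heart of the matter. Here the dominant singularity is at $t^2=1/16$ (equivalently at the critical value of the parameter $T$ of~\cite{mbm-gessel}), and the model has period $2$, so one works with the shift $x\mapsto xt$ used in Remark~\ref{rem:degrees-simple}. Using the rational parametrisation of $\tQ(xt,0)$ and $\tQ(0,y)$ in terms of $T$, $\sqrt T$, $U(x)$, $V(y)$, together with the closed forms of $B$ and $J_1(Y)$ recalled in Remark~\ref{rem:degrees-simple}, I would rewrite the right‑hand sides of~\eqref{Am-simple} and~\eqref{Dy-simple} as rational functions of these series, substitute the singular expansions of $T$, $U$, $V$ and of the root $Y$ of $\Delta$ (which tends to $1$ like $1-c\sqrt{1-16t^2}$) near $t^2=1/16$, and — after the cancellations analogous to the Kreweras case, where $A=-4J_1(\PIK^2)$ produced a $(1-27t^3)^{3/4}$ term of \emph{lower} order than the quadrant series itself — read off expansions of the form $\bigl(yD(y)+\cdots\bigr)^2=c_0(y)+c_1(y)(1-16t^2)^{2/3}+\LandauO(1-16t^2)$ with $c_0(y)\ne 0$. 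Taking square roots (choosing the sign by evaluating at a convenient value of $t$, as in the proof of Corollary~\ref{cor:all-K}) gives $D(y)=\tilde c_0(y)+\tilde c_1(y)(1-16t^2)^{2/3}+\LandauO(1-16t^2)$, and transfer theorems then yield $c_{i,i}(n)\sim -\frac{H_{i,i}}{\Gamma(-2/3)}4^n n^{-5/3}$ with $H_{i,i}$ equal, up to an explicit power of the critical value, to $[y^i]\tilde c_1$; summing over $i$ produces $\Hc_d(y)$, which is rational in the specialisation of $V$ at the critical $T$, and re‑parametrising by $P\equiv P(y)$ (the image of that specialisation) gives the stated form~\eqref{Hminus-simple}. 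The same computation starting from~\eqref{Am-simple} yields $\Hc_-(x)$, rational in the specialisation of $U$ and re‑parametrised by $L\equiv L(x)$. Since each of $\Hc_-$ and $\Hc_d$ is a rational function of a single cubic series, they are algebraic of degree $3$, and the degree of $\Hc$ is $9$ as above.

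The main obstacle is precisely this third step: the careful bookkeeping of the Puiseux expansions of the Gessel algebraic series of~\cite{mbm-gessel} at their common critical point, the verification that the fractional‑order terms combine to leave a clean $(1-16t^2)^{2/3}$ leading singular term in $D(y)$ and in $A_-(x)$ with no surviving lower‑order term, and the correct sign when extracting square roots. As in Sections~\ref{sec:K}--\ref{sec:DK}, this is most safely performed with computer algebra, and details are deferred to the accompanying \Maple\ session; conceptually it is routine once the skeleton above is in place.
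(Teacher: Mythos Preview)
Your proposal is correct and follows essentially the same route as the paper's proof sketch: reduce from $C$ to $A$ (using that quadrant corrections are $O(4^n n^{-3})$), do singularity analysis of $A_-(xt)$ and $D(y)$ at $t^2=1/16$ via the Gessel parametrisation of Remark~\ref{rem:degrees-simple}, observe that the $(1-16t^2)^{1/3}$ contributions from $T,U,V$ cancel to leave a $(1-16t^2)^{2/3}$ singularity (the paper makes exactly this point), and then bootstrap to all of $\Cc$ and derive the functional equation for $\Hc(x,y)$ from harmonicity. One small redundancy: you need not expand the root $Y$ of $\Delta$ separately, since $J_1(Y)$ and $B$ are already given rationally in $T,\sqrt T$ in Remark~\ref{rem:degrees-simple}; and rather than squaring and then extracting a root, the paper uses directly that the right-hand sides of~\eqref{Am-simple}--\eqref{Dy-simple} factor as $T\cdot\Rat^2$, so that $A_-(xt)$ and $D(y)$ themselves lie in $\sqrt T\,\qs(T,U)$ and $\sqrt T\,\qs(T,V)$.
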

Note that an explicit expression of $\Hc_d(y)$ was given in~\cite[Eq.~(53)]{trotignon-harmonic} in terms of radicals. The fact that the degree  is only $3$ is not obvious on this alternative expression.
\begin{proof}[Proof (sketched)] The principle is the same as in the proof of Corollary~\ref{cor:harmonic-K}, where we determined the harmonic function for Kreweras' walks in three quadrants. We focus on the asymptotic behaviour of the coefficients $a_{i,j}(n)$ of the series $A(x,y)$ defined by~\eqref{A-def}, and establish for them the estimate~\eqref{cij-est-simple}. Since the coefficients of $Q(x,y)$ grow like $4^n n^{-3}$ only, the coefficients $a_{i,j}(n)$ and $c_{i,j}(n)$ are asymptotically equivalent.

  We start from the expressions of $A_-(x)$ and $D(y)$ given in Theorem~\ref{thm:simple}, and use the results  of~\cite{mbm-gessel} to express $A_-(xt)$ and $D(y)$ rationally in terms of three algebraic series denoted $\sqrt T$, $U(x)$ and $V(y)$, as described in Remark~\ref{rem:degrees-simple}.

  We then perform  the singular analysis of $T$,  $U(x)$, $V(y)$ in the neighborhood of their dominant singularity, located at $t^2=1/16$.  The series $L(x)$ (resp.~$P(y)$) occurs in the singular expansion of $U(x)$ (resp.~$V(y)$). The three series  $T$,  $U(x)$, and $V(y)$ have a singular behaviour in $(1-16t^2)^{1/3}$, but we need to work out more terms because cancellations occur when moving from these series to
   $A_-(xt)$ and $D(y)$, which are found to have a singular behaviour in $(1-16t^2)^{2/3}$. This is how we finally obtain the expressions of  $\Hc_-(x)$ and $\Hc_d(y)$.
  The expression of $\Hc(x,y)$ simply comes from the harmonicity of $H_{i,j}$.
\end{proof}

  \begin{remark}
   In passing, we have also determined the harmonic function for Gessel's walks in the first quadrant: for $(i,j)\in \Qc$ and $n\equiv i$ mod $2$, the number of $n$-step walks ending at $(i,j)$ is asymptotic~to
  \[
    \frac{h_{i,j}}{\Gamma(-4/3)}  4^n n^{-7/3},
  \]
  where
  \[
    \sum_{i\ge0} h_{i,0} x^i=48 \sqrt {3} \ \frac {\left( 2\,L-3 \right) ^{2}}{ \left( L-3 \right) ^{4}}
\qquad\text{and} \qquad 
    \sum_{j\ge 0} h_{0,j} y^i=\frac{32}{\sqrt 3}{\frac {{P}^{3} \left(3- P \right) }{ \left( P+1 \right)  \left( P-1 \right) ^{4}}}.
  \]
  We observe that these two series are cubic over $\qs(\sqrt 3)$, as the two univariate series describing the harmonic function of simple walks in three quadrants~\eqref{Hminus-simple}. This is in contrast with the previously solved models, where the degree of $\Hc_-(x)$ and $\Hc_d(y)$ is twice the degree of the corresponding quadrant harmonic function; see for instance~\eqref{Hminus-K} and~\eqref{harmonic-RK-quadrant}. As already mentioned in Remark~\ref{rem:degrees-simple}, a similar property holds at the level of (counting) \gfs: $\tQ(x,0)$ and $\tQ(0,y)$ have degree $24$, and the same holds for $A_-(x)$ and $D(y)$.
    \end{remark}

% =====================================
\subsection{The diagonal model}
\label{sec:diag}
%=================================================
When $\cS=\{\nearrow, \nwarrow, \swarrow, \searrow\}$, the series $A(x,y)$ defined by~\eqref{A-def} satisfies
\beq\label{A-eq-diag}
(1-t(x+\bx)(y+\by))A(x,y)= (2+\bx^2+\by^2)/3-t\by (x+\bx)A_-(\bx) -t \bx(y+\by) A_-(\by)-t\bx\by A_{0,0} .
\eeq
As before, the series $A(x,y)$ counts weighted walks in the three-quadrant cone starting either from $(0,0)$, or from $(-2,0)$, or from $(0,-2)$, with a weight $2/3$ in the first case, $1/3$ in the other two. As discussed in Lemma~\ref{lem:func_eq}  for the corresponding series $C(x,y)$, it makes sense to define two series $U(x,y)$ and $D(y)$ by
\beq\label{A-UD-diag}
  A(x,y) = \bx^2 U(\bx^2,xy)+ D(xy)+ \by^2 U(\by^2,xy).
\eeq
Now we can reproduce the step-by-step arguments that led to~\eqref{eqT-C-diag}  for the series $C(x,y)$, and we thus obtain
\beq\label{eqT-diag}
  2 xy\tK(x,y)U(x,y)= \frac 2 3 y(1+x) +y\left(t(y+\by)+2xyt-1\right)D(y)-2 t(1+x)U(x, 0)-tD_0,
\eeq
where  $\tK(x,y)=1-t\tS(x,y)=
1-t(y+\by+xy+\bx\by)$ is  the kernel of Gessel's model, reflected in
the first diagonal. Compared to the original functional equation~\eqref{eqT-C-diag}, the only thing that has changed is the initial term $y$, which has become $2 y(1+x) /3$.

For the model $\tS$, we  know two pairs of  invariants (see Table~\ref{tab:inv-qu-1}): one is rational, and will not be used; the other takes the form~\eqref{I1J1-def} and involves quadrant \gfs:
\beq\label{I1J1-gessel-flipped}
I_1(x) =t(1+x)\tQ( x,0)-\frac x{t(1+x)},  \qquad
J_1(y)=  -t \tQ(0,y)+ t\tQ_{0, 0}-\by .
  \eeq
  These  two series are algebraic, and we refer to~\cite{BoKa-10,mbm-gessel} for explicit expressions. Denoting by $(I_1^g(x), J_1^g(y))$ the pair of invariants~\eqref{I1J1-gessel} that we used for Gessel walks, we observe that
  \[
    I_1(x)=-J_1^g(x)+ t \tQ_{0,0}, \qquad J_1(y)=- I_1^g(y)+ t \tQ_{0,0},
  \]
  in accordance with the fact that the two models differ by an $x/y$-symmetry.

\begin{theorem}\label{thm:diag}
  Let $\cS=\{\nearrow, \nwarrow, \swarrow, \searrow\}$, and let $\tS=
  \{\uparrow, \nearrow, \downarrow, \swarrow\}$ be the set of
  reflected Gessel's steps. Let $C(x,y)$ be the \gf\ of walks with steps in $\cS$, confined to the three-quadrant cone~$\Cc$. Let us define $A(x,y)$ by~\eqref{A-def}.
  
  Let $I_1(x)$ and $J_1(y)$ be the quadrant $\tS$-invariants defined by~\eqref{I1J1-gessel-flipped}.   The \gf\  $A(x,y)$, which counts (weighted) walks in $\Cc$ with steps in $\cS$, satisfies  the  equation~\eqref{A-eq-diag}, where  $A_-(x)$ can be expressed in terms of $I_1$ and $J_1$ by
\beq\label{Am-diag}
\frac 9 4  \left(2 t (\bx+1) A_-(\sqrt x) +tD_{0} -\frac 2{3t(1+x)} \right) ^{2}
  = \big(I_1(x)- J_1(Y_0)\big)\big(I_1(x)- J_1(Y_1)\big).
\eeq
In this expression, the series $Y_{0,1}$ are the two roots of
\beq\label{Delta-diag}
  \Delta(y)= \left(1-t^2\by (1+y)^2\right)\left(1-t^2\by (1-y)^2\right)
  \eeq
  that are formal power series in $t$.

The \gf\ $D(y)$ defined by~\eqref{A-UD-diag} satisfies
\beq\label{Dy-diag}
\frac 9 4 {\Delta(y) }  \left( yD ( y) +\frac {2}{3t} \right) ^{2}
=   \big(J_1(y)- J_1(Y_0)\big)\big(J_1(y)- J_1(Y_1)\big).
\eeq
In particular, the series $D_0=D(0)$ involved in~\eqref{Am-diag} satisfies
\[
  3 t^2 D_0= 2+ t(J_1(Y_0)+ J_1(Y_1)).
\]
The series $A_-(x)$, $D(y)$ and $A(x,y)$ are algebraic.
\end{theorem}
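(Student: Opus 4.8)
The plan is to replay, for the reflected Gessel companion $\tS=\{\uparrow,\nearrow,\downarrow,\swarrow\}$, the mechanism already used for the D-algebraic model (Theorem~\ref{thm:DA}) and the simple model (Theorem~\ref{thm:simple}). The starting point is the functional equation~\eqref{eqT-diag} relating the two auxiliary series $U(x,y)$ and $D(y)$ attached to $A(x,y)$ via~\eqref{A-UD-diag}. First I would record a decoupling of the modified source term $y(1+x)$: there are rational functions $F(x)$, $G(y)$, $H(x,y)$ with poles of bounded order at $0$ such that
\[
  y(1+x)=\bigl(t(y+\by)+2txy-1\bigr)\,G(y)+F(x)+H(x,y)\,\tK(x,y).
\]
This is exactly where replacing $C(x,y)$ by $A(x,y)$ is essential: by Proposition~\ref{prop:dec-three-quadrants} the bare term $y$ admits no such decoupling for the diagonal model, whereas $y(1+x)$ does. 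Rewriting the right-hand side of~\eqref{eqT-diag} with this decoupling and then invoking Lemma~\ref{lem:square} (the manipulation underlying Proposition~\ref{prop:inv-tq}) produces a new pair of $\tS$-invariants $(I(x),J(y))$, where $I(x)$ is a perfect square built from $(1+x)U(x,0)$ and $D_0$, and $J(y)=\Delta(y)\bigl(yD(y)+\tfrac{2}{3t}\bigr)^2$, $\Delta(y)$ being the relevant discriminant, which factors as in~\eqref{Delta-diag} and has exactly two formal power series roots $Y_0,Y_1$ in $t$.

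Next I would combine $(I(x),J(y))$ with the quadrant $\tS$-invariants $(I_1(x),J_1(y))$ of~\eqref{I1J1-gessel-flipped}, which are algebraic by~\cite{BoKa-10,mbm-gessel}. The crucial structural point is that here $J(y)$ has a \emph{double} pole at $y=0$ (inherited from $\Delta$) and no pole at $y=-1$, while $J_1(y)\sim-\by$ at $y=0$; so subtracting a well-chosen quadratic polynomial in $J_1(y)$ — with coefficients that are series in $t$ involving by-products of $\tQ$ and of $D$ — cancels both the double and the residual simple pole (this is also why a quadratic, rather than a cubic or quartic, suffices, in contrast with Theorems~\ref{thm:DA} and~\ref{thm:simple}). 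Setting $\tilde I(x)$ and $\tilde J(y)$ to be $I(x)$ and $J(y)$ minus the same quadratic polynomial in $I_1(x)$, $J_1(y)$ respectively, I would then use the functional equations satisfied by $U$, $D$ and $\tQ$ to verify that, for each power of $t$, the coefficient of $(\tilde I(x)-\tilde J(y))/\tK(x,y)$ is divisible by $xy$. By Lemma~\ref{lem:invariants}, $\tilde I(x)=\tilde J(y)$ is then a series in $t$ alone, which is equivalent to saying that $\tfrac94 I(x)=P(I_1(x))$ and $\tfrac94 J(y)=P(J_1(y))$ for one and the same monic quadratic polynomial $P$.

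It then remains to identify $P$. Since $\Delta(Y_i)=0$ we have $J(Y_i)=0$, hence $P(J_1(Y_i))=0$, for $i=0,1$; as $P$ is monic of degree $2$ this determines it completely, $P(u)=(u-J_1(Y_0))(u-J_1(Y_1))$, which is precisely~\eqref{Am-diag} and~\eqref{Dy-diag} — recalling that for the diagonal model $C_-$ and $U(\cdot,0)$ are linked by $C_-(\bx)=\bx^2U(\bx^2,0)$, so $A_-(\sqrt x)$ is the natural variable. Reading off the linear coefficient of $P$, together with $A_{0,0}=D_0$, then gives the stated identity $3t^2D_0=2+t(J_1(Y_0)+J_1(Y_1))$. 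Finally, the algebraicity of $A_-(x)$, $D(y)$, and hence of $A(x,y)$ follows from~\eqref{A-eq-diag} and the algebraicity of $\tQ(x,0)$, $\tQ(0,y)$ for Gessel's model~\cite{BoKa-10,mbm-gessel}. I expect the main obstacle to be the divisibility check in the penultimate step: beyond choosing the subtracted polynomial so that $\tilde I$ and $\tilde J$ are pole-free, one must verify that $(\tilde I-\tilde J)/\tK$ is genuinely a multiple of $xy$, which requires feeding in the low-order expansions of~\eqref{eqT-diag} at $x=0$ and $y=0$ and tracking several cancellations — a computation best carried out with computer algebra, as in the accompanying \Maple\ session.
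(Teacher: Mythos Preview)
Your proposal is correct and follows essentially the same route as the paper: decouple the modified source term $y(1+x)$ in~\eqref{eqT-diag}, build the squared invariants $(I(x),J(y))$ via Lemma~\ref{lem:square}, subtract a quadratic polynomial in $(I_1,J_1)$ to kill the double pole of $J(y)$ at $y=0$, apply the invariant lemma, and identify the resulting monic quadratic $P$ from its roots $J_1(Y_0),J_1(Y_1)$. The paper carries out exactly these steps (with the normalization $I(x)=\tfrac49 I_1(x)^2+B_1 I_1(x)+B_0$ instead of your monic form) and likewise defers the $xy$-divisibility check to computer algebra.
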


\begin{remark}
   {\bf Degrees and rational parametrizations of $A_-(\sqrt x)$ and $D(y)$.}
We observe for this model the same phenomenon as for the simple model: the series $\tQ(x,0)$ and $\tQ(0,y)$ have degree $24$, but $A_-(\sqrt x)$ and $D(y)$ have also degree $24$ (only). More precisely,  using again the results and notation of~\cite{mbm-gessel}, one can express  $\tQ(x,0)$  and $\tQ(0,yt)$ (involved in the expressions of  $I_1(x)$ and  $J_1(yt)$, respectively)  as rational functions  in the four algebraic series $T$, $Z=\sqrt T$, $U\equiv U(y)$ and $V\equiv V(x)$ (note the exchange of $x$ and $y$, which comes from the diagonal reflection of steps).
One then derives from these results rational expressions of $I_1(x)/t$ and $J_1(yt)/t$ in terms of $\sqrt T$, $U$ and $V$, and well as the following  expressions:
  \[
    J_1(Y_0)+J_1(Y_1)= {\frac {64\,t{Z}^{3} \left( {Z}^{3}+3\,{Z}^{2}-Z+1 \right) }{ \left(1- Z \right)  \left( {Z}^{2}+3 \right) ^{3}}}
,
\]
\[
   J_1(Y_0)J_1(Y_1)=  4\,{\frac {{Z}^{7}+7\,{Z}^{6}-3\,{Z}^{5}+19\,{Z}^{4}-45\,{Z}^{3}+53\,{
Z}^{2}-Z+1}{ \left( Z-1 \right)  \left( {Z}^{2}+3 \right) ^{3}}}  .
\]
Details of these calculations are given in our \Maple\
session. Using~\eqref{Am-diag} (resp.~\eqref{Dy-diag}), we then obtain
rational a expression of
\[
  2  (\bx+1) A_-(\sqrt x) +D_{0} -\frac 2{3t^2(1+x)}
  \qquad\left( \text{resp.} \quad
 D ( yt) +\frac {2}{3t^2y}  \right)
\]
of the form $\sqrt T \Rat(T,V(x))$ (resp. $\sqrt T \Rat(T,U(y))$). \qee
\end{remark}

\begin{proof}[Proof of Theorem~\ref{thm:diag}]
 We start from the functional equation~\eqref{eqT-diag}, and observe that  we have  a decoupling relation:
\[
y  (1+x) = (t(y+\by) +2txy-1)G(y)+F(x) + H(x,y) \tK(x,y),
\]
where
\[
  F(x)=\frac 1 {t(1+x)}, \qquad G(y)= \frac 1 t, \qquad H(x,y)=\frac x {t(1+x)}.
\]
This, together with Lemma~\ref{lem:square}, leads us to define a new  pair of invariants:
\[ %\begin{align*}
   I(x)= \left(2 t(1+x) U(x,0) + tD_0-\frac 2 {3t(1+x)}\right)^2,
  \qquad
  J(y)= \Delta(y) \left(yD(y) + \frac{2}{3t}\right)^2,
\] %\end{align*}
where $\Delta(y)$ is given by~\eqref{Delta-diag}. We want to combine
them polynomially with the invariants $(I_1(x), J_1(y))$ given
by~\eqref{I1J1-gessel-flipped} to form a pair of invariants $(\tilde I(x), \tilde J(y))$ to which Lemma~\ref{lem:invariants}  applies.

Observe that $I_1(x)$ and $I(x)$ have poles at $x=-1$, while $J_1(y)$ and $J(y) $ have poles at  $y=0$. More precisely, the series $J(y)$  has a (double) pole at $0$, which can be removed by subtracting a well-chosen quadratic polynomial in $J_1(y)$. This leads us to introduce
\[
  \tilde J(y) = J(y) - \frac 4 9 J_1(y)^2  - B_1 J_1(y),
\]
and to define accordingly
\[
  \tilde I(x) = I(x) - \frac 4 9 I_1(x)^2 - B_1 I_1(x),
\]
for a series  $B_1$ that involves $D_0$  (we refer to our \Maple\ session for details).
Then, we use the functional equations satisfied by $U(x,y)$ and $\tQ(x,y)$ to check that the ratio
\[
  \frac{\tilde I(x) -\tilde J(y)}{\tK(x,y)}
\]
is a multiple of $xy$, and conclude that $ \tilde I(x)=  \tilde J(y)=B_0$, for some series $B_0\in \qs((t))$. Hence, denoting $P(u)= 4u^2/9+B_1u+B_0$, we have
\[ %\beq\label{IJ-diag-form}
  I(x)= P(I_1(x)), \qquad   
  J(y)= P(J_1(y)).
\] %\eeq
We can express the roots of $P$ in terms of the series $\tQ$ (and more precisely, in terms of $J_1$) by looking at the two roots of $\Delta(y)$, denoted $Y_i$, with $i={0,1}$, that lie in  $\qs[[t]]$. For each of them $J_1(Y_i)$ is a root of $P(u)$. This gives the expressions of the theorem, which are those of $9I(x)/4$ and $9J(y)/4$.
\end{proof}

Let us finish with the harmonic function associated with diagonal walks in $\Cc$. 

\begin{cor}
  For $(i,j) \in \cC$, there exists a positive constant $H_{i,j}$ such that, as $n\rightarrow \infty$ with $n\equiv i+j \mod 2$,
 \[  %\beq\label{cij-est-diag}
  c_{i,j}(n) \sim -\frac{H_{i,j}}{\Gamma(-2/3)}\, 4^n n^{-5/3}.
\] %eeq
Clearly, $H_{i,j}=0$ if $i\not \equiv j$ mod $2$. The \gf\
\[
  \Hc(x,y):=\sum_{j\ge 0, i\le j} H_{i,j} x^{(j-i)/2} y^j,
\]
which is a \fps\ in $x$ and $y$, is algebraic of degree $9$, given by
\[ %\beq\label{H-eq}
\left(1+x+xy^2+x^2y-4xy\right)  \Hc(x,y)= \frac x 2 H_{0,0}
+(1+x)\Hc_-(x)+\left(1-2xy+\frac{x}2 (1+y^2)\right) \Hc_d(y),
\] %\eeq
where
\[
  \Hc_-(x):=\sum_{i>0} H_{-i,0} x^{i/2}\qquad \text{and} \qquad
   \Hc_d(y):=\sum_{i\ge 0} H_{i,i}y^i.
\]
Each of these series is algebraic of degree $3$. Let $ L(x)$ and $P(y)$ be defined  as in Corollary~\ref{cor:harmonic-simple}.
Then
\[ %\beq\label{Hminus-diag}
  \Hc_-(x) ={\frac {32 \sqrt {3} \left( 3\,P(x)-1 \right) }{ 9\left( P(x)+1 \right) 
 \left( P(x)-1 \right) ^{2}}}
  \qquad \text{and} \qquad 
  \Hc_d(y) = {\frac {144\sqrt {3}\, L(y) \left( {L(y)}^{2}-3 \right) ^{2}}{y^2 \left( {L(y)}^{2
}+6\,L(y)-9 \right)  \left( 3-L(y)\right) ^{5}}}
.
\] %eeq
(The exchange of $x$ and $y$ is intentional.)
\end{cor}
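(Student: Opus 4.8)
The plan is to follow the template already used for Corollaries~\ref{cor:harmonic-K} and~\ref{cor:harmonic-simple}, so I only indicate the main steps. As in the simple case, I would work throughout with the series $A(x,y)$ of~\eqref{A-def} rather than with $C(x,y)$: the coefficients of $Q(x,y)$ grow only like $4^n n^{-3}$, so the numbers $a_{i,j}(n)$ and $c_{i,j}(n)$ are asymptotically equivalent and it suffices to establish the claimed estimate for the $a_{i,j}(n)$, the limiting constant then being $H_{i,j}$. First I would treat the two families of ``boundary'' points --- walks ending on the diagonal ($i=j\ge 0$) and walks ending on the negative $x$-axis ($i<0$, $j=0$) --- starting from the explicit expressions~\eqref{Am-diag} and~\eqref{Dy-diag} of $A_-(x)$ and $D(y)$ provided by Theorem~\ref{thm:diag}, and substituting the rational parametrisations of $\tQ(x,0)$ and $\tQ(0,y)$ borrowed from~\cite{mbm-gessel} in terms of the algebraic series $T$, $Z=\sqrt T$, $U$ and $V$ recalled in the remark preceding this corollary. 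These are algebraic series in $t^2$ with a single dominant singularity at $t^2=1/16$, and I would compute their singular expansions there exactly as in~\cite{mbm-gessel}, the auxiliary series $L$ and $P$ of Corollary~\ref{cor:harmonic-simple} appearing in the expansions of $V(x)$ and $U(y)$ respectively.

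The delicate part, and the one I expect to cost most of the work, is that $T$, $U$ and $V$ each behave like $(1-16t^2)^{1/3}$ near the singularity, while the particular combinations entering~\eqref{Am-diag} and~\eqref{Dy-diag} exhibit a cancellation of these leading terms, so that $A_-(x)$ and $D(y)$ only have a $(1-16t^2)^{2/3}$ singularity. One must therefore push the local expansions one order further, and --- as in the Kreweras and Gessel cases --- keep careful track of the sign of the square roots in~\eqref{Am-diag} and~\eqref{Dy-diag} when extracting them. Once this is done, a standard transfer theorem~\cite{flajolet-sedgewick}, after setting $n=2m$, yields the estimate with $4^n n^{-5/3}$; reading off the coefficient of $x^i$ (resp.\ $y^i$) in the singular part then produces $H_{-i,0}$ (resp.\ $H_{i,i}$), hence the announced expressions for $\Hc_-(x)$ and $\Hc_d(y)$. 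Since each of these is a rational function of $L(x)$ (resp.\ $P(y)$), which is cubic over the base field, it has algebraic degree~$3$, just as the corresponding quadrant harmonic series. The argument stays valid because the relevant coefficient of $L$ (resp.\ $P$) does not vanish, which --- exactly as in Corollary~\ref{cor:harmonic-K} --- cannot fail without contradicting the fact that $a_{i,i}(n)$ (resp.\ $a_{-i,0}(n)$) is not $o(4^n n^{-5/3})$; this non-vanishing, and the positivity of the constants $H_{i,j}$, follow from the concatenation inequality $a_{i,j}(n+k)\ge a_{0,0}(n)$ for a fixed walk of length $k$ from $(0,0)$ to $(i,j)$ (or from the two-sided bounds of~\cite{mustapha-3quadrant}).

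With the boundary estimates in hand, I would propagate the asymptotics to every $(i,j)\in\Cc$ with $i\equiv j$ mod~$2$ by induction on $2j-i$, using the backward recurrence $c_{i,j}(n)=c_{i+1,j-1}(n+1)-c_{i,j-2}(n)-c_{i+2,j-2}(n)-c_{i+2,j}(n)$ --- the analogue, adjusted for the parity of the model, of the induction in the proof of Corollary~\ref{cor:excursions-DK}. Dividing through by $4^n n^{-5/3}$ shows at the same time that the limiting function $H$ is $\cS$-harmonic in $\Cc$, i.e.\ $4H_{i,j}=H_{i-1,j-1}+H_{i+1,j-1}+H_{i+1,j+1}+H_{i-1,j+1}$ with $H_{i,j}=0$ off $\Cc$ or when $i\not\equiv j$ mod~$2$, and symmetric in the first diagonal. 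Finally, the functional equation for $\Hc(x,y)=\sum_{j\ge 0,\ i\le j}H_{i,j}x^{(j-i)/2}y^j$ follows by substituting this harmonicity relation into the defining sum, re-indexing the four resulting sums, using $H_{i,j}=H_{j,i}$, and collecting the boundary contributions into $\Hc_-(x)$, $\Hc_d(y)$ and $H_{0,0}$ --- the same computation as in the proof of Corollary~\ref{cor:harmonic-K}, with the extra factor $1/2$ in the exponent of $x$ carried along. Since $\Hc(x,y)$ is then a rational function of $x$, $y$, $\Hc_-(x)$ and $\Hc_d(y)$, with the last two each of degree~$3$ and lying in a common degree-$9$ tower, a short elimination confirms that $\Hc$ has degree~$9$ over $\qs(x,y)$; the explicit rational-in-$L$-or-$P$ identities for $\Hc_-$ and $\Hc_d$ are then checked directly, as documented in the accompanying \Maple\ session.
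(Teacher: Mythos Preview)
Your proposal is correct and follows essentially the same route as the paper's proof, which is itself just a one-paragraph reduction to the argument of Corollary~\ref{cor:harmonic-simple}. Two small slips worth fixing: first, you have the correspondence reversed --- in the notation of~\cite{mbm-gessel}, the singular expansion of $U$ produces $L$ and that of $V$ produces $P$, so here (with the $x/y$ swap of the remark preceding the corollary) it is $U(y)$ that gives $L(y)$ and $V(x)$ that gives $P(x)$, consistently with the final formulas; second, your induction on $2j-i$ needs a separate treatment of the line $j=i+2$, since there the term $c_{i+2,j-2}=c_{i+2,i}$ lies below the diagonal and must be replaced by $c_{i,i+2}$ via the $x/y$-symmetry before solving (exactly as the line $j=i+1$ is handled separately in the proof of Corollary~\ref{cor:harmonic-K}).
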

\begin{proof}
  As in the proof of Corollary~\ref{cor:harmonic-simple},   one starts from the rational expressions of $A_-(\sqrt x)$ and $D(y)$ in terms of $\sqrt T$, $U(y)$ and $V(x)$ derived from  Theorem~\ref{thm:diag} and~\cite{mbm-gessel}, and then plugs in them the singular expansions of $T$, $U(y)$ and $V(x)$ around $t^2=1/16$. This gives $\Hc_-(x)$ and $\Hc_d(y)$.
  The expression of $\Hc(x,y)$ simply comes from the harmonicity of $H_{i,j}$.
\end{proof}

%%%%%%%%%%%%%%%%%%%%%%%%%%%%%%%%%%%%%%%%%%%%%
\section{Discussion and perspectives}
\label{sec:final}
%%%%%%%%%%%%%%%%%%%%%%%%%%%%%%%%%%%%%%%%%%%

\subsection{Generic form of the results}
\label{sec:rat-uniform}
For the last three models that we have solved ($6$th model of Table~\ref{tab:sym} in Section~\ref{sec:DA}, simple and diagonal models in Section~\ref{sec:DF}), we have directly written the invariant $I(x)$, involving $C_-(x)$ or $A_-(x)$, as a rational function in the quadrant invariant $I_1(x)$ involving $\tQ(x,0)$, and analogously for $J(y)$ and $J_1(y)$; see Theorems~\ref{thm:DA}, \ref{thm:simple}, and~\ref{thm:diag}. The coefficients of these rational functions are series in $t$. This is also possible for the three models of the Kreweras trilogy, although we have (sometimes) also used the pair $(I_0(x), J_0(y))$ to determine $(I(x),J(y))$. For the Kreweras model and the reverse Kreweras model, $I(x)$ is a polynomial in $I_1(x)$; see~\eqref{eqinv-K} and~\eqref{IJ-P-RK}. For the double Kreweras model, one can ignore the pair $(I_0(x), J_0(y))$ as well, and prove that
\[
  I(x)= \frac{P(I_1(x))}{(I_1(x)-I_1(0))^2} \quad \text{and} \quad 
  J(y)= \frac{P(J_1(y))}{(J_1(y)-I_1(0))^2}
\]
for some polynomial $P(u)$ of degree $4$.

\subsection{Solving more models via invariants?}
Let us finally discuss if, and how, one could go further in the
solution of three-quadrant walks using the approach of this paper. Let
us first recall that this approach, when it works, relates invariants
involving $C(x,y)$ to pre-existing invariants (or to \emm weak,
invariants, in the sense of~\cite{BeBMRa-17}), which so far are
systematically D-algebraic. Hence there is little hope to solve with this
approach models that would not be (at least) D-algebraic. This means
that for the nine models of Table~\ref{tab:sym},
% (diagonally symmetric, no NW nor SE step),
invariants have done for us all that we could hope for: they have
solved the six models that were not already known to be \emm
hypertranscendental, (that is, non-D-algebraic).

If we want to go further, two different difficulties arise: the model may contain NW and/or SE steps, and it may not be diagonally symmetric. Let us consider two examples.

 \begin{table}[b!]
    \centering
    \begin{tabular}{|c||a:cc||cccccc|}
      \hline
\rule{0pt}{4ex}   &   diagonal & king &  \begin{minipage}{15mm}
                   double\\ tandem
                  \end{minipage} & &&&&&\\ 
  %    &&&&&&&&\\
 \rule{0pt}{7ex} $\cS$&  \begin{tikzpicture}[scale=.45] % diagonal
    \draw[->] (0,0) -- (1,-1);
    \draw[->] (0,0) -- (1,1);
    \draw[->] (0,0) -- (-1,1);
     \draw[->] (0,0) -- (-1,-1);
 %   \draw[-] (0,-1) -- (0,-1) node[below] {\phantom{$\scriptstyle 1$}};
  \end{tikzpicture}     &
 \begin{tikzpicture}[scale=.45] % king
    \draw[->] (0,0) -- (1,-1);
    \draw[->] (0,0) -- (1,1);
    \draw[->] (0,0) -- (-1,1);
    \draw[->] (0,0) -- (-1,-1);
    \draw[->] (0,0) -- (0,1);
    \draw[->] (0,0) -- (-1,0);
    \draw[->] (0,0) -- (0,-1);
    \draw[->] (0,0) -- (1,0);
%    \draw[-] (0,-1) -- (0,-1) node[below] {\phantom{$\scriptstyle 1$}};
  \end{tikzpicture}&
    \begin{tikzpicture}[scale=.45] %double tandem
      \draw[->] (0,0) -- (0,-1);
    \draw[->] (0,0) -- (1,-1);
    \draw[->] (0,0) -- (-1,0);
    \draw[->] (0,0) -- (0,1);
    \draw[->] (0,0) -- (-1,1);
    \draw[->] (0,0) -- (1,0);
 %   \draw[-] (0,-1) -- (0,-1) node[below] {\phantom{$\scriptstyle 1$}};
  \end{tikzpicture}
&
    \begin{tikzpicture}[scale=.45] %scarecrow 1
      \draw[->] (0,0) -- (0,-1);
      \draw[->] (0,0) -- (-1,0);
    \draw[->] (0,0) -- (1,1);
    \draw[->] (0,0) -- (1,-1);
          \draw[->] (0,0) -- (-1,1);
   % \draw[-] (0,-1) -- (0,-1) node[below] {\phantom{$\scriptstyle 1$}};
  \end{tikzpicture}
&
    \begin{tikzpicture}[scale=.45] % %scarecrow 2
      \draw[->] (0,0) -- (1,-1);
    \draw[->] (0,0) -- (-1,1);
    \draw[->] (0,0) -- (-1,-1);
    \draw[->] (0,0) -- (0,1);
       \draw[->] (0,0) -- (1,0);
%    \draw[-] (0,-1) -- (0,-1) node[below] {\phantom{$\scriptstyle 1$}};
     \end{tikzpicture}
&
    \begin{tikzpicture}[scale=.45] % 6 steps
         \draw[->] (0,0) -- (1,-1);
        \draw[->] (0,0) -- (-1,1);
       \draw[->] (0,0) -- (1,1);
       \draw[->] (0,0) -- (0,-1);
    \draw[->] (0,0) -- (-1,-1);
    \draw[->] (0,0) -- (-1,0);
  %  \draw[-] (0,-1) -- (0,-1) node[below] {\phantom{$\scriptstyle 1$}};
  \end{tikzpicture}
      &   \begin{tikzpicture}[scale=.45] % 6 steps
        \draw[->] (0,0) -- (1,-1);
        \draw[->] (0,0) -- (-1,1);
        \draw[->] (0,0) -- (0,1);
    \draw[->] (0,0) -- (1,1);
    \draw[->] (0,0) -- (1,0);
      \draw[->] (0,0) -- (-1,-1);
  %    \draw[-] (0,-1) -- (0,-1) node[below] {\phantom{$\scriptstyle 1$}};
  \end{tikzpicture}
&
    \begin{tikzpicture}[scale=.45] % 7 steps
       \draw[->] (0,0) -- (1,-1);
        \draw[->] (0,0) -- (-1,1);
        \draw[->] (0,0) -- (0,-1);
       \draw[->] (0,0) -- (-1,0);
    \draw[->] (0,0) -- (0,1);
    \draw[->] (0,0) -- (1,1);
    \draw[->] (0,0) -- (1,0);
 %   \draw[-] (0,-1) -- (0,-1) node[below] {\phantom{$\scriptstyle 1$}};
  \end{tikzpicture}
&
    \begin{tikzpicture}[scale=.45] % 7 steps
       \draw[->] (0,0) -- (1,-1);
        \draw[->] (0,0) -- (-1,1);
        \draw[->] (0,0) -- (0,-1);
       \draw[->] (0,0) -- (-1,0);
    \draw[->] (0,0) -- (0,1);
    \draw[->] (0,0) -- (-1,-1);
    \draw[->] (0,0) -- (1,0);
 %   \draw[-] (0,-1) -- (0,-1) node[below] {\phantom{$\scriptstyle 1$}};
  \end{tikzpicture}
\\
 \hline
      $C_\cS(x,y;t)$ & DF & DF &\emph{ DF }? &
        \multicolumn{6}{c|}{Not    DF }
                                    \\
      &Sec.~\ref{sec:diag} & & &\multicolumn{6}{c|}{D-algebraicity not known}\\
            & \cite{BM-three-quadrants} &\cite{mbm-wallner} &\cite{mbm-wallner} & &&&&&\\
       \hline
    \end{tabular}
    \vskip 4mm  \caption{The nine models  with $x/y$-symmetry and
      steps $\nwarrow$ and $\searrow$. The first three have a finite
      group and are \emm Weyl models, in the sense
      of~\cite{mbm-wallner}; the others have an infinite group.}
      \label{tab:NWSE}
  \end{table}

\medskip
\paragraph{\bf Diagonally symmetric models with NW and SE steps.} Apart from the diagonal model, which we have actually solved via invariants, there are eight models in this class, shown in Table~\ref{tab:NWSE}. The first two are associated with a finite group (orders $4$ and $6$) and the other six with an infinite group. Let us consider for instance the fourth model of the table, $\cS=\{\nearrow, \nwarrow, \leftarrow, \downarrow, \searrow\}$ (the \emm scarecrow,), and write as before
\[
  C(x,y)=\bx U(\bx, xy)+ D(xy)+ \by U(\by,xy),
\]
with
$D(y) \in \qs[y][[t]]$ and $U(x,y) \in \qs [x,y][[t]]$.

We can write functional equations, first for $D(xy)$ and $\bx U(\bx,xy)$, as we did in Section~\ref{sec:eqfunc}. A step-by-step construction of walks gives
\[
  D(xy) =  1+ t xy D(xy) 
  +2t \by \left(\bx U(0,xy)-\bx U_{0,0}\right)
 + 2t x \by \left(\bx^2 U_1(xy)-\bx^2 U_{1,0}\right),
\]
 where $U_1(y)$ is the coefficient of $x$ in $U(x,y)$. The term involving $U_1$ is new, and corresponds to walks that jump from the lines $j-i=\pm 2$ to the diagonal, using a NW of SE step.
 For walks ending above the diagonal, we find
 \begin{multline*}
  \bx \left(1- tS(x,y)\right) U(\bx,xy)=    t \bx (1+y) D(xy)+ t\bx y \left(\by U(0,xy)-\by U_{0,0}\right)\\
  -t \by (1+x) \left(\bx U(\bx,0)+ \bx U(0,xy) -\bx U_{0,0}\right) -tx\by \left(\bx^2 U_1(xy)-\bx^2 U_{1,0}\right),
\end{multline*}
with $S(x,y)=xy+\bx y +\bx +\by +x\by$. The second term on the first line accounts for walks jumping from the line $j=i-1$ to the line $j=i+1$ by a NW step (we forbid steps from $(0,-1)$ to $(-1,0)$). The last term of the second line accounts for walks that would jump from the line $j=i+2$ to the diagonal.

We can eliminate the series $U_1(xy)$ by taking a linear combination of the two equations; but we still have $U(0,xy)$ and $D(xy)$, while we only had one of them when the steps NW and SE were forbidden. Upon performing the change of variables $x\mapsto \bx, y\mapsto xy$, we finally obtain the following counterpart of~\eqref{eq-U}:
\begin{multline*}
  2xy \tK(x,y) U(x,y)=   y +  y\, \big(t  y+ 2tx (1+xy) -1\big) D(y)
  -2t (1+\bx)  U(x,0) + 2t(xy-\bx) \left( U(0,y)-U_{0,0}\right),
\end{multline*}
with $\tK(x,y):=1-tS(\bx,xy)= 1-t (x+ \bx \by +y +\bx^2 \by +x^2 y)$. The main two novelties are the facts that the kernel has degree $2$ and valuation $-2$ in $x$, and that the right-hand side involves two series in $y$, namely $D(y)$ and $U(0,y)$. Hence it seems that a first step towards solving this three-quadrant model via invariants would be to learn how to solve quadrant models with large steps via invariants -- a question already raised in~\cite{BBMM-18}.

\medskip
\paragraph{\bf Asymmetric models.} There are $74-8-9=57$ models that do not have the $x/y$-symmetry. It is remarkable that none of them has been solved.  Exactly $16$ have a finite group, among which $4$ (at least) are conjectured to have a D-finite (even algebraic, in one case) \gf~\cite{mbm-wallner}. They are shown in Table~\ref{tab:DF-conj}.

 \begin{table}[ht]
    \centering
    \begin{tabular}{|c||ccc:c|}
      \hline
      % \Tstrut 
%      &&&&&&&&\\
\rule{0pt}{4ex}  &   diabolo & tandem&  \begin{minipage}{22mm}
                   Gouyou-\\ Beauchamps
                  \end{minipage}  & Gessel \\ 
  %    &&&&&&&&\\
 $\cS$&  \begin{tikzpicture}[scale=.45] % diabolo
    \draw[->] (0,0) -- (1,-1);
    \draw[->] (0,0) -- (1,1);
    \draw[->] (0,0) -- (-1,1);
    \draw[->] (0,0) -- (-1,-1);
    \draw[->] (0,0) -- (0,1);
    \draw[->] (0,0) -- (0,-1);
 %   \draw[-] (0,-1) -- (0,-1) node[below] {\phantom{$\scriptstyle 1$}};
  \end{tikzpicture}     &
 \begin{tikzpicture}[scale=.45] % tandem
    \draw[->] (0,0) -- (1,-1);
        \draw[->] (0,0) -- (-1,0);
    \draw[->] (0,0) -- (0,1);
%    \draw[-] (0,-1) -- (0,-1) node[below] {\phantom{$\scriptstyle 1$}};
  \end{tikzpicture}&
    \begin{tikzpicture}[scale=.45] % Gouyou Beauchamps
        \draw[->] (0,0) -- (1,-1);
    \draw[->] (0,0) -- (-1,0);
       \draw[->] (0,0) -- (-1,1);
    \draw[->] (0,0) -- (1,0);
 %   \draw[-] (0,-1) -- (0,-1) node[below] {\phantom{$\scriptstyle 1$}};
  \end{tikzpicture}
&
    \begin{tikzpicture}[scale=.45] % Gessel
      \draw[->] (0,0) -- (-1,0);
      \draw[->] (0,0) -- (1,0);
    \draw[->] (0,0) -- (1,1);
    \draw[->] (0,0) -- (-1,-1);
          % \draw[-] (0,-1) -- (0,-1) node[below] {\phantom{$\scriptstyle 1$}};
  \end{tikzpicture}
\\
                    %  \hline
      \hline
      $C_\cS(x,y;t)$ & \emph{ DF } ? & \emph{ DF } ? &\emph{ DF }? & \emph{ algebraic } ? \\
       \hline
    \end{tabular}
    \vskip 4mm  \caption{Four  interesting asymmetric models with a
      finite group. The first three are \emm Weyl models, in the
      sense of~\cite{mbm-wallner}.}
     \label{tab:DF-conj}
  \end{table}

 To illustrate the new difficulties that arise,  let us work out functional  equations for Gessel's model. Since symmetry is lost, we now write
  \[
  C(x,y)=\bx U(\bx, xy)+ D(xy)+ \by L(\by,xy),
\]
with $D(y) \in \qs[y][[t]]$ and $U(x,y)$, $L(x,y) \in \qs [x,y][[t]]$. The equation for walks ending on the diagonal reads
\[
  \left(1-t (xy+\bx\by) \right)D(xy)=1- t\bx\by D_0+ tx\left( \bx U(0, xy) \right)+ t \bx\left( \by L(0,xy)-\by  L_{0,0}\right).
\]
Now for walks ending above the diagonal, we obtain
\[
  \left(  1-t S(x,y)\right) \bx U(\bx,xy)= t\bx D(xy) - t\bx \by \left(\bx U(\bx,0)\right) -tx \left(\bx U(0,xy)\right),
\]
with $S(x,y)=x+\bx +xy+\bx\by$. Finally, for walks ending below the diagonal, we find:
\[
  \left(  1-t S(x,y)\right) \by L(\by,xy) = tx D(xy)-t \bx(1+\by) \by L(\by,0) -t\bx \left(\by L(0,xy)- \by L_{0,0}\right).
\]
In the first two equations, we perform the same change of variables as
before, namely $x\mapsto \bx, y\mapsto xy$. In the third equation, we
apply  (in parallel) $ x\mapsto xy, y\mapsto \bx $. This gives:
\begin{align*}
  \left(  1-t S(\bx,xy)\right) x U(x,y)&= \hskip 6mm tx D(y) - tx \by  U(x,0) -t U(0,y),
  \\
  \left(1-t (y+\by) \right)D(y)&=1- t\by D_0+ t U(0, y) + t \by  \left(L(0,y)-   L_{0,0}\right),
\\
  \left(  1-t S(xy,\bx)\right) x L(x,y) &=  \hskip 6mm txy D(y)-t \by(1+x)  L(x,0)
  -t \by \left( L(0,y)- L_{0,0}\right).
\end{align*}
We now have a system with \emm two,  trivariate series $U(x,y)$ and $L(x,y)$. In addition, the kernels
\[
  1-t S(\bx,xy)= 1-t(x+\bx+y+\by)\quad \text{ and } \quad 1-tS(xy,\bx)=1-t(y+\by +xy+\bx\by)
\]
are not the same.

   \bigskip\bigskip

   % \noindent{\bf Acknowledgements.} 

%%%%%%%%%%%%%%%
% Bibliography
%%%%%%%%%%%%%%%

 \bibliographystyle{abbrv}
 \bibliography{qdp}

\begin{thebibliography}{10}

\bibitem{BeBMRa-FPSAC-16}
O.~Bernardi, M.~Bousquet-M\'elou, and K.~Raschel.
\newblock Counting quadrant walks via {T}utte's invariant method
  \href{https://dmtcs.episciences.org/6416}{(extended abstract)}.
\newblock In {\em 28th {I}nt. {C}onf. on {F}ormal {P}ower {S}eries and
  {A}lgebraic {C}ombinatorics ({FPSAC} 2016)}, Discrete Math. Theor. Comput.
  Sci. Proc., AT, pages 203--214. Assoc. Discrete Math. Theor. Comput. Sci.,
  Nancy, 2016.
\newblock \href{http://arxiv.org/abs/1511.04298}{arXiv:1511.04298}.

\bibitem{BeBMRa-17}
O.~Bernardi, M.~Bousquet-M\'elou, and K.~Raschel.
\newblock Counting quadrant walks via {T}utte's invariant method.
\newblock {\em Combin. Theory}, 1, 2021.
\newblock \href{https://escholarship.org/uc/item/7k6841md#main}{Paper \#3}.
  \href{https://arxiv.org/abs/1708.08215}{arXiv:1708.08215}.

\bibitem{BBMM-18}
A.~Bostan, M.~Bousquet-Mélou, and S.~Melczer.
\newblock Walks with large steps in an orthant.
\newblock {\em J. Eur. Math. Soc. (JEMS)}, 23(7):2221–2297, 2021.
\newblock \href{https://arxiv.org/abs/1806.00968}{arXiv:1806.00968}.

\bibitem{bostan-non-DF}
A.~Bostan, F.~Chyzak, M.~van Hoeij, M.~Kauers, and L.~Pech.
\newblock Hypergeometric expressions for generating functions of walks with
  small steps in the quarter plane.
\newblock {\em European J. Combin.}, 61:242--275, 2017.
\newblock \href{https://arxiv.org/abs/1606.02982}{arXiv:1606.02982}.

\bibitem{BoKa-10}
A.~Bostan and M.~Kauers.
\newblock The complete generating function for {G}essel walks is algebraic.
\newblock {\em Proc. Amer. Math. Soc.}, 138(9):3063--3078, 2010.
\newblock \href{http://arxiv.org/abs/0909.1965}{arXiv:0909.1965}.

\bibitem{BoRaSa-14}
A.~Bostan, K.~Raschel, and B.~Salvy.
\newblock Non-{D}-finite excursions in the quarter plane.
\newblock {\em J. Combin. Theory Ser. A}, 121:45--63, 2014.
\newblock \href{http://arxiv.org/abs/1205.3300}{arXiv:1205.3300}.

\bibitem{mbm-gessel}
M.~Bousquet-M{\'e}lou.
\newblock An elementary solution of {G}essel's walks in the quadrant.
\newblock {\em Adv. Math.}, 303:1171--1189, 2016.
\newblock \href{https://arxiv.org/abs/1503.08573}{arXiv:1503.08573}.

\bibitem{BM-three-quadrants}
M.~Bousquet-M\'elou.
\newblock Square lattice walks avoiding a quadrant.
\newblock {\em J. Combin. Theory Ser. A}, 144:37--79, 2016.
\newblock
  \href{https://arxiv.org/abs/https://arxiv.org/abs/1511.02111}{arXiv:1511.02111}.

\bibitem{BMM-10}
M.~Bousquet-M{\'e}lou and M.~Mishna.
\newblock Walks with small steps in the quarter plane.
\newblock In {\em Algorithmic probability and combinatorics}, volume 520 of
  {\em Contemp. Math.}, pages 1--39. Amer. Math. Soc., 2010.
\newblock \href{http://arxiv.org/abs/0810.4387}{arXiv:0810.4387}.

\bibitem{mbm-wallner-conf}
M.~Bousquet-Mélou and M.~Wallner.
\newblock More models of walks avoiding a quadrant.
\newblock In {\em AofA 2020 (Analysis of Algorithms)}, volume 159 of {\em
  {LIPIcs}}, 2020.
\newblock \href{https://arxiv.org/abs/2109.14307}{arXiv:2109.14307}
  \href{https://doi.org/10.4230/LIPIcs.AofA.2020.8}{[doi]}.

\bibitem{mbm-wallner}
M.~Bousquet-Mélou and M.~Wallner.
\newblock Walks avoiding a quadrant and the reflection principle.
\newblock \href{https://arxiv.org/abs/2110.07633}{arXiv:2110.07633}, 2021.

\bibitem{Budd2020Winding}
T.~Budd.
\newblock Winding of simple walks on the square lattice.
\newblock {\em J. Combin. Theory Ser. A}, 172:105191, 59, 2020.
\newblock \href{http://arxiv.org/abs/math/0504018}{arXiv:0504018}.

\bibitem{denisov-wachtel}
D.~Denisov and V.~Wachtel.
\newblock Random walks in cones.
\newblock {\em Ann. Probab.}, 43(3):992--1044, 2015.
\newblock \href{http://arxiv.org/abs/1110.1254}{arXiv:1110.1254}.

\bibitem{DHRS-17}
T.~Dreyfus, C.~Hardouin, J.~Roques, and M.~F. Singer.
\newblock On the nature of the generating series of walks in the quarter plane.
\newblock {\em Invent. Math.}, 213(1):139--203, 2018.
\newblock \href{http://arxiv.org/abs/1702.04696}{arXiv:1702.04696}.

\bibitem{DHRS-sing}
T.~Dreyfus, C.~Hardouin, J.~Roques, and M.~F. Singer.
\newblock Walks in the quarter plane: genus zero case.
\newblock {\em J. Combin. Theory Ser. A}, 174:105251, 25, 2020.
\newblock \href{https://arxiv.org/abs/1710.02848}{arXiv:1710.02848}.

\bibitem{dreyfus-trotignon}
T.~Dreyfus and A.~Trotignon.
\newblock On the nature of four models of symmetric walks avoiding a quadrant.
\newblock {\em Ann. Comb.}, 25(3):617--644, 2021.
\newblock \href{https://arxiv.org/abs/2006.02299}{arXiv:2006.02299}.

\bibitem{elvey-price-trois-quarts}
A.~Elvey~Price.
\newblock Enumeration of walks with small steps avoiding a quadrant.
\newblock In preparation.

\bibitem{ElveyPrice2020Winding}
A.~Elvey~Price.
\newblock Counting lattice walks by winding angle.
\newblock In {\em FPSAC 2020 (Formal Power Series and Algebraic
  Combinatorics)}, volume 84B of {\em S\'{e}m. Lothar. Combin.}, 2020.
\newblock
  \href{https://www.mat.univie.ac.at/~slc/wpapers/FPSAC2020/43.html}{Art. 43},
  12 pp.

\bibitem{flajolet-sedgewick}
P.~Flajolet and R.~Sedgewick.
\newblock {\em Analytic combinatorics}.
\newblock Cambridge University Press, Cambridge, 2009.
\newblock Available
  \href{http://algo.inria.fr/flajolet/Publications/book.pdf}{online}.

\bibitem{hardouin-singer-20}
C.~Hardouin and M.~Singer.
\newblock On differentially algebraic generating series for walks in the
  quarter plane.
\newblock \href{https://arxiv.org/abs/2010.00963}{arXiv:2010.00963}, 2020.

\bibitem{KR-12}
I.~Kurkova and K.~Raschel.
\newblock On the functions counting walks with small steps in the quarter
  plane.
\newblock {\em Publ. Math. Inst. Hautes \'Etudes Sci.}, 116:69--114, 2012.
\newblock \href{http://arxiv.org/abs/1107.2340}{arXiv:1107.2340}.

\bibitem{MeMi13}
S.~Melczer and M.~Mishna.
\newblock Singularity analysis via the iterated kernel method.
\newblock {\em Combin. Probab. Comput.}, 23(5):861--888, 2014.
\newblock \href{http://arxiv.org/abs/1303.3236}{arXiv:1303.3236}.

\bibitem{mishna-jcta}
M.~Mishna.
\newblock Classifying lattice walks restricted to the quarter plane.
\newblock {\em J. Combin. Theory Ser. A}, 116(2):460--477, 2009.
\newblock \href{http://arxiv.org/abs/math/0611651}{arXiv:0611651}.

\bibitem{MiRe09}
M.~Mishna and A.~Rechnitzer.
\newblock Two non-holonomic lattice walks in the quarter plane.
\newblock {\em Theoret. Comput. Sci.}, 410(38-40):3616--3630, 2009.
\newblock \href{http://arxiv.org/abs/math/0701800}{arXiv:math/0701800}.

\bibitem{mustapha-3quadrant}
S.~Mustapha.
\newblock Non-{D}-finite walks in a three-quadrant cone.
\newblock {\em Ann. Comb.}, 23(1):143--158, 2019.
\newblock \href{https://doi.org/10.1007/s00026-019-00413-2}{[doi]}.

\bibitem{raschel-harmonic}
K.~Raschel.
\newblock Random walks in the quarter plane, discrete harmonic functions and
  conformal mappings.
\newblock {\em Stochastic Process. Appl.}, 124(10):3147--3178, 2014.
\newblock With an appendix by Sandro Franceschi.
  \href{https://arxiv.org/abs/1301.5716}{arXiv:1301.5716}.

\bibitem{RaTr-19}
K.~Raschel and A.~Trotignon.
\newblock On walks avoiding a quadrant.
\newblock {\em Electron. J. Combin.},
  26(3):\href{https://www.combinatorics.org/ojs/index.php/eljc/article/view/v26i3p31}{Paper
  3.31}, 34 p., 2019.

\bibitem{trotignon-harmonic}
A.~Trotignon.
\newblock Discrete harmonic functions in the three-quarter plane.
\newblock \href{https://arxiv.org/abs/1906.08082}{arXiv:1906.08082}.

\bibitem{tutte-chromatic-revisited}
W.~T. Tutte.
\newblock Chromatic sums revisited.
\newblock {\em Aequationes Math.}, 50(1-2):95--134, 1995.
\newblock \href{https://doi.org/10.1007/BF01831115}{[doi]}.

\end{thebibliography}

\end{document}